\newcommand{\bbS}{{\mathbb S}}
\newcommand{\bbZ}{{\mathbb Z}}
\newcommand{\bbR}{{\mathbb R}}
\newcommand{\bbN}{{\mathbb N}}
\newcommand{\bbP}{{\mathbb P}}
\newcommand{\bbE}{{\mathbb E}}
\newcommand{\W}{\textbf{W}}
\newcommand{\supp}{\textnormal{supp}}
\newcommand{\imag}{{\mathbf i}}
\newcommand{\op}{\textnormal{op}}
\newcommand{\Naturals}{\mathbb{N}}
\newcommand{\Integers}{\mathbb{Z}}
\newcommand{\Reals}{\mathbb{R}}
\newcommand{\indicator}{\mathds{1}}
\renewcommand{\cite}{\citeyear}
\begin{document}

\title{On the empirical spectral distribution of large wavelet random matrices based on mixed-Gaussian fractional measurements in moderately high dimensions

\thanks{H.W.~was partially supported by ANR-18-CE45-0007 MUTATION, France. G.D.'s long term visits to ENS de Lyon were supported by the school, the CNRS and the Simons Foundation collaboration grant $\#714014$. The authors would like to thank Jean-Marc Lina, Ken McLaughlin and Govind Menon for their many helpful comments on this work. }
\thanks{{\em AMS Subject classification}. Primary: 60G18, 60B20, 42C40.}
\thanks{{\em Keywords and phrases}: wavelets, random matrices, bulk, mixed-Gaussian distribution.}.}

\author{Patrice Abry \\ CNRS, ENS de Lyon, Laboratoire de Physique
\and   Gustavo Didier and Oliver Orejola\\ Mathematics Department\\ Tulane University
\and   Herwig Wendt\\ IRIT-ENSEEIHT, CNRS (UMR 5505),\\ Universit\'{e} de Toulouse, France}

\bibliographystyle{agsm}

\maketitle

\begin{abstract}
In this paper, we characterize the convergence of the (rescaled logarithmic) empirical spectral distribution of wavelet random matrices. We assume a moderately high-dimensional framework where the sample size $n$, the dimension $p(n)$ and, for a fixed integer $j$, the scale $a(n)2^j$
go to infinity in such a way that $\lim_{n \rightarrow \infty}p(n)\cdot a(n)/n = \lim_{n \rightarrow \infty} o(\sqrt{a(n)/n})= 0$. We suppose the underlying measurement process is a random scrambling of a sample of size $n$ of a growing number $p(n)$ of fractional processes. Each of the latter processes is a fractional Brownian motion conditionally on a randomly chosen Hurst exponent. We show that the (rescaled logarithmic) empirical spectral distribution of the wavelet random matrices converges weakly, in probability, to the distribution of Hurst exponents.
\end{abstract}


\section{Introduction}

A \textit{wavelet} is a unit $L^2(\bbR)$-norm function that annihilates polynomials (see \eqref{e:N_psi}). For a fixed (octave) $j \in \bbN \cup \{0\}$, a \textit{wavelet random matrix} is given by
\begin{equation}\label{e:W(a2^j)_intro}
\bbR^{p^2} \ni {\boldsymbol {\boldsymbol{\mathcal W}}_{n} \equiv {\boldsymbol{\mathcal W}}}(a(n)2^j) = \frac{1}{n_{a,j}}\sum^{n_{a,j}}_{k=1}{\mathcal D}(a(n)2^j,k)\hspace{0.5mm} {\mathcal D}(a(n)2^j,k)^*.
\end{equation}
In \eqref{e:W(a2^j)_intro}, $^*$ denotes transposition, $n_{a,j} = n/a(n)2^j$ is the number of wavelet-domain observations for a sample size $n$.  Each random vector ${\mathcal D}(a(n)2^j,k) \in \bbR^p$ is the \textit{wavelet transform} of a $p$-variate stochastic process $Y$ at
dyadic scale $a(n)2^j$ and shift $k \in \bbZ$ (see \eqref{e:continuous_wavelet_detail} and \eqref{e:disc2} for the continuous- and discrete-time definitions, respectively). The entries of $\{{\mathcal D}(a(n)2^j,k)\}_{k = 1,\hdots,n_{a,j}}$ are generally correlated. A \textit{fractal} is an object or phenomenon that displays the property of \textit{self-similarity}, in some sense, across a range of scales (Mandelbrot \cite{mandelbrot:1982}). Due to its intrinsic multiscale character and fine-tuned mathematical properties, the wavelet transform has been widely used in the study of fractals (e.g., Wornell \cite{wornell:1996}, Doukhan et al.\ \cite{doukhan:2003}, Massopust \cite{massopust:2014}). In this paper, we characterize the asymptotic behavior of the (rescaled logarithmic) empirical spectral distribution of large wavelet random matrices in a moderately high-dimensional framework where the \textit{sample size} $n$, the \textit{dimension} $p(n)$ and the \textit{scale} $a(n)2^j$ go to infinity. We assume measurements of the form
\begin{equation}\label{e:Y(ell)=P(n)X(ell)_intro}
\Reals^{p} \ni Y(t) = {\mathbf P}(n) X(t),
\end{equation}
either in continuous or in discrete time $t$. In \eqref{e:Y(ell)=P(n)X(ell)_intro}, the coordinates matrix ${\mathbf P}(n)$ is random and independent of $X(t)$. Moreover, each row of $X$ is, conditionally, an independent $H$-fractional Brownian motion, where each Hurst exponent $H$ is picked independently from a discrete probability distribution $\pi(dH)$. In the main results (Theorems \ref{t:main_theorem} and \ref{t:main_theorem_discrete}), we show that  the (rescaled logarithmic) empirical spectral distribution of ${\boldsymbol {\mathcal W}}(a(n)2^j)$ converges weakly, in probability, to a known affine transformation of the distribution $\pi(dH)$ of the Hurst exponents.\vspace{3mm}

In this paper, we combine two mathematical frameworks that are rarely considered jointly: $(i)$ high-dimensional probability theory; $(ii)$ fractal analysis. This is done by bringing together the study of large random matrices and scaling analysis in the wavelet domain.

Since the 1950s, the spectral behavior of large-dimensional random matrices has attracted considerable attention from the mathematical research community. For example, random matrices have proven to be prolific statistical mechanical models of Hamiltonian operators (e.g., Mehta and Gaudin \cite{mehta:gaudin:1960}, Dyson \cite{dyson:1962}, Ben Arous and Guionnet \cite{benarous:guionnet:1997}, Soshnikov \cite{soshnikov:1999}, Mehta \cite{mehta:2004}, Anderson et al.\ \cite{anderson:guionnet:zeitouni:2010}, Erd\H{o}s et al.\ \cite{erdos:yau:yin:2012}). They are also of great interest in combinatorics and numerical analysis (e.g., Baik et al.\ \cite{baik:deift:johansson:1999}, Baik et al.\ \cite{baik:deift:suidan:2016}, Menon and Trogdon \cite{menon:trogdon:2016}), as well as in the study of integrable systems and universality (Kuijlaars and McLaughlin \cite{kuijlaars:mclaughlin:2000}, Baik et al.\ \cite{baik:kriecherbauer:mclaughlin:miller:2007}, Deift \cite{deift:2007}, Tao and Vu \cite{tao:vu:2011}, Borodin and Petrov \cite{borodin:petrov:2014}, Deift \cite{deift:2017}). In particular, the literature on random matrices under dependence has been expanding at a fast pace (e.g., Xia et al.\ \cite{xia:qin:bai:2013}, Paul and Aue \cite{paul:aue:2014}, 
Chakrabarty et al.\ \cite{chakrabarty:hazra:sarkat:2016}, Che \cite{che:2017}, Steland and von Sachs \cite{steland:vonsachs:2017}, 
Wang et al.\ \cite{wang:aue:paul:2017}, Zhang and Wu \cite{zhang:wu:2017}, Erd\H{o}s et al.\ \cite{erdos:kruger:schroder:2019}, Merlev\`{e}de et al.\ \cite{merlevede:najim:tian:2019}, Bourguin et al.~\cite{bourguin:diez:tudor:2021}).

In turn, recall that the emergence of a fractal is typically the signature of a physical mechanism that generates \textit{scale invariance} (e.g., Mandelbrot \cite{mandelbrot:1982}, West et al.\ \cite{west:brown:enquist:1999}, Zheng et al.\ \cite{zheng:shen:wang:li:dunphy:hasan:brinker:su:2017}, He \cite{he:2018}, Shen et al.\ \cite{shen:stoev:hsing:2022}). Unlike traditional statistical mechanical systems (e.g., Reif \cite{reif:2009}), a scale-invariant system does not display a characteristic scale, namely, one that dominates its statistical behavior. Instead, the behavior of the system across scales is determined by specific parameters called \textit{scaling exponents}.  Scale invariance manifests itself in a wide range of natural and social phenomena such as in criticality (Sornette \cite{sornette:2006}), turbulence (Kolmogorov~\cite{Kolmogorovturbulence}), climate studies (Isotta et al.\ \cite{isotta:etal:2014}), dendrochronology (Bai and Taqqu~\cite{bai:taqqu:2018}) and hydrology (Benson et al.\ \cite{benson:baeumer:scheffler:2006}). Mathematically, it is a topic of central importance in Markovian settings (e.g., diffusion, lattice models, universality classes) as well as in non-Markovian ones (e.g., anomalous diffusion, long-range dependence, non-central limit theorems).

In a multidimensional framework, scaling behavior does not always appear along standard coordinate axes, and often involves multiple scaling relations. A $\bbR^r$-valued stochastic process $X$ is called \textit{operator self-similar} (o.s.s.; Laha and Rohatgi \cite{laha:rohatgi:1981}, Hudson and Mason \cite{hudson:mason:1982}) if it exhibits the scaling property
\begin{equation}\label{e:def_ss}
\{X(ct)\}_{t\in\bbR} \stackrel {\textnormal{f.d.d.}}{=} \{c^{\mathbf H} X(t)\}_{t\in\bbR}, \quad c>0.
\end{equation}
In \eqref{e:def_ss}, ${\mathbf H}$ is some (Hurst) matrix whose eigenvalues have real parts lying in the interval $(0,1)$ and $c^{\mathbf H} := \exp\{\log(c) {\mathbf H}\} = \sum^{\infty}_{k=0} \frac{(\log(c) {\mathbf H})^k}{k!}$. A canonical model for multivariate fractional systems is \textit{operator fractional Brownian motion} (ofBm), namely, a Gaussian, o.s.s., stationary-increment stochastic process (Maejima and Mason \cite{maejima:mason:1994}, Mason and Xiao \cite{mason:xiao:2002}, Didier and Pipiras \cite{didier:pipiras:2012}). In particular, ofBm is the natural multivariate generalization of the classical fractional Brownian motion (fBm; Embrechts and Maejima \cite{embrechts:maejima:2002}).

The importance of the role of multiple scaling laws in applications is now well established. For example, in econometrics, the detection of
distinct scaling laws in multivariate fractional time series is indicative of the key property of \textit{cointegration} -- namely,
the existence of meaningful and statistically useful long-run relationships among the individual series (e.g.,
Engle and Granger \cite{engle:granger:1987}, NobelPrize.org \cite{nobelprize:2003}, Hualde and Robinson \cite{hualde:robinson:2010}, Shimotsu \cite{shimotsu:2012}). From a different perspective, it has been shown that ignoring the presence of multiple scaling laws in statistical inference may lead to severe biases (the so-called \textit{dominance} and \textit{amplitude effects} -- see, for instance, Abry and Didier \cite{abry:didier:2018:dim2}).

In the study of scaling properties, \textit{eigenanalysis} has proven to be a fecund analytic framework in a number of areas, such as in the characterization of heavy tails and correlation (e.g., Meerschaert and Scheffler \cite{meerschaert:scheffler:1999,meerschaert:scheffler:2003}, Becker-Kern and Pap \cite{becker-kern:pap:2008}) and in time series modeling (e.g., Phillips and Ouliaris \cite{phillips:ouliaris:1988}, Li et al.\ \cite{li:pan:yao:2009}, Zhang et al.\ \cite{zhang:robinson:yao:2019}). In Abry and Didier \cite{abry:didier:2018:n-variate,abry:didier:2018:dim2}, \textit{wavelet eigenanalysis} is put forward in the construction of a general methodology for the statistical identification of the scaling (Hurst) structure of ofBm in low dimensions.

To the best of our knowledge, the behavior of the eigenvalues of large-dimensional wavelet random matrices was mathematically studied for the first time in Abry et al.\ \cite{abry:boniece:didier:wendt:2022,abry:boniece:didier:wendt:2023:regression}. This was done in the context of a high-dimensional signal-plus-noise model where measurements display a \textit{fixed} number $r$ of scaling laws, each driven by a deterministic (and unknown) Hurst exponent. In those papers, the scaling, convergence and joint fluctuations of the top $r$ eigenvalues of the associated wavelet random matrix were established and characterized.

As opposed to the ``strong" approach in Abry et al.\ \cite{abry:boniece:didier:wendt:2022,abry:boniece:didier:wendt:2023:regression}, where \textit{individual} eigenvalues are tracked, in this paper we put forward a \textit{weak} (or ``Wigner-like") \textit{approach}. In other words, instead of assuming a fixed number of deterministic scaling laws (Hurst exponents), we suppose that, as the dimension grows, measurements display an \textit{increasing} number of \textit{random} scaling laws. In applications, this provides a natural model for multiparameter high-dimensional fractal systems, such as those emerging in neuroscience and fMRI imaging (Li et al.\ \cite{li:pluta:shahbaba:fortin:ombao:baldi:2019}, Gotts et al.\ \cite{gotts:gilmore:martin:2020}), network traffic (Abry and Didier \cite{abry:didier:2018:n-variate}), climate science (Schmith et al.\ \cite{schmith:johansen:thejll:2012}) and high-dimensional time series (Merlev\`{e}de and Peligrad \cite{merlevede:peligrad:2016}, Chan et al.\ \cite{chan:lu:yau:2017}, Alshammri and Pan \cite{alshammri:pan:2021}). In this context, it is of primary interest to understand the \textit{bulk behavior} of the empirical spectral distribution (e.s.d.) of wavelet random matrices.

More specifically, following up on the computational study Orejola et al.\ \cite{orejola:didier:wendt:abry:2022}, for each $n \in \bbN$ we assume a random vector
\begin{equation}\label{e:H1,...,Hp(n)}
H_{1},\hdots,H_{p(n)} \in (0,1)^{p(n)}
\end{equation}
is sampled from a discrete probability measure $\pi(dH)$ on ${\mathcal B}(0,1)$. Then, measurements of the form \eqref{e:Y(ell)=P(n)X(ell)_intro} are made, where $X$ displays independent rows and the $\ell$-th row of $X$ is, conditionally on \eqref{e:H1,...,Hp(n)}, a fBm with Hurst parameter $H_\ell$ (on the recent use of this class of processes in the modeling of anomalous diffusion, see Balcerek et al.\ \cite{balcerek:burnecki:thapa:wylomanska:chechkin:2022}). In particular, the marginal distributions of the measurement process \eqref{e:Y(ell)=P(n)X(ell)_intro} are random sums of mixed-Gaussian laws.

In the main results of this paper (Theorems \ref{t:main_theorem} and \ref{t:main_theorem_discrete}), we describe the asymptotic behavior of the rescaled logarithmic e.s.d.\ of the wavelet random matrix
\begin{equation}\label{e:mathcal_Wn=cont_or_discrete}
{\boldsymbol {\mathcal W}}_n = {\mathbf W}(a(n)2^j)\textnormal{ or }{\boldsymbol {\mathcal W}}_n = \widetilde{{\mathbf W}}(a(n)2^j),
\end{equation}
corresponding to continuous- and discrete-time measurements, respectively. We consider both measurement frameworks because, in the former case, exact self-similarity-type relations hold (e.g., \eqref{e:cond_self-similarity}), which is very mathematically convenient. Once results are obtained for continuous-time measurements, then we turn to the realistic situation where measurements are made in discrete time. When taking limits, we consider a \textit{moderately high-dimensional regime} where
\begin{equation}\label{e:three-fold_lim}
\lim_{n \rightarrow \infty} \frac{p(n) \cdot a(n)}{n} = \lim_{n \rightarrow \infty} o\Big(\sqrt{\frac{a(n)}{n}}\Big) = 0
\end{equation}
(see \eqref{e:p(n),a(n)_conditions}). Namely, the sample size $n$, the dimension $p(n)$ and the scale $a(n)2^j$ go to infinity in a way that $n$ grows faster than $p(n)\cdot a(n)$. Over fixed scales (i.e., when $a(n)$ is assumed constant in \eqref{e:three-fold_lim}), moderately high-dimensional regimes have been studied in several works. These include Bai and Yin \cite{bai:yin:1988}, Jiang \cite{jiang:2008} and Wang et al.\ \cite{wang:aue:paul:2017} on Wigner, Jacobi and sample covariance matrices, respectively (on the use of condition \eqref{e:three-fold_lim} in this paper, see the discussion in Section \ref{s:conclusion}).

It is by considering the three-way limit \eqref{e:three-fold_lim}, which includes the \textit{scaling limit} $a(n) \rightarrow \infty$, that large wavelet random matrices may be used in the characterization of low-frequency behavior in a (moderately) high-dimensional framework. In fact, let $F_{{\boldsymbol {\mathcal W}}_n}(d\upsilon)$ be the rescaled logarithmic e.s.d.\ of ${\boldsymbol {\mathcal W}}_n$ as in \eqref{e:mathcal_Wn=cont_or_discrete} (see \eqref{e:empirical_specdist_log} and \eqref{e:empirical_specdist_log_discrete} for the precise definition of $F_{{\boldsymbol {\mathcal W}}_n}(d\upsilon)$ for continuous- and discrete-time measurements, respectively). In Theorems \ref{t:main_theorem} and \ref{t:main_theorem_discrete}, we show that, for any appropriate test function $g$,
\begin{equation}\label{e:main_theorem_intro}
\int_{\bbR} g(\upsilon) F_{{\boldsymbol {\mathcal W}}_n}(d\upsilon) \stackrel{\bbP}\rightarrow \int_{\bbR} g(\upsilon) F_{2\mathcal{H}+1}(d\upsilon), \quad n \rightarrow \infty.
\end{equation}
In other words, $F_{{\boldsymbol {\mathcal W}}_n}(d\upsilon)$ converges weakly, in probability, to the distribution given by the random variable $2\mathcal{H}+1$. Namely, it converges to the distribution of Hurst exponents up to a known affine transformation (see Figure \ref{fig:histogram}; see also Remark \ref{r:forces} and Figure \ref{fig:forces} on a schematic representation of the ``forces" acting on rescaled wavelet log-eigenvalues).
\begin{figure}
    \centering
    \includegraphics[scale=0.25]{"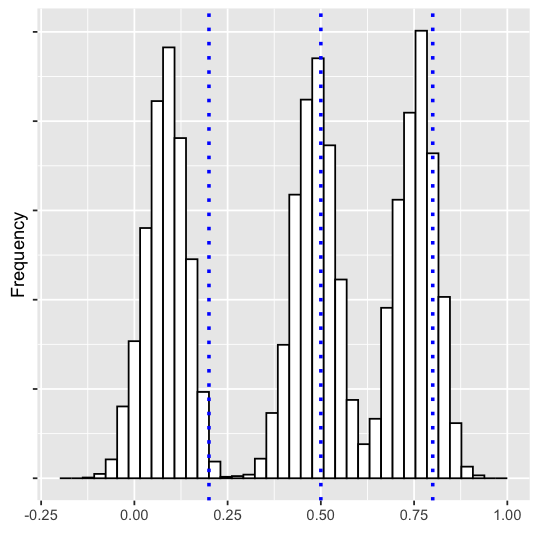"}
    \includegraphics[scale=0.25]{"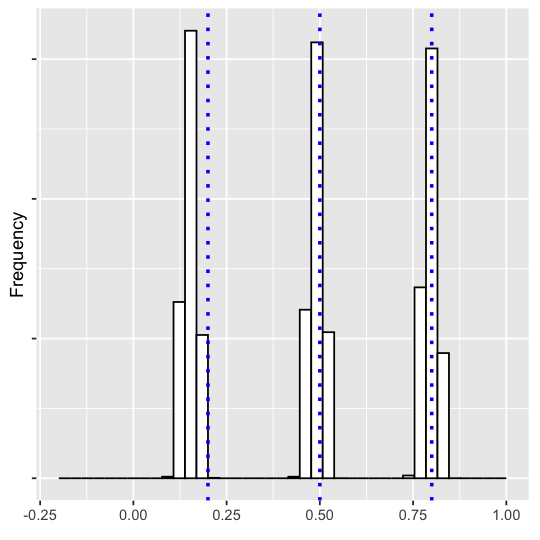"}
    \includegraphics[scale=0.25]{"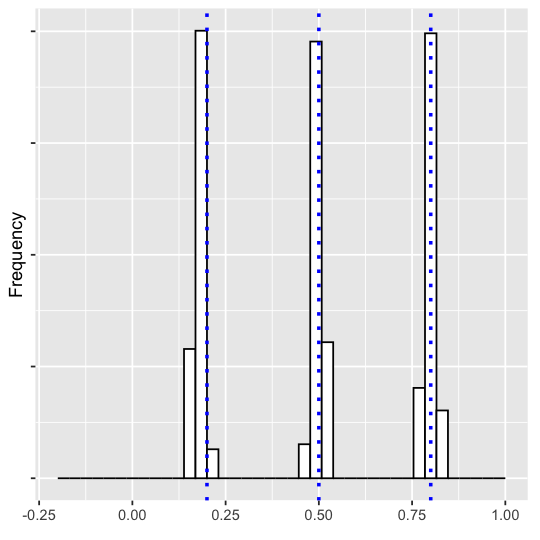"}
 \caption{ \textbf{The distribution of the (rescaled logarithmic) wavelet e.s.d.\ in the three-way limit \eqref{e:three-fold_lim}.} A Monte Carlo study displays a tri-modal distribution emerging in the rescaled logarithmic wavelet e.s.d.\ in the three-way limit \eqref{e:three-fold_lim} (\textbf{n.b.}: after applying an affine transformation, the results are shown on the same scale as that of the distribution $\pi(dH)$). In the depicted simulation study based on $1000$ realizations, $\pi(dH)$ is a discrete uniform distribution with support $\{0.2,0.5,0.8\}$. From left to right, respectively, $ (\textnormal{sample size}, \textnormal{scale},\textnormal{dimension} )  = (2^{10}, 2^4, 2^3),(2^{15}, 2^5, 2^5)$ and $(2^{18}, 2^6, 2^6)$. Wavelet log-eigenvalues weighted over multiple scales were used for enhanced (``debiased") finite-sample convergence (cf.\ Abry and Didier~\protect\cite{abry:didier:2018:n-variate} and Abry et al.\ \protect\cite{abry:boniece:didier:wendt:2022}).  }
    \label{fig:histogram}
\end{figure}

Note that \eqref{e:three-fold_lim} and \eqref{e:main_theorem_intro} stand in sharp contrast with the traditional analysis of large random matrices. With standard sample covariance matrices, for example, one considers the ratio $\lim_{n \rightarrow \infty} p(n)/n$ and the e.s.d.\ often converges to a Dirac mass at a constant or to a Mar$\breve{\textnormal{c}}$enko-Pastur distribution in moderately high and in high dimensions, respectively (e.g., Chen and Pan \cite{chen:pan:2012}, Bai and Silverstein \cite{bai:silverstein:2010}, Tao and Vu \cite{tao:vu:2012}).

In addition, statement \eqref{e:main_theorem_intro} suggests a new form of \textit{universality} that is analogous to the law of large numbers (Tao \cite{tao:2012}). In other words, regardless of the underlying $\pi(dH)$, the wavelet eigenspectrum reveals the true distribution $\pi(dH)$ of the Hurst modes in the 
moderately high-dimensional limit \eqref{e:three-fold_lim}.

From the standpoint of probability theory, to the best of our knowledge this paper provides the first mathematical study of the asymptotic behavior of the e.s.d.\ of wavelet random matrices starting from measurements containing $p(n) \rightarrow \infty$ scaling laws.

From the standpoint of fractal analysis, this paper takes a decisive step in the expansion, to the high-dimensional context, of the study of scale-invariant and non-Markovian phenomena started by Kolmogorov \cite{kolmogorov:1940} and Mandelbrot and Van Ness \cite{mandelbrot:vanness:1968}, and later taken up by the likes of Flandrin \cite{flandrin:1992}, Wornell and Oppenheim \cite{wornell:oppenheim:1992}, Meyer et al.\ \cite{meyer:sellan:taqqu:1999}, among many others (see Pipiras and Taqqu \cite{pipiras:taqqu:2017} and references therein).

This paper is organized as follows. In Section \ref{s:framework}, we provide the basic wavelet framework, definitions and assumptions used throughout the paper. In Section \ref{s:main}, we state and establish the main results on the behavior of the rescaled logarithmic e.s.d.\ of wavelet random matrices in the three-way limit \eqref{e:three-fold_lim}. The proofs of Theorems \ref{t:main_theorem} and \ref{t:main_theorem_discrete} are written so as to provide readers with immediate access to the core ideas behind the argument, whereas the auxiliary technical results can be found in the Appendices. In Section \ref{s:conclusion}, we lay out conclusions. We also briefly discuss several open problems that this work leads to in the theory and applications of wavelet random matrices. With a view to universality claims, these include potential ways of expanding the mathematical framework of the paper. 

\section{Preliminaries and assumptions}\label{s:framework}

\subsection{Notation}

Throughout the paper, we use the following notation. All with respect to the field $\bbR$, ${\mathcal M}(m_1,m_2,\bbR)$ and ${\mathcal M}(n)={\mathcal M}(n,\bbR)$ are the vector spaces, respectively, of all $m_1 \times m_2$ matrices and $n \times n$ matrices, $GL(n,\bbR)$ is the general linear group (invertible matrices), and $O(n)$ is the orthogonal group (i.e., matrices $O \in {\mathcal M}(n)$ such that $OO^* = I$). Also, ${\mathcal S}(n,\bbR)$ and ${\mathcal S}_{\geq 0}(n,\bbR)$ denote, respectively, the sets of $n \times n$ symmetric and symmetric positive semidefinite matrices, whereas $I = I_{n}$ denotes the $n\times n$ identity matrix. The $(n-1)$--dimensional unit sphere is represented by the symbol ${\bbS}^{n-1}$. For any ${\mathbf A} \in {\mathcal S}(n,\bbR)$, the ordered eigenvalues of $\mathbf{A}$ are denoted $\lambda_1({\mathbf A}) \leq \hdots \leq \lambda_n({\mathbf A})$. More generally, for ${\mathbf M} \in {\mathcal M}(n)$, the ordered singular values of ${\mathbf M}$ are written
\begin{equation}\label{e:sigma1(M)=<...=<sigman(M)}
\sigma_1({\mathbf M}) \leq \hdots \leq \sigma_n({\mathbf M}).
\end{equation}
For any ${\mathbf A}\in {\mathcal M}(n)$, $\|{\mathbf A}\|_{\textnormal{op}}:= \sigma_n({\mathbf A})$ is the operator norm of ${\mathbf A}$. The symbol $\textnormal{Haar}(O(n))$ represents the Haar probability measure on $O(n)$. The symbol $o_{\bbP}(1)$ denotes a random variable that vanishes in probability as $n\to \infty$. Given a collection of scalars $x_1,\hdots,x_n$,
\begin{equation}\label{e:x(1)=<...=<x(n)}
x_{(1)} \leq \hdots \leq x_{(n)}
\end{equation}
denotes the associated ordered $n$-tuple.

\subsection{Measurements}

In regard to the underlying stochastic framework, first consider the following definition.
\begin{definition}
Let $\pi(dH)$ be a probability measure such that $\pi(0,1) = 1$. The univariate stochastic process
\begin{equation}\label{e:X_h(t)_def}
X_\mathcal{H} = \{X_{\mathcal{H}}(t)\}_{t \in \bbR}
\end{equation}
is called a \textit{random Hurst (exponent)--fractional Brownian motion} (rH--fBm) when, conditionally on some value $\mathcal{H} = H$ picked from $\pi(d H)$, $X_{\mathcal{H}}$ is a standard fBm with Hurst exponent
\begin{equation}\label{e:H_in_(0,1]}
H \in (0,1).
\end{equation}
\end{definition}
The basic properties of rH-fBm are provided in Lemma \ref{l:properties_of_Xh(t)}.\\

So, throughout this manuscript, we make use of the following assumptions on the measurements. For expository purposes, we first state the assumptions, and then provide some interpretation. In the assumptions, ${\mathcal T}$ denotes either ${\mathcal T} = \bbR$ or ${\mathcal T} = \{1,\hdots,n\}$, corresponding to the cases of continuous- or discrete-time measurements, respectively.

\medskip

\noindent  {\sc Assumption $(A1)$}: Let $\pi(d H)$ be a distribution such that
\begin{equation}\label{e:pi(dh)}
\pi(0,1) = 1,  \quad \supp \ \pi(d H) = \{\breve{H}_1,...,\breve{H}_{r} \}, \quad \varpi :=\min \{\supp \ \pi (d H)\},
\end{equation}
where $r \in \Naturals$ and $\breve{H}_i \in (0,1)$ for each $i \in \{1,...,r\}$. For each $n \in \Naturals$,
\begin{equation}\label{e:mathds_hn}
\mathds{H}_n  = \textnormal{diag}(H_1,\hdots,H_{p(n)})\in {\mathcal M}(n)
\end{equation}
is a diagonal random matrix whose (main diagonal) entries are independently chosen from the distribution $\pi(dH)$.

\medskip

\noindent  {\sc Assumption $(A2)$}: For each $n \in \Naturals$ and given some $\mathds{H}_n$ as in \eqref{e:mathds_hn}, $p=p(n)$ independent rH--fBm sample paths are generated, each based on one of the $p$ main diagonal entries of $\mathds{H}_n$. In particular, when restricted to discrete time, a total of $p \times n$ entries is available. The associated $p$-variate (latent) stochastic process is denoted by $\{X(t)\}_{t \in {\mathcal T}}$.

\medskip

\noindent {\sc Assumption $(A3)$}: For $p = p(n)$ and $\{X(t)\}_{t \in {\mathcal T}}$ as in assumption $(A2)$, the measurements have the form
\begin{equation}\label{e:Y=PX}
\Reals^{p(n)} \ni Y(t) = {\mathbf P}(n) X(t), \quad t \in {\mathcal T}.
\end{equation}
In \eqref{e:Y=PX}, the so-named \textit{coordinates matrix} is a random matrix ${\mathbf P}(n) \in GL(p(n),\Reals)$ that is independent of $X(t)$.

\medskip

\noindent {\sc Assumption $(A4)$}: Fix $j \in \bbN \cup \{0\}$. The dimension $p(n)$ and the scaling factor $a(n)$ (cf.\ $n_{a,j}$ as in \eqref{e:W(a(n)2^j)} below) satisfy the relations
\begin{equation}\label{e:p(n),a(n)_conditions}
a(n) \leq \frac{n}{2^j}, \quad \frac{a(n)}{n} + \frac{n}{a(n)^{1+ 2\varpi}} \rightarrow 0, \quad p(n) < \frac{n}{a(n)2^j}, \quad \infty \leftarrow p(n) = o\Big(\sqrt{\frac{n}{a(n)}}\Big),
\end{equation}
as $n\to \infty$, where $\varpi$ is as in \eqref{e:pi(dh)}.

\medskip

\noindent {\sc Assumption $(A5)$}: For the random matrix ${\mathbf P}(n)$ as in \eqref{e:Y=PX},

\begin{equation}\label{e:constraints_on_coordinate_matrix}
    \frac{\log \sigma_1(\mathbf{P}(n))}{\log a(n)} \stackrel{\bbP}\to 0,\quad \frac{\log \sigma_p(\mathbf{P}(n))}{\log a(n)} \stackrel{\bbP}\to 0, \quad n \rightarrow \infty.
\end{equation}

\medskip

Assumption $(A1)$ defines the distribution of Hurst exponents. Assumption $(A2)$ postulates the latent $p(n)$-variate stochastic process $X$ as a collection of independent rH-fBms. Assumption $(A3)$ describes the observed process $Y$, where the \textit{unknown} random coordinates matrix ${\mathbf P}(n)$ determines the directions of the multiple (random) scaling relations stemming from the latent process $X$. In turn, assumption $(A4)$ controls the divergence rates among $n$, $a(n)$ and $p(n)$ in the three-way limit. In particular, it states that the scaling factor $a(n)$ and number of fractional processes, $p(n)$, must grow slower than $n$, and that the three-component ratio $\frac{p(n)a(n)}{n}$ must converge to 0 (see \eqref{e:three-fold_lim}). This establishes a moderately high-dimensional regime (cf.\ the traditional ratio $\lim_{n \rightarrow \infty} \frac{p(n)}{n}$ for sample covariance matrices). By assumption $(A5)$, the minimum and maximum singular values of ${\mathbf P}(n)$ neither vanish nor diverge too fast, respectively. Hence, ${\mathbf P}(n)$ cannot interfere too heavily on the scaling relations emanating from the latent process.\\

Throughout this manuscript, whenever convenient we write
\begin{equation}\label{e:a=a(n)_or_p=p(n)}
a = a(n)\textnormal{ or }p = p(n).
\end{equation}

\begin{remark}
    Due to the many uses of the letter ``$H$'' throughout the paper, for the readers' convenience we provide Table \ref{tab:notation_for_h} to help them keep track of the notation.
\begin{table}[]
    \centering
        \begin{tabular}{cclc}
        \textbf{notation} & \textbf{domain} & \textbf{description} & \textbf{defined in} \\ \hline
        $\mathbf{H}$ & ${\mathcal M}(p)$ & Hurst matrix &  \eqref{e:def_ss}    \\
        $\mathds{H}_n$  & ${\mathcal M}(p)$ & diagonal matrix whose main diagonal  & \eqref{e:mathds_hn} \\
                     &                   & entries are picked from $\pi(dH)$ &                \\
         $\mathbb{H}_n$  &  ${\mathcal M}(p)$ &  particular (deterministic) instance of $\mathds{H}_n$ & \eqref{e:diagonal_H_=_instance} \\
        $\mathcal{H}$ & $(0,1)$  & random Hurst exponent & \eqref{e:X_h(t)_def} \\
       $H$ & $(0,1)$  & particular (deterministic) instance of $\mathcal{H}$  & \eqref{e:H_in_(0,1]} \\
       $\breve{H}$ & $(0,1)$  & value in $\textnormal{supp}\hspace{0.5mm}\pi(dH)$  & \eqref{e:pi(dh)} \\\hline
    \end{tabular}
    \caption{Different uses of the letter ``$H$" throughout the paper.}
    \label{tab:notation_for_h}
\end{table}
\end{remark}

\subsection{Wavelet multiresolution analysis}

Recall that a \textit{wavelet} $\psi$ is a unit $L^2(\bbR)$-norm function that annihilates polynomials (see \eqref{e:N_psi}). Throughout the paper, we make use of a wavelet multiresolution analysis (MRA; see Mallat \cite{mallat:1999}, chapter 7). A wavelet MRA decomposes $L^2(\mathbb{R})$ into a sequence of \textit{approximation} (low-frequency) and \textit{detail} (high-frequency) subspaces $V_j$ and $W_j$, respectively, associated with different scales of analysis $2^{j}$, $j \in \bbZ$. In particular, given a wavelet $\psi$, there is a related \textit{scaling function} $\phi \in L^2(\bbR)$. Appropriate rescalings and shifts of $\phi$ and $\psi$ form bases for the subspaces $V_j$ and $W_j$, respectively (see Mallat \cite{mallat:1999}, Theorems 7.1 and 7.3).

In almost all mathematical statements, we make assumptions ($W1-W3$) on the underlying wavelet MRA. Such assumptions are standard in the wavelet literature and are accurately described in Section \ref{s:assumptions_on_the_MRA}. In particular, we make use of a compactly supported wavelet basis.

\subsection{Wavelet random matrices: continuous time}

For any $p \in \bbN$, suppose ${\mathcal T} = \bbR$ in \eqref{e:Y=PX}. Namely, assume we observe the continuous-time vector-valued stochastic process
\begin{equation}\label{e:infinite_sample_continuous}
\{Y(t)\}_{t \in \bbR} \subseteq \bbR^p.
\end{equation}
For a wavelet function $\psi$, the \textit{wavelet transform} vector of the stochastic process $Y$ is defined as the convolution
\begin{equation}\label{e:continuous_wavelet_detail}
\Reals^{p} \ni D(2^j,k) := 2^{-j/2} \int_\Reals \psi(2^{-j}t-k) Y(t) dt, \quad j \in \Naturals \cup \{0\}, \quad k \in \Integers,
\end{equation}
whenever such expression is meaningful. Likewise, let
\begin{equation}\label{e:DX(a(n)2^j,k)}
\Reals^{p(n)} \ni D_X(a(n)2^j,k) := 2^{-j/2} \int_\Reals \psi(2^{-j}t-k) X(t) dt
\end{equation}
be the wavelet transform of the latent process $X$ associated with $Y$. Note that, under ($A1-A5$), \eqref{e:DX(a(n)2^j,k)} is well defined in the mean squared sense by Lemma \ref{l:D(2^j,k)_is_well_defined}. Therefore, under the same conditions, \eqref{e:continuous_wavelet_detail} is also well defined in the mean squared sense.

So, let
\begin{equation}\label{e:n_in_N_wave_domain}
n \in \bbN
\end{equation}
be the wavelet-domain ``sample size", i.e., the number of wavelet coefficients available at scale $2^{0} = 1$. Also, let $j \in \bbN \cup \{0\}$ and let $a(n)$ be a dyadic scaling factor. Starting from \eqref{e:continuous_wavelet_detail}, the associated \textit{wavelet random matrices} will be denoted throughout this manuscript by
\begin{equation}\label{e:W(a(n)2^j)}
{\mathcal S}_{\geq 0}(p(n),\bbR) \ni \W(a(n)2^j) := \frac{1}{n_{a,j}} \sum^{n_{a,j}}_{k=1} D(a(n)2^j,k)D(a(n)2^j,k)^*, \quad n_{a,j} := \frac{n}{a(n)2^j}.
\end{equation}
In \eqref{e:W(a(n)2^j)}, $n_{a,j}$ may be understood as the \textit{effective sample size}, namely, the number of wavelet transform vectors $D(a(n)2^j,\cdot)$ available at scale $a(n)2^j$. For notational convenience, we assume $n_{a,j} \in \bbN$ (cf.\ the discussion around \eqref{e:nj=n/2^j}, for the case of discrete-time measurements).

Likewise, starting from \eqref{e:DX(a(n)2^j,k)}, we can define the associated wavelet random matrix
\begin{equation}\label{e:WX(a(n)2^j)}
{\mathcal S}_{\geq 0}(p(n),\bbR)  \ni \W_X(a(n)2^j) := \frac{1}{n_{a,j}} \sum^{n_{a,j}}_{k=1} D_X(a(n)2^j,k)D_X(a(n)2^j,k)^*.
\end{equation}
Note that both \eqref{e:W(a(n)2^j)} and \eqref{e:WX(a(n)2^j)} are well defined a.s. From expressions \eqref{e:continuous_wavelet_detail} and \eqref{e:WX(a(n)2^j)}, we can conveniently recast the wavelet random matrix \eqref{e:W(a(n)2^j)} for the measurements \eqref{e:infinite_sample_continuous} in the form
\begin{equation}\label{e:wavelet_transform_mixed_cont}
    \W(a(n)2^j) = {\mathbf P}(n)\W_X(a(n)2^j) {\mathbf P}(n)^*.
\end{equation}

\subsection{Wavelet random matrices: discrete time}

Now consider the realistic situation where ${\mathcal T} = \{1,\hdots,n\}$ in \eqref{e:Y=PX}. Namely, suppose we observe the discrete-time, vector-valued stochastic process
\begin{equation}\label{e:finite_sample}
\{Y(k)\}_{k=1,\hdots,n} \subseteq \bbR^p,
\end{equation}
associated with the starting scale $2^j = 1$ (or octave $j = 0$). In particular, $n \in \bbN$ in \eqref{e:finite_sample} denotes the sample size (cf.\ \eqref{e:n_in_N_wave_domain}). Starting from \eqref{e:finite_sample}, we suppose the wavelet transform vector $\widetilde{D}(2^j,k)$ of the high-dimensional process $Y$ stems from Mallat's pyramidal algorithm (Mallat \cite{mallat:1999}, chapter 7). It is given by the convolution
\begin{equation}\label{e:disc2}
\bbR^p \ni \widetilde{D}(2^j,k) := \sum_{\ell\in\bbZ} Y(\ell)h_{j,2^jk-\ell}, \quad j \in \bbN \cup \{0\},
\end{equation}
where we use the convention $Y(k) = 0$ for $k \notin \{1,\hdots,n\}$ and the filter terms are defined by
\begin{equation}\label{e:hj,l}
\bbR \ni h_{j,\ell} :=2^{-j/2}\int_\bbR\phi(t+\ell)\psi(2^{-j}t)dt, \quad \ell \in \bbZ.
\end{equation}
Due to the assumed compactness of the supports of $\psi$ and of the associated scaling function $\phi$ (see condition \eqref{e:supp_psi=compact}), only a finite number of filter terms is nonzero (Daubechies \cite{daubechies:1992}). It follows that \eqref{e:disc2} is well defined a.s. A more detailed description of Mallat's algorithm is provided in Section \ref{s:Mallats_algorithm}. For $n$, $j$, $a(n)$ and $n_{a,j}$ as in \eqref{e:continuous_wavelet_detail}, the associated \textit{wavelet random matrices} will be denoted throughout this manuscript by
\begin{equation}\label{e:W-tilde(a(n)2^j)}
{\mathcal S}_{\geq 0}(p(n),\bbR) \ni \widetilde{\W}(a(n)2^j) := \frac{1}{n_{a,j}} \sum^{n_{a,j}}_{k=1} \widetilde{D}(a(n)2^j,k)\widetilde{D}(a(n)2^j,k)^*.
\end{equation}
Likewise, let
\begin{equation}\label{e:discrete_wave_vec}
\bbR^{p(n)}\ni\widetilde{D}_X(a(n)2^j,k) := \sum_{\ell\in\bbZ} X(\ell)h_{j,2^jk-\ell}
\end{equation}
be the discrete-time wavelet transformation of the latent process $X$. As with \eqref{e:disc2}, \eqref{e:discrete_wave_vec} is well defined a.s.
Thus, we can naturally define the associated wavelet random matrix
\begin{equation*}
{\mathcal S}_{\geq 0}(p(n),\bbR)  \ni \widetilde{\W}_X(a(n)2^j) := \frac{1}{n_{a,j}} \sum^{n_{a,j}}_{k=1} \widetilde{D}_X(a(n)2^j,k)\widetilde{D}_X(a(n)2^j,k)^*
\end{equation*}
\begin{equation}\label{e:W-tilde_X(a(n)2^j)}
= \frac{1}{n_{a,j}}\widetilde{D}_X(a(n)2^j)\widetilde{D}_X(a(n)2^j)^*.
\end{equation}
In \eqref{e:W-tilde_X(a(n)2^j)}, each term $\widetilde{D}_X(a(n)2^j) \in {\mathcal M}(p(n),n_{a,j},\bbR)$ is a random matrix with $k$-th column given by \eqref{e:discrete_wave_vec}. From expressions \eqref{e:disc2} and \eqref{e:W-tilde_X(a(n)2^j)}, we can conveniently recast the wavelet random matrix \eqref{e:W-tilde(a(n)2^j)} for the measurements \eqref{e:finite_sample} in the form
\begin{equation}\label{e:wavelet_transform_mixed}
    \widetilde{\W}(a(n)2^j) = {\mathbf P}(n)\widetilde{\W}_X(a(n)2^j) {\mathbf P}(n)^*.
\end{equation}

\section{Main results} \label{s:main}

As anticipated in the Introduction, in this section we state and prove the main results, Theorems \ref{t:main_theorem} and \ref{t:main_theorem_discrete}, pertaining to continuous- and discrete-time measurements, respectively. For ease of understanding, the proofs of the theorems contain the bulk of the arguments. The auxiliary technical results can be found in the Appendices.

\subsection{Continuous time}\label{s:continuous_time}
In this section, we consider the framework in which measurements are made in continuous time (see \eqref{e:infinite_sample_continuous}). In particular, recall that, in this context, the wavelet transform is given by \eqref{e:continuous_wavelet_detail}.

Fix $j \in \bbN \cup \{0\}$ as in assumption ($A4$). For notational simplicity, write
\begin{equation}\label{e:Wn=W(a(n)2^j)}
{\mathbf W}_n = {\mathbf W}(a(n)2^j).
\end{equation}
In the first main result of this paper, we describe the asymptotic behavior of the rescaled log-eigenspectrum of the wavelet random matrix ${\mathbf W}_n$. So, to state the theorem, let
$$
\lambda_{1}\big({\mathbf W}_n\big) \leq \hdots \leq \lambda_{p(n)}\big({\mathbf W}_n\big)
$$
be the ordered eigenvalues of the wavelet random matrix ${\mathbf W}_n$. Also, let
\begin{equation}\label{e:empirical_specdist_log}
F_{{\mathbf W}_n}(d\upsilon) := \frac{1}{p}\sum_{\ell=1}^{p} \delta_{\Big\{\frac{\log \lambda_{\ell}(\mathbf{W}_n)}{\log a(n)} \leq \upsilon\Big\}}(d\upsilon)
\end{equation}
be the \textit{empirical spectral distribution} (e.s.d.) of the rescaled log-eigenvalues of ${\mathbf W}_n$. In Theorem \ref{t:main_theorem}, stated and proven next, we show that $F_{{\mathbf W}_n}(d\upsilon)$ converges weakly, in probability, to the distribution $\pi(dH)$ of the Hurst exponents up to the affine transformation $x\mapsto 2x+1$ of the latter.
\begin{theorem}\label{t:main_theorem}
Suppose assumptions ($A1-A5$) and ($W1-W3$) hold. Then, the e.s.d.\ $F_{{\mathbf W}_n}$ as in \eqref{e:empirical_specdist_log} converges weakly, in probability, to $F_{2\mathcal{H}+1}$. In other words, for any $g \in C_{b}(\bbR)$ and for any $\varepsilon > 0$,
\begin{equation}\label{e:main_theorem}
\lim_{n \rightarrow \infty}\bbP\Big(\Big|\int_{\bbR} g(\upsilon) F_{{\boldsymbol {\mathbf W}}_n}(d\upsilon) -\int_{\bbR} g(\upsilon) F_{2\mathcal{H}+1}(d\upsilon)\Big| > \varepsilon \Big)= 0.
\end{equation}
\end{theorem}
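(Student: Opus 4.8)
The plan is to reduce the statement about $F_{{\mathbf W}_n}$ to a statement about the latent wavelet random matrix $\W_X(a(n)2^j)$, and then to analyze the latter by exploiting the conditional self-similarity of rH-fBm. First I would use \eqref{e:wavelet_transform_mixed_cont}, namely $\W_n = {\mathbf P}(n)\W_X(a(n)2^j){\mathbf P}(n)^*$, together with assumption $(A5)$, to argue that the rescaled log-eigenvalues of $\W_n$ and of $\W_X(a(n)2^j)$ have the same weak limit. The key tool here is a Weyl-type / eigenvalue-interlacing or multiplicative singular-value inequality: $\sigma_1({\mathbf P})^2 \lambda_\ell(\W_X) \le \lambda_\ell(\W_n) \le \sigma_p({\mathbf P})^2 \lambda_\ell(\W_X)$, so that
\[
\frac{\log \lambda_\ell(\W_n)}{\log a(n)} = \frac{\log \lambda_\ell(\W_X(a(n)2^j))}{\log a(n)} + O\!\Big(\frac{\log \sigma_1({\mathbf P})}{\log a(n)}\Big) + O\!\Big(\frac{\log \sigma_p({\mathbf P})}{\log a(n)}\Big),
\]
and the two error terms vanish in probability by \eqref{e:constraints_on_coordinate_matrix}. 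Since $g$ is bounded and uniformly continuous, a uniform-in-$\ell$ perturbation of size $o_\bbP(1)$ in the argument changes $\int g\, dF_{\W_n}$ by $o_\bbP(1)$; this is a standard bounded-Lipschitz / portmanteau argument.

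Next I would analyze $\W_X(a(n)2^j)$ conditionally on $\mathds{H}_n = \mathbb{H}_n$. Conditionally, the rows of $X$ are independent fBms with Hurst exponents $H_1,\dots,H_p$, so the conditional self-similarity relation (the analogue of \eqref{e:cond_self-similarity} promised in the text) should give, in distribution, $D_X(a2^j,k) \stackrel{d}{=} a^{\mathbb{H}_n} D_X(2^j,k)$ up to lower-order terms, hence
\[
\W_X(a2^j) \stackrel{d}{\approx} a^{\mathbb{H}_n}\, \Big(\tfrac{1}{n_{a,j}}\sum_k D_X(2^j,k)D_X(2^j,k)^*\Big)\, a^{\mathbb{H}_n} = a^{\mathbb{H}_n}\, B_{n}\, a^{\mathbb{H}_n},
\]
where $B_n$ is a $p\times p$ random matrix whose entries are $O(1)$ (its diagonal concentrating around wavelet variances, off-diagonal around wavelet cross-covariances of independent fBms, which are zero in expectation). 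The point is that $a^{\mathbb{H}_n}$ is the diagonal matrix $\diag(a^{H_1},\dots,a^{H_p})$, so $a^{\mathbb{H}_n} B_n a^{\mathbb{H}_n}$ has entries of wildly different magnitudes $a^{H_i+H_j}$. I would then invoke an eigenvalue-localization result (to be proved as an appendix lemma — essentially a quantitative Gershgorin / Ky Fan argument, or a comparison with the diagonal): when the diagonal entries $a^{2H_i} (B_n)_{ii}$ are separated on the logarithmic scale by amounts growing like $\log a(n)$ while the off-diagonal entries are smaller by polynomial factors in $a(n)$, the eigenvalues of $a^{\mathbb{H}_n}B_n a^{\mathbb{H}_n}$ satisfy $\log \lambda_{(\ell)}(a^{\mathbb{H}_n}B_n a^{\mathbb{H}_n}) = (2 H_{(\ell)})\log a(n) + \log (B_n)_{\text{(something)}} + o(\log a(n))$, uniformly in $\ell$. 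Dividing by $\log a(n)$ yields $\log\lambda_\ell(\W_X)/\log a(n) \to 2H_{(\ell)} + 1$ appropriately (the $+1$ coming from the $a(n)2^j$ scale normalization / the extra factor in the wavelet variance scaling $a^{2H+1}$ rather than $a^{2H}$), so the e.s.d.\ converges to the empirical distribution of $\{2H_\ell+1\}$.

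Finally I would remove the conditioning and pass from the empirical distribution of $\{2H_\ell+1\}_{\ell\le p(n)}$ to its weak limit $F_{2\mathcal{H}+1}$. Since the $H_\ell$ are i.i.d.\ $\pi(dH)$ and $\pi$ is supported on finitely many points $\{\breve H_1,\dots,\breve H_r\}$, the empirical measure $\frac1p\sum_\ell \delta_{2H_\ell+1}$ converges to $F_{2\mathcal{H}+1}$ almost surely by the law of large numbers (the proportion of indices with $H_\ell = \breve H_i$ converges to $\pi(\{\breve H_i\})$); combining with the two $o_\bbP(1)$ reductions above and a union bound over the finitely many atoms gives \eqref{e:main_theorem}. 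The main obstacle, and the step requiring the most care, is the eigenvalue-localization argument: controlling the off-diagonal (cross-scale, cross-Hurst) wavelet covariances and showing they are negligible relative to the diagonal gaps requires the concentration of $B_n$ (a $p(n)\times p(n)$ matrix with $p(n)\to\infty$) and is precisely where the moderately-high-dimensional rate conditions \eqref{e:p(n),a(n)_conditions} — in particular $p(n)=o(\sqrt{n/a(n)})$ and $n/a(n)^{1+2\varpi}\to 0$ — must be used; all of this is presumably deferred to the Appendices, with the theorem's proof orchestrating the three reductions.
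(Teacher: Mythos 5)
Your overall architecture matches the paper's: reduce from $\mathbf{W}_n$ to the latent matrix via $(A5)$ and Weyl's product inequalities, pull out the scale dependence through the conditional self-similarity of the wavelet coefficients (the exact relation is $\{D_X(a(n)2^j,k)\}_k \stackrel{\textnormal{f.d.d.}}{=} \{a(n)^{\mathds{H}_n+(1/2)I}D_X(2^j,k)\}_k$ --- no lower-order terms, and the exponent $\mathds{H}_n+(1/2)I$ is where your ``$+1$'' comes from), and finish with the strong law of large numbers for the empirical measure of $\{2H_\ell+1\}$. You also correctly identify that the moderately high-dimensional condition $p(n)=o(\sqrt{n/a(n)})$ enters through the control of the fixed-scale $p\times p$ matrix. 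However, your key step --- the ``eigenvalue-localization'' lemma via a Gershgorin/diagonal-comparison argument --- rests on a false premise and would fail. The off-diagonal entries of $a^{\mathds{H}_n+(1/2)I}\mathbf{W}_X(2^j)\,a^{\mathds{H}_n+(1/2)I}$ are \emph{not} small relative to the diagonal gaps: the $(i,j)$ entry is $a^{H_i+H_j+1}(\mathbf{W}_X(2^j))_{ij}$ with $(\mathbf{W}_X(2^j))_{ij}=O_\bbP(n_{a,j}^{-1/2})=O_\bbP(\sqrt{a/n})$, so in a row with $H_i=\varpi$ the ratio of an off-diagonal entry to the diagonal entry is of order $a^{H_j-H_i}\sqrt{a/n}$, which \emph{diverges} under $(A4)$ (which forces $n\ll a^{1+2\varpi}$) whenever $H_j-H_i\ge\varpi$. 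The Gershgorin disks around the small-Hurst diagonal entries are therefore much larger than their centers, and a comparison-with-the-diagonal argument cannot locate the small eigenvalues.

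The paper avoids this entirely: two applications of Weyl's product inequality \eqref{e:Weyls_inequalities_products} together with the cyclic property \eqref{e:cyclic_property} give the exact sandwich
\begin{equation*}
\lambda_\ell\big(a(n)^{2\mathds{H}_n+I}\big)\,\lambda_1\big(\mathbf{W}_X(2^j)\big)\,\sigma_1^2\big(\mathbf{P}(n)\big)
\;\le\;\lambda_\ell\big(\breve{\mathbf{W}}(a(n)2^j)\big)\;\le\;
\lambda_\ell\big(a(n)^{2\mathds{H}_n+I}\big)\,\lambda_p\big(\mathbf{W}_X(2^j)\big)\,\sigma_p^2\big(\mathbf{P}(n)\big),
\end{equation*}
so that after taking logarithms and dividing by $\log a(n)$ the only nontrivial input is that $\log\lambda_1(\mathbf{W}_X(2^j))$ and $\log\lambda_p(\mathbf{W}_X(2^j))$ are $o_\bbP(\log a(n))$ (Lemma \ref{l:fixed_scale_log_limit}). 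The hard content is thus a \emph{two-sided bound on the extreme eigenvalues} of the fixed-scale matrix $\mathbf{W}_X(2^j)$ --- in particular a lower bound on its smallest eigenvalue, uniformly as $p(n)\to\infty$, proved via the sum-of-deformed-Wisharts analysis (Propositions \ref{p:Sum_of_deformed_wisharts}--\ref{p:k_sum_of_deformed_wisharts}, Haar-matrix arguments and Jiang's bound on truncated orthogonal matrices) --- and not a statement about off-diagonal negligibility. You would need to replace your localization lemma with an argument of this type (or an equivalent variational/min-max argument showing the rows of $D_X(2^j)$ are uniformly well-conditioned) for the proof to close. A final minor point: $g\in C_b(\bbR)$ need not be uniformly continuous, so your perturbation step should instead be run on the c.d.f.\ at continuity points of the (discrete) limit law, as in the Helly--Bray reduction.
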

\begin{proof}
Let
\begin{equation}\label{e:empirical_cdf_log}
F_{{\mathbf W}_n}(\upsilon) := \frac{1}{p}\sum_{\ell=1}^{p} \indicator_{\Big\{\frac{\log \lambda_{\ell}({\mathbf W}_n)}{\log a(n)} \leq \upsilon\Big\}}, \quad \upsilon \in \bbR,
\end{equation}
be the \textit{empirical cumulative distribution function} (e.c.d.f.) associated with $F_{{\mathbf W}_n}(d\upsilon)$ as in \eqref{e:empirical_specdist_log}. By a natural adaptation of the proof of the Helly-Bray theorem (Shiryaev \cite{shiryaev:2015}, pp.\ 377--378), it suffices to show that, for any point of continuity $\upsilon_0$ of $F_{2\mathcal{H}+1}$,
\begin{equation}\label{e:consistency_log(eigen)/log(scale)}
F_{{\mathbf W}_n}(\upsilon_0) \stackrel{\bbP}\rightarrow F_{2\mathcal{H}+1}(\upsilon_0), \quad n \rightarrow \infty.
\end{equation}

So, towards establishing \eqref{e:consistency_log(eigen)/log(scale)}, recall expressions \eqref{e:WX(a(n)2^j)} and \eqref{e:wavelet_transform_mixed_cont} for $\W_X(a(n)2^j)$ and $\W(a(n)2^j)$, respectively. Now define
\begin{equation}
\label{e:waveletmatrix}
{\mathcal S}_{\geq 0}(p(n),\bbR)  \ni \mathbf{W}_X(2^j) := \frac{1}{n_{a,j}} \sum^{n_{a,j}}_{k=1} D_X(2^j,k)D_X(2^j,k)^* =\frac{1}{n_{a,j}} D_X(2^j)D_X(2^j)^*,
\end{equation}
where $ D_X(2^j) \in {\mathcal M}(p(n),n_{a,j},\bbR)$ is a random matrix with $k$-th column given by the vector $D_X(2^j,k) \in \Reals^{p(n)}$
(\textbf{n.b.}: the wavelet random matrix $\mathbf{W}_X(2^j)$ involves $n_{a,j} = \frac{n}{a(n)2^j}$ terms, which depends on the scaling factor $a(n)$). Starting from the wavelet random matrix $\mathbf{W}_X(2^j)$ as in \eqref{e:waveletmatrix}, let
\begin{equation}\label{e:W_prime_matrix}
    \breve{\mathbf{W}}(a(n)2^j) := \mathbf{P}(n) a(n)^{ \mathds{H}_n+\ (1/2)I}\mathbf{W}_X(2^j) a(n)^{ \mathds{H}_n+\ (1/2)I} \mathbf{P}(n)^*,
\end{equation}
where $\mathds{H}_n$ is given by \eqref{e:mathds_hn}. Then, as a consequence of the random operator self-similarity relation \eqref{e:W(a(n)2^j)_self_similarity} (see Lemma \ref{l:W(a(n)2^j)_D(a(n)2^j,k)_selfsimilar}, $(ii)$), as well as of expressions \eqref{e:wavelet_transform_mixed_cont} and \eqref{e:W_prime_matrix},
\begin{equation}\label{e:lambda-ell_=d_sigma-ell_mixed}
\big\{  \lambda_{\ell}\big(\mathbf{W}(a(n)2^j)\big)\big\}_{\ell = 1,\hdots, p(n)}
 \stackrel{d}= \big\{  \lambda_{\ell}\big(\breve{\mathbf{W}}(a(n)2^j)\big)\big\}_{\ell = 1,\hdots, p(n)}.
\end{equation}
For convenience of notation, write $ \breve{\mathbf{W}}_n = \breve{\mathbf{W}}(a(n)2^j)$. So, consider the e.c.d.f.\ of the rescaled log-eigenvalues coming from the right-hand side of \eqref{e:lambda-ell_=d_sigma-ell_mixed}, i.e.,
\begin{equation}\label{e:F_{W_n'}(upsilon)}
F_{{\breve{\mathbf{W}}}_{n}}(\upsilon):= \frac{1}{p}\sum_{\ell=1}^{p} \indicator_{\Big\{\frac{\log \lambda_{\ell}(\breve{\mathbf{W}}_n)}{\log a(n)} \leq \upsilon \Big\}}, \quad \upsilon \in \bbR.
\end{equation}
Note that, as a consequence of Lemma \ref{l:fixed_scale_log_limit}, $(i)$, and of the fact that ${\mathbf P}(n) \in GL(p(n),\bbR)$ (assumption $(A3)$), \eqref{e:F_{W_n'}(upsilon)} is well defined a.s. Let $\upsilon_0$ be a continuity point of $F_{2\mathcal{H}+1}$ and suppose, for the moment, that
\begin{equation}\label{e:F_{W_n'}(upsilon_0)_conv}
F_{{\breve{\mathbf{W}}}_{n}}(\upsilon_0) \stackrel{\bbP} \to F_{2\mathcal{H}+1}(\upsilon_0), \quad n \to \infty.
\end{equation}
Also, consider $F_{{\mathbf W}_{n}}$ as defined in \eqref{e:empirical_cdf_log}. We obtain
\begin{equation}\label{e:F_Wn(upsilon0)=F_W'n(upsilon0)}
F_{{\mathbf W}_{n}}(\upsilon_0)\stackrel{d}= F_{{\breve{\mathbf{W}}}_{n}}(\upsilon_0) \stackrel{d} \to F_{2\mathcal{H}+1}(\upsilon_0), \quad n \rightarrow \infty,
\end{equation}
where equality in distribution in \eqref{e:F_Wn(upsilon0)=F_W'n(upsilon0)} is a consequence of \eqref{e:lambda-ell_=d_sigma-ell_mixed}. Since, in addition, $F_{2\mathcal{H}+1}(\upsilon_0)$ is a constant, then $F_{{\mathbf W}_{n}}(\upsilon_0) \stackrel{\bbP}\to  F_{2\mathcal{H}+1}(\upsilon_0)$ as $n\to\infty$. This shows \eqref{e:consistency_log(eigen)/log(scale)}.

So, we need to establish the convergence \eqref{e:F_{W_n'}(upsilon_0)_conv}. For this purpose, we first construct \textit{almost sure }bounds for the eigenvalues $\lambda_{\ell}(\breve{\mathbf{W}}(a(n)2^j))$ as in \eqref{e:lambda-ell_=d_sigma-ell_mixed}. 
We do this by utilizing Weyl's inequalities for matrix products \eqref{e:Weyls_inequalities_products}, and also by applying twice the cyclic property of the eigenvalues of symmetric positive semidefinite matrices \eqref{e:cyclic_property}. In fact, recall the notation \eqref{e:sigma1(M)=<...=<sigman(M)} for singular values. Starting from expression \eqref{e:W_prime_matrix}, first we obtain
$$
     \lambda_{\ell}\big(a(n)^{\mathds{H}_n+(1/2)I} \mathbf{W}_X(2^j) a(n)^{\mathds{H}_n+(1/2)I} \big) \cdot \sigma^2_{1} \big( \mathbf{P}(n)\big)  \leq \lambda_{\ell}\big(\breve{\mathbf{W}}(a(n)2^j)\big)
$$
\begin{equation}\label{e:lambda_ell(W-breve)_bounds}
\leq  \lambda_{\ell}\big(a(n)^{\mathds{H}_n+(1/2)I} \mathbf{W}_X(2^j) a(n)^{\mathds{H}_n+(1/2)I} \big) \cdot \sigma^2_{p} \big( \mathbf{P}(n)\big), \quad \quad \ell = 1,\hdots, p(n).
\end{equation}
Second, based on \eqref{e:lambda_ell(W-breve)_bounds}, we arrive at
$$
     \lambda_{\ell}\big(a(n)^{2\mathds{H}_n+I} \big) \cdot \lambda_{1} \big( \mathbf{W}_X(2^j)\big)\cdot \sigma^2_{1} \big( \mathbf{P}(n)\big)  \leq \lambda_{\ell}\big(\breve{\mathbf{W}}(a(n)2^j)\big)
$$
\begin{equation}
\label{e:eigenvaluebounds}
\leq \lambda_{\ell}\big(a(n)^{ 2\mathds{H}_n+I} \big) \cdot \lambda_{p} \big( \mathbf{W}_X(2^j)\big) \cdot \sigma^2_{p} \big( \mathbf{P}(n)\big),
\quad \quad \ell = 1,\hdots, p(n).
\end{equation}
Recall that, by Lemma \ref{l:fixed_scale_log_limit}, $(i)$, $\log \lambda_\ell(\mathbf{W}_X(2^j))$ is well defined a.s.\ for $\ell = 1,\hdots,p$. Therefore, from expression \eqref{e:eigenvaluebounds}, taking logs while dividing by $\log a(n)$ yields
\begin{equation}
\label{e:log_eigenvalue_bound1}
\lambda_{\ell}\big(2 \mathds{H}_n+ I \big) +  G_{L}(n) \leq \frac{\log \lambda_{\ell}\big(\breve{\mathbf{W}}(a(n)2^j)\big) }{\log a(n)}
\leq \lambda_\ell \big(2 \mathds{H}_n+I \big) +  G_{U}(n),
\end{equation}
for $\ell=1,\hdots,p$. In \eqref{e:log_eigenvalue_bound1}, we define
\begin{equation}\label{e:G_lower}
 G_{L}(n) := \frac{\log \lambda_{1} \big( \mathbf{W}_X(2^j)\big)  }{\log a(n)}  +\frac{2 \log \sigma_{1}\big(\mathbf{P}(n)\big) }{\log a(n)}
\end{equation}
and
\begin{equation} \label{e:G_upper}
       G_{U}(n) :=  \frac{\log \lambda_{p} \big( \mathbf{W}_X(2^j)\big)  }{\log a(n)} +\frac{2 \log \sigma_{p}\big(\mathbf{P}(n)\big) }{\log a(n)}.
\end{equation}
Now observe that by making use of expression \eqref{e:constraints_on_coordinate_matrix} (see assumption $(A5)$) and of Lemma \ref{l:fixed_scale_log_limit}, $(ii)$, it follows that
\begin{equation}
\label{e:G_limits}
   G_{L}(n) \stackrel{\bbP}\to  0 \quad \text{ and } \quad  G_{U}(n) \stackrel{\bbP}\to 0,
\end{equation}
as $n \rightarrow \infty$. Further note that
\begin{equation}
    \label{e:eigenvalues_are_samples}
     \lambda_{\ell }\big( 2\mathds{H}_n+ I \big)  = 2 \mathcal{H}_{(\ell)} + 1, \quad \ell = 1,\hdots,p(n)
\end{equation}
(cf.\ \eqref{e:x(1)=<...=<x(n)}), which stems from the fact that $\mathds{H}_n$ as in \eqref{e:mathds_hn} is a diagonal matrix.

So, we can now rewrite the bounds \eqref{e:log_eigenvalue_bound1} in terms of the limits \eqref{e:G_limits} and the equalities \eqref{e:eigenvalues_are_samples}. That is, for every $n$ and each $\ell = 1,\hdots, p(n)$, recast
\begin{equation}
\label{e:bounds_for_log_wavelet_eigenvalues}
   (2\mathcal{H}_{(\ell)}+1) + o_{\bbP}(1)_L  \leq \frac{\log \lambda_{\ell}\big(\breve{\mathbf{W}}(a(n)2^j) \big)}{\log a(n)}  \leq (2 \mathcal{H}_{(\ell)}+1) +  o_{\bbP}(1)_U
\end{equation}
(\textbf{n.b.}: the subscripts $L$ and $U$ are used simply to make a distinction between the vanishing terms in the lower and upper bounds, respectively). Now fix $\upsilon \in \bbR$ and consider the e.c.d.f.s associated with the middle, right and left terms in \eqref{e:bounds_for_log_wavelet_eigenvalues}, respectively. In other words, for $\upsilon \in \bbR$, consider $F_{{\mathbf W}_{n}}(\upsilon)$ as in \eqref{e:F_{W_n'}(upsilon)}, as well as
\begin{equation}\label{e:F_Ln(upsilon)_F_Un(upsilon)}
F_{L_n}(\upsilon):= \frac{1}{p}\sum_{\ell=1}^{p} \indicator_{\big\{(2\mathcal{H}_{(\ell)}+1) +  o_{\bbP}(1)_L \hspace{0.5mm} \leq \upsilon \big\}} \hspace{2mm}\textnormal{and}\hspace{2mm} F_{U_n}(\upsilon):= \frac{1}{p}\sum_{\ell=1}^{p} \indicator_{\big\{(2 \mathcal{H}_{(\ell)}+1) + o_{\bbP}(1)_U  \hspace{0.5mm}\leq \upsilon \big\}}.
\end{equation}
Then, for each $\upsilon \in \bbR$, the bounds \eqref{e:bounds_for_log_wavelet_eigenvalues} imply the (random) stochastic dominance relation
\begin{equation}\label{e:cdfbounds}
    F_{U_{n}}(\upsilon) \leq F_{\breve{\mathbf{W}}_{n}}(\upsilon) \leq F_{L_{n}}(\upsilon) \quad \textnormal{a.s.}
\end{equation}
Now let $\upsilon_0$ be a point of continuity of $F_{2\mathcal{H}+1}$ as in \eqref{e:F_{W_n'}(upsilon_0)_conv}. By setting $E_n = o_{\bbP}(1)_U$ and $E_n = o_{\bbP}(1)_L$ in Lemma \ref{l:difference_cdfs}, ($ii$), we obtain, respectively,
\begin{equation}\label{e:F_Un->F_2H+1_F_Ln->F_2H+1}
F_{U_n}(\upsilon_0) \stackrel{\bbP} \to F_{2\mathcal{H}+1}(\upsilon_0) \hspace{3mm}\textnormal{and} \hspace{3mm}F_{L_n}(\upsilon_0) \stackrel{\bbP} \to F_{2\mathcal{H}+1}(\upsilon_0), \quad  n \to \infty.
\end{equation}
As a consequence, from the bounds \eqref{e:cdfbounds} it follows that \eqref{e:F_{W_n'}(upsilon_0)_conv} holds, as claimed. $\Box$\\
\end{proof}

\begin{remark}\label{r:forces}
It is informative to revisit the proof of Theorem \ref{t:main_theorem} in light of some basic concepts from classical random matrix theory, pertaining to \textit{fixed}-scale behavior.

So, consider the simplest framework of a single Hurst mode (i.e., $r=1$ in \eqref{e:pi(dh)}). Now recall the fact that the wavelet transform quasi-decorrelates the original measurements in the sense that the wavelet coefficients display fast-decaying correlations (e.g., Flandrin \cite{flandrin:1992}, Bardet \cite{bardet:2002}). As a consequence of this property, over fixed scales the wavelet random matrix $n_{a,j} {\mathbf W}(a(n)2^j)$ can be \textit{approximated} by a classical Wishart matrix up to some constant $C_1 = C_1(p,a(n),j,n) > 0$. In other words, we can write $C_1 \cdot n_{a,j} {\mathbf W}(a(n)2^j) \stackrel{d}\approx {\mathbf M}^*{\mathbf M}$, where the entries of the random matrix ${\mathbf M} \in {\mathcal M}(n_{a,j},p,\bbR)$ are i.i.d.\ ${\mathcal N}(0,1)$ random variables. In turn, for some constant $C_2= C_2(p,n_{a,j}) > 0$, the joint density $f$ of the ordered eigenvalues of ${\mathbf M}^*{\mathbf M}$ satisfies
\begin{equation}\label{e:samplecov_joint_eigenval_density}
f(\lambda_1,\hdots,\lambda_p) = C_2 \cdot \indicator_{\{0 \leq \lambda_1 \leq \hdots \leq \lambda_p\}} \cdot \prod^{p}_{\ell=1}\lambda^{(n_{a,j}-p-1)/2}_\ell\cdot \exp\Big\{ - \sum^{p}_{\ell=1}\lambda_\ell/2 \Big\} \cdot \underbrace{\prod_{i < \ell} (\lambda_{\ell} - \lambda_i)}_{\textnormal{eigenvalue repulsion}}
\end{equation}
(e.g., Anderson \cite{anderson:2003}, p.\ 539). Based on the right-hand side of \eqref{e:samplecov_joint_eigenval_density}, we can interpret that, over \textit{fixed} scales, there is a ``force" based on which (wavelet) eigenvalues mutually repel (cf.\ Menon and Trogdon \cite{menon:trogdon:2023}, chapter 8, or Tao \cite{tao:2012}, pp.\ 48--49). These conclusions naturally extend to log-eigenvalues.

The situation is markedly different over \textit{large} scales. The proof of Theorem \ref{t:main_theorem} (cf.\ relation \eqref{e:bounds_for_log_wavelet_eigenvalues}) allows us to interpret that two distinct ``fractal forces" -- i.e., both stemming from scaling effects -- act on the rescaled wavelet log-eigenvalues. One ``force" is repulsive, the other one, attractive (see Figure \ref{fig:forces} for a schematic representation).

The repulsive force appears when $\pi(dH)$ displays more than one Hurst mode (i.e., $r>1$ in \eqref{e:pi(dh)}). It splits log-eigenvalues according to the different scaling laws trickling into the system over high dimensions. It mainly results from a combination of: $(i)$ the scaling property of wavelet random matrices (see \eqref{e:lambda-ell_=d_sigma-ell_mixed}); $(ii)$ the variational characterization of eigenvalues, which appears implicitly in \eqref{e:lambda_ell(W-breve)_bounds}; $(iii)$ the strong law of large numbers, which ensures that Hurst exponents are sampled asymptotically with the proportions given by the distribution $\pi$ (see \eqref{e:F_Un->F_2H+1_F_Ln->F_2H+1} and its implicit connection with \eqref{e:eigenvalueboundsSigma}).

On the other hand, the attractive force eventually overcomes fixed-scale repulsion (cf.\ \eqref{e:samplecov_joint_eigenval_density}) and causes rescaled wavelet log-eigenvalues to collapse around each Hurst mode. This force stems from the vanishing small-scale features of the system under rescaling (see \eqref{e:G_limits}).

As a consequence of the action of these two ``forces", in the three-way limit \eqref{e:p(n),a(n)_conditions} (in particular, in the \textit{large} scale limit), we are left with the distribution of Hurst (scaling) exponents in the rescaled logarithmic e.s.d.\ of wavelet random matrices.
\end{remark}


\subsection{Discrete time}

Vis-{\` a}-vis Section \ref{s:continuous_time}, in this section we consider the more realistic situation in which measurements are made in discrete time (see \eqref{e:finite_sample}). In particular, recall that, in this context, the wavelet transform is given by \eqref{e:disc2}.

The following theorem, Theorem \ref{t:main_theorem_discrete}, is the second main result of the paper. It is the discrete-time counterpart of Theorem \ref{t:main_theorem}. The fundamental conclusions in both theorems are analogous, as well as their proofs. However, the proof of Theorem \ref{t:main_theorem_discrete} relies on developments for the continuous-time framework, since it involves controlling the difference between the spectra of wavelet random matrices for continuous- and discrete-time measurements (see Remark \ref{r:deviation_cont_discrete_time} for a short description).

Fix $j \in \bbN \cup \{0\}$ as in assumption ($A4$). As in Section \ref{s:continuous_time}, and, for notational simplicity, write
\begin{equation}\label{e:Wn=W(a(n)2^j)_discrete}
\widetilde{{\mathbf W}}_n = \widetilde{{\mathbf W}}(a(n)2^j).
\end{equation}
To state the theorem, let
$$
\lambda_{1}\big(\widetilde{{\mathbf W}}_n\big) \leq \hdots \leq \lambda_{p(n)}\big(\widetilde{{\mathbf W}}_n\big)
$$
be the ordered eigenvalues of the wavelet random matrix $\widetilde{{\mathbf W}}_n$. Also, let
\begin{equation}\label{e:empirical_specdist_log_discrete}
F_{\widetilde{{\mathbf W}}_n}(d\upsilon) := \frac{1}{p}\sum_{\ell=1}^{p} \delta_{\Big\{\frac{\log \lambda_{\ell}(\widetilde{{\mathbf W}}_n)}{\log a(n)} \leq \upsilon\Big\}}(d\upsilon)
\end{equation}
be the e.s.d.\ of the rescaled log-eigenvalues of $\widetilde{{\mathbf W}}_n$. In Theorem \ref{t:main_theorem_discrete}, stated and proven next, we show that $F_{\widetilde{{\mathbf W}}_n}(d\upsilon)$ converges weakly, in probability, to the distribution $\pi(dH)$ of the Hurst exponents up to an affine transformation of the latter.
\begin{theorem}\label{t:main_theorem_discrete}
Suppose assumptions ($A1-A5$) and ($W1-W3$) hold. Then, the e.s.d.\ $F_{{\widetilde{\mathbf W}}_n}$ as in \eqref{e:empirical_specdist_log_discrete} converges weakly, in probability, to $F_{2\mathcal{H}+1}$. In other words, for any $g \in C_{b}(\bbR)$ and for any $\varepsilon > 0$,
\begin{equation}\label{e:main_theorem_discrete}
\lim_{n \rightarrow \infty}\bbP\Big(\Big|\int_{\bbR} g(\upsilon) F_{{\boldsymbol {\widetilde{\mathbf W}}}_n}(d\upsilon) -\int_{\bbR} g(\upsilon) F_{2\mathcal{H}+1}(d\upsilon)\Big| > \varepsilon \Big)= 0.
\end{equation}
\end{theorem}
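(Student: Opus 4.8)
The plan is to mirror the proof of Theorem \ref{t:main_theorem}, replacing the continuous-time wavelet random matrix by the discrete-time one and absorbing the discrepancy between the two into the vanishing error terms. As in the continuous-time argument, by the Helly--Bray reduction it suffices to show that $F_{\widetilde{\mathbf W}_n}(\upsilon_0) \stackrel{\bbP}\to F_{2\mathcal H+1}(\upsilon_0)$ for every point of continuity $\upsilon_0$ of $F_{2\mathcal H+1}$. Using the factorization \eqref{e:wavelet_transform_mixed}, $\widetilde{\mathbf W}_n = {\mathbf P}(n)\widetilde{\mathbf W}_X(a(n)2^j){\mathbf P}(n)^*$, together with Weyl's inequalities for matrix products and the cyclic property of eigenvalues, reduces the problem to controlling, on the log scale divided by $\log a(n)$, the eigenvalues of $\widetilde{\mathbf W}_X(a(n)2^j)$; the contributions of $\sigma_1({\mathbf P}(n))$ and $\sigma_p({\mathbf P}(n))$ are handled by assumption $(A5)$ exactly as in the continuous-time case.

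The one genuinely new ingredient is a comparison between $\widetilde{\mathbf W}_X(a(n)2^j)$ and its continuous-time analogue $\mathbf W_X(a(n)2^j)$ (equivalently, between the discrete transform \eqref{e:discrete_wave_vec} produced by Mallat's pyramidal algorithm and the continuous transform \eqref{e:DX(a(n)2^j,k)}; cf.\ Remark \ref{r:deviation_cont_discrete_time}). The goal is to show that, with probability tending to one, $\big|\log \lambda_\ell(\widetilde{\mathbf W}_X(a(n)2^j)) - \log \lambda_\ell(\mathbf W_X(a(n)2^j))\big| = o(\log a(n))$ uniformly in $\ell$, so that after dividing by $\log a(n)$ the discrepancy disappears. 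Concretely I would: (i) write $\widetilde{\mathbf W}_X(a(n)2^j) = \mathbf W_X(a(n)2^j) + \mathbf E_n$, where $\mathbf E_n$ collects the initialization/boundary error of Mallat's algorithm and the effect of the truncation convention $Y(k)=0$ for $k\notin\{1,\dots,n\}$; (ii) bound $\|\mathbf E_n\|_{\op}$ in probability using the compact support of $\psi$ and $\phi$ (finitely many nonzero filter taps), the rH--fBm covariance structure, and the rate conditions in $(A4)$, in particular $p(n)=o(\sqrt{n/a(n)})$ and $n/a(n)^{1+2\varpi}\to 0$, which is precisely what keeps the relative discretization error subdominant at scale $a(n)2^j$; (iii) invoke Weyl's perturbation inequality $|\lambda_\ell(\widetilde{\mathbf W}_X) - \lambda_\ell(\mathbf W_X)| \le \|\mathbf E_n\|_{\op}$ together with the a.s.\ lower bound on $\lambda_1(\mathbf W_X(2^j))$ from Lemma \ref{l:fixed_scale_log_limit}, $(i)$, to convert the additive control into the desired logarithmic control.

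With this comparison in hand, the random operator self-similarity relation \eqref{e:W(a(n)2^j)_self_similarity} applied to $\mathbf W_X$ and the eigenvalue bounds \eqref{e:eigenvaluebounds}--\eqref{e:bounds_for_log_wavelet_eigenvalues} of the continuous-time proof yield, for each $\ell = 1,\dots,p(n)$,
\begin{equation*}
(2\mathcal H_{(\ell)}+1) + o_{\bbP}(1)_L \ \leq\ \frac{\log \lambda_\ell\big(\widetilde{\mathbf W}(a(n)2^j)\big)}{\log a(n)} \ \leq\ (2\mathcal H_{(\ell)}+1) + o_{\bbP}(1)_U,
\end{equation*}
where the error terms now additionally include the discretization contribution. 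One then concludes exactly as in Theorem \ref{t:main_theorem}: sandwich $F_{\widetilde{\mathbf W}_n}$ between the e.c.d.f.s $F_{U_n}$ and $F_{L_n}$ defined as in \eqref{e:F_Ln(upsilon)_F_Un(upsilon)}, apply Lemma \ref{l:difference_cdfs}, $(ii)$, with $E_n = o_{\bbP}(1)_U$ and $E_n = o_{\bbP}(1)_L$ to obtain $F_{U_n}(\upsilon_0), F_{L_n}(\upsilon_0) \stackrel{\bbP}\to F_{2\mathcal H+1}(\upsilon_0)$, and let $n\to\infty$.

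The main obstacle is step (ii): obtaining a probabilistic operator-norm bound on $\mathbf E_n$ that is uniform over the $p(n)\to\infty$ coordinates and decays fast enough relative to $\lambda_1(\mathbf W_X(2^j))$ and to $\log a(n)$. This is where the scaling limit $a(n)\to\infty$ and the precise divergence-rate trade-offs in $(A4)$ — notably $n/a(n)^{1+2\varpi}\to 0$, which quantifies how the low-frequency content of rH--fBm dominates the high-frequency discretization residue — must be used carefully; the boundary terms arising from the convention $Y(k)=0$ off $\{1,\dots,n\}$ require a separate but analogous estimate.
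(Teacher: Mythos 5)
Your overall architecture is the right one and matches the paper's: reduce to pointwise convergence of the e.c.d.f.\ at continuity points, use Weyl's product inequalities and the cyclic property together with assumption $(A5)$ to strip off ${\mathbf P}(n)$, and conclude with the sandwich argument and Lemma \ref{l:difference_cdfs}, $(ii)$. You also correctly isolate the one new ingredient, the comparison between the discrete- and continuous-time wavelet matrices. However, the way you propose to carry out that comparison in steps (ii)--(iii) has a genuine gap. You compare the \emph{unnormalized} matrices at scale $a(n)2^j$, writing $\widetilde{\mathbf W}_X(a(n)2^j) = {\mathbf W}_X(a(n)2^j) + {\mathbf E}_n$ and invoking $|\lambda_\ell(\widetilde{\mathbf W}_X) - \lambda_\ell({\mathbf W}_X)| \leq \|{\mathbf E}_n\|_{\textnormal{op}}$. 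The additive perturbation ${\mathbf E}_n$ contains cross terms of the form $[\widetilde D_X - D_X]D_X^*$, whose operator norm is governed by the \emph{largest} Hurst direction and is of order $p(n)\,a(n)^{\max_i \breve H_i + 1}$, whereas the smallest eigenvalue of ${\mathbf W}_X(a(n)2^j)$ is of order $a(n)^{2\varpi+1}$ with $\varpi = \min_i \breve H_i$. When $\pi(dH)$ has more than one mode, the ratio $p(n)\,a(n)^{\max_i\breve H_i - 2\varpi}$ need not vanish under $(A4)$, so the additive Weyl bound gives no control whatsoever on $\log\lambda_\ell(\widetilde{\mathbf W}_X(a(n)2^j))$ for the bottom eigenvalues: the lower bound $\lambda_\ell({\mathbf W}_X) - \|{\mathbf E}_n\|_{\textnormal{op}}$ is then negative and useless. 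An additive operator-norm comparison cannot simultaneously serve eigenvalues living on different power-law scales.

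The paper resolves this by normalizing \emph{before} comparing: it forms the auxiliary matrices $\widetilde{\mathbf B}_a(2^j) = a(n)^{-(\mathds{H}_n+(1/2)I)}\widetilde{\mathbf W}_X(a(n)2^j)\,a(n)^{-(\mathds{H}_n+(1/2)I)}$ as in \eqref{e:Btilde-nu} and its continuous-time analogue $\widehat{\mathbf B}_a(2^j)$ as in \eqref{e:B^(j)}, for which \emph{all} eigenvalues are bounded above and below by positive constants up to $o_{\bbP}(1)$. Lemma \ref{l:vecBtilde(2^j)_asympt} then shows $\bbE\|\widetilde{\mathbf B}_a(2^j) - \widehat{\mathbf B}_a(2^j)\|_{\textnormal{op}} \leq A\,p(n)a(n)^{-2\varpi} + B\,p(n)a(n)^{-\varpi} \to 0$ under \eqref{e:p(n),a(n)_conditions}, and only at that point is Weyl's inequality \eqref{e:Weyls_inequalities_vershynin} applied (Lemma \ref{l:fixed_scale_log_limit_discrete}); the decomposition $\widetilde{\mathbf W}_n = {\mathbf P}(n)a(n)^{\mathds{H}_n+(1/2)I}\widetilde{\mathbf B}_a(2^j)a(n)^{\mathds{H}_n+(1/2)I}{\mathbf P}(n)^*$ is an exact pathwise identity, so no delicate coupling of a distributional self-similarity relation with an additive error on the same probability space is needed. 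To repair your argument, replace your step (i) by this pre-conjugation with $a(n)^{-(\mathds{H}_n+(1/2)I)}$; the rest of your outline then goes through.
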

\begin{proof}
The proof follows a similar argument to the one for showing Theorem \ref{t:main_theorem}. Let
\begin{equation}\label{e:empirical_cdf_log_discrete}
F_{{\widetilde{\mathbf W}}_n}(\upsilon) := \frac{1}{p}\sum_{\ell=1}^{p} \indicator_{\Big\{\frac{\log \lambda_{\ell}(\widetilde{\mathbf{W}}_n)}{\log a(n)} \leq \upsilon\Big\}}, \quad \upsilon \in \bbR,
\end{equation}
be the e.c.d.f.\ associated with $F_{\widetilde{{\mathbf W}}_n}(d\upsilon)$ as in \eqref{e:empirical_specdist_log_discrete}. As in the proof of Theorem \ref{t:main_theorem}, it suffices to show that, for any point of continuity $\upsilon_0$ of $F_{2\mathcal{H}+1}$,
\begin{equation}\label{e:consistency_log(eigen)/log(scale)_discrete}
F_{{\widetilde{\mathbf W}}_n}(\upsilon_0) \stackrel{\bbP}\rightarrow F_{2\mathcal{H}+1}(\upsilon_0), \quad n \rightarrow \infty.
\end{equation}

So, recall relation \eqref{e:W-tilde_X(a(n)2^j)}. We define the \textit{auxiliary wavelet random matrix}
$$
{\mathcal S}_{\geq 0}(p(n),\bbR) \ni \widetilde{\mathbf{B}}_a(2^j) := a(n)^{-(\mathds{H}_n+(1/2)I) } \widetilde{{\mathbf W} }_X(a(n)2^j)
a(n)^{-(\mathds{H}_n+(1/2)I) }
$$
$$
= a(n)^{-(\mathds{H}_n+(1/2)I) }  \frac{1}{n_{a,j}}\sum^{n_{a,j}}_{k=1}\widetilde{D}_X(a(n)2^j,k)\widetilde{D}_X(a(n)2^j,k)^*
a(n)^{-(\mathds{H}_n+(1/2)I) }
$$
\begin{equation}\label{e:Btilde-nu}
= a(n)^{-(\mathds{H}_n+(1/2)I) }  \frac{1}{n_{a,j}}\widetilde{D}_X(a(n)2^j)\widetilde{D}_X(a(n)2^j)^*
a(n)^{-(\mathds{H}_n+(1/2)I) }.
\end{equation}
Now observe that, by \eqref{e:wavelet_transform_mixed}, we can express the discrete-time wavelet random matrix \eqref{e:Wn=W(a(n)2^j)_discrete} as
\begin{equation}\label{e:discrete_wavelet_in_distribution}
    \widetilde{\mathbf{W}}_n  =\mathbf{P}(n) a(n)^{\mathds{H}_n+(1/2)I}\widetilde{\mathbf{B}}_a(2^j) a(n)^{\mathds{H}_n+(1/2)I}\mathbf{P}(n)^*,
\end{equation}
where $\widetilde{\mathbf{B}}_a(2^j)$ is as in \eqref{e:Btilde-nu}.

As in the proof of Theorem \ref{t:main_theorem}, by Weyl's inequalities for matrix products \eqref{e:Weyls_inequalities_products} we obtain
$$
     \lambda_{\ell}\Big(a(n)^{\mathds{H}_n+(1/2)I}\widetilde{\mathbf{B}}_a(2^j) a(n)^{\mathds{H}_n+(1/2)I} \Big) \cdot \sigma^2_{1} \big( \mathbf{P}(n)\big)  \leq \lambda_{\ell}\big(\widetilde{\mathbf{W}}_n\big)
$$
$$
\leq  \lambda_{\ell}\Big(a(n)^{\mathds{H}_n+(1/2)I}\widetilde{\mathbf{B}}_a(2^j)  a(n)^{\mathds{H}_n+(1/2)I} \Big) \cdot \sigma^2_{p} \big( \mathbf{P}(n)\big), \quad \quad \ell = 1,\hdots, p(n).
$$
Then, an application of \eqref{e:cyclic_property} and \eqref{e:Weyls_inequalities_products} yields upper and lower bounds
$$
     \lambda_{\ell}\big(a(n)^{2\mathds{H}_n+I} \big) \cdot \lambda_{1} \big(\widetilde{\mathbf{B}}_a(2^j) \big)\cdot \sigma^2_{1} \big( \mathbf{P}(n)\big)  \leq \lambda_{\ell}\big(\widetilde{\mathbf{W}}_n \big)
$$
\begin{equation}
\label{e:eigenvaluebounds_discrete}
\leq \lambda_{\ell}\big(a(n)^{ 2\mathds{H}_n+I} \big) \cdot \lambda_{p} \big(\widetilde{\mathbf{B}}_a(2^j) \big) \cdot \sigma^2_{p} \big( \mathbf{P}(n)\big) , \quad \ell = 1,\hdots, p(n).
\end{equation}
By Lemma \ref{l:fixed_scale_log_limit_discrete}, $(i)$, $\log \lambda_\ell(\widetilde{\mathbf{B}}_a(2^j))$ is well defined a.s.\ for $\ell = 1,\hdots,p(n)$. Therefore, from expression \eqref{e:eigenvaluebounds_discrete}, taking logs while dividing by $\log a(n)$ yields
\begin{equation}
\label{e:log_eigenvalue_bound1_discrete}
\lambda_{\ell}\big( 2\mathds{H}_n+ I \big) +  \widetilde{G}_{L}(n) \leq \frac{\log \lambda_{\ell}\big( \widetilde{\mathbf{W}}_n \big) }{\log a(n)}
\leq  \lambda_\ell \big( 2\mathds{H}_n+I \big) +  \widetilde{G}_{U}(n),
\end{equation}
for $\ell=1,\hdots,p$. In \eqref{e:log_eigenvalue_bound1_discrete}, we define
\begin{equation*}
 \widetilde{G}_{L}(n) := \frac{\log  \lambda_{1} \big(\widetilde{\mathbf{B}}_a(2^j)\big)   }{\log a(n)} + \frac{2 \log \sigma_{1}\big(\mathbf{P}(n)\big) }{\log a(n)}, \quad \widetilde{G}_{U}(n) := \frac{\log  \lambda_{p} \big(\widetilde{\mathbf{B}}_a(2^j)\big) }{\log a(n)} + \frac{2 \log \sigma_{p}\big(\mathbf{P}(n)\big) }{\log a(n)}.
\end{equation*}
Moreover, by Lemma  \ref{l:fixed_scale_log_limit_discrete}, $(ii)$, and \eqref{e:constraints_on_coordinate_matrix} (see assumption ($A5$)),
$\widetilde{G}_{L}(n) \stackrel{\bbP}\to  0$ and $\widetilde{G}_{U}(n) \stackrel{\bbP}\to 0$, as $n \rightarrow \infty$. The remainder of the proof is identical to that of Theorem \ref{t:main_theorem}, \textit{mutatis mutandis}. This establishes \eqref{e:main_theorem_discrete}.
 $\Box$\\
\end{proof}

\begin{remark}\label{r:deviation_cont_discrete_time}
Proving Theorem \ref{t:main_theorem_discrete} involves developments for the continuous-time framework since it requires controlling the deviation of the auxiliary wavelet random matrix \eqref{e:Btilde-nu} from its continuous-time analogue \eqref{e:B^(j)}. In fact, this control in terms of the operator norm is provided in Lemma \ref{l:vecBtilde(2^j)_asympt}. In turn, this lemma is used in the proof of Lemma \ref{l:fixed_scale_log_limit_discrete}, $(ii)$, which the proof of Theorem \ref{t:main_theorem_discrete} relies on.
\end{remark}

\begin{figure}
    \centering
    \includegraphics[scale=0.5]{"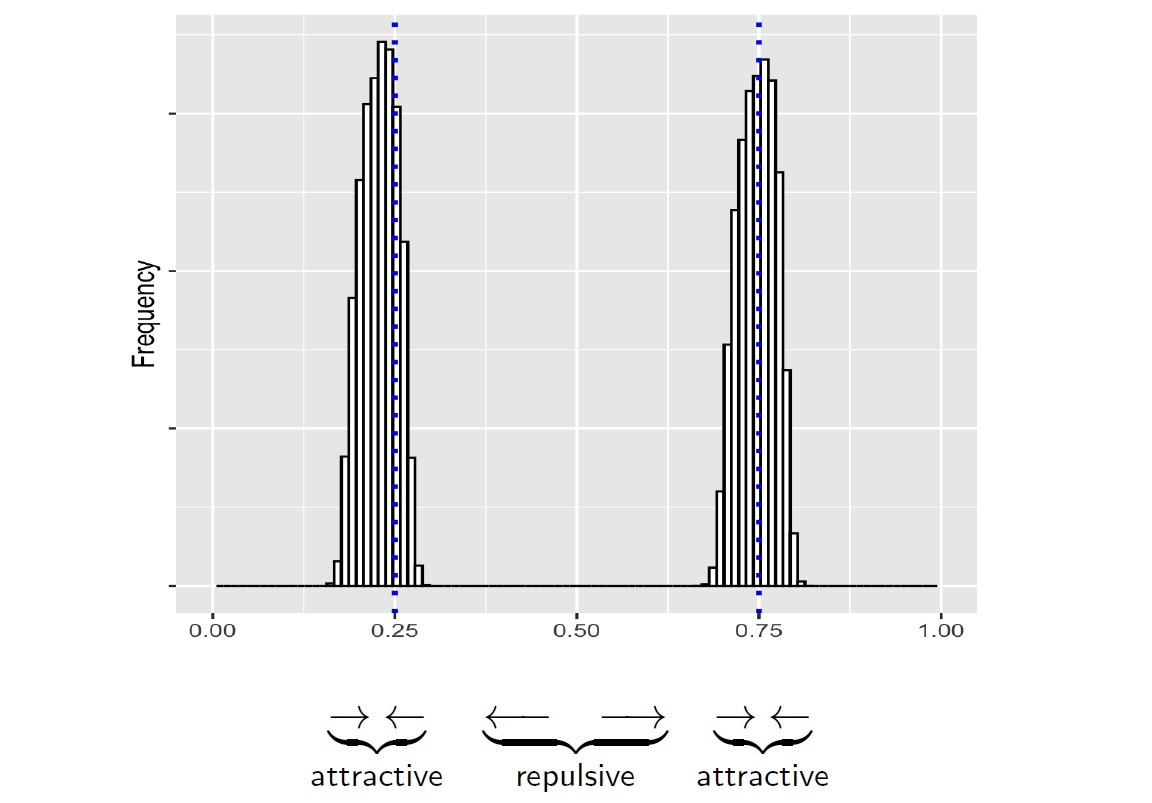"}
 \caption{ \textbf{Schematic representation of the ``forces" acting on the rescaled log-eigenvalues in the three-way limit \eqref{e:three-fold_lim}.  }  We can interpret that two distinct ``fractal forces" -- one repulsive, one attractive -- act on the rescaled wavelet log-eigenvalues. In the three-way limit \eqref{e:p(n),a(n)_conditions}, as a result of these forces only the large-scale features of the system remain. We are then left with the distribution of the Hurst exponents, as illustrated in Figure \ref{fig:histogram} (see Remark \ref{r:forces} for a discussion).   }
    \label{fig:forces}
\end{figure}

\section{Conclusion, discussion and open problems}\label{s:conclusion}

In this paper, we characterize the convergence of the (rescaled logarithmic) e.s.d.\ of wavelet random matrices. We assume a moderately high-dimensional framework where the sample size $n$, the dimension $p(n)$ and the scale $a(n)2^j$ go to infinity. We suppose the underlying measurement process is a random scrambling of a sample of size $n$ of a growing number $p(n)$ of fractional processes. Each of the latter processes is a fractional Brownian motion conditionally on a randomly chosen Hurst exponent. In our main results (Theorems \ref{t:main_theorem} and \ref{t:main_theorem_discrete}), we show that the (rescaled logarithmic) e.s.d.\ of the wavelet random matrices converges weakly, in probability, to the distribution of Hurst exponents.

This work leads to a number of open questions, which we now briefly discuss.

With a view to universality claims, it is of great interest to better understand the extent to which the main results of this paper hold while some assumptions get lifted.

The moderately high-dimensional condition
\begin{equation}\label{e:p=o(sqrt(n/a(n)))}
p(n) = o\Big(\sqrt{\frac{n}{a(n)}}\Big)
\end{equation}
(see \eqref{e:p(n),a(n)_conditions}) is crucially used only once in this paper, namely, in the convergence of the truncated Haar random matrix $\|U_{\textnormal{trunc}}\|_{\textnormal{op}} \stackrel{\bbP}\to 0$ (see \eqref{e:||U_trunc||=<||U_trunc,p||} and \eqref{e:||U_trunc,p||=oP(1)}). This convergence undergirds Proposition \ref{p:D_1_D_2_sum_bound} (see the bound \eqref{e:diagonal_bound}), which in turn is used in the construction of the fixed-scale bounds \eqref{e:eigenvalueboundsSigma} in Lemma \ref{l:fixed_scale_log_limit} and related auxiliary results. This leads to two open problems, namely, can the main conclusions of this paper be generalized from \eqref{e:p=o(sqrt(n/a(n)))} to $(i)$ the most general moderately high-dimensional framework $\lim_{n \rightarrow \infty} p(n)\cdot a(n)/n = 0$?; or $(ii)$ to the high-dimensional framework $\lim_{n \rightarrow \infty}p(n)\cdot a(n)/n = c > 0$?

Likewise, it is in Lemma \ref{l:fixed_scale_log_limit} (and related auxiliary results) that we use the assumptions of a discrete distribution $\pi(dH)$ and of the independence among rH-fBms making up the latent process in \eqref{e:Y=PX}. This leads to three other open problems, namely, can the main conclusions of this paper be generalized assuming $(iii)$ a general Borel measure $\pi(dH)$ on (0,1)?; or $(iv)$ a latent process $X$ with dependent rows? (here, preliminary simulation studies suggest a positive answer); or $(v)$ a non-Gaussian latent process?

The condition $N_{\psi}\geq 2$ (see assumption ($W1$)) is used in Lemma \ref{l:D(2^j,k)_is_stationary}, ($ii$). In turn, this lemma is used in Lemma \ref{l:fixed_scale_log_limit}, based on the bounds \eqref{e:gray_toeplitz_bound}. However, the case where
\begin{equation}\label{e:NPsi=1}
N_{\psi} = 1
\end{equation}
is still of interest because of the following reason. Heuristically, under the choice \eqref{e:NPsi=1}, the wavelet random matrix ${\boldsymbol{\mathcal W}}(a(n)2^j)$ behaves, approximately, like a \textit{multiresolution} sample covariance matrix (cf.\ Abry et al.\ \cite{abry:boniece:didier:wendt:2022}, Remark 3.4). This is so because taking the wavelet transform based on a Haar basis, for example -- for which \eqref{e:NPsi=1} holds --, roughly corresponds to a differencing procedure (e.g., Mallat \cite{mallat:1999}). Hence, whether or not the conclusions of Theorems \ref{t:main_theorem} and \ref{t:main_theorem_discrete} remain true for the case \eqref{e:NPsi=1} remains an open question.\vspace{2mm}

The research on the theory of wavelet random matrices is not only of mathematical interest. Rather, it naturally connects with a number of applications in the modeling of high-dimensional fractal systems, such as those emerging in neuroscience and fMRI imaging, network traffic, climate science, cointegration and high-dimensional time series. For those systems, statistically identifying Hurst multimodality from the available data demands the characterization of the fluctuations of the e.s.d.\ of wavelet random matrices, both in the bulk and around the edges (cf.\ Orejola et al.\ \cite{orejola:didier:wendt:abry:2022}). So far, though, this has only been done for measurements of the form of a high-dimensional signal-plus-noise model, with a fixed number of scaling exponents (Abry et al.\ \cite{abry:boniece:didier:wendt:2022,abry:boniece:didier:wendt:2023:regression}).

\appendix

\section{Wavelet framework}

\subsection{Assumptions on the wavelet multiresolution analysis}\label{s:assumptions_on_the_MRA}

In the main results of the paper, we make use of the following conditions on the underlying wavelet multiresolution analysis (MRA).

\medskip

\noindent {\sc Assumption $(W1)$}: $\psi \in L^2(\bbR)$ is a wavelet function, namely, it satisfies the relations
\begin{equation}\label{e:N_psi}
\int_{\bbR} \psi^2(t)dt = 1 , \quad \int_{\bbR} t^{p}\psi(t)dt = 0, \quad p = 0,1,\hdots, N_{\psi}-1, \quad \int_{\bbR} t^{N_{\psi}}\psi(t)dt \neq 0,
\end{equation}
for some integer (number of vanishing moments) $N_{\psi} \geq 2$.

\medskip

\noindent {\sc Assumption ($W2$)}: the scaling and wavelet functions
\begin{equation}\label{e:supp_psi=compact}
\textnormal{$\phi\in L^1(\bbR)$ and $\psi\in L^1(\bbR)$ are compactly supported }
\end{equation}
and $\widehat{\phi}(0)=1$.

\medskip

\noindent {\sc Assumption $(W3)$}: there exist positive constants $A <\infty$ and $\alpha > 1$ such that
\begin{equation}\label{e:psihat_is_slower_than_a_power_function}
\sup_{x \in \bbR} |\widehat{\psi}(x)| (1 + |x|)^{\alpha} < A.
\end{equation}

%

\medskip

Conditions \eqref{e:N_psi} and \eqref{e:supp_psi=compact} imply that $\widehat{\psi}(x)$ exists, is everywhere infinitely differentiable and its first $N_{\psi}-1$ derivatives are zero at $x = 0$. Condition \eqref{e:psihat_is_slower_than_a_power_function}, in turn, implies that $\psi$ is continuous (see Mallat \cite{mallat:1999}, Theorem 6.1) and, hence, bounded.

Note that assumptions ($W1-W3$) are closely related to the broad wavelet framework for the analysis of $\kappa$-th order ($\kappa \in \bbN \cup \{0\}$) stationary-increment stochastic processes laid out in Moulines et al.\ \cite{moulines:roueff:taqqu:2007:Fractals,moulines:roueff:taqqu:2007:JTSA,moulines:roueff:taqqu:2008} and Roueff and Taqqu~\cite{roueff:taqqu:2009}. The Daubechies scaling and wavelet functions generally satisfy ($W1-W3$) (see Moulines et al.\ \cite{moulines:roueff:taqqu:2008}, p.\ 1927, or Mallat \cite{mallat:1999}, p.\ 253). Usually, the parameter $\alpha$ increases to infinity as $N_{\psi}$ goes to infinity (see Moulines et al.\ \cite{moulines:roueff:taqqu:2008}, p.\ 1927, or Cohen \cite{cohen:2003}, Theorem 2.10.1). Also, under the orthogonality of the underlying wavelet and scaling function basis, ($W1-W3$) imply the so-called Strang-Fix condition (see Mallat \cite{mallat:1999}, Theorem 7.4, and Moulines et al.\ \cite{moulines:roueff:taqqu:2007:JTSA}, p.\ 159, condition (W-4)).

\subsection{Mallat's algorithm and discrete time measurements}\label{s:Mallats_algorithm}

Initially, suppose an infinite sequence of vector-valued measurements
\begin{equation}\label{e:infinite_sample_discrete}
\{Y(k)\}_{k \in \bbZ} \subseteq \bbR^p
\end{equation}
is available. Then, we can apply Mallat's algorithm to extract the so-named \textit{approximation} $(A(2^{j+1},\cdot))$ and \textit{detail} $(\widetilde{D}(2^{j+1},\cdot))$ coefficients at coarser scales $2^{j+1}$ by means of an iterative procedure. In fact, as commonly done in the wavelet literature, we initialize the algorithm with the process
\begin{equation}\label{e:Btilde}
\bbR^p \ni \widetilde{Y}(t) := \sum_{k \in \bbZ} Y(k)\phi(t-k), \quad  t\in \bbR.
\end{equation}
By the orthogonality of the shifted scaling functions $\{\phi(\cdot - k)\}_{k \in \bbZ}$,
\begin{equation}\label{e:a(0,k)}
\bbR^p \ni  A(2^0,k)= \int_\bbR \widetilde{Y}(t)\phi(t-k)dt= Y(k), \quad k \in \bbZ
\end{equation}
(see Stoev et al.\ \cite{stoev:pipiras:taqqu:2002}, proof of Lemma 6.1, or Moulines et al.\ \cite{moulines:roueff:taqqu:2007:JTSA}, p.\ 160; cf.\ Abry and Flandrin \cite{abry:flandrin:1994}, p.\ 33). In other words, the initial sequence, at octave $j = 0$, of approximation coefficients is given by the original sequence of random vectors. To obtain approximation and detail coefficients at coarser scales, we use Mallat's iterative procedure
\begin{equation}\label{e:Mallat}
A(2^{j+1},k) = \sum_{k'\in \mathbb{Z}} u_{k'-2k}A(2^j,k'),\quad \widetilde{D}(2^{j+1},k) =\sum_{k'\in \mathbb{Z}}v_{k'-2k} A(2^j,k'), \quad j \in \mathbb{N} \cup \{0\}, \quad k \in \mathbb{Z},
\end{equation}
where the (scalar) filter sequences $\{ u_k:=2^{-1/2}\int_\bbR \phi(t/2)\phi(t-k)dt \}_{k\in\bbZ}$, $\{v_k:=2^{-1/2}\int_\bbR\psi(t/2)\phi(t-k)dt\}_{k\in\bbZ}$ are called low- and high-pass MRA filters, respectively. Due to the assumed compactness of the supports of $\psi$ and of the associated scaling function $\phi$ (see condition \eqref{e:supp_psi=compact}), only a finite number of filter terms is nonzero, which is convenient for computational purposes (Daubechies \cite{daubechies:1992}). So, assume without loss of generality that $\text{supp}(\phi) = \text{supp}(\psi)=[0,T]$ (cf.\ Moulines et al.\ \cite{moulines:roueff:taqqu:2007:JTSA}, p.\ 160). Then, the wavelet (detail) coefficients $\widetilde{D}(2^j,k)$ of $Y$ can be expressed as \eqref{e:disc2}, where the filter terms are defined by \eqref{e:hj,l}.

If we replace \eqref{e:infinite_sample_discrete} with the realistic assumption that only a finite length series \eqref{e:finite_sample} is available, writing $\widetilde{Y}^{(n)}(t) := \sum_{k=1}^{n}Y(k)\phi(t-k)$, we have $\widetilde{Y}^{(n)}(t) = \widetilde{Y}(t)$ for all $t\in (T,n+1)$ (cf.\ Moulines et al.\ \cite{moulines:roueff:taqqu:2007:JTSA}). Noting that $\widetilde{D}(2^j,k) = \int_\bbR \widetilde Y(t) 2^{-j/2}\psi(2^{-j}t - k)dt$ and $\widetilde{D}^{(n)}(2^j,k) = \int_\bbR \widetilde Y^{(n)}(t)2^{-j/2}\psi(2^{-j}t - k)dt$, it follows that the finite-sample wavelet coefficients $\widetilde{D}^{(n)}(2^j,k)$ of $\widetilde{Y}^{(n)}(t) $ are equal to $\widetilde{D}(2^j,k)$ whenever $\textnormal{supp } \psi(2^{-j} \cdot - k) = (2^jk, 2^j (k+T)) \subseteq (T,n+1)$. In other words,
\begin{equation}\label{e:d^n=d}
\widetilde{D}^{(n)}(2^j,k) = \widetilde{D}(2^j,k),\qquad \forall (j,k)\in\{(j,k): 2^{-j}T\leq k\leq 2^{-j}(n+1)-T\}.
\end{equation}
Equivalently, such subset of finite-sample wavelet coefficients is not affected by the so-named \textit{border effect} (cf.\ Craigmile et al.\ \cite{craigmile:guttorp:Percival:2005}, Percival and Walden \cite{percival:walden:2006}). Moreover, by \eqref{e:d^n=d} the number of such coefficients at octave $j$ is given by $n_j = \lfloor 2^{-j}(n+1-T)-T\rfloor$. Hence, $n_j \sim 2^{-j}n$ for large $n$. Thus, for notational simplicity we suppose
 \begin{equation}\label{e:nj=n/2^j}
 n_{j} = \frac{n}{2^j} \in \bbN
 \end{equation}
 holds exactly and only work with wavelet coefficients unaffected by the border effect.

\section{Auxiliary results}\label{s:auxiliary}

In this section, we state and prove a number of lemmas and propositions used in the proofs of Theorems \ref{t:main_theorem} and \ref{t:main_theorem_discrete}. In several arguments, we make use of the so-called \textit{Weyl's inequalities} for both sums and products of Hermitian matrices. In particular, for $\mathbf{A}, \mathbf{B} \in \mathcal{M}(p,\bbR)$, and for $\ell = 1,...,p$, we have the following \textit{Weyl's inequalities} \begin{equation}\label{e:Weyls_inequalities_sum}
            \sigma_\ell(\mathbf{A}+\mathbf{B}) \leq \sigma_\ell(\mathbf{A})+\sigma_p(\mathbf{B})
        \end{equation}
and
\begin{equation}\label{e:Weyls_inequalities_products}
            \sigma_\ell(\mathbf{A})\cdot \sigma_1(\mathbf{B})\leq \sigma_\ell(\mathbf{A}\mathbf{B}) \leq \sigma_\ell(\mathbf{A})\cdot \sigma_p(\mathbf{B}).
        \end{equation}
Moreover, if $\mathbf{A}$ and $\mathbf{B}$ are Hermitian, then
        \begin{equation}\label{e:Weyls_inequalities_sum_symmetric}
            \lambda_\ell(\mathbf{A})+\lambda_1(\mathbf{B})\leq \lambda_\ell(\mathbf{A}+\mathbf{B}) \leq \lambda_\ell(\mathbf{A})+\lambda_p(\mathbf{B}), \quad \ell = 1,\hdots,p,
        \end{equation}
and
\begin{equation}\label{e:Weyls_inequalities_vershynin}
    \max_{1\leq \ell \leq p}| \lambda_\ell(\mathbf{A})- \lambda_\ell(\mathbf{B})| \leq \| \mathbf{A}-\mathbf{B}\|_{\textnormal{op}}.
\end{equation}
Proofs of the Weyl's inequalities \eqref{e:Weyls_inequalities_sum}--\eqref{e:Weyls_inequalities_vershynin} can be found in Gray \cite{gray:2006} and Vershynin \cite{vershynin:2018}.
In addition, we also make use of the cyclic property of Hermitian matrices, for $\ell = 1,...,p$,
\begin{equation}\label{e:cyclic_property}
    \lambda_\ell(\mathbf{A}\mathbf{B}) = \lambda_\ell(\mathbf{B}\mathbf{A}),
\end{equation}
which is a direct consequence of the fact that $\mathbf{A}\mathbf{B}$ and $\mathbf{B}\mathbf{A}$ have equal characteristic polynomials.

Throughout this and the next sections, further recall that we write $a=a(n)$ or $p=p(n)$ as in \eqref{e:a=a(n)_or_p=p(n)} whenever convenient.

\subsection{Continuous time}\label{s:Continuous time}

In this section, we provide some lemmas and propositions used in the proof of Theorem \ref{t:main_theorem}. First, we recap some notions associated with Toeplitz matrices.
\begin{definition}\label{d:Teoplitz}
A \textit{Toeplitz matrix} $T \in {\mathcal M}(n)$ is a matrix of the form
$$
T =
    \begin{bmatrix}
        \tau(0) & \tau(-1) &\tau(-2) & \hdots & \tau(-(n-1))\\
        \tau(1) & \tau(0) & \tau(-1) \\
        \tau(2) & \tau(1) & \tau(0) & \\
        \vdots & & & \ddots \\
        \tau(n-1) & & &  \hdots & \tau(0)
    \end{bmatrix}.
$$
We say a sequence of Toeplitz matrices $T_n = \big\{\tau(\ell_1-\ell_2)\big\}_{\ell_1,\ell_2=1,\hdots,n}$, $n \in \bbN$, is \textit{generated by an integrable
function} $f: [-\pi,\pi]\mapsto (0,\infty)$ when
\begin{equation}\label{e:toeplitz}
\tau(\ell) = \frac{1}{2\pi}\int_{-\pi}^{\pi}f(\lambda) e^{-i\ell\lambda}d\lambda, \quad \ell \in \bbZ.
\end{equation}
\end{definition}
Moreover, if $\{\tau(\ell)\}_{\ell \in \bbZ}$ is absolutely summable, then the sequence of Toeplitz matrices $\{T_n\}_{n \in \bbN}$ is said to be in the \textit{Wiener class}.

Let $\{\Sigma_n\}_{n \in \bbN}$ be a sequence of symmetric Toeplitz matrices in the Wiener class, generated by a function $f$. By Gray \cite{gray:2006}, Lemma 4.1, for every $n\in \mathbb{N}$ and $\ell = 1,...,n$, the eigenvalues of $\Sigma_n$ satisfy
\begin{equation}\label{e:gray_toeplitz_bound}
    m_{f} \leq \lambda_{\ell}(\Sigma_n) \leq M_{f},
\end{equation}
where
\begin{equation}\label{e:eigenvalue_bounds_toeplitz}
m_{f} := \textnormal{ess} \inf_{x\in [-\pi,\pi]} f(x)\quad \textnormal{and} \quad M_{f} := \textnormal{ess} \sup_{x\in [-\pi,\pi]} f(x).
\end{equation}\\

We are now in a position to state and prove Lemma \ref{l:fixed_scale_log_limit}. In this lemma, which is used in the proof of Theorem \ref{t:main_theorem}, we show that $\mathbf{W}_X(2^j)$ is a full rank matrix a.s. Consequently, $\log \lambda_\ell (\mathbf{W}_X(2^j))$ is well defined a.s.\ for $\ell = 1,...,p(n)$. In addition, we establish the limiting behavior of
$$
\log \lambda_\ell (\mathbf{W}_X(2^j))/\log a(n).
$$
To do so, we construct almost sure bounds on the eigenvalues of the fixed-scale wavelet random matrix $\mathbf{W}_X(2^j)$ as in \eqref{e:waveletmatrix}.

\begin{lemma}\label{l:fixed_scale_log_limit}
Suppose assumptions ($A1-A5$) and ($W1-W3$) hold. Let $\mathbf{W}_X(2^j)$ be the auxiliary wavelet random matrix as in \eqref{e:waveletmatrix}. Then, for $\ell = 1,...,p(n)$,
\begin{itemize}
\item [$(i)$]  $\log \lambda_\ell (\mathbf{W}_X(2^j))$ is well defined a.s.;
\item [$(ii)$] and
\begin{equation}
\label{e:fixed_scale_log_to_zero}
    \frac{\log \lambda_{\ell}(\mathbf{W}_X(2^j))}{\log a(n) }\stackrel{\bbP}\to 0, \quad n \rightarrow \infty.
\end{equation}
\end{itemize}
\end{lemma}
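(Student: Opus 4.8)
The plan is to establish $(i)$ and $(ii)$ together by sandwiching the eigenvalues of $\mathbf{W}_X(2^j)$ between two positive, $a(n)$-independent quantities (up to terms that grow subexponentially in $a(n)$), so that dividing $\log\lambda_\ell(\mathbf{W}_X(2^j))$ by $\log a(n)$ forces the ratio to $0$. Recall that $\mathbf{W}_X(2^j) = \frac{1}{n_{a,j}}D_X(2^j)D_X(2^j)^*$ with $D_X(2^j)\in{\mathcal M}(p(n),n_{a,j},\bbR)$, and note that $n_{a,j}=n/(a(n)2^j)\to\infty$ while $p(n)=o(\sqrt{n/a(n)})=o(\sqrt{n_{a,j}})$ by assumption $(A4)$. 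So the relevant regime is that of a $p\times n_{a,j}$ data matrix with $p/n_{a,j}\to 0$; this is exactly the moderately high-dimensional setting referenced in the introduction (Bai--Yin, Wang et al.), where the extreme singular values of a rescaled design matrix both converge to $1$.

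First I would condition on $\mathds{H}_n$ (equivalently, work on the event that $\mathds{H}_n=\mathbb{H}_n$ for a fixed deterministic diagonal instance, cf.\ \eqref{e:diagonal_H_=_instance}), so that $X$ becomes a genuine Gaussian process with independent rows, row $\ell$ being an $H_\ell$-fBm. The wavelet coefficients $\{D_X(2^j,k)\}_{k}$ of each row then form, by the vanishing-moment/decay assumptions ($W1$)--($W3$), a stationary Gaussian sequence whose spectral density is bounded above and below away from $0$ and $\infty$ on $[-\pi,\pi]$ — this is the content of the Toeplitz/Wiener-class machinery recalled just before the lemma (Gray's Lemma 4.1, via \eqref{e:gray_toeplitz_bound}--\eqref{e:eigenvalue_bounds_toeplitz}), and it uses $N_\psi\ge 2$ so that the fBm increments' long-range dependence is integrated away. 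Crucially, since $H_\ell$ ranges over the finite set $\supp\pi(dH)=\{\breve H_1,\dots,\breve H_r\}$, the spectral-density bounds $m_f,M_f$ can be taken \emph{uniform} over the rows. Writing $D_X(2^j) = \bSigma^{1/2}Z$ schematically, with $Z$ having i.i.d.\ ${\mathcal N}(0,1)$ entries and $\bSigma$ a block structure with uniformly bounded spectrum, Weyl's product inequalities \eqref{e:Weyls_inequalities_products} and the cyclic property \eqref{e:cyclic_property} give
\begin{equation}\label{e:eigenvalueboundsSigma}
m_f\,\lambda_1\Big(\tfrac{1}{n_{a,j}}ZZ^*\Big) \le \lambda_\ell(\mathbf{W}_X(2^j)) \le M_f\,\lambda_{p}\Big(\tfrac{1}{n_{a,j}}ZZ^*\Big), \quad \ell=1,\dots,p(n).
\end{equation}
(The actual correlation structure across shifts $k$ — not just within a row — is handled by the same Toeplitz argument applied blockwise; this is where Lemma \ref{l:D(2^j,k)_is_stationary} and the auxiliary results feeding Proposition \ref{p:D_1_D_2_sum_bound} enter, and it is the one place the hypothesis $p(n)=o(\sqrt{n/a(n)})$ is genuinely needed, to control a truncated Haar block $\|U_{\textnormal{trunc}}\|_{\op}\stackrel{\bbP}\to 0$.) Because $p/n_{a,j}\to 0$, the extreme eigenvalues $\lambda_1,\lambda_{p}$ of $\frac{1}{n_{a,j}}ZZ^*$ both converge in probability to $1$ (Bai--Yin-type law for the smallest and largest eigenvalues in the moderately high-dimensional regime), so $\lambda_\ell(\mathbf{W}_X(2^j))$ is, with probability tending to $1$, trapped strictly between $m_f/2$ and $2M_f$; in particular it is a.s.\ strictly positive, which gives $(i)$ (positivity a.s.\ holds unconditionally since $\{D_X(2^j,k)\}_k$ spans $\bbR^{p}$ a.s.\ once $n_{a,j}\ge p$, by nondegeneracy of the Gaussian law). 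Taking logs in \eqref{e:eigenvalueboundsSigma} and dividing by $\log a(n)\to\infty$ then yields $(ii)$: the left- and right-hand sides are $\log(\text{const}\cdot(1+o_{\bbP}(1)))/\log a(n)\stackrel{\bbP}\to 0$, and de-conditioning on $\mathds{H}_n$ preserves the convergence in probability since the bounds were uniform over the finitely many Hurst values.

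The main obstacle, and the step I would spend the most care on, is the passage from the marginal within-row stationarity to a genuine control of all $p(n)$ eigenvalues of the full $p\times n_{a,j}$ object $\mathbf{W}_X(2^j)$ — i.e.\ justifying \eqref{e:eigenvalueboundsSigma} with the across-shift correlations present and with $\bSigma^{1/2}$ replaced by the true (non-i.i.d., mixed-Hurst) covariance square root. This requires showing the smallest eigenvalue of the appropriately normalized wavelet design matrix does not degenerate as $p\to\infty$ with $p/n_{a,j}\to 0$, which is precisely where the quantitative moderately high-dimensional hypothesis is used and where the auxiliary Proposition \ref{p:D_1_D_2_sum_bound} and the truncated-Haar estimates do the heavy lifting; everything downstream of that is routine (Weyl, cyclicity, logarithm, de-conditioning).
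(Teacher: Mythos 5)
Your proposal follows essentially the same route as the paper: part $(i)$ via almost sure linear independence of the Gaussian wavelet rows (the paper's Lemma \ref{l:gaussian_rv_are_LI}), and part $(ii)$ by trapping $\lambda_\ell(\mathbf{W}_X(2^j))$ between strictly positive constants plus $o_\bbP(1)$ terms -- using the Toeplitz spectral bounds \eqref{e:gray_toeplitz_bound}, the convergence of the white-Wishart extremes under $p = o(\sqrt{n_{a,j}})$, and the deformed-Wishart-sum machinery -- and then dividing logarithms by $\log a(n)$. One caveat on your displayed sandwich: writing $D_X(2^j) = \bSigma^{1/2}Z$ and bounding $\lambda_\ell(\mathbf{W}_X(2^j))$ by scalar multiples $m_f\lambda_1(ZZ^*/n_{a,j})$ and $M_f\lambda_p(ZZ^*/n_{a,j})$ is literally correct only when $r=1$; for $r>1$ the Gram matrix is $\frac{1}{n_{a,j}}\sum_{i=1}^r \Gamma_{\breve H_i}\mathcal{Z}_i\mathcal{Z}_i^*\Gamma_{\breve H_i}$ with row-dependent, non-commuting deformations, and Weyl's product inequality cannot extract a single $\Gamma$ to produce the lower bound -- the $p$-th largest eigenvalue of such a sum could a priori degenerate if the ranges of the $r$ blocks nearly align. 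That degeneracy is ruled out precisely by the near-orthogonality of those ranges (the truncated-Haar estimate \eqref{e:||U_trunc,p||=oP(1)} under $p=o(\sqrt{n/a(n)})$, fed through Propositions \ref{p:D_1_D_2_sum_bound}, \ref{p:Sum_of_deformed_wisharts} and \ref{p:k_sum_of_deformed_wisharts}), which is indeed the crux you identify and correctly defer to; also note the difficulty is the across-row mixing of Hurst modes rather than the within-row shift correlations, which the Toeplitz bounds dispose of easily.
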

\begin{proof}
First, we show $(i)$. From expression \eqref{e:waveletmatrix}, $\mathbf{W}_X(2^j) \in {\mathcal S}_{\geq 0}(n,\bbR)$ a.s. Hence, $\lambda_1(\mathbf{W}_X(2^j))\geq 0$ a.s. Now recall that, by assumption ($A4$),
\begin{equation}\label{e:p=p(n)<n_a,j_WX}
p = p(n) < n_{a,j}.
\end{equation}
Since $\textnormal{rank}(\mathbf{W}_X(2^j)) = \textnormal{rank}(D_X(2^j))$, then in light of \eqref{e:p=p(n)<n_a,j_WX} it suffices to show that $\textnormal{rank}(D_X(2^j)) = p(n)$ a.s. Indeed, by expression \eqref{e:mixed_gaussian_representation}, $D_X(2^j)^* \stackrel{d}= \Big( \Gamma_{{\mathcal H}_1} {\mathbf Z}_1, \hdots,\Gamma_{{\mathcal H}_p} {\mathbf Z}_p \Big)$,
where $\bbR^{n_{a,j}} \supseteq {\mathbf Z}_1,\hdots,{\mathbf Z}_p \stackrel{\textnormal{i.i.d.}}\sim {\mathcal N}({\mathbf 0},I)$ and $\Gamma_{{\mathcal H}_\ell}$, $\ell = 1,\hdots,p$, are as in \eqref{e:Gamma-mathcal_H}. Consequently, for any Hurst (matrix) exponent $\mathbb{H}_n$ (see \eqref{e:diagonal_H_=_instance}), again by \eqref{e:p=p(n)<n_a,j_WX} and by Lemma \ref{l:gaussian_rv_are_LI},
$$
\bbP\big(\textnormal{dim}(\textnormal{span}\{\Gamma_{\mathcal{H}_1}\mathbf{Z}_1,\hdots,\Gamma_{\mathcal{H}_{p}}\mathbf{Z}_{p} \}) = p \big| \mathds{H}_n = \mathbb{H}_n\big) = 1.
$$
Since this holds for every $\mathbb{H}_n$, it follows that the rows of $D_X(2^j)$ are linearly independent a.s. Therefore, $\textnormal{rank}(D_X(2^j)) = p(n)$ a.s. This establishes $(i)$.

We now turn to $(ii)$. Recall the simple fact that, for any matrix $\mathbf{M}\in {\mathcal M}(p,n,\bbR)$ with $p=p(n)\leq n$,
\begin{equation}\label{e:matrix_transpose_eigen_trick}
    \lambda_\ell(\mathbf{M}\mathbf{M}^*) =\lambda_{n+\ell-p}(\mathbf{M}^*\mathbf{M}), \quad \ell = 1,\hdots, p.
\end{equation}
Therefore, by expression \eqref{e:waveletmatrix},
$$
\lambda_{\ell} \big( \mathbf{W}_X(2^j)\big) = \lambda_{\ell} \Big(\frac{1}{n_{a,j}} D_X(2^j)D_X(2^j)^*\Big) = \lambda_{n_{a,j}  - p + \ell} \Big(\frac{1}{n_{a,j}} D_X(2^j)^*D_X(2^j)\Big)
$$
\begin{equation}\label{e:eigenvalues_wavelete_decorelated}
 \stackrel{d}= \lambda_{n_{a,j}  - p + \ell} \Big(\frac{1}{n_{a,j}} \sum_{i=1}^r \Gamma_{\breve{H}_i}\mathcal{Z}_i\mathcal{Z}_i^*\Gamma_{\breve{H}_i}\Big), \quad \ell = 1,\hdots, p,
\end{equation}
where the equality in distribution follows from Lemma \ref{l:decorellation_sum_p}, $(iv)$, and the random matrices $\mathcal{Z}_i$, $i = 1,\hdots,r$, are defined in \eqref{e:decorrelation_sum_p_claim}.
  Consider the random variables $p_i = p_i(n)$, $i = 1,\hdots,r$, as in \eqref{e:p-tilde_i(n)}. By Kolmogorov's strong law of large numbers, $\lim_{n \rightarrow \infty}p_i(n)/p(n) = \bbP(\mathcal{H} =\breve{H}_i)$ a.s., for $i = 1,\hdots,r$. Thus, from assumption ($A4$) and Lemma \ref{l:D(2^j,k)_is_stationary}, ($ii$), an application of Proposition \ref{p:k_sum_of_deformed_wisharts} to the right-hand side of \eqref{e:eigenvalues_wavelete_decorelated} establishes the existence of strictly positive constants $m$ and $M$ such that, for $\ell = 1,\hdots, p$,
\begin{equation}\label{e:eigenvalueboundsSigma}
 m+o_\bbP(1) \leq \lambda_{n_{a,j}  - p + \ell} \Big(\frac{1}{n_{a,j}} \sum_{i=1}^r \Gamma_{\breve{H}_i}\mathcal{Z}_i\mathcal{Z}_i^*\Gamma_{\breve{H}_i}\Big) \leq M+o_\bbP(1).
\end{equation}
Note that the eigenvalues in \eqref{e:eigenvalueboundsSigma} are strictly positive a.s. Therefore, by Lemma \ref{l:sandwich_bounds_to_log_convergence}, again for each $\ell = 1 , \hdots, p$,
\begin{equation}\label{e:decorellated_log_convergence}
\frac{\log \lambda_{n_{a,j}  - p + \ell} \Big(\frac{1}{n_{a,j}} \sum_{i=1}^r \Gamma_{\breve{H}_i}\mathcal{Z}_i\mathcal{Z}_i^*\Gamma_{\breve{H}_i}\Big) }{\log a(n)} \stackrel{\bbP}\to 0.
\end{equation}
As a consequence of expressions \eqref{e:eigenvalues_wavelete_decorelated} and \eqref{e:decorellated_log_convergence}, the limit \eqref{e:fixed_scale_log_to_zero} follows. This completes the proof of $(ii)$. $\Box$\\
\end{proof}

Proposition \ref{p:k_sum_of_deformed_wisharts} is used in the proof of Lemma \ref{l:fixed_scale_log_limit}. However, Proposition \ref{p:Sum_of_deformed_wisharts} is needed in establishing Proposition \ref{p:k_sum_of_deformed_wisharts}. We state and prove these two propositions next.

\begin{proposition}\label{p:Sum_of_deformed_wisharts} Let $\{p_i \}_{n \in \bbN} = \{p_i(n)\}_{n \in \bbN} \subseteq \bbN \cup \{0\}$, $i=1,2$, be two sequences of random variables such that
\begin{equation}\label{e:moderately_high_condition}
 p_1+p_2=p \leq n \quad and  \quad p_1,\hspace{0.5mm} p_2 \stackrel{\textnormal{a.s.}}= o(\sqrt{n})
 \end{equation}
(\textbf{n.b.:} in \eqref{e:moderately_high_condition}, $p = p(n)$ is deterministic).
      For $i=1,2$, if $p_i \geq 1$ let $\mathcal{Z}_i \in \mathcal{M}(n,p_i,\bbR)$ be independent random matrices with independent, standard normal entries, and let $\mathcal{Z}_i = {\mathbf 0}$ if $p_i = 0$. Also, for $i=1,2$, let $\Sigma_i \in {\mathcal M}(n)$ be symmetric Toeplitz matrices generated by functions $f_i$ such that for $M_{f_i}$ and $m_{f_i}$ defined as in \eqref{e:eigenvalue_bounds_toeplitz}, $M_{f_i}\geq m_{f_i} > 0$. Let $\Gamma_i := \Sigma_\ell^{1/2},\ i = 1,2$.  Then, almost surely,
$$
    \lambda_{n-p+1} \Big(
\Gamma_1\frac{\mathcal{Z}_1 \mathcal{Z}_1^*}{n}\Gamma_1 + \Gamma_2\frac{\mathcal{Z}_2 \mathcal{Z}_2^*}{n}\Gamma_2 \Big)
$$
\begin{equation}\label{e:sum_wishart_lower_bound}
\geq \max\Big\{ \min\Big\{\frac{m_{f_2}m_{f_1}}{M_{f_2}},m_{f_2}\Big\} , \min\Big\{\frac{m_{f_2}m_{f_1}}{M_{f_1}},m_{f_1}\Big\} \Big\} + o_\bbP(1).
\end{equation}
\end{proposition}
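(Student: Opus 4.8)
The plan is to bound from below the smallest nonzero eigenvalue of
\[
\mathbf{M} := \Gamma_1\frac{\mathcal{Z}_1 \mathcal{Z}_1^*}{n}\Gamma_1 + \Gamma_2\frac{\mathcal{Z}_2 \mathcal{Z}_2^*}{n}\Gamma_2
\]
by exhibiting, via the Courant--Fischer min-max characterization, a $p$-dimensional subspace on which the associated quadratic form is uniformly bounded below by (essentially) $\min\{m_{f_1},m_{f_2}\}$ --- which, as noted at the end, already dominates the right-hand side of \eqref{e:sum_wishart_lower_bound}. Write $C_i := \textnormal{range}(\mathcal{Z}_i)$, so $\dim C_i = p_i$ a.s.; if $p_i = 0$ the $i$-th summand vanishes and one falls back on the single-term estimate below, so assume $p_1, p_2 \ge 1$. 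Since $\mathcal{Z}_1$ and $\mathcal{Z}_2$ are independent with absolutely continuous laws and $p_1 + p_2 = p \le n$, the subspace $V := \Gamma_1^{-1}C_1 + \Gamma_2^{-1}C_2$ has dimension $p$ a.s.\ (the same general-position argument as in Lemma~\ref{l:gaussian_rv_are_LI}), hence $\lambda_{n-p+1}(\mathbf{M}) \ge \min_{x\in V,\, \|x\|=1} x^*\mathbf{M}x$. For such an $x$, write $x = \Gamma_1^{-1}c_1 + \Gamma_2^{-1}c_2$, $c_i \in C_i$ (decomposition unique a.s.); since $c_i \in C_i = \textnormal{range}(\mathcal{Z}_i\mathcal{Z}_i^*)$, a Rayleigh-quotient bound on $\textnormal{range}(\mathcal{Z}_i\mathcal{Z}_i^*)$ gives
\[
x^*\mathbf{M}x = \tfrac1n\|\mathcal{Z}_1^*\Gamma_1 x\|^2 + \tfrac1n\|\mathcal{Z}_2^*\Gamma_2 x\|^2 \ge \lambda_1\!\Big(\tfrac{\mathcal{Z}_1^*\mathcal{Z}_1}{n}\Big)\|P_{C_1}\Gamma_1 x\|^2 + \lambda_1\!\Big(\tfrac{\mathcal{Z}_2^*\mathcal{Z}_2}{n}\Big)\|P_{C_2}\Gamma_2 x\|^2,
\]
$P_{C_i}$ the orthogonal projection onto $C_i$. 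Since $p_i = o(\sqrt n)$, the standard sample-covariance concentration $\|\mathcal{Z}_i^*\mathcal{Z}_i/n - I\|_{\op}\stackrel{\bbP}\to 0$ upgrades this to $x^*\mathbf{M}x \ge (1-o_{\bbP}(1))\big(\|P_{C_1}\Gamma_1 x\|^2 + \|P_{C_2}\Gamma_2 x\|^2\big)$.

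Next I control the two projected norms. From $\Gamma_1 x = c_1 + \Gamma_1\Gamma_2^{-1}c_2$ and $P_{C_1}c_1 = c_1$ one gets $\|P_{C_1}\Gamma_1 x\| \ge \|c_1\| - \|P_{C_1}\Gamma_1\Gamma_2^{-1}c_2\|$, and symmetrically for the second term. Because $\Gamma_1\Gamma_2^{-1}$ is fixed, $\Gamma_1\Gamma_2^{-1}c_2$ lies in the subspace $W := \Gamma_1\Gamma_2^{-1}C_2$, which depends only on $\mathcal{Z}_2$, so $\|P_{C_1}\Gamma_1\Gamma_2^{-1}c_2\| \le \|\Gamma_1\Gamma_2^{-1}\|_{\op}\,\|P_{C_1}P_{W}\|_{\op}\,\|c_2\|$, where $\|\Gamma_1\Gamma_2^{-1}\|_{\op} \le \sqrt{M_{f_1}/m_{f_2}}$ by \eqref{e:gray_toeplitz_bound}--\eqref{e:eigenvalue_bounds_toeplitz}, and $\|P_{C_1}P_{W}\|_{\op} = o_{\bbP}(1)$ is precisely the statement that projections onto two independent $o(\sqrt n)$-dimensional random subspaces are asymptotically orthogonal --- the truncated-Haar estimate underpinning Proposition~\ref{p:D_1_D_2_sum_bound} (cf.\ $\|U_{\textnormal{trunc}}\|_{\op}\stackrel{\bbP}\to 0$ and \eqref{e:||U_trunc,p||=oP(1)}). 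Hence $\|P_{C_1}\Gamma_1 x\| \ge \|c_1\| - o_{\bbP}(1)\|c_2\|$ and $\|P_{C_2}\Gamma_2 x\| \ge \|c_2\| - o_{\bbP}(1)\|c_1\|$; squaring, summing, and using $2\|c_1\|\|c_2\| \le \|c_1\|^2 + \|c_2\|^2$ (together with the observation that whenever one of these lower bounds is vacuous the corresponding $\|c_i\|$ is negligible compared with the other) yields $\|P_{C_1}\Gamma_1 x\|^2 + \|P_{C_2}\Gamma_2 x\|^2 \ge (1-o_{\bbP}(1))\big(\|c_1\|^2 + \|c_2\|^2\big)$.

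It remains to compare $\|c_1\|^2 + \|c_2\|^2$ with $\|x\|^2 = 1$. Expanding,
\[
1 = \|x\|^2 = c_1^*\Sigma_1^{-1}c_1 + c_2^*\Sigma_2^{-1}c_2 + 2\langle \Gamma_1^{-1}c_1,\, \Gamma_2^{-1}c_2\rangle \le \tfrac{1}{m_{f_1}}\|c_1\|^2 + \tfrac{1}{m_{f_2}}\|c_2\|^2 + o_{\bbP}(1)\|c_1\|\|c_2\|,
\]
where the first two terms use $\Sigma_i^{-1} \preceq m_{f_i}^{-1}I$ (from \eqref{e:gray_toeplitz_bound}) and the inner product $\langle c_1, \Gamma_1^{-1}\Gamma_2^{-1}c_2\rangle$ is $o_{\bbP}(1)\|c_1\|\|c_2\|$ by the same near-orthogonality ($\Gamma_1^{-1}\Gamma_2^{-1}c_2$ lies in a subspace independent of $\mathcal{Z}_1$). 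Thus $1 \le \big(\min\{m_{f_1},m_{f_2}\}^{-1} + o_{\bbP}(1)\big)\big(\|c_1\|^2 + \|c_2\|^2\big)$, i.e.\ $\|c_1\|^2 + \|c_2\|^2 \ge \min\{m_{f_1},m_{f_2}\} + o_{\bbP}(1)$. Chaining the three displays gives $\lambda_{n-p+1}(\mathbf{M}) \ge \min\{m_{f_1},m_{f_2}\} + o_{\bbP}(1)$. Since $\min\{m_{f_1}m_{f_2}/M_{f_2},\, m_{f_2}\} \le m_{f_2}$ and $\le m_{f_1}m_{f_2}/M_{f_2} \le m_{f_1}$ (as $m_{f_2}\le M_{f_2}$), and symmetrically for the other term, the right-hand side of \eqref{e:sum_wishart_lower_bound} is $\le \min\{m_{f_1},m_{f_2}\}$, so \eqref{e:sum_wishart_lower_bound} follows (indeed with the cleaner, symmetric lower bound $\min\{m_{f_1},m_{f_2}\}$). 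In the degenerate case where, say, $p_2 = 0$, one has $\mathbf{M} = \Gamma_1\mathcal{Z}_1\mathcal{Z}_1^*\Gamma_1/n$, and by the nonzero-spectrum identity \eqref{e:matrix_transpose_eigen_trick} together with $\Sigma_1 \succeq m_{f_1}I$, $\lambda_{n-p+1}(\mathbf{M}) = \lambda_1(\mathcal{Z}_1^*\Sigma_1\mathcal{Z}_1/n) \ge m_{f_1}\lambda_1(\mathcal{Z}_1^*\mathcal{Z}_1/n) = m_{f_1} + o_{\bbP}(1)$, again dominating the right-hand side of \eqref{e:sum_wishart_lower_bound}; the case $p_1 = 0$ is symmetric.

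The main obstacle is the pair of near-orthogonality estimates in the second and third paragraphs: one must show that the projection onto an $o(\sqrt n)$-dimensional uniformly random subspace, composed with the projection onto an independent $o(\sqrt n)$-dimensional subspace obtained after stripping off the fixed, bounded-condition-number factors $\Gamma_i^{\pm 1}$, has $o_{\bbP}(1)$ operator norm. This is exactly where the moderately-high-dimensional hypothesis \eqref{e:moderately_high_condition} is genuinely needed, via the truncated-Haar convergence; everything else is Courant--Fischer, Weyl's inequalities \eqref{e:Weyls_inequalities_products}, the identity \eqref{e:matrix_transpose_eigen_trick}, sample-covariance concentration, and the elementary inequality $2ab \le a^2 + b^2$. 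A secondary point needing care is the a.s.\ claim $\dim V = p$ --- equivalently that $\mathbf{M}$ has full rank $p$, so that $\lambda_{n-p+1}(\mathbf{M})$ is genuinely its smallest nonzero eigenvalue --- which rests on the absolute continuity of the Gaussian law and $p_1 + p_2 \le n$.
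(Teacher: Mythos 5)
Your proof is correct and reaches the conclusion by a genuinely different route than the paper's, ending with a sharper bound. The paper first uses Weyl's product inequalities to pull a factor $\lambda_1(\Sigma_i)$ out of the whole sum, reducing to $\mathcal{Z}_1\mathcal{Z}_1^*/n + \Gamma_1^{-1}\Gamma_2\mathcal{Z}_2\mathcal{Z}_2^*\Gamma_2\Gamma_1^{-1}/n$; it then diagonalizes both summands, uses rotational invariance together with Lemmas \ref{l:haar_basis} and \ref{l:conditional_haar} to replace the two eigenbases by a single Haar-distributed conjugation, and invokes Proposition \ref{p:D_1_D_2_sum_bound} (diagonal plus Haar-conjugated diagonal); taking the maximum over the two possible conjugations is what produces the asymmetric right-hand side of \eqref{e:sum_wishart_lower_bound}. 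You instead apply Courant--Fischer directly to the subspace $\Gamma_1^{-1}\textnormal{range}(\mathcal{Z}_1)+\Gamma_2^{-1}\textnormal{range}(\mathcal{Z}_2)$ and control the quadratic form through the decomposition $x=\Gamma_1^{-1}c_1+\Gamma_2^{-1}c_2$. The probabilistic engine is the same in both arguments --- the truncated-Haar estimate \eqref{e:||U_trunc,p||=oP(1)} under $p=o(\sqrt{n})$, which you use as near-orthogonality of two independent random subspaces after stripping the bounded-condition-number factors $\Gamma_i^{\pm1}$, plus the Wishart concentration of Proposition \ref{l:max_eigenvalue_of_white_wishart} and the full-rank statement from Lemma \ref{l:gaussian_rv_are_LI}. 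What your route buys is the symmetric bound $\min\{m_{f_1},m_{f_2}\}+o_\bbP(1)$, which, as you correctly verify, dominates the stated right-hand side; what the paper's route buys is modularity, since the conjugation step isolates all the randomness into Proposition \ref{p:D_1_D_2_sum_bound}, which is then reused verbatim in the $r$-fold induction of Proposition \ref{p:k_sum_of_deformed_wisharts}. Two points in your write-up deserve to be made explicit rather than implicit: $(a)$ every $o_\bbP(1)$ error term you introduce is an operator norm of a random matrix not depending on $x$, so the infimum over the unit sphere of $V$ preserves the bounds; and $(b)$ the invocations of Jiang's theorem and of the general-position lemma must be made conditionally on the random dimensions $(p_1,p_2)$ and then integrated out --- exactly as the paper does in Lemma \ref{l:orthogonal_spaces} and Proposition \ref{l:max_eigenvalue_of_white_wishart}. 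Neither is a gap, just a presentational tightening.
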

\begin{proof}
Observe that, by Weyl's inequalities \eqref{e:Weyls_inequalities_products}, we obtain the two bounds
  \begin{equation}\label{e:Weyl's_on_the_sum_1}
    \lambda_{n-p+1} \Big(
\Gamma_1\frac{\mathcal{Z}_1 \mathcal{Z}_1^*}{n} \Gamma_1+ \Gamma_2\frac{\mathcal{Z}_2 \mathcal{Z}_2^*}{n} \Gamma_2\Big) \geq \lambda_{1}(\Sigma_2) \cdot \lambda_{n-p+1} \Big(
\Gamma_2^{-1}\Gamma_1\frac{\mathcal{Z}_1 \mathcal{Z}_1^*}{n} \Gamma_1\Gamma_2^{-1}+\frac{\mathcal{Z}_2 \mathcal{Z}_2^*}{n}  \Big)
\end{equation}
and
\begin{equation}\label{e:Weyl's_on_the_sum}
     \lambda_{n-p+1} \Big(
\Gamma_1\frac{\mathcal{Z}_1 \mathcal{Z}_1^*}{n} \Gamma_1+ \Gamma_2\frac{\mathcal{Z}_2 \mathcal{Z}_2^*}{n} \Gamma_2\Big) \geq \lambda_{1}(\Sigma_1) \cdot \lambda_{n-p+1} \Big(
\frac{\mathcal{Z}_1 \mathcal{Z}_1^*}{n} +\Gamma_1^{-1}\Gamma_2\frac{\mathcal{Z}_2 \mathcal{Z}_2^*}{n} \Gamma_2\Gamma_1^{-1} \Big).
    \end{equation}
Note that the right-hand side of inequalities \eqref{e:Weyl's_on_the_sum_1} and \eqref{e:Weyl's_on_the_sum} are of identical form.  Moreover, the method by which we bound \eqref{e:Weyl's_on_the_sum} will analogously apply to \eqref{e:Weyl's_on_the_sum_1}. So, to establish \eqref{e:sum_wishart_lower_bound} we simply take the max of the two lower bounds. Therefore, without loss of generality we focus on \eqref{e:Weyl's_on_the_sum}. That is, we want to show that
\begin{equation}\label{e:Wishart_and_dirty_wishart}
    \lambda_{n-p+1} \Big(
\frac{\mathcal{Z}_1 \mathcal{Z}_1^*}{n} +\Gamma_1^{-1}\Gamma_2\frac{\mathcal{Z}_2\mathcal{Z}_2^*}{n} \Gamma_2\Gamma_1^{-1} \Big)\geq \min\Big\{\frac{m_{f_2}}{M_{f_1}},1\Big\} + o_\bbP(1) \quad \textnormal{a.s.}.
     \end{equation}
    To do so, consider the following decomposition of the two matrices in the sum,
\begin{equation}\label{e:diagonalization_substitution}
    \frac{\mathcal{Z}_1 \mathcal{Z}_1^*}{n} + \Gamma_1^{-1}\Gamma_2\frac{\mathcal{Z}_2 \mathcal{Z}_2^*}{n}\Gamma_2\Gamma_1^{-1}  = \mathbf{U}_1 \mathbf{D}_1 \mathbf{U}_1^* + \mathbf{U}_2 \mathbf{D}_2 \mathbf{U}_2^* \quad \textnormal{a.s.}
    \end{equation}
In \eqref{e:diagonalization_substitution}, $\mathbf{U}_1$, $\mathbf{U}_2 \in O(n)$ and $\mathbf{D}_1$, $\mathbf{D}_2$ are diagonal matrices satisfying
$$\lambda_\ell(\mathbf{D}_1) = \lambda_\ell\Big(\frac{\mathcal{Z}_1 \mathcal{Z}_1^*}{n} \Big) \quad \textnormal{and} \quad \lambda_\ell(\mathbf{D}_2) = \lambda_\ell\Big(\Gamma_1^{-1}\Gamma_2\frac{\mathcal{Z}_2 \mathcal{Z}_2}{n}\Gamma_2\Gamma_1^{-1} \Big) $$
for $\ell = 1,...,n$. Thus,
 $$
    \lambda_{n-p+1} \Big(
\frac{\mathcal{Z}_1 \mathcal{Z}_1^*}{n} + \Gamma_1^{-1}\Gamma_2\frac{\mathcal{Z}_2 \mathcal{Z}_2^*}{n}\Gamma_2\Gamma_1^{-1} \Big)=  \lambda_{n-p+1} ( \mathbf{D}_1  + \mathbf{U}_1^*\mathbf{U}_2 \mathbf{D}_2 \mathbf{U}_2^* \mathbf{U}_1 ) .
$$
Note that $\mathbf{U}_1$ and $\mathbf{U}_2$ are independent random matrices. Moreover, by Lemma \ref{l:haar_basis} we can assume that $\mathbf{U}_1 \sim \textnormal{Haar}(O(n))$. Hence, by Lemma \ref{l:conditional_haar}, $\mathbf{U}_1^*\mathbf{U}_2 = \mathbf{U} \sim \textnormal{Haar}(O(n))$. Thus,
\begin{equation}\label{e:decomposition_equality_indistribution}
\lambda_{n-p+1} ( \mathbf{D}_1  + \mathbf{U}_1^*\mathbf{U}_2 \mathbf{D}_2 \mathbf{U}_2^* \mathbf{U}_1 ) \stackrel{d}=\lambda_{n-p+1} ( \mathbf{D}_1  + \mathbf{U} \mathbf{D}_2 \mathbf{U}^* ).
\end{equation}
For now, suppose that, almost surely, the non-zero eigenvalues of $\mathbf{D}_1$ and $\mathbf{D}_2$ satisfy the boundedness relations
\begin{equation}\label{e:boundedness_relations_D_1}
1 + o_\mathbb{P}(1) \leq \lambda_{n-p_1+1}(\mathbf{D}_1) \leq \lambda_{n}(\mathbf{D}_1) \leq 1 + o_\mathbb{P}(1)
\end{equation}
and
\begin{equation}\label{e:boundedness_relations_D_2}
\frac{m_{f_2}}{M_{f_1}} + o_\mathbb{P}(1)\leq \lambda_{n-p_2+1}(\mathbf{D}_2)\leq \lambda_{n}(\mathbf{D}_2) \leq \frac{M_{f_2}}{m_{f_1}} + o_\mathbb{P}(1).
\end{equation}
Namely, $\mathbf{D}_1$ and $\mathbf{D}_2$ satisfy the boundedness relations \eqref{e:eigenvalues_lower_limit} and \eqref{e:eigenvalues_upper_limit}. Then, by Proposition \ref{p:D_1_D_2_sum_bound},
$$
\lambda_{n-p+1}(\mathbf{D}_1 + \mathbf{U}\mathbf{D}_2\mathbf{U}^*)  \geq \min\Big\{\frac{m_{f_2}}{M_{f_1}},1\Big\} + o_\mathbb{P}(1) \quad\textnormal{a.s.}
$$
Moreover, under conditions \eqref{e:moderately_high_condition}, by Lemma \ref{l:bounds_equal_in_distribution} and by the equality in distribution \eqref{e:decomposition_equality_indistribution}, we obtain
$$
  \lambda_{n-p+1} \Big(
\frac{\mathcal{Z}_1 \mathcal{Z}_1^*}{n} + \Gamma_1^{-1}\Gamma_2\frac{\mathcal{Z}_2 \mathcal{Z}_2^*}{n}\Gamma_1^{-1} \Gamma_2\Big)  \geq \min\Big\{\frac{m_{f_2}}{M_{f_1}},1\Big\} + o_\bbP(1).
$$
This establishes \eqref{e:sum_wishart_lower_bound}.

So, we now need to establish relations \eqref{e:boundedness_relations_D_1} and \eqref{e:boundedness_relations_D_2}. In fact, as a consequence of condition \eqref{e:moderately_high_condition}, by Proposition \ref{l:max_eigenvalue_of_white_wishart},
\begin{equation}\label{e:bai_yin}
\lambda_1\Big(\frac{\mathcal{Z}_1^*\mathcal{Z}_1}{n}\Big), \lambda_{p_1}\Big(\frac{\mathcal{Z}_1^*\mathcal{Z}_1}{n}\Big),
\lambda_1\Big(\frac{\mathcal{Z}_2^*\mathcal{Z}_2}{n}\Big), \lambda_{p_2}\Big(\frac{\mathcal{Z}_2^*\mathcal{Z}_2}{n}\Big) \stackrel{\bbP}\to 1.
\end{equation}
From relation \eqref{e:matrix_transpose_eigen_trick}, $\lambda_\ell(\mathcal{Z}_1^*\mathcal{Z}_1/n) = \lambda_{n-p_1+\ell}(\mathbf{D}_1)$ for $\ell=1,...,p_1$. It follows from expression \eqref{e:bai_yin} that $\mathbf{D}_1$ satisfies conditions \eqref{e:boundedness_relations_D_1}, as claimed.

Similarly, from relation \eqref{e:matrix_transpose_eigen_trick} and by Weyl's inequalites \eqref{e:Weyls_inequalities_products}, for $\ell =1,...,p_2$,
$$
\lambda_1(\Sigma_2)\cdot \lambda_1(\Sigma_1^{-1})\cdot\lambda_{1}\Big(\frac{\mathcal{Z}_2^* \mathcal{Z}_2}{n}\Big) = \lambda_1(\Sigma_2)\cdot \lambda_1(\Sigma_1^{-1})\cdot \lambda_{n-p_2+1}\Big(\frac{\mathcal{Z}_2\mathcal{Z}_2^*}{n}\Big) \leq \lambda_{n-p_2+\ell}\Big(\Gamma_1^{-1}\Gamma_2\frac{\mathcal{Z}_2 \mathcal{Z}_2^*}{n}\Gamma_2^*\Gamma_1^{*-1}\Big)
$$
$$
= \lambda_{n-p_2+\ell}(\mathbf{D}_2) \leq \lambda_n(\Sigma_2)\cdot \lambda_n(\Sigma_1^{-1})\cdot \lambda_{n-p_2+\ell}\Big(\frac{\mathcal{Z}_2\mathcal{Z}_2^*}{n}\Big) = \lambda_n(\Sigma_2)\cdot\lambda_{n}(\Sigma_1^{-1})\cdot\lambda_{p_2}\Big(\frac{\mathcal{Z}_2^* \mathcal{Z}_2}{n}\Big).
$$
Then, by the facts that $\lambda_1(\Sigma_1^{-1}) = \lambda_{n}(\Sigma_1)^{-1} $, $\lambda_n(\Sigma_1^{-1}) = \lambda_{1}(\Sigma_1)^{-1} $, and by expression \eqref{e:gray_toeplitz_bound}, we arrive at the bounds
\begin{equation}\label{e:d_2_bounds}
\frac{m_{f_2}}{M_{f_1}}\cdot \lambda_{1}\Big(\frac{\mathcal{Z}_2^* \mathcal{Z}_2}{n}\Big)\leq \lambda_{n-p_2+\ell}(\mathbf{D}_2) \leq \frac{M_{f_2}}{m_{f_1}}\cdot \lambda_{p_2}\Big(\frac{\mathcal{Z}_2^* \mathcal{Z}_2}{n}\Big),
\end{equation}
for $\ell =1,...,p_2$. Hence, from expression \eqref{e:bai_yin} and inequalities \eqref{e:d_2_bounds}, it follows that $\mathbf{D}_2$ also satisfies \eqref{e:boundedness_relations_D_2}, as claimed. $\Box$\\
\end{proof}

The following proposition is used in the proof of Lemma \ref{l:fixed_scale_log_limit}. It establishes upper and lower bounds on the eigenvalues appearing on the right-hand side of \eqref{e:eigenvalues_wavelete_decorelated}. Note that the lower bound \eqref{e:sum_bound_lower} is not sharp. Instead, it is one possible bound we can obtain based on the max bound in \eqref{e:sum_wishart_lower_bound}.

\begin{proposition}\label{p:k_sum_of_deformed_wisharts}
    Fix $r \in \Naturals$. For $i = 1,...,r$, let $p_i = p_i(n) \in \bbN \cup \{0\}$, $i=1,\hdots,r$, be a nondecreasing sequence of random variables such that
     \begin{equation}\label{e:moderatly_high_condition_sum}
      \sum_{i=1}^r p_i=p= o(\sqrt{n}) \quad \textnormal{and} \quad \frac{p_i}{p} \stackrel{\textnormal{a.s.}}\to c_i \in (0,1)
     \end{equation}
(\textbf{n.b.}: in \eqref{e:moderatly_high_condition_sum}, $p=p(n)$ is deterministic). Moreover, for $i = 1,\hdots,r$, let $\mathcal{Z}_i$ be independent random matrices as in Proposition \ref{p:Sum_of_deformed_wisharts}. In addition, for $i = 1,\hdots,r$, let $\Sigma_i \in {\mathcal M}(n)$ be symmetric Toeplitz matrices generated by a strictly positive function $f_i$ such that for $M_{f_i}$ and $m_{f_i}$ defined as in \eqref{e:eigenvalue_bounds_toeplitz}, $M_{f_i}\geq m_{f_i} > 0$. Also, define $\Gamma_i = \Sigma_i^{1/2}$.  Then,
    \begin{equation}\label{e:sum_bound_upper}
    \lambda_{n}\Big( \sum_{i=1}^r \Gamma_{i}\frac{\mathcal{Z}_i\mathcal{Z}_i^*}{n}\Gamma_{i}\Big)\leq \sum_{i=1}^r M_{f_i} + o_{\bbP}(1),
    \end{equation}
    whereas
    \begin{equation}\label{e:sum_bound_lower}
    \lambda_{n-p+1}\Big( \sum_{i=1}^r \Gamma_{i}\frac{\mathcal{Z}_i\mathcal{Z}_i^*}{n}\Gamma_{i}^*\Big)
\geq m_{f_r} \min\Big\{1,\frac{m_{f_{r-1}}}{M_{f_r}},\frac{m_{f_{r-1}}m_{f_{r-2}}}{M_{f_r}M_{f_{r-1}}},\hdots, \frac{\prod_{i=1}^{r-1}m_{f_i}}{\prod_{i=2}^{r}M_{f_i}}  \Big\} + o_\bbP(1).
    \end{equation}
\end{proposition}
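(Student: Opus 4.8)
The plan is to prove the two bounds in turn, establishing the upper bound \eqref{e:sum_bound_upper} first and then invoking it inside an induction on $r$ for the lower bound \eqref{e:sum_bound_lower}. For \eqref{e:sum_bound_upper}, I would split off the largest eigenvalue by the Weyl sum inequality \eqref{e:Weyls_inequalities_sum_symmetric}, so that $\lambda_{n}\big(\sum_{i=1}^{r}\Gamma_{i}\tfrac{\mathcal{Z}_i\mathcal{Z}_i^*}{n}\Gamma_{i}\big)\le\sum_{i=1}^{r}\lambda_{n}\big(\Gamma_{i}\tfrac{\mathcal{Z}_i\mathcal{Z}_i^*}{n}\Gamma_{i}\big)$, and then bound each summand by the Weyl product inequality \eqref{e:Weyls_inequalities_products}, the cyclic property \eqref{e:cyclic_property} and the Toeplitz bound \eqref{e:gray_toeplitz_bound}, which yield $\lambda_{n}\big(\Gamma_{i}\tfrac{\mathcal{Z}_i\mathcal{Z}_i^*}{n}\Gamma_{i}\big)\le M_{f_i}\lambda_{n}\big(\tfrac{\mathcal{Z}_i\mathcal{Z}_i^*}{n}\big)=M_{f_i}\lambda_{p_i}\big(\tfrac{\mathcal{Z}_i^*\mathcal{Z}_i}{n}\big)$, the last equality by \eqref{e:matrix_transpose_eigen_trick}. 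Since $p_i=o(\sqrt{n})$ by \eqref{e:moderatly_high_condition_sum}, Proposition~\ref{l:max_eigenvalue_of_white_wishart} gives $\lambda_{p_i}\big(\tfrac{\mathcal{Z}_i^*\mathcal{Z}_i}{n}\big)\stackrel{\bbP}\to 1$, and summing over the finitely many $i$ produces \eqref{e:sum_bound_upper}.

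For \eqref{e:sum_bound_lower}, write $L_{r}$ for its right-hand side with the $o_{\bbP}(1)$ term deleted; an elementary computation gives $L_{1}=m_{f_1}$ together with the recursion $L_{r}=m_{f_r}\min\{1,\,L_{r-1}/M_{f_r}\}$. I would argue by induction on $r$, the case $r=1$ being immediate and the case $r=2$ reducing to the first term of the maximum in \eqref{e:sum_wishart_lower_bound}, i.e.\ to Proposition~\ref{p:Sum_of_deformed_wisharts}. For the inductive step, put $S_{r-1}:=\sum_{i=1}^{r-1}\Gamma_{i}\tfrac{\mathcal{Z}_i\mathcal{Z}_i^*}{n}\Gamma_{i}$ and factor $S_{r}=\Gamma_{r}\big(\tfrac{\mathcal{Z}_r\mathcal{Z}_r^*}{n}+\Gamma_{r}^{-1}S_{r-1}\Gamma_{r}^{-1}\big)\Gamma_{r}$; via \eqref{e:Weyls_inequalities_products}, \eqref{e:cyclic_property} and \eqref{e:gray_toeplitz_bound} this reduces the task to showing $\lambda_{n-p+1}\big(\tfrac{\mathcal{Z}_r\mathcal{Z}_r^*}{n}+\Gamma_{r}^{-1}S_{r-1}\Gamma_{r}^{-1}\big)\ge\min\{1,\,L_{r-1}/M_{f_r}\}+o_{\bbP}(1)$, since then $\lambda_{n-p+1}(S_{r})\ge m_{f_r}\min\{1,\,L_{r-1}/M_{f_r}\}+o_{\bbP}(1)=L_{r}+o_{\bbP}(1)$. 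Using the same three ingredients together with the inductive hypothesis and the already-proven \eqref{e:sum_bound_upper} applied to $S_{r-1}$, the nonzero eigenvalues of $\Gamma_{r}^{-1}S_{r-1}\Gamma_{r}^{-1}$ lie in $[\,L_{r-1}/M_{f_r}+o_{\bbP}(1),\,(\sum_{i<r}M_{f_i})/m_{f_r}+o_{\bbP}(1)\,]$, while by Proposition~\ref{l:max_eigenvalue_of_white_wishart} the nonzero eigenvalues of $\tfrac{\mathcal{Z}_r\mathcal{Z}_r^*}{n}$ concentrate at $1$.

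With these boundedness relations in hand, I would mimic the end of the proof of Proposition~\ref{p:Sum_of_deformed_wisharts}: diagonalize $\tfrac{\mathcal{Z}_r\mathcal{Z}_r^*}{n}=\mathbf{U}_2\mathbf{D}_2\mathbf{U}_2^*$, where by rotational invariance and Lemma~\ref{l:haar_basis} one may take $\mathbf{U}_2\sim\textnormal{Haar}(O(n))$ independent of $\mathbf{D}_2$ and of $S_{r-1}$, and diagonalize $\Gamma_{r}^{-1}S_{r-1}\Gamma_{r}^{-1}=\mathbf{U}_1\mathbf{D}_1\mathbf{U}_1^*$. Then $\lambda_{n-p+1}\big(\tfrac{\mathcal{Z}_r\mathcal{Z}_r^*}{n}+\Gamma_{r}^{-1}S_{r-1}\Gamma_{r}^{-1}\big)=\lambda_{n-p+1}\big(\mathbf{D}_1+\mathbf{U}_1^*\mathbf{U}_2\mathbf{D}_2\mathbf{U}_2^*\mathbf{U}_1\big)\stackrel{d}=\lambda_{n-p+1}\big(\mathbf{D}_1+\mathbf{U}\mathbf{D}_2\mathbf{U}^*\big)$ with $\mathbf{U}=\mathbf{U}_1^*\mathbf{U}_2\sim\textnormal{Haar}(O(n))$ independent of $(\mathbf{D}_1,\mathbf{D}_2)$ by Lemma~\ref{l:conditional_haar}. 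Proposition~\ref{p:D_1_D_2_sum_bound}, whose rank hypotheses hold because $p=o(\sqrt{n})$ by \eqref{e:moderatly_high_condition_sum}, then yields $\lambda_{n-p+1}\big(\mathbf{D}_1+\mathbf{U}\mathbf{D}_2\mathbf{U}^*\big)\ge\min\{1,\,L_{r-1}/M_{f_r}\}+o_{\bbP}(1)$ a.s., and Lemma~\ref{l:bounds_equal_in_distribution} carries this conclusion back along the distributional identity, closing the induction.

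The step I expect to be the main obstacle is the combinatorial bookkeeping of ranks and indices. One must verify that, almost surely, $S_{r-1}$ and $\Gamma_{r}^{-1}S_{r-1}\Gamma_{r}^{-1}$ each have exactly $p_1+\dots+p_{r-1}$ nonzero eigenvalues and $\tfrac{\mathcal{Z}_r\mathcal{Z}_r^*}{n}$ exactly $p_r$, so that the index $n-p+1$ in $\lambda_{n-p+1}(S_{r})$ is correctly aligned with the smallest eigenvalue of the sum that Proposition~\ref{p:D_1_D_2_sum_bound} controls; one must also check that the (random) dimension counts fed into Propositions~\ref{p:D_1_D_2_sum_bound} and~\ref{l:max_eigenvalue_of_white_wishart} satisfy the requisite $o(\sqrt{n})$-type growth at each stage. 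On top of this, the $o_{\bbP}(1)$ error terms have to be tracked carefully as they pass through the products, inverses and nested minima that build up the recursion for $L_{r}$, which is also why the bound \eqref{e:sum_bound_lower} is in general not sharp.
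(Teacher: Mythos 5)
Your proposal is correct and follows essentially the same route as the paper: the upper bound via Weyl's sum/product inequalities, the Toeplitz bounds \eqref{e:gray_toeplitz_bound} and Proposition \ref{l:max_eigenvalue_of_white_wishart}; the lower bound by induction (the paper writes out $r=3$ and notes the general case is a direct adaptation), conjugating the partial sum by $\Gamma_r^{-1}$, sandwiching its nonzero eigenvalues, and then applying the Haar-rotation reduction, Proposition \ref{p:D_1_D_2_sum_bound} and Lemma \ref{l:bounds_equal_in_distribution}. Your explicit recursion $L_r=m_{f_r}\min\{1,L_{r-1}/M_{f_r}\}$ reproduces exactly the nested minimum in \eqref{e:sum_bound_lower}, and the only cosmetic difference is that you reuse the already-proven \eqref{e:sum_bound_upper} to bound $\lambda_n(\Gamma_r^{-1}S_{r-1}\Gamma_r^{-1})$ where the paper re-derives that estimate directly.
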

\begin{proof}
We first establish \eqref{e:sum_bound_upper}. By Weyl's inequalities \eqref{e:Weyls_inequalities_products}-\eqref{e:Weyls_inequalities_sum_symmetric} as well as the bounds \eqref{e:gray_toeplitz_bound},
$$
\lambda_{n}\Big( \sum_{i=1}^r \Gamma_{i}\frac{\mathcal{Z}_i\mathcal{Z}_i^*}{n}\Gamma_{i}\Big)\leq  \sum_{i=1}^r \lambda_{n}\Big(\Gamma_{i}\frac{\mathcal{Z}_i\mathcal{Z}_i^*}{n}\Gamma_{i}\Big) \leq  \sum_{i=1}^r \lambda_{n}(\Sigma_i)\cdot \lambda_{n}\Big(\frac{\mathcal{Z}_i\mathcal{Z}_i^*}{n}\Big)\leq \sum_{i=1}^r M_{f_i}\cdot \lambda_{n}\Big(\frac{\mathcal{Z}_i\mathcal{Z}_i^*}{n}\Big).
$$
Then, under condition \eqref{e:moderatly_high_condition_sum}, by Proposition \ref{l:max_eigenvalue_of_white_wishart} we obtain \eqref{e:sum_bound_upper}.

In turn, to establish \eqref{e:sum_bound_lower}, we proceed in an inductive fashion. We only explicitly derive a bound for the case $r=3$, since the general case of $r \in \bbN$ can be established by a simple adaptation of the argument.

Let $\mathbf{U}_2\mathbf{D}_2\mathbf{U}_2^* =\Gamma_3^{-1} \Big( \Gamma_2\frac{\mathcal{Z}_2 \mathcal{Z}_2^*}{n} \Gamma_2 + \Gamma_1\frac{\mathcal{Z}_1 \mathcal{Z}_1^*}{n} \Gamma_1\Big)\Gamma_3^{-1}$, where $\mathbf{U}_2\in O(n)$ and $\lambda_{\ell}(\mathbf{D}_2) = \lambda_{\ell}\Big(\Gamma_3^{-1} \Big( \Gamma_2\frac{\mathcal{Z}_2 \mathcal{Z}_2^*}{n} \Gamma_2 + \Gamma_1\frac{\mathcal{Z}_1 \mathcal{Z}_1^*}{n} \Gamma_1\Big)\Gamma_3^{-1}\Big)$ for $\ell  =1,...,n$. Then,
$$
   \lambda_{n-(p_1+p_2)+1} (\mathbf{D}_2)  \geq\lambda_1(\Sigma_3^{-1})\cdot \lambda_{n-(p_1+p_2)+1} \Big(\Gamma_2\frac{\mathcal{Z}_2 \mathcal{Z}_2^*}{n} \Gamma_2 + \Gamma_1\frac{\mathcal{Z}_1 \mathcal{Z}_1^*}{n} \Gamma_1\Big)
$$
\begin{equation}\label{e:conjugated_bound_lower}
  \geq \frac{m_{f_2}}{M_{f_3}}\min\Big\{\frac{m_{f_1}}{M_{f_2}},1\Big\} + o_\bbP(1)\quad \textnormal{a.s.}
\end{equation}
where the first inequality is a consequence of Weyl's inequalities \eqref{e:Weyls_inequalities_products}, and the second follows from Proposition \ref{p:Sum_of_deformed_wisharts}, and expression \eqref{e:gray_toeplitz_bound}.
Moreover,
\begin{equation}\label{e:conjugated_bound_upper}
   \lambda_n(\mathbf{D}_2) \leq \frac{M_{f_1}}{m_{f_3}}\lambda_n \Big(\frac{\mathcal{Z}_1 \mathcal{Z}_1^*}{n} \Big) +  \frac{M_{f_2}}{m_{f_3}} \lambda_n \Big(\frac{\mathcal{Z}_2 \mathcal{Z}_2^*}{n} \Big) \leq \frac{M_{f_1}}{m_{f_3}} +  \frac{M_{f_2}}{m_{f_3}} + o_\bbP(1) \quad \textnormal{a.s.},
\end{equation}
where the first inequality follows from  Weyl's inequalities \eqref{e:Weyls_inequalities_products}-\eqref{e:Weyls_inequalities_sum_symmetric} and bounds \eqref{e:gray_toeplitz_bound}, and the second inequaility is a consequence of condition \eqref{e:moderatly_high_condition_sum} and Proposition \ref{l:max_eigenvalue_of_white_wishart}. Hence, $\mathbf{D}_2$ satisfies the bounds \eqref{e:eigenvalues_lower_limit} and \eqref{e:eigenvalues_upper_limit}. In addition, by Lemma \ref{l:haar_basis}, we can recast $\frac{\mathcal{Z}_3\mathcal{Z}_3^*}{n} \stackrel{d}= \mathbf{U}_1 \mathbf{D}_1\mathbf{U}_1^*$ where $\mathbf{U}_1\sim \textnormal{Haar}(O(n))$ and $\lambda_\ell(\mathbf{D}_1) = \lambda_\ell\big(\frac{\mathcal{Z}_3\mathcal{Z}_3^*}{n}\big)$ for $\ell = 1,...,n$. Then, as in the proof of Proposition \ref{p:Sum_of_deformed_wisharts}, by Proposition \ref{p:D_1_D_2_sum_bound} and Lemma \ref{l:bounds_equal_in_distribution},  it follows that
$$
\lambda_{n-(p_1+p_2+p_3)+1} \Big( \frac{\mathcal{Z}_3\mathcal{Z}_3^*}{n} + \Gamma_3^{-1} \Big(  \Gamma_2\frac{\mathcal{Z}_2 \mathcal{Z}_2^*}{n} \Gamma_2 + \Gamma_1\frac{\mathcal{Z}_1 \mathcal{Z}_1^*}{n} \Gamma_1\Big)\Gamma_3^{-1}\Big)
$$
$$
\geq \min\Big\{1, \frac{m_{f_2}}{M_{f_3}}\min\Big\{\frac{m_{f_1}}{M_{f_2}},1\Big\}\Big\}+ o_\bbP(1) = \min\Big\{1, \frac{m_{f_2}m_{f_1}}{M_{f_3}M_{f_2}},  \frac{m_{f_2}}{M_{f_3}} \Big\}+ o_\bbP(1)\quad \textnormal{a.s.}
$$
Therefore,
$$
\lambda_{n-(p_1+p_2+p_3)+1} \Big( \Gamma_3\frac{\mathcal{Z}_3\mathcal{Z}_3^*}{n}\Gamma_3 + \Gamma_2\frac{\mathcal{Z}_2 \mathcal{Z}_2^*}{n} \Gamma_2 + \Gamma_1\frac{\mathcal{Z}_1 \mathcal{Z}_1^*}{n} \Gamma_1\Big)
$$
$$
\geq m_{f_3} \min\Big\{1, \frac{m_{f_2}m_{f_1}}{M_{f_3}M_{f_2}},  \frac{m_{f_2}}{M_{f_3}} \Big\}+ o_\bbP(1).
$$
This shows \eqref{e:sum_bound_lower}. $\Box$\\
\end{proof}

\subsection{Discrete time}\label{s:discrete time}


The following lemma is the discrete-time analogue of Lemma \ref{l:fixed_scale_log_limit}. The lemma is used in the proof of Theorem \ref{t:main_theorem_discrete}. In the lemma, we make use of the continuous-time analogue of the auxiliary wavelet random matrix \eqref{e:Btilde-nu}. So, assuming continuous-time measurements, let
$$
{\mathcal S}_{\geq 0}(p(n),\bbR) \ni \widehat{\mathbf{B}}_a(2^j) :=a(n)^{-(\mathds{H}_n+(1/2)I) } \frac{1}{n_{a,j}}\sum^{n_{a,j}}_{k=1}D_X(a(n)2^j,k)D_X(a(n)2^j,k)^*
 a(n)^{-(\mathds{H}_n+(1/2)I)}
$$
\begin{equation}\label{e:B^(j)}
\stackrel{d}=  \frac{1}{n_{a,j}}\sum^{n_{a,j}}_{k=1}D_X(2^j,k)D_X(2^j,k)^*  = {\mathbf W}_X(2^j),
\end{equation}
where the equality in distribution follows from Lemma \ref{l:W(a(n)2^j)_D(a(n)2^j,k)_selfsimilar}, $(ii)$ (\textbf{n.b.}: in discrete time, there is no exact self-similarity-like relation such as \eqref{e:B^(j)}).

\begin{lemma}\label{l:fixed_scale_log_limit_discrete}
Suppose assumptions ($A1-A5$) and ($W1-W3$) hold. Let $\widetilde{\mathbf{B}}_a(2^j)$ be a fixed-scale wavelet random matrix for discrete-time
measurements as in \eqref{e:Btilde-nu}. Then, for $\ell = 1,...,p(n)$,
\begin{itemize}
\item [$(i)$]  $\log \lambda_\ell (\widetilde{\mathbf{B}}_a(2^j))$ is well defined a.s.;
\item [$(ii)$] and
\begin{equation}
\label{e:fixed_scale_log_to_zero_discrete}
    \frac{\log \lambda_{\ell}(\widetilde{\mathbf{B}}_a(2^j))}{\log a(n) }\stackrel{\bbP}\to 0, \quad n \rightarrow \infty.
\end{equation}
\end{itemize}
\end{lemma}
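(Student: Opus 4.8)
The plan is to follow the blueprint of Lemma~\ref{l:fixed_scale_log_limit}, with the exact continuous-time self-similarity identity behind \eqref{e:B^(j)} replaced by the \emph{approximate} identity controlled by Lemma~\ref{l:vecBtilde(2^j)_asympt}. Write $\widehat{\mathbf{B}}_a(2^j)$ for the continuous-time analogue of $\widetilde{\mathbf{B}}_a(2^j)$, as in \eqref{e:B^(j)}.

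For part $(i)$, I would first observe that the diagonal matrix $a(n)^{-(\mathds{H}_n+(1/2)I)}$ is a.s.\ invertible, so that, by \eqref{e:W-tilde_X(a(n)2^j)}, $\textnormal{rank}(\widetilde{\mathbf{B}}_a(2^j)) = \textnormal{rank}(\widetilde{\mathbf{W}}_X(a(n)2^j)) = \textnormal{rank}(\widetilde{D}_X(a(n)2^j))$. Since $p(n)<n_{a,j}$ by assumption $(A4)$, it suffices to show $\textnormal{rank}(\widetilde{D}_X(a(n)2^j)) = p(n)$ a.s. Conditionally on $\mathds{H}_n = \mathbb{H}_n$, the rows of $\widetilde{D}_X(a(n)2^j)$ are independent Gaussian vectors in $\bbR^{n_{a,j}}$, the $\ell$-th being the finite-scale discrete wavelet transform \eqref{e:discrete_wave_vec} of an fBm with Hurst exponent $H_\ell$; this vector is non-degenerate because the discrete-time wavelet coefficients of fBm at a fixed scale form a stationary Gaussian sequence whose spectral density is bounded away from zero (cf.\ Lemma~\ref{l:D(2^j,k)_is_stationary}). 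Hence, arguing exactly as in the proof of Lemma~\ref{l:fixed_scale_log_limit}, $(i)$ — applying Lemma~\ref{l:gaussian_rv_are_LI} conditionally and then integrating over $\mathds{H}_n$ — the rows are a.s.\ linearly independent, so $\textnormal{rank}(\widetilde{D}_X(a(n)2^j))=p(n)$ and $\widetilde{\mathbf{B}}_a(2^j)$ is positive definite a.s.; this gives $(i)$.

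For part $(ii)$, I would compare $\widetilde{\mathbf{B}}_a(2^j)$ with $\widehat{\mathbf{B}}_a(2^j)$. By Weyl's inequality \eqref{e:Weyls_inequalities_vershynin},
\[
\max_{1\leq \ell \leq p}\big| \lambda_\ell(\widetilde{\mathbf{B}}_a(2^j)) - \lambda_\ell(\widehat{\mathbf{B}}_a(2^j))\big| \leq \big\| \widetilde{\mathbf{B}}_a(2^j) - \widehat{\mathbf{B}}_a(2^j)\big\|_{\op},
\]
and Lemma~\ref{l:vecBtilde(2^j)_asympt} shows the right-hand side is $o_{\bbP}(1)$. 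On the other hand, $\widehat{\mathbf{B}}_a(2^j) \stackrel{d}= \mathbf{W}_X(2^j)$ by \eqref{e:B^(j)}, so the fixed-scale bounds \eqref{e:eigenvalues_wavelete_decorelated}--\eqref{e:eigenvalueboundsSigma} established inside the proof of Lemma~\ref{l:fixed_scale_log_limit} furnish strictly positive constants $m\leq M$ with $m+o_{\bbP}(1) \leq \lambda_\ell(\widehat{\mathbf{B}}_a(2^j)) \leq M + o_{\bbP}(1)$ for $\ell=1,\hdots,p(n)$. Combining the two displays yields $m+o_{\bbP}(1) \leq \lambda_\ell(\widetilde{\mathbf{B}}_a(2^j)) \leq M + o_{\bbP}(1)$; since these eigenvalues are a.s.\ strictly positive by $(i)$, Lemma~\ref{l:sandwich_bounds_to_log_convergence} then delivers \eqref{e:fixed_scale_log_to_zero_discrete}, and the remainder is \emph{mutatis mutandis} the continuous-time argument.

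The real work, I expect, sits not in the bookkeeping above but in Lemma~\ref{l:vecBtilde(2^j)_asympt}: one must control, \emph{uniformly over the growing dimension $p(n)$}, the discrepancy between the discrete- and continuous-time normalized wavelet random matrices — a discrepancy produced by the border effect \eqref{e:d^n=d} and by the mismatch between the discrete MRA filter \eqref{e:hj,l} and the continuous wavelet — and show that, after the two-sided rescaling by $a(n)^{-(\mathds{H}_n+(1/2)I)}$, it vanishes in probability. A secondary, milder point to check carefully is the non-degeneracy of the discrete wavelet-coefficient Gaussian law invoked in $(i)$, since, unlike in continuous time, there is no exact mixed-Gaussian representation such as \eqref{e:mixed_gaussian_representation} to fall back on.
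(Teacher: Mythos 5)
Your proposal is correct and follows essentially the same route as the paper: reduce $(i)$ to the a.s.\ full rank of $\widetilde{D}_X(a(n)2^j)$ via conditional Gaussianity and Lemma~\ref{l:gaussian_rv_are_LI}, and prove $(ii)$ by sandwiching $\lambda_\ell(\widetilde{\mathbf{B}}_a(2^j))$ between the eigenvalues of $\widehat{\mathbf{B}}_a(2^j)\stackrel{d}=\mathbf{W}_X(2^j)$ using Weyl's inequalities together with the operator-norm control of Lemma~\ref{l:vecBtilde(2^j)_asympt}, then invoking Lemma~\ref{l:sandwich_bounds_to_log_convergence}. The one point you flagged as needing care — non-degeneracy of the discrete-time wavelet-coefficient law — is exactly what the paper supplies through Lemma~\ref{l:covariance_of_univariate_discrete_wavelet_coeff} (rather than the continuous-time Lemma~\ref{l:D(2^j,k)_is_stationary} you cite), and the transfer of the a.s.\ bounds from $\mathbf{W}_X(2^j)$ to $\widehat{\mathbf{B}}_a(2^j)$ is formalized via Lemma~\ref{l:bounds_equal_in_distribution}.
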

\begin{proof}
The proof of $(i)$ is analogous to the proof of Lemma \ref{l:fixed_scale_log_limit}, $(i)$. In particular, from expression \eqref{e:Btilde-nu} and under condition \eqref{e:p=p(n)<n_a,j_WX}, it suffices to show that $\textnormal{rank}(a(n)^{-\mathds{H}-(1/2)I}\widetilde{D}_X(a(n)2^j)) = p(n)$ a.s. Moreover, since $a(n)^{-\mathds{H}-(1/2)I}$ has full rank, we only need to show that $\textnormal{rank}(\widetilde{D}_X(a(n)2^j)) = p(n)$ a.s.  Indeed, by the independence of the $p$ rH-fBms that make up $\{X(t)\}_{t \in {\mathcal T}}$ (see assumption ($A2$) and expression \eqref{e:Y=PX}), and by Lemma \ref{l:covariance_of_univariate_discrete_wavelet_coeff}, conditionally on $\mathds{H}_n = \mathbb{H}_n$, the rows of $\widetilde{D}_X(a(n)2^j)$ are equal in distribution to independent Gaussian random vectors. That is, for $i=1,\hdots,p$, there exist a full rank matrix $\widetilde{\Gamma}_{H_i}$ and an independent standard normal vector $\mathbf{Z}_\ell\in\bbR^{n_{a,j}}$ such that $\widetilde{\Gamma}_{H_i}\mathbf{Z}_\ell\in\bbR^{n_{a,j}}$ is equal, in law, to the $i$--th row of $\widetilde{D}_X(a(n)2^j)$. Therefore, by Lemma \ref{l:gaussian_rv_are_LI},
$$
\bbP\big(\textnormal{dim}(\textnormal{span}\{\widetilde{\Gamma}_{H_1}\mathbf{Z}_1,\hdots,\widetilde{\Gamma}_{H_{p}}\mathbf{Z}_{p} \}=p \big| \mathds{H}_n = \mathbb{H}_n\big) = 1.
$$
 Since this holds for every $\mathbb{H}_n$, it follows that the rows of $D_X(2^j)$ are linearly independent a.s. Therefore, $\textnormal{rank}(D_X(2^j)) = p(n)$ a.s. This establishes $(i)$.

We now show $(ii)$. Observe that, by an application of Weyl's inequalities \eqref{e:Weyls_inequalities_sum_symmetric},
$$
 \lambda_{1} \big(\widetilde{\mathbf{B}}_a(2^j) - \widehat{\mathbf{B}}_a(2^j)\big)  + \lambda_1\big(\widehat{\mathbf{B}}_a(2^j)\big)  \leq \lambda_{\ell}\big(\widetilde{\mathbf{B}}_a(2^j)\big)
 $$
\begin{equation}\label{e:discrete_approx_eigenvalue_bounds}
\leq \lambda_{p} \big(\widetilde{\mathbf{B}}_a(2^j) - \widehat{\mathbf{B}}_a(2^j)\big)  +\lambda_p\big(\widehat{\mathbf{B}}_a(2^j)\big),\quad \ell = 1,\hdots, p(n),
\end{equation}
with $\widehat{\mathbf{B}}_a(2^j)$ as in \eqref{e:B^(j)}. In addition, as a consequence of Lemma \ref{l:vecBtilde(2^j)_asympt} and Weyl's inequality \eqref{e:Weyls_inequalities_vershynin}, it follows that
\begin{equation}\label{e:discrete_approximation_eigenvalue_convergence}
\big|\lambda_{1} \big(\widetilde{\mathbf{B}}_a(2^j) - \widehat{\mathbf{B}}_a(2^j)\big)\big|\stackrel{\bbP}\to 0,\quad \big|\lambda_{p} \big(\widetilde{\mathbf{B}}_a(2^j) - \widehat{\mathbf{B}}_a(2^j)\big)\big| \stackrel{\bbP}\to 0.
\end{equation}
Moreover, $ \widehat{\mathbf{B}}_a(2^j) \stackrel{d}= \mathbf{W}_X(2^j)$ (see expression \eqref{e:B^(j)}) implies that $\lambda_\ell\big(\widehat{\mathbf{B}}_a(2^j)  \big)  \stackrel{d}= \lambda_\ell\big(\mathbf{W}_X(2^j)\big)$ for $\ell = 1,...,p(n)$. Then, by expressions \eqref{e:eigenvalues_wavelete_decorelated}, \eqref{e:eigenvalueboundsSigma}, and by Lemma \ref{l:bounds_equal_in_distribution}, there exist strictly positive constants $m,\ M$ such that
\begin{equation}\label{e:oP(1)+m=<lambda_ell(B-hat)=<M+oP(1)}
o_\bbP(1)+ m\leq \lambda_\ell\big(\widehat{\mathbf{B}}_a(2^j)\big)  \leq M+ o_\bbP(1) \quad \textnormal{a.s.}, \quad \ell=1,\hdots,p(n).
\end{equation}
Consequently, making use of \eqref{e:discrete_approx_eigenvalue_bounds}, \eqref{e:discrete_approximation_eigenvalue_convergence} and \eqref{e:oP(1)+m=<lambda_ell(B-hat)=<M+oP(1)}, we obtain
$$
o_\bbP(1)+ m  \leq \lambda_{\ell}\big(\widetilde{\mathbf{B}}_a(2^j)\big) \leq M + o_\bbP(1)\quad \textnormal{a.s.}, \quad \ell=1,\hdots,p(n).
$$
Hence, by part $(i)$ of this lemma and by Lemma \ref{l:sandwich_bounds_to_log_convergence}, \eqref{e:fixed_scale_log_to_zero_discrete} holds. This completes the proof of $(ii)$. $\Box$\\
\end{proof}

The next proposition is analogous to Proposition C.1 in Abry and Didier \cite{abry:didier:2018:dim2}. It establishes that, at a given scale $2^j$, the root-mean squared difference  between the discrete- and continuous-time wavelet coefficients for rH-fBm is bounded, up to a constant factor, by $2^{j/2}$ times the square root of the dimension. The proposition is used in Lemma \ref{l:vecBtilde(2^j)_asympt}.
\begin{proposition}\label{p:D-tildeD_is_bounded}
Suppose assumptions $(A1-A5)$ and $(W1-W3)$ hold. For the fixed $j \in \bbN \cup \{0\}$ and for any $k \in \bbZ$, let $\widetilde{D}_X(2^j,k)$ be the discrete-time wavelet coefficient given by Mallat's algorithm (see \eqref{e:Mallat}). Then, there is a constant $C(\psi,\phi) \geq 0$, not depending on $j$ or $k$, such that
\begin{equation}\label{e:difference_discrete_continuous_coeff}
    \big(\bbE \|D_X(2^j,k)- \widetilde{D}_X(2^j,k)\|^2\big)^{1/2} \leq C(\psi,\phi) \hspace{1mm}2^{j/2} \hspace{1mm}p(n)^{1/2}.
\end{equation}
\end{proposition}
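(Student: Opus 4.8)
Proposition \ref{p:D-tildeD_is_bounded} is a crude second‑moment estimate, so the plan is to reduce the vector‑valued claim to a coordinatewise bound and then control, pointwise in time, the error incurred by replacing $X$ by its discretize‑then‑reconstruct version. Write $D_X(2^j,k)=(D_{X,1}(2^j,k),\hdots,D_{X,p(n)}(2^j,k))$, and similarly for $\widetilde{D}_X(2^j,k)$, so that
$$
\bbE\big\|D_X(2^j,k)-\widetilde{D}_X(2^j,k)\big\|^2=\sum_{i=1}^{p(n)}\bbE\big|D_{X,i}(2^j,k)-\widetilde{D}_{X,i}(2^j,k)\big|^2 .
$$
It therefore suffices to exhibit a constant $C(\psi,\phi)\geq 0$, not depending on $j$, $k$ or $i$, with $\bbE|D_{X,i}(2^j,k)-\widetilde{D}_{X,i}(2^j,k)|^2\leq C(\psi,\phi)^2\,2^j$ for every $i$; summing over $i$ and taking square roots then yields \eqref{e:difference_discrete_continuous_coeff}.

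Next I would condition on the Hurst exponent $H_i$ attached to the $i$‑th coordinate: given $H_i=H\in(0,1)$, the $i$‑th row $X_i$ of $X$ is a standard fBm with Hurst parameter $H$ (assumption ($A2$)). By the initialization of Mallat's pyramidal algorithm (see the discussion around \eqref{e:Btilde}--\eqref{e:a(0,k)} and \eqref{e:d^n=d}), the discrete wavelet coefficient can be written as $\widetilde{D}_{X,i}(2^j,k)=\int_\bbR \widetilde{X}_i(t)\,\psi_{j,k}(t)\,dt$, where $\widetilde{X}_i(t):=\sum_{\ell\in\bbZ}X_i(\ell)\phi(t-\ell)$ and $\psi_{j,k}(t):=2^{-j/2}\psi(2^{-j}t-k)$ (both well defined a.s.\ since $\phi,\psi$ are compactly supported). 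Hence $D_{X,i}(2^j,k)-\widetilde{D}_{X,i}(2^j,k)=\int_\bbR \big(X_i(t)-\widetilde{X}_i(t)\big)\psi_{j,k}(t)\,dt$.

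The heart of the argument is a uniform pointwise bound on the reconstruction error $X_i(t)-\widetilde{X}_i(t)$. Using the partition‑of‑unity identity $\sum_{\ell\in\bbZ}\phi(t-\ell)=1$ (valid under ($W1$--$W3$); cf.\ the Strang--Fix condition), one rewrites $X_i(t)-\widetilde{X}_i(t)=\sum_{\ell\in\bbZ}\big(X_i(t)-X_i(\ell)\big)\phi(t-\ell)$, a finite sum because $\supp\phi\subseteq[0,T]$ for some $T$. Applying the Cauchy--Schwarz inequality in $\ell$, the boundedness of $\phi$ (which holds for the compactly supported bases under consideration; cf.\ Daubechies \cite{daubechies:1992}, Mallat \cite{mallat:1999}), the fBm increment identity $\bbE|X_i(t)-X_i(s)|^2=|t-s|^{2H}$, and the localization $|t-\ell|\leq T$ on the non‑vanishing indices (with $|t-\ell|^{2H}\leq\max\{1,T^2\}$ since $H\in(0,1)$), one obtains $\bbE|X_i(t)-\widetilde{X}_i(t)|^2\leq C_1(\phi)$ uniformly in $t\in\bbR$ and $H\in(0,1)$. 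Plugging this into Minkowski's integral inequality gives
$$
\bbE\big|D_{X,i}(2^j,k)-\widetilde{D}_{X,i}(2^j,k)\big|^2\leq\Big(\int_\bbR\big(\bbE|X_i(t)-\widetilde{X}_i(t)|^2\big)^{1/2}\,|\psi_{j,k}(t)|\,dt\Big)^2\leq C_1(\phi)\,\|\psi_{j,k}\|_{L^1}^2=C_1(\phi)\,\|\psi\|_{L^1}^2\,2^j,
$$
since $\|\psi_{j,k}\|_{L^1}=2^{j/2}\|\psi\|_{L^1}$. As this bound does not depend on $H$, it passes to the unconditional expectation over $H_i$; setting $C(\psi,\phi):=C_1(\phi)^{1/2}\|\psi\|_{L^1}$ and summing over $i=1,\hdots,p(n)$ gives the asserted bound.

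The main obstacle I anticipate is the uniform pointwise estimate $\bbE|X_i(t)-\widetilde{X}_i(t)|^2\leq C_1(\phi)$: the partition of unity must be used to express $\widetilde{X}_i-X_i$ as a weighted combination of increments of $X_i$, after which the compact support of $\phi$ localizes the sum so that the resulting constant depends neither on $t$ nor on the Hurst value. Everything else — the coordinate decomposition, the conditioning on the Hurst exponents, and Minkowski's inequality — is routine bookkeeping. It is worth noting that, unlike sharper wavelet approximation estimates, this argument does not use the vanishing moments of $\psi$ at all: the elementary $L^1$ scaling $\|\psi_{j,k}\|_{L^1}=2^{j/2}\|\psi\|_{L^1}$ already produces the claimed rate $2^{j/2}p(n)^{1/2}$.
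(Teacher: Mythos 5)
Your proof is correct, and the outer structure (coordinatewise decomposition, conditioning on the Hurst exponents, reduction to a univariate bound of order $2^j$) matches the paper's. Where you genuinely diverge is in how the univariate estimate $\bbE|d_{H}(2^j,k)-\widetilde{d}_{H}(2^j,k)|^2\leq C(\psi,\phi)\,2^j$ is obtained. The paper goes through the time-domain moving-average representation of fBm, a stochastic Fubini argument, and then adapts the proofs of Lemmas 6.1--6.2 and Proposition 2.4 of Stoev et al.\ \cite{stoev:pipiras:taqqu:2002} to the kernel \eqref{e:fBm_time_kernel} (with a separate patch for $H=1/2$). You instead write $\widetilde{d}_H(2^j,k)=\int \widetilde{X}(t)\psi_{j,k}(t)\,dt$ with $\widetilde{X}(t)=\sum_\ell X(\ell)\phi(t-\ell)$ (which is exactly consistent with \eqref{e:disc2}--\eqref{e:hj,l} after the substitution $u=t-2^jk$), use the partition of unity $\sum_\ell\phi(t-\ell)=1$ to express $X(t)-\widetilde{X}(t)$ as a compactly localized combination of fBm increments, bound its second moment uniformly in $t$ and in $H\in(0,1)$, and finish with Minkowski's integral inequality and $\|\psi_{j,k}\|_{L^1}=2^{j/2}\|\psi\|_{L^1}$. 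This is more elementary and self-contained: it avoids the integral representation of fBm entirely, handles $H=1/2$ with no special case, and makes transparent (as you note) that the vanishing moments of $\psi$ play no role in this crude rate. What the paper's route buys in exchange is compatibility with the sharper approximation machinery of Stoev et al., which would be needed if one wanted a better-than-$O(2^{j/2})$ error.

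Two small points to make explicit if you write this up. First, the partition of unity $\sum_\ell\phi(t-\ell)=\widehat\phi(0)=1$ is not literally listed in ($W1$--$W3$); it follows from $\widehat\phi(0)=1$ together with the orthonormality of $\{\phi(\cdot-k)\}_{k\in\bbZ}$ (via $\sum_m|\widehat\phi(\xi+2\pi m)|^2=1$ a.e., forcing $\widehat\phi(2\pi m)=0$ for $m\neq0$), and the paper does invoke this orthonormality elsewhere (see \eqref{e:a(0,k)}), so you should cite it. Second, your constant $C_1(\phi)$ requires $\|\phi\|_\infty<\infty$, which again is not stated in ($W2$) but holds for the compactly supported orthonormal (e.g.\ Daubechies) bases the paper restricts to; this is the same level of implicit regularity the paper's own argument relies on, so it is a presentational caveat rather than a gap.
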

\begin{proof}

Conditionally on
\begin{equation}\label{e:diagonal_H_=_instance}
\mathds{H}_n = \mathbb{H}_n,
\end{equation}
$X$ is an ofBm. So, let $H_\ell$ denote $\ell$--th entry of the diagonal of the matrix $\mathbb{H}_n$. Then,
\begin{equation}\label{e:discrete_approx_sum}
\bbE \big[\|D_X(2^j,k)- \widetilde{D}_X(2^j,k)\|^2\big|\mathds{H}_n =  \mathbb{H}_n\big]  = \sum_{\ell = 1}^{p(n)} \bbE \big[ |d_{\mathcal{H}_\ell}(2^j,k) -\widetilde{d}_{\mathcal{H}_\ell}(2^j,k)|^2 \big| \mathcal{H}_\ell = H_\ell \big],
\end{equation}
where $\widetilde{d}_{\mathcal{H}_\ell}(2^j,k)$ is the univariate discrete wavelet transformation corresponding to the $\ell$-th entry of the vector $\widetilde{D}_X(2^j,k)$. Suppose for the moment that, for each $\breve{H} \in \textnormal{supp} \ \pi (d H)$,
\begin{equation}\label{e:univariate_discrete_approx}
    \bbE  |d_{\breve{H}}(2^j,k) -\widetilde{d}_{\breve{H}}(2^j,k)|^2  \leq C(\psi,\phi)2^j,
\end{equation}
where $C(\psi,\phi)$ is a constant only depending on $\psi$ and $\phi$. Then, \eqref{e:difference_discrete_continuous_coeff} is an immediate consequence of expressions \eqref{e:discrete_approx_sum} and \eqref{e:univariate_discrete_approx}.

So, it remains to show that \eqref{e:univariate_discrete_approx} holds. Indeed, by Taqqu \cite{taqqu:2003}, the univariate standard fBm admits the time-domain representation
\begin{equation}\label{e:fBm_time_representation}
\{B_H(t)\}_{t\in \bbR} \stackrel{\textnormal{f.d.d.}}=\Big\{\int_\bbR f_H(t,u) B(du)\Big\}_{t \in \bbR},
\end{equation}
where
\begin{equation}\label{e:fBm_time_kernel}
f_H(t,u) =\begin{cases}
\frac{\sqrt{\Gamma(2H+1)\sin( \pi H)}}{\Gamma(H+ \frac{1}{2})}  \{(t-u)^{H-1/2}_+-(-u)^{H-1/2}_+ \}, & H\in (0,1/2) \cup (1/2,1)\\
\indicator_{\{0\leq u \leq t\}} - \indicator_{\{t\leq u < 0\}}, & H = 1/2
\end{cases}.
\end{equation}
In \eqref{e:fBm_time_representation}, $B(du)$ is a real-valued, orthogonal-increment Gaussian random measure, and in \eqref{e:fBm_time_kernel}, $x_+ = x$ if $x\geq 0$ and $x_+ = 0$ otherwise. In light of \eqref{e:fBm_time_representation}, of the compact support of $\psi$ (see assumption ($W2$)) as well as of relation \eqref{e:CramerLeadbetter_condition_un_conditioned}, an adaptation of the proof of Theorem 11.4.1 in Samorodnitsky and Taqqu \cite{samorodnitsky:taqqu:1994} (a type of stochastic Fubini's theorem) yields
\begin{equation}\label{e:stoevProp2.1_analogue}
d_H(2^j,k) \stackrel{d}= 2^{-j/2}\int_\bbR \Big(\int_\bbR \psi(2^{-j}t-k) f_H(t,s)dt\Big) B(ds).
\end{equation}
Then, under assumptions $(W1-W3)$, the proofs of Lemma 6.1, Lemma 6.2 and Proposition 2.4 in Stoev et al.\ \cite{stoev:pipiras:taqqu:2002} can be directly adapted for the time-domain kernel \eqref{e:fBm_time_kernel}. However, for $H=1/2$, we note that in order to adapt the proofs of Lemma 6.1 and Proposition 2.4 in Stoev et al.\ \cite{stoev:pipiras:taqqu:2002}, the following identities are required, i.e.,
$$
\int_\bbR | f_{H=1/2}(t_1,u) -  f_{H=1/2}(t_2,u)|^2 du =  |t_1-t_2|
$$
and
\begin{equation}\label{e:independent_from_i}
f_{H=1/2}(i+2^jk,u) = f_{H=1/2}(i/2^j , u/2^j-k) + \indicator_{\{0\leq u/2^j < k\}} - \indicator_{\{k<u/2^j \leq 0\}}
\end{equation}
(see Stoev et al.\ \cite{stoev:pipiras:taqqu:2002}, p.\ 1895, first and second columns, respectively). This yields expression \eqref{e:univariate_discrete_approx}. $\Box$\\
\end{proof}

The following lemma is analogous to Lemma C.2 in Abry and Didier \cite{abry:didier:2018:dim2}. It is used in the proof of Lemma \ref{l:fixed_scale_log_limit_discrete}.
\begin{lemma}\label{l:vecBtilde(2^j)_asympt}
Suppose assumptions ($A1-A5$) and ($W1-W3$) hold. Let $\widetilde{\mathbf{B}}_{a}(2^j)$, $\widehat{\mathbf{B}}_a(2^j)$ be as in \eqref{e:Btilde-nu} and \eqref{e:B^(j)}, respectively. Then,
\begin{equation}\label{e:sqrt(Kaj)(Btilde-Bhat)}
\bbE \| \widetilde{\mathbf{B}}_{a}(2^j) - \widehat{\mathbf{B}}_a(2^j) \|_{\textnormal{op}} \rightarrow 0, \quad n \rightarrow \infty.
\end{equation}
\end{lemma}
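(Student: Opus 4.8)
The plan is to reduce everything to two conditional moment estimates — one controlled by the self-similarity of the latent wavelet coefficients (Lemma~\ref{l:W(a(n)2^j)_D(a(n)2^j,k)_selfsimilar}, $(ii)$) and one by the coefficient error bound of Proposition~\ref{p:D-tildeD_is_bounded} — glued together by the elementary inequalities $\|MN^{*}\|_{\op}\le\|M\|_{\op}\|N\|_{\op}\le\|M\|_{F}\|N\|_{F}$, the triangle inequality, and Cauchy--Schwarz. Throughout I would condition on $\mathds{H}_{n}=\mathbb{H}_{n}$ and set $A:=a(n)^{-(\mathbb{H}_{n}+(1/2)I)}$, a deterministic diagonal matrix with $\|A\|_{\op}=a(n)^{-(\min_{\ell}H_{\ell}+1/2)}\le a(n)^{-(\varpi+1/2)}$, where $\varpi$ is as in \eqref{e:pi(dh)}; this bound is uniform in $\mathbb{H}_{n}$. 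Let $D_{X}(a(n)2^{j})$ and $\widetilde{D}_{X}(a(n)2^{j})$ denote the $\mathcal{M}(p(n),n_{a,j},\bbR)$ matrices whose $k$-th columns are $D_{X}(a(n)2^{j},k)$ and $\widetilde{D}_{X}(a(n)2^{j},k)$, and put $\widehat{D}_{X}:=A\,D_{X}(a(n)2^{j})$, $E:=\widetilde{D}_{X}(a(n)2^{j})-D_{X}(a(n)2^{j})$, and $\widehat{E}:=A\,E$. Then $A\,\widetilde{D}_{X}(a(n)2^{j})=\widehat{D}_{X}+\widehat{E}$, so by \eqref{e:Btilde-nu} and \eqref{e:B^(j)},
\begin{equation*}
\widetilde{\mathbf{B}}_{a}(2^{j})-\widehat{\mathbf{B}}_{a}(2^{j})=\frac{1}{n_{a,j}}\big(\widehat{D}_{X}\widehat{E}^{*}+\widehat{E}\widehat{D}_{X}^{*}+\widehat{E}\widehat{E}^{*}\big),
\end{equation*}
and therefore, using $\|\widehat{E}\widehat{E}^{*}\|_{\op}=\|\widehat{E}\|_{\op}^{2}\le\|\widehat{E}\|_{F}^{2}$ and Cauchy--Schwarz,
\begin{equation*}
\bbE\big[\|\widetilde{\mathbf{B}}_{a}(2^{j})-\widehat{\mathbf{B}}_{a}(2^{j})\|_{\op}\mid\mathbb{H}_{n}\big]\le\frac{2}{n_{a,j}}\big(\bbE[\|\widehat{D}_{X}\|_{F}^{2}\mid\mathbb{H}_{n}]\big)^{1/2}\big(\bbE[\|\widehat{E}\|_{F}^{2}\mid\mathbb{H}_{n}]\big)^{1/2}+\frac{1}{n_{a,j}}\,\bbE[\|\widehat{E}\|_{F}^{2}\mid\mathbb{H}_{n}].
\end{equation*}

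For the first moment, the conditional self-similarity relation gives $\{D_{X}(a(n)2^{j},k)\}_{k}\stackrel{d}{=}\{a(n)^{\mathbb{H}_{n}+(1/2)I}D_{X}(2^{j},k)\}_{k}$, hence $\widehat{D}_{X}\stackrel{d}{=}D_{X}(2^{j})$ and
\begin{equation*}
\bbE[\|\widehat{D}_{X}\|_{F}^{2}\mid\mathbb{H}_{n}]=\sum_{k=1}^{n_{a,j}}\sum_{\ell=1}^{p(n)}\bbE\big[|d_{\mathcal{H}_{\ell}}(2^{j},k)|^{2}\mid\mathcal{H}_{\ell}=H_{\ell}\big]\le C_{1}\,n_{a,j}\,p(n),
\end{equation*}
where $C_{1}:=\max_{\breve{H}\in\supp\pi(dH)}\bbE|d_{\breve{H}}(2^{j},0)|^{2}<\infty$ (finiteness of fBm wavelet variances, cf.\ Lemma~\ref{l:D(2^j,k)_is_well_defined}, together with the stationarity of the coefficients). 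For the second moment, submultiplicativity and Proposition~\ref{p:D-tildeD_is_bounded} applied at octave $j+\log_{2}a(n)$ — legitimate because $a(n)$ is dyadic, so $a(n)2^{j}$ is a power of two — give, uniformly in $\mathbb{H}_{n}$,
\begin{equation*}
\bbE[\|\widehat{E}\|_{F}^{2}\mid\mathbb{H}_{n}]\le\|A\|_{\op}^{2}\sum_{k=1}^{n_{a,j}}\bbE\big[\|D_{X}(a(n)2^{j},k)-\widetilde{D}_{X}(a(n)2^{j},k)\|^{2}\mid\mathbb{H}_{n}\big]\le a(n)^{-(2\varpi+1)}\,n_{a,j}\,C(\psi,\phi)^{2}\,a(n)2^{j}\,p(n).
\end{equation*}
Substituting the two estimates and recalling that $j$ is fixed yields, uniformly in $\mathbb{H}_{n}$ and hence after averaging over $\mathds{H}_{n}$,
\begin{equation*}
\bbE\,\|\widetilde{\mathbf{B}}_{a}(2^{j})-\widehat{\mathbf{B}}_{a}(2^{j})\|_{\op}\le C_{3}\,p(n)\,a(n)^{-\varpi}+C_{4}\,p(n)\,a(n)^{-2\varpi}
\end{equation*}
for constants $C_{3},C_{4}>0$ depending only on $\psi,\phi,j$.

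It remains to check the right-hand side tends to $0$, which then gives \eqref{e:sqrt(Kaj)(Btilde-Bhat)}. By assumption $(A4)$, $p(n)=o(\sqrt{n/a(n)})$, so $p(n)\,a(n)^{-\varpi}=o\big((n/a(n)^{1+2\varpi})^{1/2}\big)$, and the latter vanishes since $n/a(n)^{1+2\varpi}\to0$, again by $(A4)$; as $a(n)\to\infty$ and $\varpi>0$, this also forces $p(n)\,a(n)^{-2\varpi}\to0$. The only genuinely delicate point is this balance of rates: the factor $\|A\|_{\op}^{2}\le a(n)^{-(2\varpi+1)}$ must absorb both the $a(n)2^{j}$ growth of the wavelet coefficient error variance and the dimension $p(n)$, and this succeeds precisely because $(A4)$ is calibrated so that $n/a(n)^{1+2\varpi}\to0$ — which is exactly why that otherwise ad hoc hypothesis is imposed. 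Relatedly, it is essential to first reduce $\widehat{D}_{X}$ to $D_{X}(2^{j})$ via self-similarity: estimating $\bbE\|D_{X}(a(n)2^{j},k)\|^{2}$ directly would introduce a positive power of $a(n)$ that $\|A\|_{\op}^{2}$ cannot fully cancel once $\supp\pi(dH)$ contains more than one point.
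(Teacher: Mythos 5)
Your proof is correct and follows essentially the same route as the paper's: the same three-term decomposition of $\widetilde{D}\widetilde{D}^{*}-DD^{*}$, the bound $\|a(n)^{-(\mathds{H}_n+(1/2)I)}\|_{\op}\le a(n)^{-(\varpi+1/2)}$, Proposition~\ref{p:D-tildeD_is_bounded} at scale $a(n)2^{j}$ for the error term, the self-similarity relation to reduce $a(n)^{-(\mathds{H}_n+(1/2)I)}D_X(a(n)2^j,\cdot)$ to $D_X(2^j,\cdot)$, Cauchy--Schwarz for the cross terms, and the identical final rate $C_3\,p(n)a(n)^{-\varpi}+C_4\,p(n)a(n)^{-2\varpi}$ killed by $(A4)$. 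The only (cosmetic) difference is that you aggregate into Frobenius norms of the full $p\times n_{a,j}$ matrices, whereas the paper applies Minkowski over the $n_{a,j}$ columns and uses the rank-one identity $\|{\mathbf x}_1{\mathbf x}_2^{\top}\|_{\op}=\|{\mathbf x}_1\|\,\|{\mathbf x}_2\|$ term by term.
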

\begin{proof}
By Minkowsky's inequality, the left-hand side of \eqref{e:sqrt(Kaj)(Btilde-Bhat)} is bounded by
$$
\frac{1}{n_{a,j}} \sum^{n_{a,j}}_{k=1} \bbE \big\|
a(n)^{-(\mathds{H}_n+(1/2)I) } \widetilde{D}_X(a(n)2^j,k)\widetilde{D}_X(a(n)2^j,k)^* a(n)^{-(\mathds{H}_n+(1/2)I)}
$$
\begin{equation}\label{e:bounding_normalized_distance_squared_wavecoef_vec}
- a(n)^{-(\mathds{H}_n+(1/2)I) } D_X(a(n)2^j,k)D_X(a(n)2^j,k)^*a(n)^{-(\mathds{H}_n+(1/2)I)} \big\|_{\textnormal{op}}.
\end{equation}
However, for $k = 1,\hdots,n_{a,j}$, the deviation between the discrete- and continuous-time (non-normalized) wavelet variance terms can be recast in the form
$$
\widetilde{D}_X(a(n)2^j,k)\widetilde{D}_X(a(n)2^j,k)^* - D_X(a(n)2^j,k)D_X(a(n)2^j,k)^*
$$
$$
= [\widetilde{D}_X(a(n)2^j,k)-D_X(a(n)2^j,k)][\widetilde{D}_X(a(n)2^j,k)-D_X(a(n)2^j,k)]^*
$$
$$
+ [\widetilde{D}_X(a(n)2^j,k)-D_X(a(n)2^j,k)]D_X(a(n)2^j,k)^*
+ D_X(a(n)2^j,k)[\widetilde{D}_X(a(n)2^j,k)-D_X(a(n)2^j,k)]^*.
$$
For notational simplicity, let
\begin{equation}\label{e:D-breve(a(n)2^j,k)}
\breve{D}_X(a(n)2^j,k) := a(n)^{-(\mathds{H}_n+(1/2)I)}D_X(a(n)2^j,k).
\end{equation}
Then, from the fact that the operator norm is sub-multiplicative, it follows that each term under the summation sign in \eqref{e:bounding_normalized_distance_squared_wavecoef_vec} can be bounded by
$$
\bbE\Big( \|a(n)^{-(\mathds{H}_n+(1/2)I)}\big\{[\widetilde{D}_X(a(n)2^j,k)-D(a(n)2^j,k)][\widetilde{D}_X(a(n)2^j,k)-D_X(a(n)2^j,k)]^*
$$
$$
+ [\widetilde{D}_X(a(n)2^j,k)-D_X(a(n)2^j,k)]D_X(a(n)2^j,k)^*
$$
$$
+ D_X(a(n)2^j,k)[\widetilde{D}_X(a(n)2^j,k)-D_X(a(n)2^j,k)]^*\big\}a(n)^{-(\mathds{H}_n+(1/2)I)}\|_{\textnormal{op}}\Big)
$$
$$
\leq \bbE \Big(\|a(n)^{-(\mathds{H}_n+(1/2)I)}\|^{2}_{\textnormal{op}}\cdot\|[\widetilde{D}_X(a(n)2^j,k)-D_X(a(n)2^j,k)][\widetilde{D}_X(a(n)2^j,k)-D_X(a(n)2^j,k)]^* \|_{\textnormal{op}}\Big)
$$
$$
+ \bbE\Big(\|a(n)^{-(\mathds{H}_n+(1/2)I)}\|_{\textnormal{op}} \cdot \| [\widetilde{D}_X(a(n)2^j,k)-D_X(a(n)2^j,k)] \breve{D}_X(a(n)2^j,k)^*  \|_{\textnormal{op}}\Big)
$$
$$
+ \bbE\Big(\| \breve{D}_X(a(n)2^j,k)  [\widetilde{D}_X(a(n)2^j,k)-D_X(a(n)2^j,k)]^*\|_{\textnormal{op}} \cdot \|a(n)^{-(\mathds{H}_n+(1/2)I)}\|_{\textnormal{op}} \Big)
$$
$$
= \bbE \big(\|a(n)^{-(\mathds{H}_n+(1/2)I)}\|^{2}_{\textnormal{op}}\cdot \|\widetilde{D}_X(a(n)2^j,k)-D_X(a(n)2^j,k)\|^2\big)
$$
\begin{equation}\label{e:two_term_bound_on_(Btilde-Bhat)}
+ 2 \bbE\big( \|a(n)^{-(\mathds{H}_n+(1/2)I)}\|_{\textnormal{op}}  \cdot\|\breve{D}_X(a(n)2^j,k)  \|\cdot\|\widetilde{D}_X(a(n)2^j,k)-D_X(a(n)2^j,k)\|\big).
\end{equation}
In \eqref{e:two_term_bound_on_(Btilde-Bhat)}, the last equality follows from the fact that, for a rank one matrix, ${\mathbf x}_1 {\mathbf x}_2^\top$, with ${\mathbf x}_1,{\mathbf x}_2\in\Reals^{p(n)}$, $\|{\mathbf x}_1{\mathbf x}_2^\top\|_{\textnormal{op}} = \|{\mathbf x}_1\|\|{\mathbf x}_2\|$. We now develop upper bounds for each term in the equality \eqref{e:two_term_bound_on_(Btilde-Bhat)}. Since $a(n)^{-(\mathds{H}_n+(1/2)I)}$ is a diagonal matrix, it follows that, for $\varpi$ as in \eqref{e:pi(dh)} (see assumption $(A1)$),
\begin{equation}\label{e:a_diagonal_matrix_bound}
    \|a(n)^{-(\mathds{H}_n+(1/2)I)}\|_{\textnormal{op}} \leq a(n)^{-(\varpi +1/2)}.
\end{equation}
 Hence, by Proposition \ref{p:D-tildeD_is_bounded} and expression \eqref{e:a_diagonal_matrix_bound},
$$
\bbE \big(\|a(n)^{-(\mathds{H}_n+(1/2)I)}\|^{2}_{\textnormal{op}}\cdot \|\widetilde{D}_X(a(n)2^j,k)-D_X(a(n)2^j,k)\|^2\big)
$$
$$
\leq a(n)^{-2\varpi-1} \bbE \big(\|\widetilde{D}_X(a(n)2^j,k)-D_X(a(n)2^j,k) \|^2 \big)
$$
\begin{equation}\label{e:(Btilde-Bhat)_term_one_bound}
\leq a(n)^{-2\varpi} C(\psi,\phi)^2 \hspace{0.5mm}2^j p(n) .
\end{equation}
In addition, again by \eqref{e:a_diagonal_matrix_bound},
$$
\bbE\big( \|a(n)^{-(\mathds{H}_n+(1/2)I)}\|_{\textnormal{op}}  \cdot \|\breve{D}_X(a(n)2^j,k)\|\cdot \|\widetilde{D}_X(a(n)2^j,k)-D_X(a(n)2^j,k)\|\big)
$$
\begin{equation}\label{e:(Btilde-Bhat)_term_two}
 \leq a(n)^{-(\varpi+1/2)} \bbE \big(\|\breve{D}_X(a(n)2^j,k) \| \cdot \|\widetilde{D}_X(a(n)2^j,k)-D_X(a(n)2^j,k)\|\big).
\end{equation}
Then, by the Cauchy-Schwarz inequality,
$$
\bbE \big(\|\breve{D}_X(a(n)2^j,k)\|\cdot \|\widetilde{D}_X(a(n)2^j,k)-D_X(a(n)2^j,k)\|\big)
$$
$$
\leq \Big(\bbE \|\breve{D}_X(a(n)2^j,k)\|^2\cdot \bbE \|\widetilde{D}_X(a(n)2^j,k)-D_X(a(n)2^j,k)\|^2 \Big)^{1/2}
$$
$$
= \Big(\bbE \|D_X(2^j,k)\|^2 \cdot \bbE \|\widetilde{D}_X(a(n)2^j,k)-D_X(a(n)2^j,k)\|^2 \Big)^{1/2},
$$
where the equality is a consequence of \eqref{e:D-breve(a(n)2^j,k)} and Lemma \ref{l:W(a(n)2^j)_D(a(n)2^j,k)_selfsimilar}, $(i)$. Observe that, since the $p(n)$ entries of $D_X(2^j,k)$ are i.i.d.,
$$
\bbE \|D_X(2^j,k)\|^2 =  \sum_{\ell=1}^{p(n)} \bbE d_{{\mathcal H}}(2^j,k)_\ell^2 = p(n)\cdot  \bbE d_{{\mathcal H}}(2^j,k)^2 = p(n)\cdot  \bbE d_{{\mathcal H}}(2^j,0)^2,
$$
where the last equality follows from the conditional stationarity of $ d_{\mathcal{H}}(2^j,k)$ (see Lemma  \ref{l:D(2^j,k)_is_stationary}, $(i)$). Thus, by Proposition \ref{p:D-tildeD_is_bounded}, the right-hand side of expression \eqref{e:(Btilde-Bhat)_term_two} can be bounded by
\begin{equation}\label{e:(Btilde-Bhat)_term_two_bound}
  a(n)^{-\varpi} p(n)\big(\bbE d_{{\mathcal H}}(2^j,0)_1^2 \big)^{1/2} C(\psi,\phi) (2^j)^{1/2}.
\end{equation}
Therefore, applying the bounds \eqref{e:(Btilde-Bhat)_term_one_bound} and \eqref{e:(Btilde-Bhat)_term_two_bound} to the terms in the sum \eqref{e:two_term_bound_on_(Btilde-Bhat)} yields
\begin{equation}
    \bbE \| \widetilde{\mathbf{B}}_{a}(2^j) - \widehat{\mathbf{B}}_a(2^j) \|_{\textnormal{op}} \leq A \cdot p(n)\cdot a(n)^{-2\varpi}  + B\cdot a(n)^{-\varpi} \cdot p(n),
\end{equation}
where $A$ and $B$ are positive constants that do not depend on $n$. By condition \eqref{e:p(n),a(n)_conditions} (see assumption $(A4)$), \eqref{e:sqrt(Kaj)(Btilde-Bhat)} holds. $\Box$\\
\end{proof}

\section{Additional results}
\label{s:additional results}

In this section, we establish auxiliary results used throughout Sections \ref{s:Continuous time} and \ref{s:discrete time}. In regard to the notation, throughout this section recall that $O(n)$ and $\textnormal{Haar}(O(n))$ denote, respectively, the orthogonal group on $\bbR^n$ and the Haar probability measure on $O(n)$.

The following lemma is used to establish Propositions \ref{p:Sum_of_deformed_wisharts} and \ref{p:k_sum_of_deformed_wisharts}. The lemma shows that a subset of $p_1 = p_1(n)$ Euclidean vectors and of $p_2 = p_2(n)$ columns obtained from a Haar-distributed random matrix are approximately orthogonal \textit{as long as} $p_1+p_2$ is sufficiently small with respect to the overall dimension $n \rightarrow \infty$.

\begin{lemma}\label{l:orthogonal_spaces}
For any $n \in \bbN$ and $1 \leq p = p(n) \leq n$, suppose that $p_i = p_i(n,\omega) \in \bbN$, $i=1,2$, are two sequences of random variables such that
\begin{equation}\label{e:moderatly_high_dimensions}
p_1 + p_2 = p = o(\sqrt{n}) 
\end{equation}
(\textbf{n.b.}: in \eqref{e:moderatly_high_dimensions}, $p=p(n)$ is deterministic). Let
\begin{equation}\label{e:mathcalE}
\mathcal{E} = \textnormal{span}\{{\mathbf e}_1,...,{\mathbf e}_{p_1} \}\subseteq \bbR^{n},
\end{equation}
where ${\mathbf e}_\ell$ denotes the $\ell$-th Euclidean vector. Also, let
\begin{equation}\label{e:mathcalU}
\mathcal{U} \equiv \mathcal{U}_n = \textnormal{span}\{{\mathbf u}_1,...,{\mathbf u}_{p_2} \}\subseteq \bbR^{n},
\end{equation}
where ${\mathbf u}_\ell = {\mathbf u}_{\ell,n}(\omega)$ denotes the $\ell$-th column of the random matrix
\begin{equation}\label{e:Un_sim_Haar}
{\mathbf U} \equiv {\mathbf U}_{n} \sim \textnormal{Haar}(O(n)).
\end{equation}
The following claims hold.
\begin{itemize}
\item [$(i)$]
\begin{equation}\label{e:Euclidean_and_Haar_vectors_are_LI}
\textnormal{dim}\big(\textnormal{span}\{ \mathbf{e}_1,\hdots,\mathbf{e}_{p_1}, \mathbf{u}_1,\hdots \mathbf{u}_{p_2}\}\big)=p \quad   a.s.
\end{equation}
\item [$(ii)$] For any (random) sequence of unit vectors $\{\boldsymbol{v}_{n}\}_{n \in \bbN} \subseteq  \textnormal{span}\{ \mathbf{e}_1,\hdots,\mathbf{e}_{p_1}, \mathbf{u}_1,\hdots \mathbf{u}_{p_2}\}$, almost surely there exist sequences of vectors $\boldsymbol{v}_{\mathcal{E},n} \in \mathcal{E}$, $\boldsymbol{v}_{\mathcal{U},n} \in \mathcal{U}$ based on which we can write the direct sum
    \begin{equation}\label{e:v=vE+vU}
    \boldsymbol{v}_n = \boldsymbol{v}_{\mathcal{E},n} + \boldsymbol{v}_{\mathcal{U},n},
    \end{equation}
such that
\begin{equation}\label{e:basis_lower_bound}
\|\boldsymbol{v}_{\mathcal{E},n} \|^2+ \| \boldsymbol{v}_{\mathcal{U},n}\|^2 \geq 1 - o_{\bbP}(1)
\end{equation}
and
\begin{equation}\label{e:basis_upper_bound}
\max\{\|\boldsymbol{v}_{\mathcal{E},n} \|^2, \| \boldsymbol{v}_{\mathcal{U},n}\|^2 \}\leq 1 + o_{\bbP}(1).
\end{equation}
\end{itemize}
\end{lemma}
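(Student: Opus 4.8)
The plan is to reduce both parts to a single quantitative fact: the minimal principal angle between the fixed coordinate subspace $\mathcal{E}$ and the random subspace $\mathcal{U}$ tends to $\pi/2$ in probability; equivalently, the operator norm of the truncated Haar matrix $\mathbf{U}_{\textnormal{trunc}} := (\mathbf{U}_{ij})_{1\le i\le p_1,\,1\le j\le p_2}$ (the top-left block of $\mathbf{U}$) satisfies $\|\mathbf{U}_{\textnormal{trunc}}\|_{\textnormal{op}} \stackrel{\bbP}\to 0$. Throughout, write $\mathbf{E} = [\mathbf{e}_1\,|\,\cdots\,|\,\mathbf{e}_{p_1}]$ and $\mathbf{V} = [\mathbf{u}_1\,|\,\cdots\,|\,\mathbf{u}_{p_2}]$, and note $\mathbf{E}^*\mathbf{E}=I_{p_1}$, $\mathbf{V}^*\mathbf{V}=I_{p_2}$ (the $\mathbf{u}_j$ being columns of an orthogonal matrix), while $\mathbf{E}^*\mathbf{V}=\mathbf{U}_{\textnormal{trunc}}$.

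For part $(i)$, set $\mathbf{M}=[\mathbf{E}\,|\,\mathbf{V}] \in {\mathcal M}(n,p,\bbR)$; then \eqref{e:Euclidean_and_Haar_vectors_are_LI} is the statement $\textnormal{rank}(\mathbf{M})=p$ a.s., equivalently $\det(\mathbf{M}^*\mathbf{M})\neq 0$ a.s. By the block structure above and the Schur complement, $\det(\mathbf{M}^*\mathbf{M})=\det(I_{p_2}-\mathbf{U}_{\textnormal{trunc}}^*\mathbf{U}_{\textnormal{trunc}})$. I would condition on the realized pair $(p_1,p_2)=(q_1,q_2)$, with $q_1+q_2=p\le n$: this determinant is then a polynomial, hence real-analytic, function of the first $q_2$ columns of $\mathbf{U}$, which range over the connected Stiefel manifold $V_{q_2}(\bbR^n)$, and it is not identically zero there, since $\mathbf{u}_i=\mathbf{e}_{q_1+i}$ gives $\mathbf{U}_{\textnormal{trunc}}=0$ and determinant $1$. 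As the conditional law of the first $q_2$ columns of $\mathbf{U}$ is absolutely continuous with respect to the volume measure on $V_{q_2}(\bbR^n)$, the zero set of a nonvanishing analytic function is null; averaging over the finitely many values of $(p_1,p_2)$ yields \eqref{e:Euclidean_and_Haar_vectors_are_LI}.

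For part $(ii)$, work on the probability-one event of $(i)$, so that $\mathcal{E}+\mathcal{U}$ is a direct sum and the decomposition \eqref{e:v=vE+vU} is uniquely determined. Put $\varepsilon_n := \sup\{|\langle \mathbf{x},\mathbf{y}\rangle|: \mathbf{x}\in\mathcal{E},\ \mathbf{y}\in\mathcal{U},\ \|\mathbf{x}\|=\|\mathbf{y}\|=1\}$; expanding $\mathbf{x},\mathbf{y}$ in the orthonormal systems $\{\mathbf{e}_i\}$, $\{\mathbf{u}_j\}$ identifies $\varepsilon_n = \|\mathbf{U}_{\textnormal{trunc}}\|_{\textnormal{op}}$. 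Since $\mathbf{U}_{\textnormal{trunc}}$ is a submatrix of the $p\times p$ block $\mathbf{U}_{\textnormal{trunc},p}:=(\mathbf{U}_{ij})_{1\le i,j\le p}$, which does not depend on $(p_1,p_2)$, we get $\varepsilon_n \le \|\mathbf{U}_{\textnormal{trunc},p}\|_{\textnormal{op}} \le \|\mathbf{U}_{\textnormal{trunc},p}\|_F$; since each row of a Haar-distributed $\mathbf{U}$ is uniform on $\bbS^{n-1}$, so $\bbE[\mathbf{U}_{ij}^2]=1/n$, we have $\bbE\|\mathbf{U}_{\textnormal{trunc},p}\|_F^2 = p^2/n \to 0$ by \eqref{e:moderatly_high_dimensions}, and Markov's inequality gives $\varepsilon_n\stackrel{\bbP}\to 0$. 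Finally, from $1=\|\boldsymbol{v}_n\|^2 = S_n + 2\langle \boldsymbol{v}_{\mathcal{E},n},\boldsymbol{v}_{\mathcal{U},n}\rangle$, where $S_n := \|\boldsymbol{v}_{\mathcal{E},n}\|^2+\|\boldsymbol{v}_{\mathcal{U},n}\|^2$, together with $|\langle \boldsymbol{v}_{\mathcal{E},n},\boldsymbol{v}_{\mathcal{U},n}\rangle| \le \varepsilon_n \|\boldsymbol{v}_{\mathcal{E},n}\|\,\|\boldsymbol{v}_{\mathcal{U},n}\| \le \tfrac{1}{2}\varepsilon_n S_n$, one obtains on the event $\{\varepsilon_n<1\}$ the sandwich $(1+\varepsilon_n)^{-1} \le S_n \le (1-\varepsilon_n)^{-1}$. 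As $\bbP(\varepsilon_n<1)\to 1$ and $(1\pm\varepsilon_n)^{-1}=1+o_{\bbP}(1)$, this yields $S_n \ge 1-o_{\bbP}(1)$, i.e.\ \eqref{e:basis_lower_bound}, and $\max\{\|\boldsymbol{v}_{\mathcal{E},n}\|^2,\|\boldsymbol{v}_{\mathcal{U},n}\|^2\}\le S_n = 1+o_{\bbP}(1)$, i.e.\ \eqref{e:basis_upper_bound}.

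The point requiring the most care is the uniformity over the possibly $\omega$-dependent sequence $\boldsymbol{v}_n$: this is exactly what the operator-norm formulation of $\varepsilon_n$ buys us, since the single random scalar $\varepsilon_n$ controls the cross term $\langle\boldsymbol{v}_{\mathcal{E},n},\boldsymbol{v}_{\mathcal{U},n}\rangle$ for all admissible $\boldsymbol{v}_n$ simultaneously. A secondary subtlety — the randomness of $p_1,p_2$ — is defused by passing to the $p\times p$ block $\mathbf{U}_{\textnormal{trunc},p}$, and it is precisely here, through $\bbE\|\mathbf{U}_{\textnormal{trunc},p}\|_F^2 = p^2/n$, that the moderately high-dimensional hypothesis $p=o(\sqrt{n})$ enters.
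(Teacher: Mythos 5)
Your proof is correct, and it departs from the paper's argument at two substantive points. For part $(i)$, the paper reduces \eqref{e:Euclidean_and_Haar_vectors_are_LI} to the statement that no $\mathbf{e}_j$ lies in $\textnormal{span}\{\mathbf{u}_1,\hdots,\mathbf{u}_{p_2}\}$ (Lemma \ref{l:two_spaces_LI_equivalence}), realizes the Haar columns as Gram--Schmidt orthonormalizations of i.i.d.\ Gaussians, and invokes Lemma \ref{l:gaussian_rv_are_LI}; your Schur-complement identity $\det(\mathbf{M}^*\mathbf{M})=\det(I_{p_2}-\mathbf{U}_{\textnormal{trunc}}^*\mathbf{U}_{\textnormal{trunc}})$, combined with the fact that the zero set of a nonvanishing real-analytic function on the (connected, since $q_2\le n-1$) Stiefel manifold is null, reaches the same conclusion more directly, with the evaluation at $\mathbf{u}_i=\mathbf{e}_{q_1+i}$ correctly ruling out identical vanishing. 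For part $(ii)$, both arguments pivot on $\|\mathbf{U}_{\textnormal{trunc},p}\|_{\textnormal{op}}\stackrel{\bbP}\to 0$ and on expanding $1=S_n+2\boldsymbol{v}_{\mathcal{E},n}^*\boldsymbol{v}_{\mathcal{U},n}$, but the paper obtains the operator-norm convergence \eqref{e:||U_trunc,p||=oP(1)} by citing Jiang (2008), Theorem 3, whereas you derive it from scratch via $\|\cdot\|_{\textnormal{op}}\le\|\cdot\|_F$ and $\bbE\|\mathbf{U}_{\textnormal{trunc},p}\|_F^2=p^2/n\to 0$ --- an elementary, self-contained substitute that makes transparent exactly why $p=o(\sqrt{n})$ is the natural threshold for this route (cf.\ the discussion of \eqref{e:p=o(sqrt(n/a(n)))} in Section \ref{s:conclusion}). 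Your subsequent algebra (the bound $\|\boldsymbol{v}_{\mathcal{E},n}\|\,\|\boldsymbol{v}_{\mathcal{U},n}\|\le S_n/2$ and the sandwich $(1+\varepsilon_n)^{-1}\le S_n\le(1-\varepsilon_n)^{-1}$ on $\{\varepsilon_n<1\}$) replaces the paper's self-referential inequality \eqref{e:angle_bound} and is, if anything, cleaner, while still controlling all admissible $\boldsymbol{v}_n$ simultaneously through the single scalar $\varepsilon_n$. One minor shared loose end: in part $(i)$ you assert that, given $(p_1,p_2)=(q_1,q_2)$, the conditional law of the first $q_2$ columns of $\mathbf{U}$ is absolutely continuous, although the lemma does not postulate independence of $\mathbf{U}$ from $(p_1,p_2)$; since the unconditional event that $\mathbf{e}_1,\hdots,\mathbf{e}_{q_1},\mathbf{u}_1,\hdots,\mathbf{u}_{q_2}$ are independent has probability one for each fixed pair, intersecting with $\{(p_1,p_2)=(q_1,q_2)\}$ and summing repairs this without conditioning --- the paper's own proof has the same structure, so this is cosmetic in both cases.
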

\begin{proof}
We start by establishing $(i)$. First, we prove \eqref{e:Euclidean_and_Haar_vectors_are_LI} conditionally on fixed values of $p_1= p_1(n)$ and $p_2= p_2(n)$. For simplicity, we use the notation $\widetilde{p}_1= \widetilde{p}_1(n)$ and $\widetilde{p}_2=\widetilde{p}_2(n)$ for these deterministic values. Note that choosing any  (deterministic number) $\widetilde{p}_2$ of columns of ${\mathbf U}$ is equal, in law, to the following procedure: $(i)$ pick $\widetilde{p}_2$ vectors $\bbR^n \supseteq \{\mathbf{Z}_i \}_{i=1,\hdots, \widetilde{p}_2 }\stackrel{\textnormal{i.i.d.}}\sim \mathcal{N}(0,I_n)$
$(ii)$ next, orthonormalize these $\widetilde{p}_2$ vectors by means of the Gram-Schmidt algorithm (cf.\ Meckes \cite{meckes:2014:women}, p.\ 8). In particular, the resulting vectors $\widetilde{{\mathbf u}}_1,...,\widetilde{{\mathbf u}}_{\widetilde{p}_2}$ satisfy
\begin{equation}\label{e:Gram-Schmidt_Haar}
\widetilde{{\mathbf u}}_1,...,\widetilde{{\mathbf u}}_{\widetilde{p}_2}
\stackrel{d}= {\mathbf u}_1,...,{\mathbf u}_{\widetilde{p}_2}.
\end{equation}
Let $\mathbf{e}_1,...,\mathbf{e}_{\widetilde{p}_1}$ be the first $\widetilde{p}_1$ Euclidean vectors (\textbf{n.b.}: $\widetilde{p}_1$ is deterministic). Then,
$$
\bbP\big(\textnormal{dim}(\textnormal{span}\{ \mathbf{e}_1,\hdots,\mathbf{e}_{\widetilde{p}_1}, \mathbf{Z}_1,\hdots \mathbf{Z}_{\widetilde{p}_2}\})=p\big) = \bbP\Big( \bigcap_{j=1}^{\widetilde{p}_1}\{\mathbf{e}_j\notin \textnormal{span}\{\mathbf{Z}_1,\hdots,\mathbf{Z}_{\widetilde{p}_2}\}\Big)
$$
$$
= \bbP\Big( \bigcap_{j=1}^{\widetilde{p}_1}\{\mathbf{e}_j\notin \textnormal{span}\{\widetilde{{\mathbf u}}_1,...,\widetilde{{\mathbf u}}_{\widetilde{p}_2}\}\Big) =
\bbP\Big( \bigcap_{j=1}^{\widetilde{p}_1}\{\mathbf{e}_j\notin \textnormal{span}\{{\mathbf u}_1,...,{\mathbf u}_{\widetilde{p}_2}\}\Big)
$$
$$
=\bbP\big(\textnormal{dim}(\textnormal{span}\{ \mathbf{e}_1,\hdots,\mathbf{e}_{\widetilde{p}_1},{\mathbf u}_1,...,{\mathbf u}_{\widetilde{p}_2}\})=p\big),
$$
where the first and fourth equalities follow from Lemma \ref{l:two_spaces_LI_equivalence}, the second equality follows from the Gram-Schmidt algorithm, and the third equality is a consequence of \eqref{e:Gram-Schmidt_Haar}. Since $\textnormal{dim}(\textnormal{span}\{\mathbf{e}_1,..,\mathbf{e}_{\widetilde{p}_1}\}) = \widetilde{p}_1$ and by Lemma \ref{l:gaussian_rv_are_LI}, $\textnormal{dim}(\textnormal{span}\{ \mathbf{Z}_1,\hdots \mathbf{Z}_{\widetilde{p}_2}\})=\widetilde{p}_2$ a.s., we now consider the case where  $\widetilde{p}_2 \neq 0$. In particular, for $\widetilde{p}_1, \widetilde{p}_2 \neq 0 $, a simple adaptation of the proof of Lemma \ref{l:gaussian_rv_are_LI} yields $ \bbP\big(\textnormal{dim}(\textnormal{span}\{ \mathbf{e}_1,\hdots,\mathbf{e}_{\widetilde{p}_1}, \mathbf{Z}_1,\hdots \mathbf{Z}_{\widetilde{p}_2}\})=\widetilde{p}_1 + \widetilde{p}_2 = p\big) = 1$. Since this is true for any fixed pair $\widetilde{p}_1$, $\widetilde{p}_2$, then \eqref{e:Euclidean_and_Haar_vectors_are_LI} also holds unconditionally. Thus, $(i)$ is established.

We now turn to $(ii)$. The almost sure existence of the decomposition \eqref{e:v=vE+vU} is a consequence of \eqref{e:Euclidean_and_Haar_vectors_are_LI}. Now, for any $\boldsymbol{v}_{\mathcal{E},n}$ and $\boldsymbol{v}_{\mathcal{U},n}$ as in \eqref{e:v=vE+vU}, observe that $ 1 = \| \boldsymbol{v}_n\|^2 = \|\boldsymbol{v}_{\mathcal{E},n} \|^2+ \| \boldsymbol{v}_{\mathcal{U},n}\|^2  + 2\boldsymbol{v}_{\mathcal{E},n}^*\boldsymbol{v}_{\mathcal{U},n}.$ Hereinafter, we write $\boldsymbol{v}_{\mathcal{E}}= \boldsymbol{v}_{\mathcal{E},n}$ and $\boldsymbol{v}_{\mathcal{U}} = \boldsymbol{v}_{\mathcal{U},n}$ for notational simplicity. From this, we obtain the almost sure upper and lower bounds
$$
\|\boldsymbol{v}_{\mathcal{E}} \|^2+ \| \boldsymbol{v}_{\mathcal{U}}\|^2  + 2| \boldsymbol{v}_{\mathcal{E}}^*\boldsymbol{v}_{\mathcal{U}}| \geq 1 \geq \|\boldsymbol{v}_{\mathcal{E}} \|^2+ \| \boldsymbol{v}_{\mathcal{U}}\|^2  - 2| \boldsymbol{v}_{\mathcal{E}}^*\boldsymbol{v}_{\mathcal{U}}|.
$$
Hence,
\begin{equation}\label{e:basis_bounds}
    \|\boldsymbol{v}_{\mathcal{E}} \|^2+ \| \boldsymbol{v}_{\mathcal{U}}\|^2  \geq 1 - 2| \boldsymbol{v}_{\mathcal{E}}^*\boldsymbol{v}_{\mathcal{U}}| \quad \textnormal{and} \quad \max\{\|\boldsymbol{v}_{\mathcal{E}} \|^2, \| \boldsymbol{v}_{\mathcal{U}}\|^2\} \leq 1 +  2| \boldsymbol{v}_{\mathcal{E}}^*\boldsymbol{v}_{\mathcal{U}}|.
\end{equation}
Therefore, from \eqref{e:basis_bounds}, to establish  \eqref{e:basis_lower_bound} and \eqref{e:basis_upper_bound}, it suffices to show that
\begin{equation}\label{e:basis_angles_vanish}
\boldsymbol{v}_{\mathcal{E}}^*\boldsymbol{v}_{\mathcal{U}} = o_{\bbP}(1).
\end{equation}
So, let
\begin{equation}\label{e:U-trunc}
{\mathbf U}_{\textnormal{trunc}}\in {\mathcal M}(p_1,p_2,\bbR)
\end{equation}
be the upper $p_1 \times p_2$ corner of the random matrix ${\mathbf U}_{n}$ as in \eqref{e:Un_sim_Haar}, and consider the basis expansions
\begin{equation}\label{e:basis_expansions}
{\boldsymbol v}_{\mathcal{E}} =\sum_{i=1}^{p_1} \alpha_i {\mathbf e}_i, \quad {\boldsymbol v}_{\mathcal{U}} =\sum_{\ell=1}^{p_2} \beta_{\ell} {\mathbf u}_{\ell}
\end{equation}
(\textbf{n.b.}: the coefficients in \eqref{e:basis_expansions} are random and depend on $n$). Then,
\begin{equation}\label{e:v_E*v_U}
\boldsymbol{v}_{\mathcal{E}}^* \boldsymbol{v}_{\mathcal{U}} =\sum_{i=1}^{p_1} \sum_{\ell=1}^{p_2} \alpha_i \mathbf{e}_i^*\mathbf{u}_{\ell} \beta_{\ell} =
\begin{bmatrix}
\alpha_1& \hdots & \alpha_{p_1}
\end{bmatrix}
\begin{bmatrix}
\mathbf{e}_1^*\mathbf{u}_1 & \hdots & \mathbf{e}_1^*\mathbf{u}_{p_2}\\
\vdots\\
\mathbf{e}_{p_1}^*\mathbf{u}_1 & \hdots & \mathbf{e}_{p_1}^*\mathbf{u}_{p_2}
\end{bmatrix}
\begin{bmatrix}
\beta_1\\
\vdots \\
\beta_{p_2}
\end{bmatrix}
= \boldsymbol{\alpha}^* {\mathbf U}_{\textnormal{trunc}} \boldsymbol{\beta},
\end{equation}
where $\boldsymbol\alpha \in \bbR^{p_1}$ and $\boldsymbol\beta \in \bbR^{p_2}$ are vectors of coefficients. Note that, by \eqref{e:basis_expansions},
\begin{equation}\label{e:||alpha||,||beta||}
\|\boldsymbol\alpha\| =\| \boldsymbol{v}_{\mathcal{E}}\|, \quad \|\boldsymbol\beta\| =\| \boldsymbol{v}_{\mathcal{U}}\|.
\end{equation}
Now let
\begin{equation}\label{e:U-trunc,p}
{\mathbf U}_{\textnormal{trunc},p} \in {\mathcal M}(p,\bbR)
\end{equation}
be the upper $p \times p$ corner  ($p \leq n$) of the random matrix ${\mathbf U}_{n}$ as in \eqref{e:Un_sim_Haar}. In particular, since $\max\{p_1,p_2\}\leq p$, the matrix ${\mathbf U}_{\textnormal{trunc}} \in {\mathcal M}(p_1,p_2,\bbR)$ is embedded in the matrix ${\mathbf U}_{\textnormal{trunc},p} \in {\mathcal M}(p,\bbR)$. Hence,
\begin{equation}\label{e:||U_trunc||=<||U_trunc,p||}
\|{\mathbf U}_{\textnormal{trunc}}\|_{\op} \leq \|{\mathbf U}_{\textnormal{trunc},p}\|_{\op}.
\end{equation}
Then, by relation \eqref{e:v_E*v_U} and the Cauchy-Schwarz inequality, as well as by relations \eqref{e:basis_bounds} and \eqref{e:||U_trunc||=<||U_trunc,p||},
$$
| \boldsymbol{v}_{\mathcal{E}}^*\boldsymbol{v}_{\mathcal{U}}|\leq \|\boldsymbol{v}_{\mathcal{E}}\|\cdot \|\boldsymbol{v}_{\mathcal{U}}\| \cdot \|{\mathbf U}_{\textnormal{trunc}}\|_{\textnormal{op}}
$$
\begin{equation}\label{e:angle_bound}
\leq \|{\mathbf U}_{\textnormal{trunc}}\|_{\textnormal{op}} \cdot (1 + 2| \boldsymbol{v}_{\mathcal{E}}^*\boldsymbol{v}_{\mathcal{U}}| )
 \leq \|{\mathbf U}_{\textnormal{trunc,p}}\|_{\textnormal{op}} \cdot (1 + 2| \boldsymbol{v}_{\mathcal{E}}^*\boldsymbol{v}_{\mathcal{U}}| ).
\end{equation}
Consequently, we can write $|\boldsymbol{v}_{\mathcal{E}}^*\boldsymbol{v}_{\mathcal{U}}| (1 - 2 \|{\mathbf U}_{\textnormal{trunc,p}}\|_{\textnormal{op}}) \leq \|{\mathbf U}_{\textnormal{trunc},p}\|_{\textnormal{op}}$. However, by Jiang \cite{jiang:2008}, Theorem 3, under $p = o(\sqrt{n})$ (see condition \eqref{e:moderatly_high_dimensions}),
\begin{equation}\label{e:||U_trunc,p||=oP(1)}
\|{\mathbf U}_{\textnormal{trunc},p}\|_{\textnormal{op}} = o_{\bbP}(1).
\end{equation}
So, fix $0 < \delta < 1$ and let $\Omega_{n,\delta}= \{\omega: \|{\mathbf U}_{\textnormal{trunc},p}\|_{\textnormal{op}} < \delta /2\}$. Then, for any $\varepsilon > 0$, relations \eqref{e:angle_bound} and \eqref{e:||U_trunc,p||=oP(1)} imply that
$$
\bbP\big( | \boldsymbol{v}_{\mathcal{E}}^*\boldsymbol{v}_{\mathcal{U}}| > \varepsilon \big)
= \bbP\big( | \boldsymbol{v}_{\mathcal{E}}^*\boldsymbol{v}_{\mathcal{U}}| > \varepsilon \cap \Omega_{n,\delta} \big)
+ \bbP\big( | \boldsymbol{v}_{\mathcal{E}}^*\boldsymbol{v}_{\mathcal{U}}| > \varepsilon \cap A^c_{n,\delta} \big)
$$
$$
\leq \bbP\big( \|{\mathbf U}_{\textnormal{trunc},p}\|_{\textnormal{op}} > \varepsilon (1-\delta)\big) + o(1) = o(1).
$$
This shows \eqref{e:basis_angles_vanish}. Hence, $(ii)$ is established.  $\Box$\\
 \end{proof}

The next proposition is used in the proofs of Propositions \ref{p:Sum_of_deformed_wisharts} and \ref{p:k_sum_of_deformed_wisharts}. Note that, in the statement and proof, we make use of expressions appearing in Lemma \ref{l:orthogonal_spaces}. The proposition shows that, for random $p_1, p_2 \in \bbN$, a rank $p_1$ positive diagonal matrix plus a rank $p_2$ symmetric positive definite perturbation tends to have rank $p_1 + p_2$.
\begin{proposition}\label{p:D_1_D_2_sum_bound}
For any $n \in \bbN$ and $1 \leq p = p(n) \leq n$, suppose that $p_i = p_i(n,\omega) \in \bbN$, $i=1,2$, are two sequences of random variables
satisfying \eqref{e:moderatly_high_dimensions}. Let
\begin{equation}\label{e:D1_D2_def}
{\mathbf D}_1 \equiv {\mathbf D}_{1,n}=\textnormal{diag}( \zeta_{1,n},...,\zeta_{p_1,n},0,...,0),\quad  {\mathbf D}_2 \equiv {\mathbf D}_{2,n}=\textnormal{diag}( \eta_{1,n},...,\eta_{p_2,n},0,...0)
\end{equation}
be two independent sequences of random matrices in ${\mathcal S}_{\geq 0}(n,\bbR)$ with
$$
0 < \zeta_{1,n} \leq \hdots \leq \zeta_{p_1,n}\quad and \quad 0 < \eta_{1,n} \leq \hdots \leq \eta_{p_2,n} \quad a.s.
$$
Further assume that there exist positive constants $\zeta_{\min}$, $\zeta_{\max}$, $\eta_{\min}$ and $\eta_{\max}$ such that
 \begin{equation}\label{e:eigenvalues_lower_limit}
 \zeta_{1,n}  \geq \zeta_{\min} + o_{\bbP}(1)  \quad \textnormal{and} \quad \eta_{1,n} \geq \eta_{\min} + o_{\bbP}(1)
 \end{equation}
 and
 \begin{equation}\label{e:eigenvalues_upper_limit}
\zeta_{p_1,n}  \leq \zeta_{\max} + o_{\bbP}(1)   \quad \textnormal{and} \quad \eta_{p_2,n} \leq  \eta_{\max} + o_{\bbP}(1).
 \end{equation}
Now let ${\mathbf U} \equiv {\mathbf U}_n \sim \textnormal{Haar}(O(n))$ be a sequence of random matrices which are independent of ${\mathbf D}_1={\mathbf D}_{1,n}$ and ${\mathbf D}_2={\mathbf D}_{2,n}$. Then,
\begin{equation}\label{e:diagonal_bound}
\lambda_{n-p+1}\big({\mathbf D}_1 + {\mathbf U} {\mathbf D}_2 {\mathbf U}^* \big) \geq \textnormal{min}\{\eta_{\min},\zeta_{\min}\} + o_\bbP(1).
\end{equation}
\end{proposition}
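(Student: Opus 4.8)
The plan is to combine the variational characterization of eigenvalues with the approximate orthogonality of the subspaces $\mathcal{E}$ and $\mathcal{U}$ of Lemma \ref{l:orthogonal_spaces}. Write $\Pi_{\mathcal{E}}$ and $\Pi_{\mathcal{U}}$ for the orthogonal projections of $\bbR^n$ onto $\mathcal{E}=\textnormal{span}\{\mathbf{e}_1,\dots,\mathbf{e}_{p_1}\}$ and $\mathcal{U}=\textnormal{span}\{\mathbf{u}_1,\dots,\mathbf{u}_{p_2}\}$, where $\mathbf{u}_\ell$ is the $\ell$-th column of $\mathbf{U}$. By the Courant--Fischer min-max theorem (with the convention $\lambda_1\le\cdots\le\lambda_n$), for any $\mathbf{M}\in{\mathcal S}(n,\bbR)$ one has $\lambda_{n-p+1}(\mathbf{M})=\max_{\dim V=p}\ \min_{v\in V,\ \|v\|=1}v^*\mathbf{M}v$. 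Since $\dim(\mathcal{E}+\mathcal{U})=p$ almost surely by Lemma \ref{l:orthogonal_spaces}$(i)$ (condition \eqref{e:moderatly_high_dimensions} being assumed here), the admissible choice $V=\mathcal{E}+\mathcal{U}$ produces the almost sure lower bound
$$\lambda_{n-p+1}\big(\mathbf{D}_1+\mathbf{U}\mathbf{D}_2\mathbf{U}^*\big)\ \ge\ \min_{v\in\mathcal{E}+\mathcal{U},\ \|v\|=1} v^*\big(\mathbf{D}_1+\mathbf{U}\mathbf{D}_2\mathbf{U}^*\big)v.$$

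Next I would bound the quadratic form from below in the positive semidefinite order. From \eqref{e:D1_D2_def} and the orderings $0<\zeta_{1,n}\le\cdots\le\zeta_{p_1,n}$ and $0<\eta_{1,n}\le\cdots\le\eta_{p_2,n}$ one has $\mathbf{D}_1\succeq\zeta_{1,n}\,\Pi_{\mathcal{E}}$, while $\mathbf{D}_2\succeq\eta_{1,n}\,\textnormal{diag}(1,\dots,1,0,\dots,0)$ ($p_2$ ones) gives $\mathbf{U}\mathbf{D}_2\mathbf{U}^*\succeq\eta_{1,n}\,\Pi_{\mathcal{U}}$, because $\mathbf{u}_1,\dots,\mathbf{u}_{p_2}$ are orthonormal. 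Using $v^*\Pi_{\mathcal{E}}v=\|\Pi_{\mathcal{E}}v\|^2$ and $v^*\Pi_{\mathcal{U}}v=\|\Pi_{\mathcal{U}}v\|^2$, it follows that for every unit vector $v$,
$$v^*\big(\mathbf{D}_1+\mathbf{U}\mathbf{D}_2\mathbf{U}^*\big)v\ \ge\ \min\{\zeta_{1,n},\eta_{1,n}\}\,\big(\|\Pi_{\mathcal{E}}v\|^2+\|\Pi_{\mathcal{U}}v\|^2\big),$$
and, by \eqref{e:eigenvalues_lower_limit}, $\min\{\zeta_{1,n},\eta_{1,n}\}\ge\min\{\zeta_{\min},\eta_{\min}\}+o_{\bbP}(1)\ge 0$. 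Thus the proposition reduces to the \emph{uniform} geometric estimate $\inf\{\|\Pi_{\mathcal{E}}v\|^2+\|\Pi_{\mathcal{U}}v\|^2:\ v\in\mathcal{E}+\mathcal{U},\ \|v\|=1\}\ge 1-o_{\bbP}(1)$, since pulling the nonnegative factor $\min\{\zeta_{1,n},\eta_{1,n}\}$ out of the minimum and invoking \eqref{e:eigenvalues_lower_limit} then gives \eqref{e:diagonal_bound}.

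To establish the geometric estimate I would fix a unit $v\in\mathcal{E}+\mathcal{U}$ and use the almost surely unique direct-sum decomposition $v=v_{\mathcal{E}}+v_{\mathcal{U}}$, $v_{\mathcal{E}}\in\mathcal{E}$, $v_{\mathcal{U}}\in\mathcal{U}$ (whose existence a.s.\ is guaranteed by Lemma \ref{l:orthogonal_spaces}$(i)$). From $\Pi_{\mathcal{E}}v=v_{\mathcal{E}}+\Pi_{\mathcal{E}}v_{\mathcal{U}}$ and $\langle v_{\mathcal{E}},\Pi_{\mathcal{E}}v_{\mathcal{U}}\rangle=\langle v_{\mathcal{E}},v_{\mathcal{U}}\rangle$ (self-adjointness and idempotency of $\Pi_{\mathcal{E}}$, with $\Pi_{\mathcal{E}}v_{\mathcal{E}}=v_{\mathcal{E}}$), expanding the norm gives $\|\Pi_{\mathcal{E}}v\|^2\ge\|v_{\mathcal{E}}\|^2+2\langle v_{\mathcal{E}},v_{\mathcal{U}}\rangle$, and symmetrically $\|\Pi_{\mathcal{U}}v\|^2\ge\|v_{\mathcal{U}}\|^2+2\langle v_{\mathcal{E}},v_{\mathcal{U}}\rangle$; adding these and substituting $\|v_{\mathcal{E}}\|^2+\|v_{\mathcal{U}}\|^2=1-2\langle v_{\mathcal{E}},v_{\mathcal{U}}\rangle$ (from $\|v\|=1$) yields $\|\Pi_{\mathcal{E}}v\|^2+\|\Pi_{\mathcal{U}}v\|^2\ge 1+2\langle v_{\mathcal{E}},v_{\mathcal{U}}\rangle\ge 1-2|\langle v_{\mathcal{E}},v_{\mathcal{U}}\rangle|$. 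It then suffices to see that $\sup_v|\langle v_{\mathcal{E}},v_{\mathcal{U}}\rangle|=o_{\bbP}(1)$ uniformly over the unit sphere of $\mathcal{E}+\mathcal{U}$: expanding $v_{\mathcal{E}},v_{\mathcal{U}}$ in the bases $\{\mathbf{e}_i\}$, $\{\mathbf{u}_\ell\}$ shows $|\langle v_{\mathcal{E}},v_{\mathcal{U}}\rangle|\le\|\mathbf{U}_{\textnormal{trunc},p}\|_{\op}\,\|v_{\mathcal{E}}\|\,\|v_{\mathcal{U}}\|$ (cf.\ \eqref{e:v_E*v_U} and \eqref{e:||U_trunc||=<||U_trunc,p||}), and combining this with $\|v_{\mathcal{E}}\|\,\|v_{\mathcal{U}}\|\le\frac{1}{2}+|\langle v_{\mathcal{E}},v_{\mathcal{U}}\rangle|$ (again from $\|v\|=1$) forces $|\langle v_{\mathcal{E}},v_{\mathcal{U}}\rangle|\le\|\mathbf{U}_{\textnormal{trunc},p}\|_{\op}/\big(2-2\|\mathbf{U}_{\textnormal{trunc},p}\|_{\op}\big)$ on the event $\{\|\mathbf{U}_{\textnormal{trunc},p}\|_{\op}<1/2\}$; the required $v$-free vanishing then follows from \eqref{e:||U_trunc,p||=oP(1)}. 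Chaining the three displays yields \eqref{e:diagonal_bound}.

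\emph{Main obstacle.} The delicate point is this last step: upgrading the ``one subspace at a time'' near-orthogonality of $\mathcal{E}$ and $\mathcal{U}$ — which ultimately rests on Jiang's bound $\|\mathbf{U}_{\textnormal{trunc},p}\|_{\op}\stackrel{\bbP}\to 0$, valid in the moderately high-dimensional regime $p=o(\sqrt n)$ of \eqref{e:moderatly_high_dimensions} — into a bound that is uniform over all unit vectors of the random $p$-dimensional subspace $\mathcal{E}+\mathcal{U}$, while also handling the fact that the splitting $v=v_{\mathcal{E}}+v_{\mathcal{U}}$ is oblique rather than orthogonal. By contrast, the min-max reduction and the semidefinite domination are routine.
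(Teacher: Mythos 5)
Your argument is correct, and while it shares the paper's skeleton (Courant--Fischer with the test subspace ${\mathcal V}=\mathcal{E}\oplus\mathcal{U}$, whose dimension equals $p$ a.s.\ by Lemma \ref{l:orthogonal_spaces}$(i)$, followed by control of the $\mathcal{E}$--$\mathcal{U}$ interaction through $\|{\mathbf U}_{\textnormal{trunc},p}\|_{\op}=o_\bbP(1)$), the way you handle the quadratic form is genuinely different. The paper expands $(\boldsymbol{v}_\mathcal{E}+\boldsymbol{v}_\mathcal{U})^*({\mathbf D}_1+{\mathbf U}{\mathbf D}_2{\mathbf U}^*)(\boldsymbol{v}_\mathcal{E}+\boldsymbol{v}_\mathcal{U})$ into six terms as in \eqref{e:v^T(D1+UD2U*)v_basic_bound}, discards two nonnegative ones, keeps the two diagonal terms bounded below via $\zeta_{1,n},\eta_{1,n}$ together with \eqref{e:basis_lower_bound}, and must then show that the oblique cross terms $\boldsymbol{v}_\mathcal{U}^*{\mathbf D}_1\boldsymbol{v}_\mathcal{E}$ and $\boldsymbol{v}_\mathcal{E}^*{\mathbf U}{\mathbf D}_2{\mathbf U}^*\boldsymbol{v}_\mathcal{U}$ are $o_\bbP(1)$ --- a step that consumes the upper bounds \eqref{e:eigenvalues_upper_limit} on $\zeta_{p_1,n}$ and $\eta_{p_2,n}$ (see \eqref{e:cross_term_d_1} and \eqref{e:cross_term_d_2}). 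You instead invoke the semidefinite domination ${\mathbf D}_1+{\mathbf U}{\mathbf D}_2{\mathbf U}^*\succeq \zeta_{1,n}\Pi_{\mathcal E}+\eta_{1,n}\Pi_{\mathcal U}$, so no cross terms ever appear and the whole problem collapses to the geometric estimate $\inf_{v}\big(\|\Pi_{\mathcal E}v\|^2+\|\Pi_{\mathcal U}v\|^2\big)\geq 1-o_\bbP(1)$, which you derive from the same truncated-Haar bound. Your inequalities check out: $\|\Pi_{\mathcal E}v\|^2\geq\|v_{\mathcal E}\|^2+2\langle v_{\mathcal E},v_{\mathcal U}\rangle$ and its $\mathcal{U}$-counterpart, combined with $\|v_{\mathcal E}\|^2+\|v_{\mathcal U}\|^2=1-2\langle v_{\mathcal E},v_{\mathcal U}\rangle$, give $\|\Pi_{\mathcal E}v\|^2+\|\Pi_{\mathcal U}v\|^2\geq 1-2|\langle v_{\mathcal E},v_{\mathcal U}\rangle|$, and your self-improving bound $|\langle v_{\mathcal E},v_{\mathcal U}\rangle|\leq\|{\mathbf U}_{\textnormal{trunc},p}\|_{\op}\big(\tfrac12+|\langle v_{\mathcal E},v_{\mathcal U}\rangle|\big)$ yields a $v$-free estimate. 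Your route buys two things: it never uses \eqref{e:eigenvalues_upper_limit} (so the proposition holds under slightly weaker hypotheses), and it makes explicit the uniformity over the unit sphere of $\mathcal{E}\oplus\mathcal{U}$, which the paper's presentation leaves implicit (its bounds do depend only on $\|{\mathbf U}_{\textnormal{trunc},p}\|_{\op}$ and the extreme diagonal entries, so uniformity holds there too, but it is not flagged). The paper's version, in exchange, is the one that generalizes more directly to the iterated conjugation arguments of Proposition \ref{p:k_sum_of_deformed_wisharts}, where the ``${\mathbf D}_2$'' block is no longer a scaled projection.
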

\begin{proof}
Starting from the left-hand side of \eqref{e:diagonal_bound}, note that ${\mathbf D}_1 + {\mathbf U} {\mathbf D}_2 {\mathbf U}^* \in {\mathcal S}_{\geq 0}(n,\bbR)$. Now let ${\mathcal V}_p\subseteq \bbR^n$ be any $p$-dimensional subspace. By the Courant-Fischer min-max theorem (e.g., Horn and Johnson \cite{horn:johnson:2012}, p.\ 236),
\begin{equation}\label{e:courant_fischer}
    \lambda_{n-p+1}\big( {\mathbf D}_1 + {\mathbf U} {\mathbf D}_2 {\mathbf U}^* \big) \geq \inf_{\boldsymbol{v}\in {\mathcal V}_p \cap \bbS^{n-1}} \boldsymbol{v}^* ({\mathbf D}_1 + {\mathbf U} {\mathbf D}_2 {\mathbf U}^*)\boldsymbol{v}.
\end{equation}
Let ${\mathcal E}$ and ${\mathcal U}$ be as in \eqref{e:mathcalE} and \eqref{e:mathcalU}, respectively. Define ${\mathcal V}:= \mathcal{E} \oplus \mathcal{U}$. By Lemma \ref{l:orthogonal_spaces}, $\textnormal{dim}\hspace{0.5mm}{\mathcal V} = p$ a.s. Also, by the same lemma, for any $\boldsymbol{v} \in {\mathcal V}$ we can almost surely find $\boldsymbol{v}_\mathcal{E}  \in \mathcal{E}$ and $\boldsymbol{v}_\mathcal{U}  \in \mathcal{U}$ such that $\boldsymbol{v} =\boldsymbol{v}_\mathcal{E} +\boldsymbol{v}_\mathcal{U}$. Thus,
$$
\boldsymbol{v}^*({\mathbf D}_1 + {\mathbf U} {\mathbf D}_2 {\mathbf U}^*) \boldsymbol{v} = (\boldsymbol{v}_\mathcal{E}^* +\boldsymbol{v}_\mathcal{U}^*)({\mathbf D}_1 + {\mathbf U}{\mathbf D}_2{\mathbf U}^*)(\boldsymbol{v}_\mathcal{E} +\boldsymbol{v}_\mathcal{U})
$$
\begin{equation}\label{e:v^T(D1+UD2U*)v_basic_bound}
=\boldsymbol{v}_\mathcal{E}^* {\mathbf D}_1 \boldsymbol{v}_\mathcal{E} + 2 \boldsymbol{v}_\mathcal{E}^* {\mathbf D}_1 \boldsymbol{v}_\mathcal{U} + \boldsymbol{v}_\mathcal{E}^* {\mathbf U}{\mathbf D}_2{\mathbf U}^* \boldsymbol{v}_\mathcal{E} + 2 \boldsymbol{v}_\mathcal{E}^* {\mathbf U}{\mathbf D}_2{\mathbf U}^*\boldsymbol{v}_\mathcal{U} +
\boldsymbol{v}_\mathcal{U}^* {\mathbf D}_1 \boldsymbol{v}_\mathcal{U} +
\boldsymbol{v}_\mathcal{U}^* {\mathbf U}{\mathbf D}_2{\mathbf U}^*\boldsymbol{v}_\mathcal{U}.
\end{equation}
In light of \eqref{e:D1_D2_def}, since ${\mathbf D}_1, {\mathbf U}{\mathbf D}_2{\mathbf U}^* \in{\mathcal S}_{\geq 0}(n,\bbR)$, then
\begin{equation}\label{e:quad_D1_quad_UD2U*_bound}
\boldsymbol{v}_\mathcal{E}^* {\mathbf D}_1 \boldsymbol{v}_\mathcal{E} \geq \|\boldsymbol{v}_{\mathcal{E}}\|^2 \zeta_{1,n},\quad \boldsymbol{v}_\mathcal{U}^* {\mathbf U} {\mathbf D}_2 {\mathbf U}^* \boldsymbol{v}_\mathcal{U} \geq \|\boldsymbol{v}_{\mathcal{U}}\|^2 \eta_{1,n}.
\end{equation}
By \eqref{e:v^T(D1+UD2U*)v_basic_bound} and \eqref{e:quad_D1_quad_UD2U*_bound}, we obtain the bound
\begin{equation*}
    \boldsymbol{v}^*({\mathbf D}_1 + {\mathbf U} {\mathbf D}_2 {\mathbf U}^*) \boldsymbol{v}\geq \|\boldsymbol{v}_{\mathcal{E}}\|^2 \zeta_{1,n} + \|\boldsymbol{v}_{\mathcal{U}}\|^2 \eta_{1,n} + 2 \boldsymbol{v}_\mathcal{U}^* {\mathbf D}_1 \boldsymbol{v}_\mathcal{E}  + 2 \boldsymbol{v}_\mathcal{E}^* {\mathbf U}{\mathbf D}_2{\mathbf U}^*\boldsymbol{v}_\mathcal{U}.
\end{equation*}
\begin{equation}\label{e:quadratic_form_bounds}
\geq   \min\{\zeta_{1,n}, \eta_{1,n}\}(1 - o_{\bbP}(1)) + 2 \boldsymbol{v}_\mathcal{U}^* {\mathbf D}_1 \boldsymbol{v}_\mathcal{E}  + 2 \boldsymbol{v}_\mathcal{E}^* {\mathbf U}{\mathbf D}_2{\mathbf U}^*\boldsymbol{v}_\mathcal{U},
\end{equation}
where the last inequality follows from relation \eqref{e:basis_lower_bound} (see Lemma \ref{l:orthogonal_spaces}). Suppose, for the moment, that
\begin{equation}\label{e:vU*D1vE=oP(1)_vE*UD2U*vU=oP(1)}
\boldsymbol{v}_\mathcal{U}^* {\mathbf D}_1 \boldsymbol{v}_\mathcal{E} = o_{\bbP}(1), \quad \boldsymbol{v}_\mathcal{E}^* {\mathbf U}{\mathbf D}_2{\mathbf U}^*\boldsymbol{v}_\mathcal{U} = o_{\bbP}(1).
\end{equation}
Then, applying \eqref{e:vU*D1vE=oP(1)_vE*UD2U*vU=oP(1)} to expression \eqref{e:courant_fischer} with bound \eqref{e:quadratic_form_bounds} yields
$$
\lambda_{n-p+1}\big( {\mathbf D}_1 + {\mathbf U} {\mathbf D}_2 {\mathbf U}^* \big) \geq \min\{\zeta_{1,n}, \eta_{1,n}\}(1 - o_{\bbP}(1)) + o_{\bbP}(1).
$$
Thus, by condition \eqref{e:eigenvalues_lower_limit}, we establish \eqref{e:diagonal_bound}.

So, we need to show the two asymptotic relations in \eqref{e:vU*D1vE=oP(1)_vE*UD2U*vU=oP(1)}. We start off with the left one. Consider the basis expansions \eqref{e:basis_expansions}, in the proof of Lemma \ref{l:orthogonal_spaces}. Let ${\mathbf D}_{1,\textnormal{trunc}} := \textnormal{diag}(\zeta_{1,n},...,\zeta_{p_1,n}) \in {\mathcal S}_{\geq 0}(p_1,\bbR)$. Then,
$$
|\boldsymbol{v}_\mathcal{U}^* {\mathbf D}_1\boldsymbol{v}_\mathcal{E}| = \Big|\sum_{\ell=1}^{p_2} \sum_{i=1}^{p_1} \beta_{\ell} \alpha_i \zeta_ i {\mathbf e}_i^*{\mathbf u}_{\ell}\Big|
=\Big|
\begin{bmatrix}
\zeta_{1,n} \alpha_1& \hdots & \zeta_{p_1,n} \alpha_{p_1}
\end{bmatrix}
\begin{bmatrix}
{\mathbf e}_1^*{\mathbf u}_1 & \hdots & {\mathbf e}_1^*{\mathbf u}_{p_2}\\
\vdots\\
{\mathbf e}_{p_1}^*{\mathbf u}_1 & \hdots & {\mathbf e}_{p_1}^*{\mathbf u}_{p_2}
\end{bmatrix}
\begin{bmatrix}
\beta_1\\
\vdots \\
\beta_{p_2}
\end{bmatrix}\Big|
$$
\begin{equation}\label{e:D_1_cross_term_bound}
=|({\mathbf D}_{1,\textnormal{trunc}}\boldsymbol\alpha)^* {\mathbf U}_{\textnormal{trunc}}\boldsymbol\beta| \leq \|{\mathbf D}_{1,\textnormal{trunc}}\boldsymbol\alpha\|\cdot \|{\mathbf U}_{\textnormal{trunc}}\boldsymbol\beta\| \leq \zeta_{p_1,n} \|\boldsymbol\alpha\| \cdot  \|{\mathbf U}_{\textnormal{trunc},p}\|_{\textnormal{op}} \hspace{0.5mm}\|\boldsymbol\beta\|,
\end{equation}
where ${\mathbf U}_{\textnormal{trunc},p}$ and ${\mathbf U}_{\textnormal{trunc},p}$ are as in \eqref{e:U-trunc,p} and \eqref{e:U-trunc,p}, respectively.
In \eqref{e:D_1_cross_term_bound}, the first and second inequalities follow, respectively, from an application of the Cauchy-Schwarz inequality and of the bound \eqref{e:||U_trunc||=<||U_trunc,p||}. Therefore, by relations \eqref{e:eigenvalues_upper_limit}, \eqref{e:||alpha||,||beta||}, \eqref{e:basis_upper_bound} (see Lemma \ref{l:orthogonal_spaces}) and \eqref{e:||U_trunc,p||=oP(1)}, it follows that
\begin{equation}\label{e:cross_term_d_1}
|\boldsymbol{v}_\mathcal{U}^* {\mathbf D}_1\boldsymbol{v}_\mathcal{E}| \leq \zeta_{p_1,n} \cdot \max\{\| \boldsymbol{v}_{\mathcal{E}}\|^2,\| \boldsymbol{v}_{\mathcal{U}}\|^2\} \cdot \|{\mathbf U}_{\textnormal{trunc},p}\|_{\textnormal{op}} = o_{\bbP}(1).
\end{equation}
Hence, the left asymptotic relation in \eqref{e:vU*D1vE=oP(1)_vE*UD2U*vU=oP(1)} is proved.

We now consider the right asymptotic relation in \eqref{e:vU*D1vE=oP(1)_vE*UD2U*vU=oP(1)}. Note that, for any $\ell=1,\hdots,p_2$, ${\mathbf U}^* \mathbf{D}_2{\mathbf U} {\mathbf u}_{\ell} = \eta_j\mathbf{u}_{\ell}$. Then,
$$
|\boldsymbol{v}_\mathcal{E}^* {\mathbf U}{\mathbf D}_2{\mathbf U}^*\boldsymbol{v}_\mathcal{U}| =  \Big|\sum_{\ell=1}^{p_2} \sum_{i=1}^{p_1} \beta_{\ell} \alpha_i ({\mathbf e}_i^*{\mathbf U}){\mathbf D}_2({\mathbf U}^*{\mathbf u}_{\ell})\Big| =
\Big|\sum_{\ell=1}^{p_2} \sum_{i=1}^{p_1} \beta_{\ell} \alpha_i \eta_{\ell} {\mathbf e}_i^*{\mathbf u}_{\ell}\Big|
$$
$$
=\Big|
\begin{bmatrix}
\eta_1 \beta_1& \hdots & \eta_{p_2} \beta_{p_2}
\end{bmatrix}
\begin{bmatrix}
{\mathbf e}_1^*{\mathbf u}_1 & \hdots & {\mathbf e}_{p_1}^*{\mathbf u}_{1}\\
\vdots\\
{\mathbf e}_{1}^*{\mathbf u}_{p_2} & \hdots & {\mathbf e}_{p_1}^*{\mathbf u}_{p_2}
\end{bmatrix}
\begin{bmatrix}
\alpha_1\\
\vdots \\
\alpha_{p_1}
\end{bmatrix}\Big|
$$
$$
=|({\mathbf D}_{2,\textnormal{trunc}}\boldsymbol\beta)^* {\mathbf U}_{\textnormal{trunc}}^*\boldsymbol\alpha| \leq \|{\mathbf D}_{2,\textnormal{trunc}}\boldsymbol\beta\| \cdot \|{\mathbf U}_{\textnormal{trunc}}^*\boldsymbol\alpha\| \leq \eta_{p_2} \|\boldsymbol\alpha\| \cdot \|{\mathbf U}_{\textnormal{trunc},p}\|_{\textnormal{op}} \cdot  \|\boldsymbol\beta\|,
$$
where ${\mathbf D}_{2,\textnormal{trunc}} = \textnormal{diag}(\eta_{1,n},...,\eta_{p_2,n})$. Hence, again by relations \eqref{e:eigenvalues_upper_limit}, \eqref{e:||alpha||,||beta||}, \eqref{e:basis_upper_bound} (see Lemma \ref{l:orthogonal_spaces}) and \eqref{e:||U_trunc,p||=oP(1)}, it follows that
\begin{equation}\label{e:cross_term_d_2}
    \boldsymbol{v}_\mathcal{E}^* {\mathbf U}{\mathbf D}_2{\mathbf U}^*\boldsymbol{v}_\mathcal{U} = o_{\bbP}(1).
\end{equation}
This proves the right asymptotic relation in \eqref{e:vU*D1vE=oP(1)_vE*UD2U*vU=oP(1)}. Hence, the proposition is established.  $\Box$\\
\end{proof}

The following lemma is used in the proof of Proposition \ref{p:Sum_of_deformed_wisharts}.
\begin{lemma}\label{l:conditional_haar}
Let ${\mathbf U}_1, {\mathbf U}_2 \in O(n)$ be independent random matrices. Further assume that ${\mathbf U}_1 \sim \textnormal{Haar}(O(n))$. Then, ${\mathbf U}_1^*{\mathbf U}_2 \sim  \textnormal{Haar}(O(n)).$
\end{lemma}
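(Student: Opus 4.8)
The plan is to use the defining property of the Haar measure, namely its invariance under left (and right) multiplication by any fixed orthogonal matrix. Concretely, the goal is to show that for any fixed $g \in O(n)$, the law of ${\mathbf U}_1^*{\mathbf U}_2$ is unchanged under $V \mapsto gV$, and that this characterizes $\textnormal{Haar}(O(n))$ on the compact group $O(n)$. First I would condition on ${\mathbf U}_2$: by independence of ${\mathbf U}_1$ and ${\mathbf U}_2$, conditionally on $\{{\mathbf U}_2 = O_2\}$ the matrix ${\mathbf U}_1^*{\mathbf U}_2 = {\mathbf U}_1^* O_2$ is just a fixed right-translate of ${\mathbf U}_1^*$. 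Since ${\mathbf U}_1 \sim \textnormal{Haar}(O(n))$ and the Haar measure is invariant under inversion (equivalently, if ${\mathbf U}_1 \sim \textnormal{Haar}$ then ${\mathbf U}_1^* \sim \textnormal{Haar}$, because $O \mapsto O^*$ pushes Haar forward to Haar on a compact group), we get ${\mathbf U}_1^* \sim \textnormal{Haar}(O(n))$, and then by right-invariance ${\mathbf U}_1^* O_2 \sim \textnormal{Haar}(O(n))$.

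The key point is that this conditional law does not depend on $O_2$. Hence for every measurable $A \subseteq O(n)$ and every fixed $g \in O(n)$,
\begin{equation*}
\bbP\big({\mathbf U}_1^*{\mathbf U}_2 \in A\big) = \bbE\big[\bbP({\mathbf U}_1^* {\mathbf U}_2 \in A \mid {\mathbf U}_2)\big] = \bbE\big[\textnormal{Haar}(O(n))(A)\big] = \textnormal{Haar}(O(n))(A),
\end{equation*}
so ${\mathbf U}_1^*{\mathbf U}_2 \sim \textnormal{Haar}(O(n))$ directly, integrating out ${\mathbf U}_2$. There is essentially no obstacle here: the only facts used are (i) the left/right translation invariance of Haar measure on the compact group $O(n)$, (ii) the inversion invariance of Haar measure on a compact (in fact unimodular) group, and (iii) the tower property of conditional expectation together with independence of ${\mathbf U}_1$ and ${\mathbf U}_2$.

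If one prefers to avoid invoking inversion invariance, an equivalent route is to verify left-invariance of the law of $V := {\mathbf U}_1^*{\mathbf U}_2$ directly: for fixed $g \in O(n)$, $gV = g{\mathbf U}_1^* {\mathbf U}_2 = ({\mathbf U}_1 g^*)^* {\mathbf U}_2$, and ${\mathbf U}_1 g^* \stackrel{d}{=} {\mathbf U}_1$ by right-invariance of $\textnormal{Haar}(O(n))$, while ${\mathbf U}_1 g^*$ remains independent of ${\mathbf U}_2$; hence $gV \stackrel{d}{=} V$. Since $\textnormal{Haar}(O(n))$ is the unique left-invariant Borel probability measure on the compact group $O(n)$ (uniqueness of Haar measure on compact groups), the law of $V$ must be $\textnormal{Haar}(O(n))$. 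This second argument is arguably the cleanest and needs only right-invariance of Haar plus uniqueness; I would present the short conditioning argument as the main proof and remark on the left-invariance variant. $\Box$
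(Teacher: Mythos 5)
Your proof is correct and takes essentially the same route as the paper: condition on ${\mathbf U}_2$, use independence together with the invariance of $\textnormal{Haar}(O(n))$ to see that the conditional law of ${\mathbf U}_1^*{\mathbf U}_2$ does not depend on the value of ${\mathbf U}_2$, and integrate out. You are somewhat more explicit than the paper in isolating the inversion and right-translation invariances being used, and your left-invariance-plus-uniqueness variant is a nice optional addition, but the core argument is the same.
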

\begin{proof}
Let $\mathcal{B}$ be a set from the Borel $\sigma$-algebra defined on $O(n)$. Then, for any $\widetilde{U}_2 \in O(n)$,
\begin{equation}\label{e:conditional_haar_argument}
\bbP(  {\mathbf U}_1^* {\mathbf U}_2\in \mathcal{B} | {\mathbf U}_2 = \widetilde{U}_2) = \bbP(  {\mathbf U}_1^*\widetilde{U}_2 \in \mathcal{B} | {\mathbf U}_2 =\widetilde{U}_2 ) =
\bbP( {\mathbf U}_1^* \in \mathcal{B} | {\mathbf U}_2 = \widetilde{U}_2) = \bbP(  {\mathbf U}_1^* \in \mathcal{B}).
\end{equation}
The second equality in \eqref{e:conditional_haar_argument} follows from the fact that ${\mathbf U}_1^*\widetilde{U}_2 \sim  \textnormal{Haar}(O(n)) $ for any fixed $\widetilde{U}_2 \in O(n)$. In turn, the third equality follows from the independence of ${\mathbf U}_1$ and ${\mathbf U}_2$. Therefore,
\begin{equation*}
    \bbP(  {\mathbf U}_1^*{\mathbf U}_2 \in \mathcal{B}) = \bbE[\bbP(  {\mathbf U}_1^* {\mathbf U}_2\in \mathcal{B} | {\mathbf U}_2 )]  = \bbP(  {\mathbf U}_1^* \in \mathcal{B}).
    \end{equation*}
Therefore, ${\mathbf U}_1^*{\mathbf U}_2 \sim  \textnormal{Haar}(O(n))$, as claimed. $\Box$\\
\end{proof}

The next lemma is used in the proofs of Propositions \ref{p:Sum_of_deformed_wisharts} and \ref{p:k_sum_of_deformed_wisharts}.
\begin{lemma}\label{l:haar_basis}
For $n \in \Naturals$, let $p\in \Naturals\cup\{0\}$ be a random variable such that $p \leq n$ a.s. If $p\geq 1$, let $\mathcal{Z} \in \mathcal{M}(n, p,\bbR)$ be a random matrix with independent standard normal random entries, otherwise $\mathcal{Z} = \mathbf{0}\in\bbR^n$.  Let $\mathcal{S}_{\geq0}(n,\bbR) \ni \mathbf{D} := \textnormal{diag}\big(\lambda_1(\mathcal{Z}\mathcal{Z}^*),...,\lambda_n(\mathcal{Z}\mathcal{Z}^*)\big)$. Then,  we can write
\begin{equation}\label{e:ZZ*=UDU*}
\mathcal{Z}\mathcal{Z}^* \stackrel{d}= \mathbf{U}\mathbf{D}\mathbf{U}^*,
\end{equation}
where $\mathbf{U} \sim \textnormal{Haar}(O(n))$.
\end{lemma}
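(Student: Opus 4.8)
The plan is to reduce to a deterministic number of columns by conditioning on $p$, and then to combine the orthogonal invariance of a Gaussian matrix with the spectral theorem and the right-translation invariance of Haar measure. First I would dispose of the case $p=0$: there $\mathcal{Z}\mathcal{Z}^* = \mathbf{0} = \mathbf{D}$, so \eqref{e:ZZ*=UDU*} holds trivially with any $\mathbf{U}\in O(n)$. For the rest, I would fix a value $\widetilde p\in\{1,\hdots,n\}$ of $p$ and argue conditionally on $\{p=\widetilde p\}$, in which case $\mathcal{Z}\in\mathcal{M}(n,\widetilde p,\bbR)$ is a matrix with i.i.d.\ $\mathcal{N}(0,1)$ entries; since the conclusion (the distributional identity \eqref{e:ZZ*=UDU*} with $\mathbf{U}\sim\textnormal{Haar}(O(n))$ independent of $\mathbf{D}$) is the same for every $\widetilde p$, it will transfer to the unconditional statement by averaging over the law of $p$.

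\textbf{Orthogonal invariance.} For fixed $\widetilde p$ I would record that, for any deterministic $O\in O(n)$, each column $\mathbf{z}$ of $\mathcal{Z}$ has $O\mathbf{z}\sim\mathcal{N}(\mathbf{0},OO^*)=\mathcal{N}(\mathbf{0},I_n)$ and the columns of $O\mathcal{Z}$ stay independent, so $O\mathcal{Z}\stackrel{d}=\mathcal{Z}$ and hence $O\,\mathcal{Z}\mathcal{Z}^*\,O^* = (O\mathcal{Z})(O\mathcal{Z})^*\stackrel{d}=\mathcal{Z}\mathcal{Z}^*$. Taking $\mathbf{U}\sim\textnormal{Haar}(O(n))$ independent of $\mathcal{Z}$ and conditioning on $\mathbf{U}$ then upgrades this to
\begin{equation}\label{e:haar_conj_invariance}
\mathbf{U}\,\mathcal{Z}\mathcal{Z}^*\,\mathbf{U}^* \stackrel{d}= \mathcal{Z}\mathcal{Z}^*.
\end{equation}

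\textbf{Spectral theorem and right-invariance.} It then remains to identify $\mathbf{U}\,\mathcal{Z}\mathcal{Z}^*\,\mathbf{U}^*$ in distribution with $\mathbf{U}\mathbf{D}\mathbf{U}^*$, which I would do by conditioning on $\mathcal{Z}$. By the spectral theorem there is, for each realization of $\mathcal{Z}$, some $W\in O(n)$ with $\mathcal{Z}\mathcal{Z}^* = W\mathbf{D}W^*$, where $\mathbf{D}=\textnormal{diag}(\lambda_1(\mathcal{Z}\mathcal{Z}^*),\hdots,\lambda_n(\mathcal{Z}\mathcal{Z}^*))$; since $\mathbf{U}$ is independent of $\mathcal{Z}$ and Haar measure is invariant under right translation, $\mathbf{U}W\sim\textnormal{Haar}(O(n))$ given $\mathcal{Z}$. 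Hence, for any bounded measurable $f$ on $\mathcal{S}_{\geq 0}(n,\bbR)$,
\begin{equation*}
\bbE\big[\,f(\mathbf{U}\,\mathcal{Z}\mathcal{Z}^*\,\mathbf{U}^*)\,\big|\,\mathcal{Z}\,\big] = \bbE\big[\,f\big((\mathbf{U}W)\mathbf{D}(\mathbf{U}W)^*\big)\,\big|\,\mathcal{Z}\,\big] = \bbE\big[\,f(\mathbf{U}\mathbf{D}\mathbf{U}^*)\,\big|\,\mathcal{Z}\,\big] \quad \textnormal{a.s.},
\end{equation*}
and averaging, together with \eqref{e:haar_conj_invariance}, gives $\bbE f(\mathcal{Z}\mathcal{Z}^*) = \bbE f(\mathbf{U}\mathbf{D}\mathbf{U}^*)$, i.e.\ \eqref{e:ZZ*=UDU*}; undoing the conditioning on $p$ finishes the argument.

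\textbf{Main obstacle.} There is no serious difficulty here; the one point to be careful about is the final conditioning step. The key observation that keeps it clean is that the orthogonal matrix $W$ diagonalizing $\mathcal{Z}\mathcal{Z}^*$ is used only \emph{pointwise} (for each realization of $\mathcal{Z}$), so no measurable selection of eigenvectors is needed — such a selection would be delicate, because $\mathcal{Z}\mathcal{Z}^*$ carries the eigenvalue $0$ with multiplicity $n-\widetilde p$ whenever $\widetilde p<n$. The only other thing to track is the randomness of $p$, which is absorbed by the conditioning in the first step.
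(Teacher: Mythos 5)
Your proof is correct and follows essentially the same route as the paper's: condition on the value of $p$, invoke the rotational invariance of the Gaussian ensemble to make the eigenbasis Haar-distributed, and average over the law of $p$ at the end. The only difference is that where the paper delegates the key step to a citation (Bai and Silverstein), you derive it explicitly from the left-invariance $O\mathcal{Z}\stackrel{d}=\mathcal{Z}$ together with the right-translation invariance of $\textnormal{Haar}(O(n))$, which also makes the independence of $\mathbf{U}$ from $\mathbf{D}$ (needed downstream in Proposition \ref{p:D_1_D_2_sum_bound}) explicit and sidesteps any measurable selection of eigenvectors.
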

\begin{proof}
Note that, by the singular value decomposition of $\mathcal{Z}\mathcal{Z}^*$, there exists a matrix $\mathbf{U}\in O(n)$ such that $\mathcal{Z}\mathcal{Z}^* = \mathbf{U}\mathbf{D}\mathbf{U}^*$ (\textbf{n.b}: when $p=0$, $\mathcal{Z}\mathcal{Z}^* = \mathbf{U}\mathbf{D}\mathbf{U}^*$ holds for any $\mathbf{U}\in O(n)$). So, it remains to show that $\mathbf{U}\sim \textnormal{Haar}(O(n))$.

Observe that, conditionally on $p=\widetilde{p}\neq0$, $\mathcal{Z} \in \mathcal{M}(n, \widetilde{p},\bbR)$ is a random matrix with independent standard normal entries. In addition, $\mathcal{Z}$ is conditionally independent of $p$. Namely, $\big(\hspace{0.5mm}\mathcal{Z} | p = \widetilde{p} \hspace{0.5mm}\big) \stackrel{d}= \widetilde{{\mathcal Z}}$, where $\widetilde{{\mathcal Z}} \in {\mathcal M}(n,\widetilde{p},\bbR)$ is a matrix with independent, standard normal entries. Then, by the rotational invariance of the standard multivariate Gaussian, we can assume that $\mathbf{U}\sim  \textnormal{Haar}(O(n))$ conditionally on $p=\widetilde{p}\neq 0$ (see Bai and Silverstein \cite{bai:silverstein:2010}, pp.\ 334). In addition, for $\widetilde{p}=0$, we can simply let $\mathbf{U} \sim \textnormal{Haar}(O(n))$. Now let $\mathcal{B}$ be a set from the Borel $\sigma$-algebra defined on $O(n)$. Then, for any $U\sim \textnormal{Haar}(O(n))$,
$$
\bbP(\mathbf{U}\in \mathcal{B}) = \sum_{\widetilde{p}=0}^n\bbP(\mathbf{U} \in \mathcal{B} | p=\widetilde{p} )\cdot \bbP(p=\widetilde{p}) = \sum_{\widetilde{p}=0}^n\bbP(U\in \mathcal{B})\cdot\bbP(p=\widetilde{p})  = \bbP(U\in \mathcal{B}).
$$
This shows \eqref{e:ZZ*=UDU*}. $\Box$\\
\end{proof}

The next lemma contains generic facts used in the proof of Lemma \ref{l:fixed_scale_log_limit}.
\begin{lemma}\label{l:sandwich_bounds_to_log_convergence}
    Consider three sequences of real valued random variables $\{X_n\}_{n\in \Naturals}$, $\{L_n\}_{n\in \Naturals}$, and $\{U_n\}_{n\in \Naturals}$ defined on a given probability space. Moreover, assume that for every $n \in \bbN$, $X_n >0$ a.s.\ and
    \begin{equation}\label{e:sandwichboundedsequences}
        m + L_n \leq X_n \leq M + U_n \quad \textnormal{a.s.}
    \end{equation}
    for constants $M\geq m>0$. Further assume that
    \begin{equation}\label{e:Ln,Un->0_in_prob}
    L_n,U_n \stackrel{\bbP}\rightarrow 0, \quad n \rightarrow \infty.
    \end{equation}
    Then,
    \begin{equation}\label{e:sandwich_bounds_to_log_convergence}
        \frac{\log X_n}{b_n}\stackrel{\bbP}\to 0, \quad n\to \infty,
    \end{equation}
    for any sequence $\{b_n\}_{n\in\Naturals}$ such that $b_n\to \infty$ as $n\to \infty$.
\end{lemma}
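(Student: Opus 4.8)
The plan is to reduce \eqref{e:sandwich_bounds_to_log_convergence} to a purely deterministic squeeze argument carried out on an event of probability tending to one. The key observation is that the two-sided bound \eqref{e:sandwichboundedsequences}, once $L_n$ and $U_n$ are forced to be small, pins $X_n$ inside a fixed compact subinterval of $(0,\infty)$ on which $\log$ is bounded; dividing by $b_n\to\infty$ then makes the ratio vanish on that event, and convergence in probability only concerns what happens off a set of vanishing probability.

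Concretely, I would first fix $\varepsilon>0$ and $\delta>0$ and introduce the ``good event''
\[
A_n := \{\,|L_n|\le m/2\,\}\cap\{\,|U_n|\le m/2\,\}.
\]
By the hypothesis \eqref{e:Ln,Un->0_in_prob} and a union bound, $\bbP(A_n^c)\to 0$, so there is $N_1$ with $\bbP(A_n^c)<\delta$ for all $n\ge N_1$. On $A_n$, the almost sure inequalities \eqref{e:sandwichboundedsequences} yield
\[
0<\frac{m}{2}\le m+L_n\le X_n\le M+U_n\le M+\frac{m}{2},
\]
so, since $\log$ is continuous and increasing on $(0,\infty)$, on $A_n$ one has $|\log X_n|\le C$, where $C:=\max\bigl\{\,|\log(m/2)|,\ \log(M+m/2)\,\bigr\}<\infty$ (here $m>0$ guarantees $C$ is finite, and the standing assumption $X_n>0$ a.s.\ makes $\log X_n$ meaningful throughout). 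Next, using $b_n\to\infty$, pick $N_2$ with $b_n\ge C/\varepsilon$ for all $n\ge N_2$; then for $n\ge\max\{N_1,N_2\}$ one gets $|\log X_n/b_n|\le C/b_n\le\varepsilon$ on $A_n$, hence
\[
\Bigl\{\,\bigl|\tfrac{\log X_n}{b_n}\bigr|>\varepsilon\,\Bigr\}\subseteq A_n^c,
\]
so $\bbP(|\log X_n/b_n|>\varepsilon)\le\bbP(A_n^c)<\delta$. Since $\varepsilon,\delta>0$ were arbitrary, \eqref{e:sandwich_bounds_to_log_convergence} follows.

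I do not anticipate a real obstacle here: the statement is essentially a deterministic sandwich estimate dressed up with a convergence-in-probability layer. The only mild subtlety is that $L_n$ and $U_n$ are random, so one cannot literally substitute ``$L_n,U_n$ small'' into \eqref{e:sandwichboundedsequences}; instead one must pass to the intersection event $A_n$ on which \emph{both} are controlled and invoke $\bbP(A_n)\to 1$. The threshold $m/2$ is an arbitrary choice --- any fixed constant strictly between $0$ and $m$ serves equally well --- and the behaviour of $b_n$ for small $n$ is irrelevant, since convergence in probability is a tail property.
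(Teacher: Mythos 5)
Your proof is correct and follows essentially the same squeeze argument as the paper: both restrict to a high-probability event on which $L_n$ and $U_n$ are controlled so that $X_n$ is trapped in a fixed compact subinterval of $(0,\infty)$, and then let $b_n\to\infty$ kill the ratio. The only cosmetic difference is that you use a single good event with a uniform bound on $|\log X_n|$, whereas the paper splits the tail probability into upper and lower parts and treats each with the corresponding one-sided bound.
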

\begin{proof}
For any $n \in \bbN$, let $\Omega_{n} = \{\omega: L_n > -m\}$. Also, fix $\varepsilon > 0$. Then,
$$
\bbP\Big(\frac{|\log X_n|}{b_n} > \varepsilon \Big) \leq \bbP\Big(\frac{\log X_n}{b_n} > \varepsilon \Big)   +
\bbP\Big(\frac{\log X_n}{b_n} < - \varepsilon \Big).
$$
However,
$$
\bbP\Big(\frac{\log X_n}{b_n} > \varepsilon \Big)  \leq \bbP\Big(\frac{\log(M + U_n)}{b_n} > \varepsilon \Big) \rightarrow 0, \quad n \rightarrow \infty,
$$
where the limit is a consequence of condition \eqref{e:Ln,Un->0_in_prob}. On the other hand,
$$
\bbP\Big(\frac{\log X_n}{b_n} < - \varepsilon \Big) =  \bbP\Big(\Big\{\frac{\log X_n}{b_n} < - \varepsilon \Big\} \cap \Omega_n \Big)
+ \bbP\Big(\Big\{\frac{\log X_n}{b_n} < - \varepsilon \Big\} \cap \Omega^c_n \Big)
$$
$$
\leq \bbP\Big(\Big\{\frac{\log (m + L_n)}{b_n} < - \varepsilon \Big\} \cap \Omega_n \Big) + o(1) = o(1),
$$
where the last inequality and the last equality follow, again, from condition \eqref{e:Ln,Un->0_in_prob}. This establishes \eqref{e:sandwich_bounds_to_log_convergence}. $\Box$\\
\end{proof}

The next lemma is a generic fact used in the proofs of Proposition \ref{p:Sum_of_deformed_wisharts} and Lemma \ref{l:fixed_scale_log_limit_discrete}.

\begin{lemma}\label{l:bounds_equal_in_distribution}
Consider three sequences of random variables $\{X_n\}_{n\in\Naturals}$, $\{L_n\}_{n\in\Naturals}$ and $\{U_n\}_{n\in\Naturals}$ defined on a given probability space. Moreover, assume that for every $n\in \Naturals$, $X_n >0$ a.s.\ and that there exist constants $M\geq m>0$ such that condition \eqref{e:sandwichboundedsequences} holds. Further suppose that $L_n$, $U_n$ satisfy condition \eqref{e:Ln,Un->0_in_prob}. Let $\{Y_n\}_{n\in\Naturals}$ be another sequence of random variables, possibly defined on a different probability space, and assume that
\begin{equation}
    \label{e:two_sequences_equal_in_distribution}
    Y_n\stackrel{d}=X_n \hspace{2mm}\textnormal{\textit{for each} $n\in \Naturals$}.
\end{equation}
Then, there exist sequences of random variables $\{L'_n\}_{n\in\Naturals}$, $\{U'_n\}_{n\in\Naturals}$, such that
$$
m + L'_n \leq Y_n \leq M + U'_n \quad a.s.,
$$
where $L'_n=o_\bbP(1)$, $U'_n=o_\bbP(1)$.
\end{lemma}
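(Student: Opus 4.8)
The plan is to build $L'_n$ and $U'_n$ directly as fixed Borel functions of $Y_n$, so that the two-sided bound holds surely, and then to transfer the convergence-in-probability statements from the original probability space via the equality in distribution \eqref{e:two_sequences_equal_in_distribution}.

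First I would set
\begin{equation*}
L'_n := \min\{Y_n - m,\, 0\}, \qquad U'_n := \max\{Y_n - M,\, 0\}.
\end{equation*}
Since $Y_n \stackrel{d}= X_n$ and $X_n > 0$ a.s., we also have $Y_n > 0$ a.s.; and by construction $L'_n \leq Y_n - m$ and $U'_n \geq Y_n - M$, so the inequalities $m + L'_n \leq Y_n \leq M + U'_n$ hold pointwise. Hence it remains only to show that $L'_n = o_{\bbP}(1)$ and $U'_n = o_{\bbP}(1)$.

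Because $y \mapsto \min\{y - m, 0\}$ and $y \mapsto \max\{y - M, 0\}$ are fixed measurable maps, the equality in distribution \eqref{e:two_sequences_equal_in_distribution} gives $L'_n \stackrel{d}= \min\{X_n - m, 0\}$ and $U'_n \stackrel{d}= \max\{X_n - M, 0\}$ for every $n$; since convergence in probability to a constant is determined by the marginal laws, it suffices to prove that these two sequences, now living on the original probability space, vanish in probability. For the first, on the event $\{X_n \geq m\}$ one has $\min\{X_n - m, 0\} = 0$, while on $\{X_n < m\}$ the left half of hypothesis \eqref{e:sandwichboundedsequences} forces $L_n \leq X_n - m < 0$, so that $|\min\{X_n - m, 0\}| = m - X_n \leq -L_n = |L_n|$. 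Thus $|\min\{X_n - m, 0\}| \leq |L_n|$ a.s., and since $L_n \stackrel{\bbP}\to 0$ by \eqref{e:Ln,Un->0_in_prob}, we obtain $\min\{X_n - m, 0\} \stackrel{\bbP}\to 0$. The symmetric argument, using the right half of \eqref{e:sandwichboundedsequences} on the event $\{X_n > M\}$, yields $|\max\{X_n - M, 0\}| \leq |U_n|$ a.s.\ and hence $\max\{X_n - M, 0\} \stackrel{\bbP}\to 0$. Combining the two, $L'_n, U'_n = o_{\bbP}(1)$, which finishes the proof.

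There is no real obstacle here: the content is simply that a two-sided bound with vanishing errors is preserved under equality in distribution, once one is willing to replace the (possibly non-transferable) errors $L_n, U_n$ by the canonical truncations of $Y_n$. The only point to be careful about is precisely that one must not try to move $L_n$ and $U_n$ themselves to the new probability space — they need not be defined there — and that the downstream uses of this lemma (e.g.\ in Proposition \ref{p:Sum_of_deformed_wisharts} and Lemma \ref{l:fixed_scale_log_limit_discrete}) only require an $o_{\bbP}(1)$ control rather than an a.s.\ bound by a prescribed sequence, so this slight weakening is harmless.
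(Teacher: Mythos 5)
Your proof is correct and takes essentially the same route as the paper's: both arguments replace $L_n,U_n$ by the canonical truncations $-\max\{m-\cdot\,,0\}$ and $\max\{\cdot-M,0\}$, which are fixed Borel functions of the variable and therefore transfer to the new probability space by the equality in distribution, with convergence in probability to $0$ being a property of the marginal laws. The only difference is cosmetic: the paper passes through an auxiliary equivalence with the condition $\bbP(m-\eta_L<X_n<M+\eta_U)\to 1$, whereas you verify that the truncations vanish by dominating them pointwise by $|L_n|$ and $|U_n|$, which is a harmless streamlining.
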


Before we prove Lemma \ref{l:bounds_equal_in_distribution}, we state and show the following lemma, used in the proof of the former.
\begin{lemma}\label{e:equivalent_conditions_almost_sure_bounds_that_vanish}
    Let $\{X_n\}_{n\in\Naturals}$ be a sequence of random variables defined on a given probability space. Let $m$ and $M$ be two real-valued constants such that $m<M$. Then, the following statements are equivalent.
    \begin{enumerate}
        \item[$(i)$] There exist sequences of random variables $\{L_n\}_{n\in\Naturals}$ and $\{U_n\}_{n\in\Naturals}$ satisfying \eqref{e:Ln,Un->0_in_prob} and such that
        \begin{equation}\label{e:almost_sure_bound}
            m + L_n  \leq X_n \leq M + U_n \quad a.s.
        \end{equation}
        \item[$(ii)$] For every $\eta_L, \eta_U >0$,
    \begin{equation}\label{e:good_set_converges_to_1}
            \bbP\big(m - \eta_L < X_n < M + \eta_U \big)\to 1, \quad n\to \infty.
    \end{equation}
    \end{enumerate}
\end{lemma}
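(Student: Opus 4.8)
The plan is to establish the two implications $(i)\Rightarrow(ii)$ and $(ii)\Rightarrow(i)$ separately; both are elementary, and the work is essentially bookkeeping with convergence in probability together with one explicit choice of truncation.

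For $(i)\Rightarrow(ii)$, I would fix $\eta_L,\eta_U>0$ and argue on the event $\{|L_n|<\eta_L\}\cap\{|U_n|<\eta_U\}$. On this event the almost-sure sandwich \eqref{e:almost_sure_bound} forces $X_n\ge m+L_n>m-\eta_L$ and $X_n\le M+U_n<M+\eta_U$. A union bound then gives $\bbP(m-\eta_L<X_n<M+\eta_U)\ge \bbP(|L_n|<\eta_L)+\bbP(|U_n|<\eta_U)-1$, and since each probability on the right tends to $1$ by \eqref{e:Ln,Un->0_in_prob}, the conclusion \eqref{e:good_set_converges_to_1} follows upon letting $n\to\infty$.

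For $(ii)\Rightarrow(i)$, I would exhibit the sequences explicitly by setting $L_n:=\min\{X_n-m,0\}$ and $U_n:=\max\{X_n-M,0\}$. Then $m+L_n=\min\{X_n,m\}\le X_n$ and $M+U_n=\max\{X_n,M\}\ge X_n$ hold surely, so the sandwich \eqref{e:almost_sure_bound} is automatic, and it only remains to verify $L_n,U_n\stackrel{\bbP}\to 0$. For any $\eta>0$ one has $\{|L_n|>\eta\}=\{X_n<m-\eta\}$ and $\{|U_n|>\eta\}=\{X_n>M+\eta\}$; applying $(ii)$ once with $\eta_L=\eta$ and once with $\eta_U=\eta$ (an arbitrary value being chosen for the other parameter each time) shows that both of these events have vanishing probability, which is precisely \eqref{e:Ln,Un->0_in_prob}.

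The only place requiring a touch of care is in $(ii)\Rightarrow(i)$: condition \eqref{e:good_set_converges_to_1} is a two-sided statement, and to extract the one-sided tail bounds $\bbP(X_n\le m-\eta)\to0$ and $\bbP(X_n\ge M+\eta)\to0$ I would use that $m<M$, so that the complement of the open interval $(m-\eta_L,M+\eta_U)$ is the disjoint union of the two half-lines $(-\infty,m-\eta_L]$ and $[M+\eta_U,\infty)$, each of which is therefore also null in the limit. Beyond this minor point, I do not anticipate any genuine obstacle.
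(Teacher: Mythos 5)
Your proof is correct and follows essentially the same route as the paper's: for $(i)\Rightarrow(ii)$ you restrict to the high-probability event $\{|L_n|<\eta_L\}\cap\{|U_n|<\eta_U\}$ exactly as the paper does, and for $(ii)\Rightarrow(i)$ your explicit choices $L_n=\min\{X_n-m,0\}$ and $U_n=\max\{X_n-M,0\}$ coincide with the paper's $L_n=-\max\{m-X_n,0\}$, $U_n=\max\{X_n-M,0\}$. The only cosmetic difference is that the paper bounds each tail by the probability of the complement of a one-sided event $\{X_n<M+\varepsilon/2\}$ rather than invoking disjointness of the two half-lines, but this is immaterial since each tail event is already contained in the complement of the interval.
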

\begin{proof}
 First, we establish $(i) \Rightarrow (ii)$. Suppose $(i)$ holds, and fix $\eta_L, \eta_U > 0$. Let $\Omega_{L_n} = \{ \omega: |L_n|<\eta_L\}$, $\Omega_{U_n} = \{ \omega: |U_n|<\eta_U\}$. Then,
$$
\bbP\big(m - \eta_L < X_n < M + \eta_U\big) \geq \bbP\big(\{m - \eta_L < X_n < M + \eta_U\} \cap \Omega_{L_n}\cap \Omega_{U_n}\big)
$$
\begin{equation}\label{e:P(Omega-Ln_and_Omega-Un)}
\geq \bbP\big(\{m + L_n < X_n < M + U_n \} \cap \Omega_{L_n}\cap \Omega_{U_n}\big) = \bbP\big(\Omega_{L_n}\cap \Omega_{U_n}\big) \rightarrow 1, \quad n \rightarrow \infty.
\end{equation}
In \eqref{e:P(Omega-Ln_and_Omega-Un)}, the equality follows from condition \eqref{e:almost_sure_bound}. Hence, $(ii)$ holds. \\

We now establish that $(ii)\Rightarrow(i)$. So, suppose $(ii)$ holds. For $n \in \bbN$, let $L_n := -\max\{m-X_n,0\}$, $U_n := \max\{X_n-M,0\}$. Observe that
$$
m-\max\{m-X_n,0\} \leq X_n \leq M + \max\{X_n-M,0\}\quad \textnormal{a.s.},
$$
namely, relation \eqref{e:almost_sure_bound} holds for this choice of sequences $L_n$, $U_n$. We now show that these sequences satisfy \eqref{e:Ln,Un->0_in_prob}. Indeed, fix any $\varepsilon>0$. Let $\Omega_n^M=\{\omega:X_n-M < \varepsilon/2\}$ and let $\Omega_n^m=\{\omega:m-X_n\leq \varepsilon/2 \}$. Then,
$$
 \bbP\big(\max\{X_n-M,0\}>\varepsilon\big) = \bbP\big(\{\max\{X_n-M,0\}>\varepsilon\}\cap\Omega_n^M \big) +\bbP\big(\{\max\{X_n-M,0\}>\varepsilon\}\cap(\Omega_n^M)^c\big)
$$
\begin{equation}\label{e:P(max(Xn-M,0)>epsilon)}
= \bbP\big(\{\max\{X_n-M,0\}>\varepsilon\}\cap(\Omega_n^M)^c \big) \leq \bbP\big((\Omega_n^M)^c\big) = 1 - \bbP\big(X_n < M + \varepsilon/2\big) = o(1).
\end{equation}
In \eqref{e:P(max(Xn-M,0)>epsilon)}, the second equality follows from the fact that $\bbP\big(\{\max\{X_n-M,0\}>\varepsilon\}\cap\Omega_n^M \big)= 0$, and the last equality is a consequence of \eqref{e:good_set_converges_to_1}. By an analogous reasoning, $\bbP\big(\max\{m-X_n,0\} > \varepsilon\big) = o(1)$. This establishes relation \eqref{e:Ln,Un->0_in_prob}, as claimed. Hence, $(i)$ holds, as claimed. $\Box$\\
\end{proof}

We are now in a position to establish Lemma \ref{l:bounds_equal_in_distribution}.\\

\noindent {\sc Proof of Lemma \ref{l:bounds_equal_in_distribution}}: By condition \eqref{e:two_sequences_equal_in_distribution}, for every $\eta_L, \eta_U > 0$,
    $$
    \bbP\big(m-\eta_L <X_n<M+\eta_U \big) = \bbP\big(m-\eta_L <Y_n<M + \eta_U\big).
    $$
    Moreover, by condition \eqref{e:sandwichboundedsequences} and Lemma \ref{e:equivalent_conditions_almost_sure_bounds_that_vanish}, it follows that $\bbP\big(m-\eta_L <Y_n<M+\eta_U\big)\to 1$ as $n \rightarrow \infty$. Consequently, another application of Lemma \ref{e:equivalent_conditions_almost_sure_bounds_that_vanish} provides the existence of the sequences of random variables $\{L'_n\}_{n\in\Naturals}$, $\{U'_n\}_{n\in\Naturals}$. This completes the proof. $\Box$\\

The next lemma is a simple linear algebra fact used in the proof of Lemma \ref{l:orthogonal_spaces}. Its proof is omitted.
\begin{lemma}\label{l:two_spaces_LI_equivalence}
    Fix $2 \leq p \leq n$. Consider two fixed integers $p_1,p_2 \geq 1$ such that $p_1+p_2=p \leq n$. Let $\{\mathbf{v}_1,\hdots,\mathbf{v}_{p_1}\}\subseteq \bbR^n$ and $\{\mathbf{u}_1,\hdots,\mathbf{u}_{p_2}\}\subseteq \bbR^n$ each be a collection of $p$ linearly independent vectors. Then,
    \begin{equation}\label{e:dim_of_two_LI_spaces}
    \textnormal{dim}(\textnormal{span} \{ \mathbf{v}_1,\hdots,\mathbf{v}_{p_1},\mathbf{u}_1,\hdots,\mathbf{u}_{p_2} \}) = p
    \end{equation}
    if and only if
    \begin{equation}\label{e:each_vec_is_not_in_U}
     \mathbf{v}_j\notin \textnormal{span}\{\mathbf{u}_1,\hdots,\mathbf{u}_{p_2} \}, \quad j=1,\hdots,p_1.
     \end{equation}
\end{lemma}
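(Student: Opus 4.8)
The plan is to recast both \eqref{e:dim_of_two_LI_spaces} and \eqref{e:each_vec_is_not_in_U} in terms of a single subspace intersection. Set $V := \textnormal{span}\{\mathbf{v}_1,\hdots,\mathbf{v}_{p_1}\}$ and $U := \textnormal{span}\{\mathbf{u}_1,\hdots,\mathbf{u}_{p_2}\}$; by the linear independence hypotheses, $\textnormal{dim}\, V = p_1$ and $\textnormal{dim}\, U = p_2$. Since $\textnormal{span}\{\mathbf{v}_1,\hdots,\mathbf{v}_{p_1},\mathbf{u}_1,\hdots,\mathbf{u}_{p_2}\} = V+U$ and $\textnormal{dim}(V+U) = \textnormal{dim}\, V + \textnormal{dim}\, U - \textnormal{dim}(V\cap U) = p - \textnormal{dim}(V\cap U)$, the identity \eqref{e:dim_of_two_LI_spaces} is equivalent to $V\cap U = \{\mathbf{0}\}$. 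The task then reduces to showing that $V\cap U = \{\mathbf{0}\}$ holds if and only if \eqref{e:each_vec_is_not_in_U} does, which I would split into the two implications.

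For the forward direction I would argue by contraposition: if \eqref{e:each_vec_is_not_in_U} fails, say $\mathbf{v}_{j_0}\in\textnormal{span}\{\mathbf{u}_1,\hdots,\mathbf{u}_{p_2}\}=U$ for some $j_0$, then $\mathbf{v}_{j_0}$ --- which is nonzero, the $\mathbf{v}_j$ being linearly independent --- lies in $V\cap U$, so $V\cap U\neq\{\mathbf{0}\}$ and, by the reduction above, \eqref{e:dim_of_two_LI_spaces} fails. This direction is immediate.

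For the converse I would build up the span inductively: starting from the linearly independent family $\{\mathbf{u}_1,\hdots,\mathbf{u}_{p_2}\}$, adjoin $\mathbf{v}_1,\mathbf{v}_2,\hdots,\mathbf{v}_{p_1}$ one at a time and verify that each adjunction raises the dimension by one, so that after $p_1$ steps the resulting span has dimension $p_2+p_1=p$ and \eqref{e:dim_of_two_LI_spaces} holds; the base step is precisely the hypothesis $\mathbf{v}_1\notin U$. The step I expect to be the main obstacle is the induction step, where one must show that $\mathbf{v}_{j+1}$ is not already contained in $\textnormal{span}\{\mathbf{u}_1,\hdots,\mathbf{u}_{p_2},\mathbf{v}_1,\hdots,\mathbf{v}_j\}$: the hypothesis \eqref{e:each_vec_is_not_in_U} only controls membership in $U$ directly, so closing this step requires combining it with the linear independence of $\{\mathbf{v}_1,\hdots,\mathbf{v}_{p_1}\}$, and this is the only part of the argument that is not essentially bookkeeping.
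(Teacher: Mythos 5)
Your reduction of \eqref{e:dim_of_two_LI_spaces} to $V\cap U=\{\mathbf{0}\}$ via the dimension formula is correct, and so is the ``only if'' direction. But the step you flag as the main obstacle --- the induction step in the converse --- is not a gap you can close: it genuinely fails, because the implication \eqref{e:each_vec_is_not_in_U} $\Rightarrow$ \eqref{e:dim_of_two_LI_spaces} is false whenever $p_1\geq 2$. Take $n=3$, $p_1=2$, $p_2=1$, $\mathbf{u}_1=\mathbf{e}_1$, $\mathbf{v}_1=\mathbf{e}_2$, $\mathbf{v}_2=\mathbf{e}_1+\mathbf{e}_2$. Each collection is linearly independent and neither $\mathbf{v}_1$ nor $\mathbf{v}_2$ lies in $\textnormal{span}\{\mathbf{e}_1\}$, yet $\textnormal{span}\{\mathbf{v}_1,\mathbf{v}_2,\mathbf{u}_1\}=\textnormal{span}\{\mathbf{e}_1,\mathbf{e}_2\}$ has dimension $2\neq 3=p$. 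Concretely, your induction breaks at the second step: $\mathbf{v}_2=\mathbf{u}_1+\mathbf{v}_1\in\textnormal{span}\{\mathbf{u}_1,\mathbf{v}_1\}$ even though $\mathbf{v}_2\notin U$ and $\{\mathbf{v}_1,\mathbf{v}_2\}$ is linearly independent. The underlying issue is that \eqref{e:each_vec_is_not_in_U} only excludes the $p_1$ particular vectors $\mathbf{v}_j$ from $U$, which is strictly weaker than $V\cap U=\{\mathbf{0}\}$ once $\dim V\geq 2$.

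The fault is not entirely yours: the paper states the lemma with its proof omitted, and as written the lemma is false for $p_1\geq 2$ (it is correct for $p_1=1$, where $V$ is one-dimensional and the two conditions coincide). A correct version would replace \eqref{e:each_vec_is_not_in_U} either by $V\cap U=\{\mathbf{0}\}$ or by the nested condition $\mathbf{v}_j\notin\textnormal{span}\{\mathbf{u}_1,\hdots,\mathbf{u}_{p_2},\mathbf{v}_1,\hdots,\mathbf{v}_{j-1}\}$ for $j=1,\hdots,p_1$, which is exactly what your induction needs. In the one place the paper invokes the lemma (the proof of Lemma \ref{l:orthogonal_spaces}), both events in question turn out to have probability one, so the conclusion there survives, but the deterministic equivalence you were asked to prove does not.
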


In the next lemma, for the reader's convenience we state and prove the classical fact that a collection of independent multivariate Gaussian random vectors are linearly independent almost surely. The lemma is used in the proofs of Lemma \ref{l:fixed_scale_log_limit}, Lemma \ref{l:fixed_scale_log_limit_discrete}, and Lemma \ref{l:orthogonal_spaces}.
\begin{lemma}\label{l:gaussian_rv_are_LI}
    For a fixed $p\leq n$, consider the collection of random vectors $\Gamma_1 \mathbf{Z}_1, \hdots, \Gamma_p \mathbf{Z}_p \in \bbR^n$, where $\bbR^n \supseteq \{\mathbf{Z}_i \}_{i=1,\hdots,p} \stackrel{\textnormal{i.i.d}}\sim \mathcal{N}({\mathbf 0},I)$ and
    \begin{equation}\label{e:Gamma-i_is_full_rank}
    \textnormal{$\Gamma_i \in  \mathcal{S}_{\geq 0}(n,\mathbb{R})$ \textit{has full rank for each $i=1,...,p$.}}
    \end{equation}
    Then,
    \begin{equation}\label{e:dim(span(Gamma1Z1,...,GammapZp)=p)}
    \textnormal{dim}(\textnormal{span}\{\Gamma_1 \mathbf{Z}_1, \hdots, \Gamma_p \mathbf{Z}_p\})= p \quad a.s.
    \end{equation}
\end{lemma}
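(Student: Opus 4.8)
The plan is to prove \eqref{e:dim(span(Gamma1Z1,...,GammapZp)=p)} by induction on $p$, showing at each step that the newly added vector $\Gamma_{k}\mathbf{Z}_{k}$ almost surely fails to lie in the span of the previous $k-1$ vectors. The base case $p=1$ is immediate: since $\Gamma_{1}$ has full rank, $\Gamma_{1}\mathbf{Z}_{1}=\mathbf{0}$ iff $\mathbf{Z}_{1}=\mathbf{0}$, an event of probability zero, so $\textnormal{dim}(\textnormal{span}\{\Gamma_{1}\mathbf{Z}_{1}\})=1$ a.s.

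For the inductive step, suppose $\textnormal{dim}(\textnormal{span}\{\Gamma_{1}\mathbf{Z}_{1},\hdots,\Gamma_{k-1}\mathbf{Z}_{k-1}\})=k-1$ a.s., where $k\leq p\leq n$. Condition on the values of $\mathbf{Z}_{1},\hdots,\mathbf{Z}_{k-1}$. On the event of full dimension $k-1$ (which has probability one), the subspace $V_{k-1}:=\textnormal{span}\{\Gamma_{1}\mathbf{Z}_{1},\hdots,\Gamma_{k-1}\mathbf{Z}_{k-1}\}$ is a fixed linear subspace of $\bbR^{n}$ of dimension $k-1\leq n-1$, hence a proper subspace, so it has Lebesgue measure zero in $\bbR^{n}$. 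Since $\Gamma_{k}$ is invertible, $\Gamma_{k}^{-1}V_{k-1}$ is again a proper subspace of $\bbR^{n}$, and therefore also has Lebesgue measure zero. Because $\mathbf{Z}_{k}$ is independent of $\mathbf{Z}_{1},\hdots,\mathbf{Z}_{k-1}$ and has a density with respect to Lebesgue measure on $\bbR^{n}$, we get
\begin{equation}\label{e:cond_prob_GammaZk_not_in_V}
\bbP\big(\Gamma_{k}\mathbf{Z}_{k}\in V_{k-1}\,\big|\,\mathbf{Z}_{1},\hdots,\mathbf{Z}_{k-1}\big)=\bbP\big(\mathbf{Z}_{k}\in \Gamma_{k}^{-1}V_{k-1}\,\big|\,\mathbf{Z}_{1},\hdots,\mathbf{Z}_{k-1}\big)=0\quad\textnormal{a.s.}
\end{equation}
Taking expectations, $\bbP(\Gamma_{k}\mathbf{Z}_{k}\in V_{k-1})=0$, so a.s.\ $\Gamma_{k}\mathbf{Z}_{k}\notin V_{k-1}$, and hence $\textnormal{dim}(\textnormal{span}\{\Gamma_{1}\mathbf{Z}_{1},\hdots,\Gamma_{k}\mathbf{Z}_{k}\})=k$ a.s. By induction, \eqref{e:dim(span(Gamma1Z1,...,GammapZp)=p)} holds.

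There is essentially no serious obstacle here; the only point requiring a modicum of care is the conditioning argument in \eqref{e:cond_prob_GammaZk_not_in_V}, namely that one must work on the probability-one event where the first $k-1$ vectors are linearly independent so that $V_{k-1}$ is genuinely a fixed $(k-1)$-dimensional (hence proper, since $k-1\le n-1$) subspace before invoking absolute continuity of the Gaussian law. A convenient alternative phrasing of the inductive step, avoiding explicit conditioning, is to note that any proper subspace $W\subsetneq\bbR^{n}$ satisfies $\bbP(\mathbf{Z}_{k}\in W)=0$ and then apply this with $W=\Gamma_{k}^{-1}V_{k-1}$, using independence to pull $V_{k-1}$ outside; either route is routine.
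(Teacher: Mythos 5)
Your proof is correct and follows essentially the same route as the paper's: induction on the number of vectors, with the key step being that, conditionally on $\mathbf{Z}_1,\hdots,\mathbf{Z}_{k-1}$, the set $\Gamma_k^{-1}V_{k-1}$ is a proper subspace of $\bbR^n$ and hence a Lebesgue-null set for the independent Gaussian $\mathbf{Z}_k$. The only cosmetic difference is that the paper proves the "new vector escapes the old span" claim as a standalone statement (using only $\dim V_{k-1}\leq k-1<n$, without restricting to the full-dimension event) before running the induction, whereas you fold it into the inductive step; both are fine.
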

\begin{proof}
First, we show that, for $n\geq p$,
\begin{equation}\label{e:P(Gammap_Zp_in_span)=0}
 \bbP(\Gamma_p \mathbf{Z}_p \in \textnormal{span}\{\Gamma_1\mathbf{Z}_1,\hdots,\Gamma_{p-1}\mathbf{Z}_{p-1}\})=0.
\end{equation}
In fact,
    $$
    \bbP(\Gamma_p \mathbf{Z}_p \in \textnormal{span}\{\Gamma_1\mathbf{Z}_1,\hdots,\Gamma_{p-1}\mathbf{Z}_{p-1}\}) = \bbP(\{\mathbf{Z}_p \in \textnormal{span}\{\Gamma_p^{-1} \Gamma_1\mathbf{Z}_1,\hdots,\Gamma_p^{-1}\Gamma_{p-1}\mathbf{Z}_{p-1}\}\})
    $$
    $$
    = \int_{\bbR^n\times\hdots\times\bbR^n} \bbP(\mathbf{Z}_p \in \textnormal{span}\{\Gamma_p^{-1} \Gamma_1\mathbf{z}_1,\hdots,\Gamma_p^{-1}\Gamma_{p-1}\mathbf{z}_{p-1}\}\big| \{\mathbf{Z}_i = \mathbf{z}_i\}_{i=1,\hdots,p-1})\prod_{i=1,\hdots,p-1}dF_{\mathbf{Z_i}}(\mathbf{z}_i)
    $$
    \begin{equation}\label{e:P(Gamma-p*Zp_in_span)}
    = \int_{\bbR^n\times\hdots\times\bbR^n} \bbP(\mathbf{Z}_p \in \textnormal{span}\{\Gamma_p^{-1} \Gamma_1\mathbf{z}_1,\hdots,\Gamma_p^{-1}\Gamma_{p-1}\mathbf{z}_{p-1}\})\prod_{i=1,\hdots,p-1}dF_{\mathbf{Z_i}}(\mathbf{z}_i).
    \end{equation}
In \eqref{e:P(Gamma-p*Zp_in_span)}, the first equality follows from \eqref{e:Gamma-i_is_full_rank}, and the second and third equalities follow from the law of total probability and the independence of ${\mathbf Z}_i \in \bbR^n$, $i = 1,\hdots,p$. Since $n\geq p$ and $\textnormal{dim}(\textnormal{span}\{\Gamma_p^{-1} \Gamma_1\mathbf{z}_1,\hdots,\Gamma_p^{-1}\Gamma_{p-1}\mathbf{z}_{p-1}\}) \leq p-1$ , then the Lebesgue measure of $\textnormal{span}\{\Gamma_p^{-1} \Gamma_1\mathbf{z}_1,\hdots,\Gamma_p^{-1}\Gamma_{p-1}\mathbf{z}_{p-1}\})$ is zero. It follows that, in this case, $\bbP(\mathbf{Z}_p \in \textnormal{span}\{\Gamma_p^{-1} \Gamma_1\mathbf{z}_1,\hdots,\Gamma_p^{-1}\Gamma_{p-1}\mathbf{z}_{p-1}\})=0$. Therefore, \eqref{e:P(Gammap_Zp_in_span)=0} holds, as claimed.

    To show \eqref{e:dim(span(Gamma1Z1,...,GammapZp)=p)}, we proceed by induction on the collection of vectors. First, by \eqref{e:Gamma-i_is_full_rank}, observe that
    $$
    \bbP(\Gamma_1\mathbf{Z}_1 = {\mathbf 0}) = \bbP(\mathbf{Z}_1 ={\mathbf 0}) = 0.
    $$
    Equivalently, $\textnormal{dim}(\textnormal{span}\{\Gamma_1\mathbf{Z}_1\})=1$  a.s., establishing the base case. Now, for $n \geq p$, suppose that
    \begin{equation}\label{e:dimensional_inductive_hypothesis}
    \textnormal{dim}(\textnormal{span}\{\Gamma_1\mathbf{Z}_1,\hdots,\Gamma_{p-1}\mathbf{Z}_{p-1}\})=p-1\quad \textnormal{a.s.}
    \end{equation}
    Then, as a consequence of \eqref{e:P(Gammap_Zp_in_span)=0}, $\textnormal{dim}(\textnormal{span}\{\Gamma_1\mathbf{Z}_1,\hdots,\Gamma_p\mathbf{Z}_p\})= p$ a.s. This completes the proof. $\Box$\\
\end{proof}

In Lemma \ref{l:difference_cdfs}, stated and shown next, we show the generic fact that a perturbation of a sample does not affect the asymptotic behavior of the associated e.c.d.f.. Lemma \ref{l:difference_cdfs} is used in the proofs of Theorems \ref{t:main_theorem} and \ref{t:main_theorem_discrete}.

\begin{lemma}\label{l:difference_cdfs}
Let $\{X_n\}_{n \in \bbN}$ be i.i.d random variables with common c.d.f.\ $F_X$.
\begin{itemize}
    \item[$(i)$] Let $\{Y_n\}_{n \in \bbN}$ be a sequence of random variables such that
    \begin{equation}\label{e:Yn->0_a.s.}
    Y_n \stackrel{\textnormal{a.s.}}\to 0 \hspace{3mm}\textnormal{as $n\to \infty$}.
    \end{equation}
    Then, for every $\upsilon$ that is a point of continuity of $F_X$,
\begin{equation}\label{e:difference_of_cdfs_X_a.s}
      \frac{1}{n}\sum_{\ell = 1}^n \indicator_{\{X_{\ell}+Y_n\leq \upsilon\}}  \stackrel{\textnormal{a.s.}}\rightarrow F_X(\upsilon), \quad n \rightarrow \infty.
\end{equation}
    \item[$(ii)$] Let $\{E_n\}_{n \in \bbN}$ be a sequence of random variables such that $E_n \stackrel{\bbP}\to 0$ as $n\to \infty$. Then, for every $\upsilon$ that is a point of continuity of $F_X$,
\begin{equation}\label{e:difference_of_cdfs_X_prob}
      \frac{1}{n}\sum_{\ell = 1}^n \indicator_{\{X_{\ell}+E_n\leq \upsilon\}}  \stackrel{\bbP}\rightarrow F_X(\upsilon), \quad n \rightarrow \infty.
\end{equation}
\end{itemize}
\end{lemma}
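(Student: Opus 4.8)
The plan is to prove both parts by a ``sandwich'' (Glivenko--Cantelli type) argument that reduces everything to the ordinary strong law of large numbers for i.i.d.\ Bernoulli random variables: for every fixed $t\in\bbR$, $\frac1n\sum_{\ell=1}^n\indicator_{\{X_\ell\le t\}}\stackrel{\textnormal{a.s.}}{\to}F_X(t)$ as $n\to\infty$. For part $(i)$, I would fix a continuity point $\upsilon$ of $F_X$ and $\varepsilon>0$. On the almost sure event of \eqref{e:Yn->0_a.s.} there is a random index $N$ with $|Y_n|\le\varepsilon$ for all $n\ge N$, and on that event, for every $\ell$ and every $n\ge N$,
\begin{equation*}
\{X_\ell\le\upsilon-\varepsilon\}\subseteq\{X_\ell\le\upsilon-Y_n\}=\{X_\ell+Y_n\le\upsilon\}\subseteq\{X_\ell\le\upsilon+\varepsilon\}.
\end{equation*}
Averaging over $\ell=1,\hdots,n$ and applying the Bernoulli SLLN at the thresholds $\upsilon-\varepsilon$ and $\upsilon+\varepsilon$ yields, almost surely,
\begin{equation*}
F_X(\upsilon-\varepsilon)\le\liminf_{n}\frac1n\sum_{\ell=1}^n\indicator_{\{X_\ell+Y_n\le\upsilon\}}\le\limsup_{n}\frac1n\sum_{\ell=1}^n\indicator_{\{X_\ell+Y_n\le\upsilon\}}\le F_X(\upsilon+\varepsilon).
\end{equation*}
Intersecting the (finitely many) null sets used here over the countable family $\varepsilon=1/m$, $m\in\bbN$, and then letting $m\to\infty$, the continuity of $F_X$ at $\upsilon$ forces both the $\liminf$ and the $\limsup$ to equal $F_X(\upsilon)$ on a single almost sure event, which is \eqref{e:difference_of_cdfs_X_a.s}.

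For part $(ii)$, write $S_n:=\frac1n\sum_{\ell=1}^n\indicator_{\{X_\ell+E_n\le\upsilon\}}$ and run the same sandwich, but now restricted to the good event $\{|E_n|<\varepsilon\}$, whose probability tends to $1$ by hypothesis. Given $\eta>0$, first pick $\varepsilon>0$ so small that $F_X(\upsilon+\varepsilon)-F_X(\upsilon)<\eta$ and $F_X(\upsilon)-F_X(\upsilon-\varepsilon)<\eta$, which is possible since $\upsilon$ is a continuity point of $F_X$. With $A_n:=\frac1n\sum_{\ell=1}^n\indicator_{\{X_\ell\le\upsilon-\varepsilon\}}$ and $B_n:=\frac1n\sum_{\ell=1}^n\indicator_{\{X_\ell\le\upsilon+\varepsilon\}}$, the set inclusions of part $(i)$ give $A_n\le S_n\le B_n$ on $\{|E_n|<\varepsilon\}$. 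Splitting $\bbP(|S_n-F_X(\upsilon)|>\eta)$ according to whether $|E_n|\ge\varepsilon$ or $|E_n|<\varepsilon$, using $A_n\le S_n\le B_n$ on the second event, and a union bound, one obtains the upper bound $\bbP(|E_n|\ge\varepsilon)+\bbP(A_n\le F_X(\upsilon)-\eta)+\bbP(B_n\ge F_X(\upsilon)+\eta)$; its first term vanishes by hypothesis, while $\bbP(A_n\le F_X(\upsilon)-\eta)\to0$ and $\bbP(B_n\ge F_X(\upsilon)+\eta)\to0$ because $A_n\stackrel{\textnormal{a.s.}}{\to}F_X(\upsilon-\varepsilon)>F_X(\upsilon)-\eta$ and $B_n\stackrel{\textnormal{a.s.}}{\to}F_X(\upsilon+\varepsilon)<F_X(\upsilon)+\eta$. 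This proves \eqref{e:difference_of_cdfs_X_prob}. Alternatively, $(ii)$ follows from $(i)$ through the subsequence characterization of convergence in probability: any subsequence of $\{E_n\}$ has a further subsequence along which $E_n\to0$ almost surely, and the argument of part $(i)$ — which uses only that the perturbation vanishes and that the index tends to infinity — applies along that subsequence verbatim.

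There is no genuine analytic difficulty here; the only steps that require care are the bookkeeping of the $\bbP$-null exceptional sets in part $(i)$ — in particular taking the intersection over the countable sequence $\varepsilon=1/m$ \emph{before} invoking continuity of $F_X$ at $\upsilon$ — and, in part $(ii)$, isolating the event $\{|E_n|<\varepsilon\}$ before applying the deterministic sandwich. These are the only spots where a careless write-up could slip.
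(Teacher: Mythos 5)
Your proposal is correct and follows essentially the same route as the paper: the same $\varepsilon$-sandwich of indicators combined with Kolmogorov's SLLN at the thresholds $\upsilon\pm\varepsilon$ for part $(i)$, and for part $(ii)$ your alternative via the subsequence characterization of convergence in probability is exactly the paper's argument. Your primary, direct argument for $(ii)$ (restricting to the event $\{|E_n|<\varepsilon\}$ and using a union bound) is a valid minor variant, and your explicit intersection of the null sets over $\varepsilon=1/m$ in part $(i)$ is, if anything, slightly more careful bookkeeping than the paper's.
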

\begin{proof}
Towards establishing $(i)$, fix any $\upsilon$ that is a point of continuity of $F_X$. Note that, by Kolmogorov's strong law of large numbers, as $n \rightarrow \infty$,
\begin{equation}\label{e:indicator_conv_K's_SLLN}
\frac{1}{n}\sum_{\ell = 1}^n \indicator_{\{X_{\ell} \leq \upsilon\}} \stackrel{\textnormal{a.s.}}\rightarrow \bbP(X_{\ell}\leq \upsilon).
\end{equation}
However, let $\varepsilon > 0$. Define $\Omega_0 = \big\{\omega: \textnormal{the limits in \eqref{e:Yn->0_a.s.} and \eqref{e:indicator_conv_K's_SLLN} hold}\big\}$. In particular,
\begin{equation}\label{e:P(Omega0)=1}
\bbP(\Omega_0) = 1.
\end{equation}
Then, for $\omega \in \Omega_0$, as a result of relation \eqref{e:Yn->0_a.s.} there exists $n_0(\omega)$ such that, for $\ell = 1,\hdots,n$ and $n \geq n_0(\omega)$,
\begin{equation}\label{e:indication_ineqs}
\indicator_{\{X_{\ell}+ \varepsilon \leq \upsilon\}} \leq \indicator_{\{X_{\ell}+ |Y_n| \leq \upsilon\}} \leq \indicator_{\{X_{\ell}+ Y_n \leq \upsilon\}} \leq \indicator_{\{X_{\ell}- |Y_n| \leq \upsilon\}}\leq \indicator_{\{X_{\ell}- \varepsilon \leq \upsilon\}}.
\end{equation}
After taking the sample average $\frac{1}{n} \sum^{n}_{\ell=1}$ across expression \eqref{e:indication_ineqs}, relations \eqref{e:indicator_conv_K's_SLLN} and \eqref{e:P(Omega0)=1} imply that, as $n\rightarrow \infty$,
\begin{equation}\label{e:indicator_conv_K's_SLLN_liminf_limsup}
F_X(\upsilon-\varepsilon) \leq \liminf_{n \rightarrow \infty}\frac{1}{n}\sum^{n}_{\ell=1}\indicator_{\{X_{\ell}+ Y_n \leq \upsilon\}} \leq \limsup_{n \rightarrow \infty}\frac{1}{n}\sum^{n}_{\ell=1}\indicator_{\{X_{\ell}+ Y_n \leq \upsilon\}}
\leq F_X(\upsilon+\varepsilon) \hspace{3mm}\textnormal{a.s.}
\end{equation}
By considering a sequence $\varepsilon_i \to  0$ as $i \rightarrow \infty$, the continuity of the c.d.f.\ $F_X$ at $\upsilon$ and relation \eqref{e:indicator_conv_K's_SLLN_liminf_limsup} imply that \eqref{e:difference_of_cdfs_X_a.s} holds. This establishes $(i)$.

We now turn to $(ii)$. Fix any $\upsilon$  that is a point of continuity of $F_X$. Consider any subsequence $\frac{1}{n'}\sum^{n'}_{\ell=1}\indicator_{\{X_{\ell}+ E_{n'} \leq \upsilon\}}$. We can pass to any sub-subsequence such that $E_{n''} \stackrel{\textnormal{a.s.}}\to 0$. Thus, by $(i)$,
\begin{equation}\label{e:difference_of_cdfs_X_a.s._for_prob}
      \frac{1}{n''}\sum_{\ell = 1}^{n''} \indicator_{\{X_{\ell}+E_{n''}\leq \upsilon\}}  \stackrel{\textnormal{a.s.}}\rightarrow F_X(\upsilon), \quad n'' \rightarrow \infty.
\end{equation}
That is, for any subsequence of $\frac{1}{n}\sum^{n}_{\ell=1}\indicator_{\{X_{\ell}+ E_n \leq \upsilon\}}$, there exists a sub-subsequence that almost surely converges to $F_X(\upsilon)$. This establishes ($ii$). $\Box$\\
\end{proof}

Consider a nondecreasing random sequence $p = p(n) \in \bbN$. Let $\mathcal{Z} \in \mathcal{M}(n,p,\mathbb{R})$ be a random matrix with independent standard normal entries, and consider the random matrix $\mathcal{Z}^*\mathcal{Z}/n$. In Proposition \ref{l:max_eigenvalue_of_white_wishart}, stated and proved next, we establish the almost sure convergence of $\|\mathcal{Z}^*\mathcal{Z}/n\|_{\op}$ in the moderately high-dimensional regime. Proposition \ref{l:max_eigenvalue_of_white_wishart} is used in the proofs of Propositions \ref{p:Sum_of_deformed_wisharts} and \ref{p:k_sum_of_deformed_wisharts}.
\begin{proposition}\label{l:max_eigenvalue_of_white_wishart}
    Let
    \begin{equation}\label{e:p=p(n),p=<n}
    p = p(n) \in \bbN, \quad p \leq n,
\end{equation}
    be a nondecreasing sequence of random variables such that
\begin{equation}\label{e:p/n->0_a.s.}
p/n \to 0 \hspace{2mm} a.s.
\end{equation}
     For each $p(n)$, let $\mathcal{Z} \in \mathcal{M}(n,p(n),\mathbb{R})$ be a random matrix with i.i.d.\ standard normal entries.  Then, as $n\to \infty$,
\begin{equation}\label{e:max_eigenvalue_difference_of_white_wishart}
        \Big\|\frac{\mathcal{Z}^*\mathcal{Z}}{n}-I_{p} \Big\|_{\textnormal{op}} \stackrel{\textnormal{a.s.}}\to 0.
    \end{equation}
Moreover, as $n\to \infty$,
\begin{equation}\label{e:max_eigenvalue_of_white_wishart}
\lambda_{1}\Big(\frac{\mathcal{Z}^*\mathcal{Z}}{n}\Big),\lambda_{p}\Big(\frac{\mathcal{Z}^*\mathcal{Z}}{n}\Big),\lambda_{n}\Big(\frac{\mathcal{Z}\mathcal{Z}^*}{n}\Big) \stackrel{\textnormal{a.s.}}\to 1.
\end{equation}
\end{proposition}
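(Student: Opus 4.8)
The plan is to reduce the whole statement to showing that the two extreme eigenvalues of $\mathcal{Z}^*\mathcal{Z}/n$ converge to $1$ almost surely. Indeed, $\mathcal{Z}^*\mathcal{Z}/n\in{\mathcal S}_{\geq 0}(p(n),\bbR)$ has ordered eigenvalues $\lambda_1(\mathcal{Z}^*\mathcal{Z}/n)=\sigma_1(\mathcal{Z})^2/n\leq\cdots\leq\lambda_{p}(\mathcal{Z}^*\mathcal{Z}/n)=\sigma_{p}(\mathcal{Z})^2/n$, so $\|\mathcal{Z}^*\mathcal{Z}/n-I_p\|_{\op}=\max_{1\leq\ell\leq p}|\lambda_\ell(\mathcal{Z}^*\mathcal{Z}/n)-1|=\max\{|\lambda_1(\mathcal{Z}^*\mathcal{Z}/n)-1|,|\lambda_{p}(\mathcal{Z}^*\mathcal{Z}/n)-1|\}$, while $\lambda_n(\mathcal{Z}\mathcal{Z}^*/n)=\lambda_{p}(\mathcal{Z}^*\mathcal{Z}/n)$ by \eqref{e:matrix_transpose_eigen_trick}. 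Hence, once $\lambda_1(\mathcal{Z}^*\mathcal{Z}/n),\lambda_{p}(\mathcal{Z}^*\mathcal{Z}/n)\stackrel{\textnormal{a.s.}}\to 1$ is established, both \eqref{e:max_eigenvalue_difference_of_white_wishart} and \eqref{e:max_eigenvalue_of_white_wishart} follow at once. The single genuine difficulty is that the number of columns $p=p(n)$ is random, so $\mathcal{Z}$ cannot be fed directly into the Bai--Yin theorem; I would get around this by squeezing $\mathcal{Z}$ between matrices of deterministic aspect ratio.

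To set this up, I would realize all the relevant matrices on one probability space: let $(Z_{ij})_{i,j\geq 1}$ be i.i.d.\ $\mathcal{N}(0,1)$ and, for $m\leq n$, put $\mathcal{Z}_{n,m}:=(Z_{ij})_{1\leq i\leq n,\,1\leq j\leq m}$, so that $\mathcal{Z}$ may be taken equal to $\mathcal{Z}_{n,p(n)}$ (a legitimate coupling, since the assertion depends only on the joint law of $(p(n),\mathcal{Z})_n$). For $m\leq m'\leq n$, $\mathcal{Z}_{n,m}$ is formed by the first $m$ columns of $\mathcal{Z}_{n,m'}$; writing the smallest and largest eigenvalues of $\mathcal{Z}_{n,m}^*\mathcal{Z}_{n,m}$ as, respectively, the $\min$ and $\max$ of $\|\mathcal{Z}_{n,m'}y\|^2$ over unit vectors $y$ lying in the coordinate subspace $\bbR^{m}\times\{0\}\subseteq\bbR^{m'}$, the variational characterization of the extreme eigenvalues gives the pointwise bounds
\begin{equation*}
\lambda_1(\mathcal{Z}_{n,m'}^*\mathcal{Z}_{n,m'})\leq\lambda_1(\mathcal{Z}_{n,m}^*\mathcal{Z}_{n,m})\leq\lambda_{m}(\mathcal{Z}_{n,m}^*\mathcal{Z}_{n,m})\leq\lambda_{m'}(\mathcal{Z}_{n,m'}^*\mathcal{Z}_{n,m'}).
\end{equation*}

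Then I would fix $\epsilon\in(0,1)$ and set $q=q(n):=\lceil\epsilon n\rceil\in\{1,\ldots,n\}$, a deterministic sequence with $q(n)/n\to\epsilon$. Since $p(n)/n\to 0$ a.s., almost surely $p(n)\leq q(n)$ for all large $n$, and for such $n$ the last display (with $m=p(n)$, $m'=q(n)$) yields $n^{-1}\lambda_1(\mathcal{Z}_{n,q}^*\mathcal{Z}_{n,q})\leq n^{-1}\lambda_1(\mathcal{Z}^*\mathcal{Z})\leq n^{-1}\lambda_{p}(\mathcal{Z}^*\mathcal{Z})\leq n^{-1}\lambda_{q}(\mathcal{Z}_{n,q}^*\mathcal{Z}_{n,q})$. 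As $q(n)$ is deterministic with $q(n)/n\to\epsilon\in(0,1)$, the Bai--Yin theorem (Bai and Yin \cite{bai:yin:1988}, Bai and Silverstein \cite{bai:silverstein:2010}) applies to $\mathcal{Z}_{n,q}$ and gives $n^{-1}\lambda_1(\mathcal{Z}_{n,q}^*\mathcal{Z}_{n,q})\stackrel{\textnormal{a.s.}}\to(1-\sqrt{\epsilon})^2$ and $n^{-1}\lambda_{q}(\mathcal{Z}_{n,q}^*\mathcal{Z}_{n,q})\stackrel{\textnormal{a.s.}}\to(1+\sqrt{\epsilon})^2$. Consequently, almost surely,
\begin{equation*}
(1-\sqrt{\epsilon})^2\leq\liminf_{n}\frac{\lambda_1(\mathcal{Z}^*\mathcal{Z})}{n}\leq\limsup_{n}\frac{\lambda_{p}(\mathcal{Z}^*\mathcal{Z})}{n}\leq(1+\sqrt{\epsilon})^2.
\end{equation*}
Applying this with $\epsilon=1/k$, intersecting the countably many exceptional null sets over $k\geq 2$ and letting $k\to\infty$ forces $n^{-1}\lambda_1(\mathcal{Z}^*\mathcal{Z})\to 1$ and $n^{-1}\lambda_{p}(\mathcal{Z}^*\mathcal{Z})\to 1$ a.s., which by the first paragraph establishes \eqref{e:max_eigenvalue_difference_of_white_wishart} and \eqref{e:max_eigenvalue_of_white_wishart}.

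The part I expect to be the main obstacle is precisely the handling of the random column count $p(n)$; once that is dispatched by the deterministic squeeze, the remaining ingredients --- monotonicity of eigenvalues under column deletion, the classical Bai--Yin almost sure limits at a fixed aspect ratio, and the diagonal passage $\epsilon\downarrow 0$ --- are routine. A lesser point to be careful about is the coupling step in the second paragraph, which is harmless because the statement involves only the law of the sequence $(p(n),\mathcal{Z})_n$.
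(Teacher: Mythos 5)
Your proof is correct, but it takes a genuinely different route from the paper's. The paper conditions on the realization $\widetilde{\mathbf p}$ of the entire sequence $\{p(n)\}_{n\in\bbN}$ and, for each deterministic sequence with $\widetilde p(n)/n\to 0$, invokes the quantitative result of Chen and Pan \cite{chen:pan:2012} (Theorem 1), namely $\tfrac12\sqrt{n/\widetilde p}\,\big\|\mathcal{Z}_{\widetilde p}^*\mathcal{Z}_{\widetilde p}/n-I_{\widetilde p}\big\|_{\op}\stackrel{\textnormal{a.s.}}\to 1$, then integrates over $\bbP(d\widetilde{\mathbf p})$ to get \eqref{e:max_eigenvalue_difference_of_white_wishart}; the eigenvalue limits \eqref{e:max_eigenvalue_of_white_wishart} are deduced \emph{afterwards} from the operator-norm statement via Weyl's inequality \eqref{e:Weyls_inequalities_vershynin} and \eqref{e:matrix_transpose_eigen_trick} --- the opposite logical order from yours. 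You instead handle the randomness of $p(n)$ pathwise: embed everything in a single i.i.d.\ Gaussian array, use monotonicity of the extreme eigenvalues under column deletion to squeeze $\mathcal{Z}$ between the deterministic-aspect-ratio matrices $\mathcal{Z}_{n,\lceil\epsilon n\rceil}$, apply only the classical Bai--Yin limits $(1\mp\sqrt{\epsilon})^2$, and send $\epsilon\downarrow 0$ along a countable family. Your argument is more elementary and self-contained (no conditioning on $\mathbf p$, no need for the sharper rate statement), at the price of an explicit coupling step and of losing the rate $2\sqrt{p/n}$ that the paper's citation implicitly carries. Note that both arguments rest on the same tacit convention that the matrices for different $n$ are nested in a common i.i.d.\ array --- you make this explicit, while the paper relies on it through the hypotheses of the cited theorem and through the independence of $\mathbf p$ from the Gaussian entries --- since an almost sure convergence statement is not determined by the marginal laws alone.
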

\begin{proof}
  For convenience of notation, we let $\mathcal{Z}_p := \mathcal{Z}$. By condition \eqref{e:p=p(n),p=<n}, the eigenvalues on the left-hand side of \eqref{e:max_eigenvalue_of_white_wishart} are well defined. Let $\Omega_0 = \big\{\{p(n)\}_{n \in \bbN} :  \textnormal{the limit \eqref{e:p/n->0_a.s.} holds} \big\}$ so that, by \eqref{e:p/n->0_a.s.}, $\bbP(\Omega_0)=1$. Write ${\mathbf p} := \{p(n)\}_{n \in \bbN} \in \bbN^{\bbN}$, and let $\widetilde{{\mathbf p}}= \{\widetilde{p}(n)\}_{n \in \bbN} $ be a particular realization of the sequence ${\mathbf p}$. Also, let $\bbP(d\widetilde{{\mathbf p}})$ be the probability distribution of ${\mathbf p}$. Then, by conditioning on ${\mathbf p}$,
  $$
    \bbP \Big( \Big\|\frac{\mathcal{Z}_p^*\mathcal{Z}_p}{n} - I_{p} \Big\|_{\textnormal{op}} \to 0 \Big) = \int_{\Omega_0} \bbP \Big( \Big\{\Big\|\frac{\mathcal{Z}_p^*\mathcal{Z}_p}{n} - I_{p} \Big\|_{\textnormal{op}} \to 0 \Big\} \Big| {\mathbf p} = \widetilde{{\mathbf p}}\Big)\bbP(d\widetilde{{\mathbf p}})
  $$
  $$
  = \int_{\Omega_0} \bbP \Big( \Big\{\Big\|\frac{\mathcal{Z}_{\widetilde{p}}^*\mathcal{Z}_{\widetilde{p}}}{n} - I_{\widetilde{p}(n)} \Big\|_{\textnormal{op}} \to 0 \Big\} \Big| {\mathbf p}= \widetilde{{\mathbf p}}\Big)\bbP(d\widetilde{{\mathbf p}})
  $$
  \begin{equation}\label{e:convergence_Wishart_conditioning}
  = \int_{\Omega_0} \bbP \Big( \Big\{\Big\|\frac{\mathcal{Z}_{\widetilde{p}}^*\mathcal{Z}_{\widetilde{p}}}{n} - I_{\widetilde{p}(n)} \Big\|_{\textnormal{op}} \to 0 \Big\} \Big)\bbP(d\widetilde{{\mathbf p}}) = 1.
  \end{equation}
 In \eqref{e:convergence_Wishart_conditioning}, the second equality follows from the substitution principle with $\mathcal{Z}_{\widetilde{p}} \in \mathcal{M}(n,\widetilde{p}(n))$, whose entries consist of i.i.d.\ standard normal random variables. In turn, the third equality results from the independence of ${\mathbf p}$ and the entries of $\mathcal{Z}_{\widetilde{p}}$. That is, by construction, for each $\mathbf{p}$ the sequence of matrices $\mathcal{Z}_{p}$ has independent standard normal entries. Also, the last equality is a consequence of the fact that, for any deterministic sequence $\widetilde{{\mathbf p}}$ such that $\widetilde{p}(n)/n\to 0$,
    $$
    \frac{1}{2}\sqrt{\frac{n}{\widetilde{p}}}\Big\|\frac{\mathcal{Z}_{\widetilde{p}}^*\mathcal{Z}_{\widetilde{p}}}{n} - I_{\widetilde{p}} \Big\|_{\textnormal{op}} \stackrel{\textnormal{a.s.}}\to 1\quad \textnormal{as} \quad n\to \infty
    $$
    (see Chen and Pan \cite{chen:pan:2012}, Theorem 1). This shows \eqref{e:max_eigenvalue_difference_of_white_wishart}.

    In regard to \eqref{e:max_eigenvalue_of_white_wishart}, note that, by Weyl's inequality \eqref{e:Weyls_inequalities_vershynin},
$$
\sup_{1\leq \ell \leq p}\Big| \lambda_{\ell}\Big(\frac{\mathcal{Z}^*\mathcal{Z}}{n}\Big) - 1 \Big| \leq \Big\|\frac{\mathcal{Z}^*\mathcal{Z}}{n} - I_{p} \Big\|_{\textnormal{op}}.
$$
Hence, expression \eqref{e:max_eigenvalue_of_white_wishart} is a direct consequence of \eqref{e:max_eigenvalue_difference_of_white_wishart} and \eqref{e:matrix_transpose_eigen_trick}. $\Box$ \\
\end{proof}

The subsequent lemmas are used to establish properties of the rH-fBm $X_\mathcal{H}(t)$ (see \eqref{e:X_h(t)_def}) and also of the univariate, continuous-time wavelet transform
\begin{equation}\label{e:wave_coefs_univar}
\bbR^{n_{a,j}} \ni {\mathbf d}_{{\mathcal H}}(2^j) := \big\{d_{\mathcal{H}}(2^j,k) \big\}_{k=1,\hdots,n_{a,j}}  := \Big\{2^{-j/2} \int_\Reals \psi(2^{-j}t-k) X_{\mathcal{H}}(t) dt \Big\}_{k=1,\hdots,n_{a,j}}
\end{equation}
(cf.\ \eqref{e:continuous_wavelet_detail}). We start off with the former. The following lemma is used in the proofs of Lemmas \ref{l:D(2^j,k)_is_stationary} and \ref{l:D(2^j,k)_is_well_defined}.
\begin{lemma}\label{l:properties_of_Xh(t)}
Let $X_\mathcal{H} = \{X_\mathcal{H}(t)\}_{t \in \bbR}$ be as in \eqref{e:X_h(t)_def}. Then,
\begin{itemize}
\item [$(i)$] for any $s,t \in \bbR$, the conditional covariance function of $X_\mathcal{H}$ is given by
\begin{equation}\label{e:univar_cov_function}
\bbE [X_\mathcal{H}(s)X_\mathcal{H}(t)\big |\mathcal{H}=H] =  \frac{|s|^{2H}+|t|^{2H}-|t-s|^{2H}}{2}.
\end{equation}
In addition, the conditional covariance function \eqref{e:univar_cov_function} is continuous as a function of $(s,t)$;
\item [$(ii)$] $X_\mathcal{H}$ has strictly stationary increments, i.e., for any $\tau \in \bbR$,
\begin{equation}\label{e:Xh_has_strictly_stationary_increments}
 \{X_\mathcal{H}(t + \tau)-X_\mathcal{H}(\tau)\}_{t \in \bbR} \stackrel{\textnormal{f.d.d.}}= \{X_\mathcal{H}(t)\}_{t \in \bbR}.
\end{equation}
Moreover, $X_\mathcal{H}$ is conditionally self-similar, i.e., for any $c > 0$,
\begin{equation}\label{e:cond_self-similarity}
\{X_\mathcal{H}(ct)\}_{t \in \bbR} \hspace{0.5mm}\big| \hspace{0.5mm}\mathcal{H} = H \stackrel{\textnormal{f.d.d.}}= \{c^{\mathcal{H}}X_\mathcal{H}(t)\}_{t \in \bbR} \hspace{0.5mm}\big| \hspace{0.5mm}\mathcal{H} = H
\end{equation}
\end{itemize}
\end{lemma}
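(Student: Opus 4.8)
The plan is to reduce every assertion to the corresponding classical property of standard fractional Brownian motion (fBm) by conditioning on the realized value of $\mathcal{H}$. Since $(0,1)$, equipped with its Borel $\sigma$-algebra, is a Polish space, a regular conditional distribution of $X_{\mathcal{H}}$ given $\mathcal{H}$ exists, and by the very definition of rH-fBm this regular conditional law, evaluated at $\mathcal{H} = H$, is the law of a standard fBm $B_H$ with Hurst exponent $H$. In the discrete case relevant to the rest of the paper (Assumption $(A1)$) this is even more transparent, since one may simply write $X_{\mathcal{H}} = \sum_{i=1}^{r} \indicator_{\{\mathcal{H} = \breve{H}_i\}} B_{\breve{H}_i}$ with $B_{\breve{H}_1},\dots,B_{\breve{H}_r}$ mutually independent standard fBms, independent of $\mathcal{H}$.

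For part $(i)$, I would note that the conditional covariance \eqref{e:univar_cov_function} is just the classical fBm covariance $\tfrac12(|s|^{2H}+|t|^{2H}-|t-s|^{2H})$ (Embrechts and Maejima \cite{embrechts:maejima:2002}), obtained from $\bbE[B_H(t)^2] = |t|^{2H}$, $B_H(0) = 0$, and polarization. Joint continuity in $(s,t)$ is then immediate: $x \mapsto |x|^{2H}$ is continuous on $\bbR$ (including at the origin, since $2H > 0$), so $(s,t) \mapsto |s|^{2H}+|t|^{2H}-|t-s|^{2H}$ is a finite sum of compositions of continuous maps.

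For part $(ii)$, I would fix $\tau \in \bbR$, a finite collection $t_1,\dots,t_m$ and a Borel set $A \subseteq \bbR^m$, and integrate the conditional f.d.d.\ identities over $\pi(dH)$: conditionally on $\mathcal{H} = H$, the stationary-increment property of $B_H$ gives that $(X_{\mathcal{H}}(t_1+\tau)-X_{\mathcal{H}}(\tau),\dots,X_{\mathcal{H}}(t_m+\tau)-X_{\mathcal{H}}(\tau))$ and $(X_{\mathcal{H}}(t_1),\dots,X_{\mathcal{H}}(t_m))$ have the same conditional law, hence, averaging against $\pi(dH)$, the same unconditional law; this yields \eqref{e:Xh_has_strictly_stationary_increments}. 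The conditional self-similarity \eqref{e:cond_self-similarity} is then essentially a tautology: for fixed $H$, the regular conditional law of $\{X_{\mathcal{H}}(ct)\}_{t}$ given $\mathcal{H} = H$ equals that of $\{B_H(ct)\}_{t} = \{c^H B_H(t)\}_{t}$, the last equality being the defining self-similarity of standard fBm with index $H$; since on $\{\mathcal{H} = H\}$ one may replace $c^H$ by $c^{\mathcal{H}}$ and $B_H$ by $X_{\mathcal{H}}$, the stated conditional identity follows.

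The only delicate point — and hence the main (mild) obstacle — is making the conditioning step fully rigorous, i.e.\ arguing that a regular conditional distribution genuinely exists and that the phrase ``conditionally on $\mathcal{H} = H$, $X_{\mathcal{H}}$ is a standard fBm'' is a statement about that regular conditional distribution which can legitimately be substituted inside the integrals above. This is routine for Polish-space-valued random elements, and trivial in the discrete mixture case that matters here; beyond it, no estimates are required.
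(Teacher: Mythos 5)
Your proposal is correct and follows essentially the same route as the paper: both reduce each claim to the corresponding classical property of standard fBm by conditioning on $\mathcal{H}=H$, and both establish \eqref{e:Xh_has_strictly_stationary_increments} by integrating the conditional finite-dimensional identities against $\pi(dH)$. The extra care you take with regular conditional distributions (and the explicit discrete-mixture representation) is a harmless elaboration of what the paper leaves implicit.
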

\begin{proof}
By definition, conditionally on $\mathcal{H} = H$, $X_{\mathcal{H}}$ is a fBm. Hence, \eqref{e:univar_cov_function} holds, as well as the continuity of the conditional covariance function. This establishes $(i)$.

We now prove $(ii)$. In fact, for any $\tau \in \bbR$, any $r \in \bbN$, any $t_1,\hdots,t_r \in \bbR$ and any $B_1,\hdots,B_r \in \mathcal{B}(\Reals)$, we can write
$$
\bbP\Bigg( \bigcap^{r}_{\ell=1} \big\{X_\mathcal{H}(t_\ell + \tau)-X_\mathcal{H}(\tau)\in B_\ell \big\}\Bigg) = \int_0^1 \bbP \Big( \bigcap^{r}_{\ell=1} \big\{X_H(t_\ell+\tau)-X_\mathcal{H}(\tau) \in B_{\ell}\big\} \Big| \mathcal{H}=H\Big) \pi(d H)
$$
$$
= \int_0^1 \bbP \Big( \bigcap^{r}_{\ell=1} \big\{X_H(t_\ell) \in B_{\ell}\big\} \Big|\mathcal{H}=H\Big) \pi(d H)
= \bbP\Bigg( \bigcap^{r}_{\ell=1} \big\{X_\mathcal{H}(t_\ell)\in B_\ell \big\}\Bigg).
$$
Equivalently, \eqref{e:Xh_has_strictly_stationary_increments} holds.
The conditional self-similarity property \eqref{e:cond_self-similarity} promptly follows from the fact that, conditionally on $\mathcal{H} = H$, $X_{\mathcal{H}}$ is a fBm. This establishes $(ii)$. $\Box$\\
\end{proof}

We now turn to the conditional properties of \eqref{e:wave_coefs_univar}. The following lemma is used in the proofs of Lemma \ref{l:fixed_scale_log_limit} and Lemma \ref{l:decorellation_sum_p}. 
\begin{lemma}\label{l:D(2^j,k)_is_stationary}
Suppose assumptions ($A1-A5$) and ($W1-W3$) hold. Fix $H \in \textnormal{supp} \ \pi(dH)$.
\begin{itemize}
    \item[$(i)$]
    Conditionally on $\mathcal{H} = H \in \textnormal{supp}(dH)$, the wavelet transform $d_{\mathcal{H}}(2^j,k)$ is well defined in the mean squared sense for any $j \in \bbN \cup \{0\}$ and $k \in \bbZ$. In addition (still conditionally on $\mathcal{H} = H$),  for any fixed $j \in \bbN \cup \{0\}$, the sequence $\{d_{\mathcal{H}}(2^j,k)\}_{k \in \bbZ}$  is strictly stationary, and for any $\kappa \in \bbZ$, it satisfies
    \begin{equation}\label{e:wavelet_covariance}
    \bbE[ d_\mathcal{H}(2^j,k+\kappa)d_\mathcal{H}(2^j,k)\big |\mathcal{H}=H]=\int_{-\pi}^\pi e^{\imag x \kappa}(2^j)^{1+2H}\alpha(H)^2 \sum_{\ell = - \infty}^\infty  \frac{|\widehat{\psi}(x + 2 \pi \ell)|^2  }{|x + 2 \pi \ell|^{1+2H}}dx.
\end{equation}

In \eqref{e:wavelet_covariance}, $\widehat{\psi}$ denotes the Fourier transform of $\psi$ and \begin{equation}\label{e:coefficient_spectral_representation_fbm}
     \alpha(H)^2 := \frac{H\Gamma(2 H) \sin(H\pi) }{\pi}.
\end{equation}
 In particular, conditionally on $\mathcal{H}=H$, $\{d_\mathcal{H}(2^j,k)\}_{k \in \bbZ} $ is wide-sense stationary with spectral density
\begin{equation}\label{e:mixed_process_spectral_density}
     f_{H}(x) := (2^j)^{1+2H}\alpha(H)^2 \sum_{\ell = - \infty}^\infty  \frac{|\widehat{\psi}(x + 2 \pi \ell)|^2 }{|x + 2 \pi \ell|^{1+2H}}.
\end{equation}
    \item[$(ii)$]  The spectral density $f_H$ as in \eqref{e:mixed_process_spectral_density} is continuous and strictly positive on $[-\pi,\pi]$. In particular, there exist constants $M_H$ and $m_H$ such that $M_H \geq m_H > 0$ and
\begin{equation}\label{e:support_bounds_for_spectral_density}
    m_{H} := \textnormal{ess} \inf_{x\in [-\pi,\pi]} f_H(x)\quad \textnormal{and} \quad M_{H} := \textnormal{ess} \sup_{x\in [-\pi,\pi]} f_H(x).
     \end{equation}
    \item[$(iii)$] For any $H \in \textnormal{supp} \hspace{0.5mm}\pi(dH)$, let
\begin{equation}\label{e:Sigma-H}
\Sigma_{H} \equiv \Sigma_{H,n_{a,j}} = \Big\{\bbE \big[d_{{\mathcal H}}(2^j,k_1) d_{{\mathcal H}}(2^j,k_2)| {\mathcal H} = H \big] \Big\}_{k_1,k_2=1,\hdots,n_{a,j}}\in {\mathcal S}_{\geq 0}(n_{a,j},\bbR)
\end{equation}
be the conditional covariance matrix of the wavelet coefficients in \eqref{e:wave_coefs_univar}. Then, $\Sigma_{H}$ has full rank.
\end{itemize}
\end{lemma}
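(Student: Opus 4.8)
Since, conditionally on $\{\mathcal{H} = H\}$, the process $X_{\mathcal{H}}$ is a standard fBm with Hurst exponent $H$ (Lemma \ref{l:properties_of_Xh(t)}, $(i)$), the plan is to reduce the whole lemma to classical facts about the wavelet analysis of fBm, derived from the harmonizable representation of fBm together with conditions ($W1$--$W3$). For part $(i)$, I would start from the harmonizable representation $\{B_H(t)\}_{t\in\bbR} \stackrel{\textnormal{f.d.d.}}{=} \{\alpha(H)\int_{\bbR}\frac{e^{\imag tx}-1}{\imag x}|x|^{(1-2H)/2}W(dx)\}_{t\in\bbR}$, with $\alpha(H)$ as in \eqref{e:coefficient_spectral_representation_fbm} and $W$ a Hermitian Gaussian random measure with control measure $dx$, and, using the compact support of $\psi$ (assumption $(W2$)), the continuity of $\psi$ implied by $(W3)$, and a stochastic Fubini argument of the type in Samorodnitsky and Taqqu \cite{samorodnitsky:taqqu:1994}, Theorem 11.4.1 (used analogously for the time-domain kernel in the proof of Proposition \ref{p:D-tildeD_is_bounded}), obtain, conditionally on $\mathcal{H}=H$,
\[
d_{\mathcal{H}}(2^j,k) = \alpha(H)\, 2^{j/2}\int_{\bbR} \frac{e^{\imag 2^j k x}\,\widehat{\psi}(-2^j x)}{\imag x}\,|x|^{(1-2H)/2}\,W(dx),
\]
where the ``$-1$'' in the fBm kernel has been annihilated by the zeroth vanishing moment $\int_{\bbR}\psi=0$ of \eqref{e:N_psi}. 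A direct second-moment computation followed by the change of variables $y=2^j x$ then yields $\bbE[d_{\mathcal{H}}(2^j,k)^2\mid\mathcal{H}=H] = \alpha(H)^2 (2^j)^{1+2H}\int_{\bbR}|\widehat{\psi}(y)|^2|y|^{-(1+2H)}\,dy$, which is finite because $|\widehat{\psi}(y)|^2=O(|y|^{2N_\psi})$ near the origin (by \eqref{e:N_psi} the first $N_\psi-1$ derivatives of $\widehat\psi$ vanish at $0$) with $2N_\psi-1-2H>-1$ since $N_\psi\geq 1>H$, and $|\widehat{\psi}(y)|^2|y|^{-(1+2H)}=O(|y|^{-2\alpha-1-2H})$ at infinity with $\alpha>1$ by \eqref{e:psihat_is_slower_than_a_power_function}; this gives well-definedness in the mean-squared sense. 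Repeating the computation for $\bbE[d_{\mathcal{H}}(2^j,k+\kappa)d_{\mathcal{H}}(2^j,k)\mid\mathcal{H}=H]$, changing variables $y=2^j x$, partitioning $\bbR=\bigcup_{\ell\in\bbZ}[-\pi+2\pi\ell,\pi+2\pi\ell)$ and using $e^{\imag\kappa(x+2\pi\ell)}=e^{\imag\kappa x}$ for $\kappa\in\bbZ$ produces \eqref{e:wavelet_covariance}. Finally, strict stationarity of $\{d_{\mathcal{H}}(2^j,k)\}_{k\in\bbZ}$ given $\mathcal{H}=H$ follows from the strict stationarity of the increments of $X_H$ (Lemma \ref{l:properties_of_Xh(t)}, $(ii)$): the substitution $t\mapsto t+2^j\kappa$ in \eqref{e:wave_coefs_univar}, together with $\int\psi(2^{-j}\cdot-k)=0$, shows the joint law of the coefficients is shift-invariant, and, the sequence being Gaussian, wide-sense stationarity upgrades to strict stationarity; \eqref{e:wavelet_covariance} then exhibits \eqref{e:mixed_process_spectral_density} as its spectral density. (These are classical; cf.\ Flandrin \cite{flandrin:1992}, Moulines et al.\ \cite{moulines:roueff:taqqu:2007:JTSA}, Stoev et al.\ \cite{stoev:pipiras:taqqu:2002}.)

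For part $(ii)$, continuity of $f_H$ on $[-\pi,\pi]$ would be checked term by term: for $\ell\neq 0$ the denominator satisfies $|x+2\pi\ell|^{1+2H}\geq\pi^{1+2H}>0$, so that term is continuous; for $\ell=0$, the estimate $|\widehat\psi(x)|^2|x|^{-(1+2H)}=O(|x|^{2N_\psi-1-2H})$ shows the term is continuous at $x=0$ with value $0$ --- and it is precisely here that $N_\psi\geq 2$ is used, since together with $H<1$ it guarantees $2N_\psi-1-2H>0$. The series \eqref{e:mixed_process_spectral_density} converges uniformly on $[-\pi,\pi]$ because, for large $|\ell|$, its $\ell$-th term is $O(|\ell|^{-2\alpha-1-2H})$ uniformly in $x$ by \eqref{e:psihat_is_slower_than_a_power_function} with $\alpha>1$; hence $f_H$ is continuous. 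For strict positivity I would invoke the orthonormality of the compactly supported wavelet system $\{\psi(\cdot-k)\}_{k\in\bbZ}$ underlying the MRA, which gives the aliasing identity $\sum_{\ell\in\bbZ}|\widehat\psi(x+2\pi\ell)|^2=c$ for some constant $c>0$ and a.e.\ $x$; since the left-hand side is continuous (same uniform-convergence argument), the identity in fact holds for every $x\in[-\pi,\pi]$. Consequently, for each $x$ there exists $\ell=\ell(x)\in\bbZ$ with $\widehat\psi(x+2\pi\ell)\neq 0$ --- and when $x=0$ this index is necessarily nonzero since $\widehat\psi(0)=\int\psi=0$ --- so the corresponding term in \eqref{e:mixed_process_spectral_density} is strictly positive while the remaining terms are nonnegative and finite; therefore $f_H(x)>0$ for every $x\in[-\pi,\pi]$. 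Being continuous on a compact interval, $f_H$ then satisfies $m_H=\textnormal{ess}\inf_{x\in[-\pi,\pi]}f_H(x)=\min_{x\in[-\pi,\pi]}f_H(x)>0$ and $M_H=\textnormal{ess}\sup_{x\in[-\pi,\pi]}f_H(x)=\max_{x\in[-\pi,\pi]}f_H(x)<\infty$, which is \eqref{e:support_bounds_for_spectral_density}.

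Part $(iii)$ is then immediate: by \eqref{e:wavelet_covariance}, the $(k_1,k_2)$-entry of $\Sigma_H$ equals $\int_{-\pi}^{\pi}e^{\imag(k_1-k_2)x}f_H(x)\,dx$, so for any $\mathbf{v}=(v_1,\hdots,v_{n_{a,j}})^*\neq\mathbf{0}$,
\[
\mathbf{v}^*\Sigma_H\mathbf{v} = \int_{-\pi}^{\pi}\Big|\sum_{k=1}^{n_{a,j}} v_k e^{\imag k x}\Big|^2 f_H(x)\,dx > 0,
\]
since the trigonometric polynomial $\sum_k v_k e^{\imag kx}$ is not identically zero (it vanishes on at most a finite subset of $[-\pi,\pi]$) and $f_H>0$ everywhere by part $(ii)$. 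Hence $\Sigma_H$ is positive definite and, in particular, has full rank; equivalently, this is the strictly positive lower bound in \eqref{e:gray_toeplitz_bound} applied to the symmetric Toeplitz matrix $\Sigma_H$ with continuous, strictly positive generating symbol $2\pi f_H$.

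The one genuinely delicate point is the strict positivity of $f_H$ in part $(ii)$: the remaining assertions are either computations with the harmonizable representation or routine estimates built from ($W1$--$W3$), but positivity cannot follow from ($W1$--$W3$) alone --- it requires the MRA structure, i.e.\ that the wavelet system is orthonormal (or at least that the aliasing sum $\sum_\ell|\widehat\psi(\cdot+2\pi\ell)|^2$ is bounded below), because for a generic compactly supported $\psi$ with $N_\psi\geq 2$ vanishing moments the coset $x_0+2\pi\bbZ$ could a priori lie in the (discrete) zero set of the entire function $\widehat\psi$, which would force $f_H(x_0)=0$.
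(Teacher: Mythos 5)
Your proof is correct and follows the same overall strategy as the paper's: condition on $\mathcal{H}=H$ to reduce to classical fBm wavelet analysis for part $(i)$, split the aliasing sum into the $\ell=0$ term (where $N_\psi\geq 2$ kills the singularity at the origin) plus a uniformly convergent tail controlled by ($W3$) for continuity, and rest strict positivity on the orthonormality identity $\sum_{\ell}|\widehat\psi(x+2\pi\ell)|^2=1$, which is exactly the paper's Lemma \ref{l:orthonormality_everywhere}. The differences are minor but worth recording. For $(i)$ you derive \eqref{e:wavelet_covariance} from the harmonizable representation plus a stochastic Fubini argument, where the paper simply cites Flandrin \cite{flandrin:1992}, expression (34); your version has the advantage of making explicit where the vanishing moment and the integrability at $0$ and $\infty$ enter. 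For the positivity in $(ii)$ you argue pointwise --- for each $x$ some translate $x+2\pi\ell$ with $\widehat\psi(x+2\pi\ell)\neq 0$ exists (with $\ell\neq 0$ when $x=0$ since $\widehat\psi(0)=0$), so $f_H(x)>0$ --- and then invoke continuity on the compact interval to get $m_H>0$; the paper instead proves a uniform tail estimate $\sup_x\sum_{|\ell|>L}|\widehat\psi(x+2\pi\ell)|^2<\varepsilon$ by a compactness/contradiction argument and deduces the explicit uniform lower bound $(1-\varepsilon)\alpha(H)^2(2^j)^{1+2H}/|\pi+2\pi L|^3$ directly. Your route is shorter and logically equivalent given the continuity already established. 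For $(iii)$ your quadratic-form computation $\mathbf{v}^*\Sigma_H\mathbf{v}=\int_{-\pi}^{\pi}|\sum_k v_ke^{\imag kx}|^2f_H(x)\,dx>0$ is the standard proof of the Toeplitz eigenvalue bound \eqref{e:gray_toeplitz_bound} that the paper cites from Gray, so the two are the same argument at different levels of encapsulation. Your closing observation --- that positivity genuinely requires the orthonormality of the MRA and cannot follow from ($W1$--$W3$) alone --- is accurate and is precisely why the paper isolates Lemma \ref{l:orthonormality_everywhere}.
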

\begin{proof}
First, we show $(i)$. As in Lemma \ref{l:properties_of_Xh(t)}, conditionally on $\mathcal{H} = H$, $X_\mathcal{H}$ is a fBm with Hurst exponent $H$.  Therefore, $d_\mathcal{H}(2^j,k)$ is well defined and the (fixed-scale) conditional process is also strictly stationary (see, for instance, Abry and Didier \cite{abry:didier:2018:dim2}, Proposition 3.1, or Abry et al.\ \cite{abry:flandrin:taqqu:veitch:2003}, expression (39)). In addition, as a consequence of Flandrin \cite{flandrin:1992}, expression (34), we can compute the autocovariance of the wavelet coefficients as
\begin{equation}\label{e:integral_rep_wave_coeff}
    \bbE[ d_\mathcal{H}(2^j,k+\kappa)d_\mathcal{H}(2^j,k)\big| \mathcal{H} =H ]  =
 \int_{\Reals} e^{\imag x \kappa} (2^j)^{1+2H}\alpha(H)^2  \frac{|\widehat{\psi}(x)|^2}{|x |^{1+2H}}dx.
\end{equation}
After a change of variables, we obtain \eqref{e:wavelet_covariance}. This proves $(i)$.

We now establish $(ii)$. For notational convenience, define the function
\begin{equation}\label{e:g_DX(x)}
g_{D_X}(x) := (2^j)^{1+2H}\alpha(H)^2\frac{ |\widehat{\psi}(x)|^2  }{|x|^{1+2H}} \indicator_{\{x \neq 0\}}.
\end{equation}
By condition \eqref{e:supp_psi=compact} (see ($W2$)),
\begin{equation}\label{e:g_DX(x)_infinitely_diff_x_neq_0}
\textnormal{$g_{D_X}(x)$ is infinitely differentiable at any $x \neq 0$.}
\end{equation}
Then, the conditional spectral density of $\{d_{{\mathcal H}}(2^j,k)\}_{k \in \bbZ}$ can be written as
\begin{equation}\label{e:f_h(x)_cond_specdens}
f_{H}(x) = \sum_{\ell = -\infty}^\infty g_{D_X}(x + 2\pi \ell), \quad x \in [-\pi,\pi].
\end{equation}
Next, we aim to establish the continuity of $f_{H}$. For small $x$, by Mallat \cite{mallat:1999}, Theorem 7.4, we can write
\begin{equation}\label{e:taylor_expansion_wavelet}
    \widehat{\psi}(x) =  \widehat{\psi}^{N_\psi}(0)x^{N_\psi} + O( x^{N_\psi} ), \quad x \rightarrow 0.
\end{equation}
In addition, observe that, for all $x \in \Reals$,
\begin{equation}\label{e:upper_bound_for_expected_density}
   \frac{(2^j)^{1+2H}\alpha(H)^2}{|x |^{2H}}  \leq C \cdot \max \{ 1,|x|^{-2}\}.
\end{equation}
In \eqref{e:upper_bound_for_expected_density}, $C = (2^j)^3 \max_{H\in(0,1)}\alpha(H)^2$, where the maximum exists as a consequence of the continuity of the function $\alpha(H)$ (see \eqref{e:coefficient_spectral_representation_fbm}). Thus, from expressions \eqref{e:g_DX(x)}, \eqref{e:taylor_expansion_wavelet} and \eqref{e:upper_bound_for_expected_density}, we obtain the bound
$$
0\leq g_{D_X}(x) \leq O(x^{2 N_\psi - 3 })
$$
for small $x$. Since $N_{\psi} \geq 2$ (see $(W1$)), then the function $g_{D_X}(x)$ is continuous at $x = 0$. Hence, in view of \eqref{e:g_DX(x)_infinitely_diff_x_neq_0}, $g_{D_X}(x)$  is continuous for all $x \in \bbR$. Now, break up the sum in the spectral density as
$$
f_{H}(x) = g_{D_X}(x) + \sum_{\ell \neq 0}g_{D_X}(x+2\pi \ell).
$$
Observe that, for $\ell \neq 0$ and $x \in [-\pi,\pi]$, by assumption \eqref{e:psihat_is_slower_than_a_power_function} (see $(W3$)) and bound \eqref{e:upper_bound_for_expected_density},
$$
g_{D_X}(x+2\pi \ell) \leq C  \cdot \max\{1,|x+2\pi \ell|^{-2} \}\cdot\frac{|\widehat{\psi}(x+2\pi \ell)|^2 }{|x + 2 \pi \ell|} \leq \frac{ C  A }{|x + 2 \pi \ell| ( 1 + | x + 2\pi \ell|)^{2 \alpha}}.
$$
Thus, by the dominated convergence theorem, the function $x \to \sum_{\ell \neq 0}g_{D_X}(x+2\pi \ell)$ is continuous for all $x \in [-\pi, \pi]$. Therefore, it follows that the spectral density $f_{H}(x)$ is continuous and bounded from above. In particular, there exists a scalar $M_H$ such that $M_H \geq f_{H}(x) $ for all $x\in [-\pi,\pi]$  as in \eqref{e:support_bounds_for_spectral_density}.

We now show that $f_{H}(x) > 0$ for any $x \in \bbR$. Assume for the moment that, for any $\varepsilon > 0$, there exists $L = L(\varepsilon) \in \bbN$ such that
\begin{equation}\label{e:sup_sum_||ell|>=L|^2<varepsilon}
\sup_{x \in [-\pi,\pi]}\sum_{|\ell| > L} |\widehat{\psi}(x + 2 \pi \ell)|^2 < \varepsilon.
\end{equation}
Now note that, for $L$ as in \eqref{e:sup_sum_||ell|>=L|^2<varepsilon}, for any $\ell$ such that $|\ell| \leq L$ and for $x \in [-\pi,\pi]$,
\begin{equation}\label{e:|x+2*pi*ell|^(1+2h)=<|pi+2*pi*L|^(1+2h)}
|x + 2 \pi \ell|^{1+2H} \leq |\pi + 2 \pi |\ell||^{1+2H} \leq |\pi + 2 \pi L|^{1+2H}\Rightarrow \frac{1}{|\pi + 2 \pi L|^{1+2H}} \leq \frac{1}{|x + 2 \pi \ell|^{1+2H}}.
\end{equation}
So, for $x \neq 0$, relation \eqref{e:orthonormality_everywhere} (see Lemma \ref{l:orthonormality_everywhere}) and the bounds \eqref{e:|x+2*pi*ell|^(1+2h)=<|pi+2*pi*L|^(1+2h)} and \eqref{e:sup_sum_||ell|>=L|^2<varepsilon} imply that
$$
\sum^{\infty}_{\ell = - \infty} \frac{ |\widehat{\psi}(x + 2 \pi \ell)|^2 }{|x + 2 \pi \ell|^{1+2H}}
\geq \sum_{|\ell| \leq L} \frac{|\widehat{\psi}(x + 2 \pi \ell)|^2 }{|\pi + 2 \pi L|^{1+2H}}
\geq \frac{1}{|\pi + 2 \pi L|^{3}}  \sum_{|\ell| \leq L} |\widehat{\psi}(x + 2 \pi \ell)|^2
\geq \frac{1-\varepsilon}{|\pi + 2 \pi L|^{3}} .
$$
On the other hand, at $x = 0$, the same argument applies, \textit{mutatis mutandis}, starting from the expression
$$
\sum^{\infty}_{\ell = - \infty} \frac{|\widehat{\psi}(2 \pi \ell)|^2 }{|2 \pi \ell|^{1+2H}}
= \sum_{\ell \neq 0} \frac{|\widehat{\psi}(2 \pi \ell)|^2 }{|2 \pi \ell|^{1+2H}}.
$$
Since $\alpha(H)^2 > 0$ for all $H \in \textnormal{supp}\ \pi (d\mathcal{H})$, $f_{H}(x)$ as in \eqref{e:mixed_process_spectral_density} is strictly positive for all $x\in[-\pi,\pi]$, as claimed. In particular, with $f_H(x)$ continuous and strictly positive on a compact set, there exists $m_H>0$ as in \eqref{e:support_bounds_for_spectral_density}.

So, we now need to show \eqref{e:sup_sum_||ell|>=L|^2<varepsilon}. By way of contradiction, suppose there exists $\varepsilon_0 > 0$ and a sequence $\{x_{L} \}_{L \in \bbN} \subseteq [-\pi,\pi]$ such that
\begin{equation}\label{e:sum>=varepsilon0_contradiction}
\sum_{|\ell| > L} |\widehat{\psi}(x_L + 2 \pi \ell)|^2 \geq \varepsilon_0.
\end{equation}
Since $[-\pi,\pi]$ is compact, without loss of generality we can assume that $x_{L}\rightarrow x_0 \in [-\pi,\pi]$ as $L \rightarrow \infty$. Fix $L' \in \bbN$. Then, for large enough $L$,
$$
\sum_{|\ell| \leq L' } |\widehat{\psi}(x_L + 2 \pi \ell)|^2 \leq \sum_{|\ell| \leq L } |\widehat{\psi}(x_L + 2 \pi \ell)|^2 \leq 1,
$$
where the second inequality is a consequence of relation \eqref{e:orthonormality_everywhere} (see Lemma \ref{l:orthonormality_everywhere}). Since $\widehat{\psi}$ is continuous (see \eqref{e:supp_psi=compact} in ($W2$)), we can sequentially take the double limit $\lim_{L'\rightarrow \infty}\liminf_{L \rightarrow \infty}$ to obtain
\begin{equation}\label{e:liminf=1}
1 = \sum^{\infty}_{\ell - \infty} |\widehat{\psi}(x_0 + 2 \pi \ell)|^2 \leq \liminf_{L \rightarrow \infty}\sum_{|\ell| \leq L } |\widehat{\psi}(x_L + 2 \pi \ell)|^2 \leq 1,
\end{equation}
where the equality again follows from \eqref{e:orthonormality_everywhere} (see Lemma \ref{l:orthonormality_everywhere}). Therefore,
$$
\varepsilon_0 + 1 \leq \liminf_{L \rightarrow \infty}\sum_{|\ell| > L} |\widehat{\psi}(x_L + 2 \pi \ell)|^2 + \liminf_{L \rightarrow \infty}\sum_{|\ell| \leq L }|\widehat{\psi}(x_L + 2 \pi \ell)|^2 = 1,
$$
where the inequality stems from \eqref{e:sum>=varepsilon0_contradiction} and \eqref{e:liminf=1}, and the equality is again a consequence of \eqref{e:orthonormality_everywhere} (contradiction). This proves \eqref{e:sup_sum_||ell|>=L|^2<varepsilon}. Hence, ($ii$) is established.

We now turn to $(iii)$. Observe that $\Sigma_H$ as in \eqref{e:Sigma-H}  is a symmetric Toeplitz matrix (see Definition \ref{d:Teoplitz}). Therefore,  from part $(ii)$ of this lemma (see also expressions \eqref{e:gray_toeplitz_bound} and \eqref{e:eigenvalue_bounds_toeplitz}), it follows that  $\Sigma_H$ is nonsingular for every $n_{a,j}$. This establishes $(iii)$. $\Box$\\
\end{proof}

Next, we consider unconditional properties of the wavelet transform vectors. The following lemma proves that $d_\mathcal{H}(2^j,k)$ and $D_X(2^j,k)$ are (unconditionally) well defined in the mean squared sense (cf.\ Lemma \ref{l:D(2^j,k)_is_stationary}). It is used in the proof of Theorem \ref{t:main_theorem}.
\begin{lemma}\label{l:D(2^j,k)_is_well_defined}
Suppose assumptions ($A1-A5$) and ($W1-W3$) hold. Then, for any $j \in \bbN \cup \{0\}$ and $k \in \bbZ$ we have the following.
\begin{itemize}
    \item[$(i)$] The wavelet transform $d_\mathcal{H}(2^j,k)$ (see \eqref{e:wave_coefs_univar}) is well defined in the mean squared sense. 
    \item[$(ii)$]  The multivariate wavelet transform $D_X(2^j,k)$ (see \eqref{e:continuous_wavelet_detail}) is well defined in the mean squared sense. 
   \end{itemize}
\end{lemma}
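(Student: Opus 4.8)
The plan is to derive both items from the second-order structure of the rH-fBm together with the compactness of $\supp\psi$ and the boundedness of $\psi$; no genuinely new estimate is needed. For $(i)$, fix $j\in\bbN\cup\{0\}$ and $k\in\bbZ$. Conditionally on $\mathcal{H}=H$ with $H\in\supp\pi(dH)$, $X_\mathcal{H}$ is a standard fBm, so Lemma \ref{l:D(2^j,k)_is_stationary}$(i)$ already gives that $d_\mathcal{H}(2^j,k)$ is well defined in the mean squared sense \emph{conditionally}, with $\bbE[d_\mathcal{H}(2^j,k)^2\mid\mathcal{H}=H]=\int_{-\pi}^{\pi}f_H(x)\,dx\le 2\pi M_H<\infty$ by \eqref{e:wavelet_covariance} and \eqref{e:support_bounds_for_spectral_density}. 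Since $\supp\pi(dH)=\{\breve H_1,\dots,\breve H_r\}$ is finite (assumption $(A1)$, see \eqref{e:pi(dh)}), I then patch over the finitely many values of $\mathcal{H}$: writing $S_N$ for the mean-square Riemann sums approximating the integral in \eqref{e:wave_coefs_univar} and using $d_\mathcal{H}(2^j,k)=\sum_{i=1}^r\indicator_{\{\mathcal{H}=\breve H_i\}}d_{\breve H_i}(2^j,k)$ together with the independence of $\mathcal{H}$ from the underlying fBms, one has $\bbE|S_N-d_\mathcal{H}(2^j,k)|^2=\sum_{i=1}^{r}\bbP(\mathcal{H}=\breve H_i)\,\bbE\big[|S_N-d_{\breve H_i}(2^j,k)|^2\mid\mathcal{H}=\breve H_i\big]\to 0$ as the mesh vanishes, a finite sum of vanishing terms. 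Equivalently, and more self-containedly, I would verify the unconditioned Cram\'er--Leadbetter integrability condition
\begin{equation}\label{e:CramerLeadbetter_condition_un_conditioned}
\int_{\bbR}\!\int_{\bbR}\big|\psi(2^{-j}s-k)\big|\,\big|\psi(2^{-j}t-k)\big|\,\big|R_X(s,t)\big|\,ds\,dt<\infty,
\end{equation}
where $R_X(s,t):=\bbE[X_\mathcal{H}(s)X_\mathcal{H}(t)]=\sum_{i=1}^r\bbP(\mathcal{H}=\breve H_i)\,\bbE[X_\mathcal{H}(s)X_\mathcal{H}(t)\mid\mathcal{H}=\breve H_i]$: by Lemma \ref{l:properties_of_Xh(t)}$(i)$ each conditional covariance \eqref{e:univar_cov_function} is continuous, hence $R_X$ is continuous (a finite convex combination of continuous functions) and bounded on compact sets, while $\psi$ is bounded (continuous by $(W3)$, see \eqref{e:psihat_is_slower_than_a_power_function}) and compactly supported (by $(W2)$, see \eqref{e:supp_psi=compact}); thus the integral in \eqref{e:CramerLeadbetter_condition_un_conditioned} is taken over a compact square with bounded integrand and is therefore finite, and the classical Cram\'er--Leadbetter criterion for existence of mean-square integrals yields $(i)$.

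For $(ii)$, recall from assumptions $(A2)$--$(A3)$ (see \eqref{e:mathds_hn}--\eqref{e:Y=PX}) that the $\ell$-th coordinate of $D_X(2^j,k)$, as in \eqref{e:continuous_wavelet_detail}, is the univariate continuous-time wavelet coefficient $d_{\mathcal{H}_\ell}(2^j,k)$ of the $\ell$-th row of $X$, which is itself an rH-fBm (conditionally on its Hurst exponent $H_\ell\sim\pi$, a standard fBm). Hence each coordinate is well defined in the mean squared sense by part $(i)$. Since $p(n)<\infty$, the corresponding $\bbR^{p(n)}$-valued Riemann sums $\mathbf{S}_N$ converge to $D_X(2^j,k)$ in $L^2(\Omega;\bbR^{p(n)})$, because $\bbE\,\|\mathbf{S}_N-D_X(2^j,k)\|^2=\sum_{\ell=1}^{p(n)}\bbE\,|S_{N,\ell}-d_{\mathcal{H}_\ell}(2^j,k)|^2$ is a finite sum of coordinatewise mean-square errors, each vanishing by $(i)$. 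This gives $(ii)$.

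I do not expect any real obstacle here; the single point requiring care is the passage from the conditional description of $X_\mathcal{H}$ (a bona fide fBm) to the unconditional, mixture process, namely checking that $X_\mathcal{H}$ remains a centred, mean-square continuous second-order process. This is exactly where the finiteness of $\supp\pi(dH)$ from assumption $(A1)$ enters, in combination with the continuity of the fBm covariance supplied by Lemma \ref{l:properties_of_Xh(t)}$(i)$; everything else reduces to the standard existence theory for mean-square (Riemann) integrals of second-order processes against bounded, compactly supported deterministic kernels.
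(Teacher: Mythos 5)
Your second, ``self-contained'' route is essentially the paper's proof: it verifies the Cram\'er--Leadbetter integrability condition for the unconditional covariance of $X_{\mathcal H}$ against the bounded, compactly supported kernel $\psi$, and then obtains $(ii)$ coordinatewise (the paper conditions on $\mathds{H}_n$ and invokes the ofBm structure, which amounts to the same thing). The only difference worth noting is that the paper gets continuity of $\bbE X_{\mathcal H}(s)X_{\mathcal H}(t)$ from dominated convergence applied to $t\mapsto \bbE|t|^{\mathcal H}$ and bounds it by $\max\{1,|s|\}\max\{1,|t|\}$ via Cauchy--Schwarz, so --- unlike your patching argument, which also works but needlessly leans on the ``independence of $\mathcal H$ from the underlying fBms'' (the law of total expectation suffices) --- the finiteness of $\supp\pi(dH)$ is not actually needed at this step.
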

\begin{proof}
First, we show $(i)$. Note that, by an application of the dominated convergence theorem, the mapping $\bbR \ni t \mapsto \bbE |t|^{{\mathcal H}}$ is continuous. Thus, in view of Lemma \ref{l:properties_of_Xh(t)}, $(i)$, the covariance function $\bbE X_\mathcal{H}(s)X_\mathcal{H}(t)$ is continuous. Therefore, by Cram\'{e}r and Leadbetter \cite{cramer:leadbetter:1967}, p.\ 86, it suffices to show that
\begin{equation}
\label{e:CramerLeadbetter_condition_un_conditioned}
    \int_{\bbR}\int_{\bbR} \big|\bbE X_\mathcal{H}(s)X_\mathcal{H}(t)| |\psi(s)||\psi(t)| ds dt < \infty.
\end{equation}
In fact, for any $s,t\in\Reals$, expression \eqref{e:univar_cov_function} (see Lemma \ref{l:properties_of_Xh(t)}) implies that
$$
|\bbE X_\mathcal{H}(s)X_\mathcal{H}(t)|= \big|\bbE \big[ \bbE \big(X_{\mathcal{H}}(s)X_{\mathcal{H}}(t) | \mathcal{H}\big) \big]\big|
$$
$$
\leq  \bbE\big( \sqrt{\bbE (X_{\mathcal{H}}(s)^2| \mathcal{H})} \sqrt{\bbE( X_{\mathcal{H}}(t)^2|\mathcal{H})} \big)  = \big| \bbE |s|^\mathcal{H} |t|^\mathcal{H} \big| \leq \max\{1,|s|\}\cdot \max\{1,|t|\} .
$$
Since $\psi$ is integrable with compact support (see assumptions $(W1)$ and $(W2)$), \eqref{e:CramerLeadbetter_condition_un_conditioned} holds. This proves $(i)$.

We now consider the multivariate setting. Recall that, by assumption $(A2)$, $X$ is a $p$-variate process composed of $p$ independent rH--fBms. Thus, conditionally on $\mathds{H}_n = \mathbb{H}_n$, $X$ is an ofBm with Hurst exponent (matrix) $\mathbb{H}_n$. Therefore, a natural adaptation of the proof of part $(i)$ of this lemma establishes $(ii)$. $\Box$\\
\end{proof}

The following lemma is used in the proof of Lemma \ref{l:fixed_scale_log_limit}. The lemma establishes properties of the (unconditional) dependence structure of ${\mathbf d}_{{\mathcal H}}(2^j)$ as in see \eqref{e:wave_coefs_univar}.
\begin{lemma}\label{l:decorellation_sum_p}
Suppose assumptions ($A1-A5$) and ($W1-W3$) hold. Fix $j \in \bbN \cup \{0\}$ and consider the random vector $ {\mathbf d}_{{\mathcal H}}(2^j)$ as in \eqref{e:wave_coefs_univar}. For $H \in \textnormal{supp} \hspace{0.5mm}\pi(dH)$ and $\Sigma_H$ as in \eqref{e:Sigma-H}, let
\begin{equation}\label{e:Gamma-H_cond}
\Gamma_{H} := \Sigma^{1/2}_{H} \in {\mathcal S}_{\geq 0}(n_{a,j},\bbR).
\end{equation}
Now define the random matrix
\begin{equation}\label{e:Gamma-mathcal_H}
\Gamma_{{\mathcal H}} \in {\mathcal S}_{\geq 0}(n_{a,j},\bbR)
\end{equation}
by replacing $H$ with ${\mathcal H}$ in \eqref{e:Gamma-H_cond}. The following claims hold.
\begin{itemize}
\item [$(i)$] The random matrix
\begin{equation}\label{e:Gamma^(-1)_H_is_well_defined}
\Gamma^{-1}_{{\mathcal H}} \textnormal{\textit{ is well defined a.s.}}
\end{equation}
and
\begin{equation}\label{e:Gamma^(-1)d-vec_H_N(0,I)}
\bbR^{n_{a,j}} \ni \Gamma^{-1}_{{\mathcal H}}{\mathbf d}_{{\mathcal H}}(2^j)  \stackrel{d}= {\mathcal N}({\mathbf 0},I).
\end{equation}
\item [$(ii)$] $\Gamma_{{\mathcal H}}$ and $\Gamma^{-1}_{{\mathcal H}} {\mathbf d}_{{\mathcal H}}(2^j)$ are conditionally independent. In other words, fix any (Borel) sets
    \begin{equation}\label{e:B1_in_Borel(S>=0)_B2_in_B(R^n)}
    B_1 \in {\mathcal B}({\mathcal S}_{\geq 0}(n_{a,j},\bbR)), \quad B_2 \in {\mathcal B}(\bbR^{n_{a,j}}).
    \end{equation}
    Then,
\begin{equation}\label{e:Gamma_Gamma^(-1)d_are_conditional_indep}
\bbP\big(  \Gamma_{{\mathcal H}} \in B_1 \hspace{0.5mm}\cap  \hspace{0.5mm}\Gamma^{-1}_{{\mathcal H}} {\mathbf d}_{{\mathcal H}}(2^j) \in B_2 \big| {\mathcal H} \big)
= \bbP\big(  \Gamma_{{\mathcal H}} \in B_1  \big| {\mathcal H}\big) \cdot \bbP\big( \Gamma^{-1}_{{\mathcal H}} {\mathbf d}_{{\mathcal H}}(2^j) \in B_2 \big| {\mathcal H}\big).
\end{equation}
\item [$(iii)$] $\Gamma_{{\mathcal H}}$ and $\Gamma^{-1}_{{\mathcal H}} {\mathbf d}_{{\mathcal H}}(2^j)$ are (unconditionally) independent.
\item [$(iv)$] Let $D_X(2^j)$ be the random matrix appearing in \eqref{e:DX(a(n)2^j,k)}. Let
\begin{equation}\label{e:p-tilde_i(n)}
p_i = p_i(n), \quad i = 1,\hdots,n,
\end{equation}
be the random variable which counts the number of random variables $\mathcal{H}_\ell$, $\ell = 1,\hdots,p$, equal to $\breve{H}_i$. Then, we can write
\begin{equation}\label{e:decorrelation_sum_p_claim}
\frac{1}{n_{a,j}} D_X(2^j)^*D_X(2^j) \stackrel{d} =
\frac{1}{n_{a,j}} \sum_{i=1}^r \Gamma_{\breve{H}_i}\mathcal{Z}_i\mathcal{Z}_i^*\Gamma_{\breve{H}_i}.
\end{equation}
In \eqref{e:decorrelation_sum_p_claim}, $\mathcal{Z}_i \in {\mathcal M}(n_{a,j},p_i,\bbR)$, $i = 1,\hdots,r$, are independent random matrices containing only independent standard normal entries if $p_i \geq 1$, and $\mathcal{Z}_i = {\mathbf 0}$ if $p_i = 0$.
\end{itemize}
\end{lemma}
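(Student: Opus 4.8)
\textbf{Proof proposal for Lemma \ref{l:decorellation_sum_p}.}

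The plan is to establish parts $(i)$--$(iv)$ in sequence, since each relies on the preceding ones. First, for part $(i)$: by Lemma \ref{l:D(2^j,k)_is_stationary}, $(iii)$, for every $H \in \textnormal{supp}\,\pi(dH)$ the conditional covariance matrix $\Sigma_H$ as in \eqref{e:Sigma-H} has full rank, hence $\Gamma_H = \Sigma_H^{1/2} \in {\mathcal S}_{\geq 0}(n_{a,j},\bbR)$ is invertible. Since $\pi(dH)$ is supported on the finite set $\{\breve H_1,\dots,\breve H_r\}$ (assumption $(A1)$), the random matrix $\Gamma_{\mathcal H}$ takes values in the finite set $\{\Gamma_{\breve H_1},\dots,\Gamma_{\breve H_r}\}$, each term invertible, so $\Gamma_{\mathcal H}^{-1}$ is well defined a.s., giving \eqref{e:Gamma^(-1)_H_is_well_defined}. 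For \eqref{e:Gamma^(-1)d-vec_H_N(0,I)}, I would condition on ${\mathcal H} = H$: conditionally, ${\mathbf d}_{\mathcal H}(2^j)$ is a centered Gaussian vector (it is a measurable linear functional of the Gaussian process $X_{\mathcal H}$, using Lemma \ref{l:D(2^j,k)_is_stationary}, $(i)$, for mean-square well-definedness) with covariance $\Sigma_H = \Gamma_H \Gamma_H^*$, so $\Gamma_H^{-1}{\mathbf d}_{\mathcal H}(2^j)\mid {\mathcal H}=H \stackrel{d}= {\mathcal N}({\mathbf 0},I)$; since this conditional law does not depend on $H$, it equals the unconditional law, which is \eqref{e:Gamma^(-1)d-vec_H_N(0,I)}.

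For part $(ii)$, the point is that conditionally on ${\mathcal H}$, the matrix $\Gamma_{\mathcal H}$ is a deterministic function of ${\mathcal H}$, hence conditionally a.s.\ constant. So for $B_1,B_2$ as in \eqref{e:B1_in_Borel(S>=0)_B2_in_B(R^n)}, the event $\{\Gamma_{\mathcal H}\in B_1\}$ is, conditionally on ${\mathcal H}$, either the whole space or the empty set, and \eqref{e:Gamma_Gamma^(-1)d_are_conditional_indep} follows trivially: both sides equal $\indicator_{\{\Gamma_{\mathcal H}\in B_1\}}\cdot \bbP(\Gamma_{\mathcal H}^{-1}{\mathbf d}_{\mathcal H}(2^j)\in B_2\mid {\mathcal H})$. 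For part $(iii)$, I would integrate \eqref{e:Gamma_Gamma^(-1)d_are_conditional_indep} over the distribution of ${\mathcal H}$. Using $(i)$, the conditional law $\bbP(\Gamma_{\mathcal H}^{-1}{\mathbf d}_{\mathcal H}(2^j)\in B_2\mid {\mathcal H}) = \bbP({\mathcal N}({\mathbf 0},I)\in B_2)$ is a constant not depending on ${\mathcal H}$; pulling this constant out of the expectation over ${\mathcal H}$ gives $\bbP(\Gamma_{\mathcal H}\in B_1 \cap \Gamma_{\mathcal H}^{-1}{\mathbf d}_{\mathcal H}(2^j)\in B_2) = \bbP(\Gamma_{\mathcal H}\in B_1)\cdot\bbP(\Gamma_{\mathcal H}^{-1}{\mathbf d}_{\mathcal H}(2^j)\in B_2)$, i.e.\ unconditional independence.

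For part $(iv)$, recall that the rows of $D_X(2^j)^*$ (equivalently, the columns, i.e.\ the vectors $D_X(2^j,k)$ read across the $p$ coordinates) are, by assumption $(A2)$, the wavelet transforms ${\mathbf d}_{{\mathcal H}_\ell}(2^j)$ of $p$ independent rH-fBms, so $D_X(2^j)^* \stackrel{d}= ({\mathbf d}_{{\mathcal H}_1}(2^j),\dots,{\mathbf d}_{{\mathcal H}_p}(2^j))$ with $\{{\mathcal H}_\ell\}$ i.i.d.\ from $\pi$, independent across $\ell$. Using $(iii)$, write each column as ${\mathbf d}_{{\mathcal H}_\ell}(2^j) \stackrel{d}= \Gamma_{{\mathcal H}_\ell}{\mathbf Z}_\ell$ with ${\mathbf Z}_\ell \sim {\mathcal N}({\mathbf 0},I)$ independent of ${\mathcal H}_\ell$ and of all other indices/noises; group the columns according to which of the $r$ values $\breve H_i$ the exponent ${\mathcal H}_\ell$ equals — there are $p_i = p_i(n)$ columns in group $i$ as in \eqref{e:p-tilde_i(n)} — and collect the corresponding ${\mathbf Z}_\ell$'s into a matrix ${\mathcal Z}_i\in{\mathcal M}(n_{a,j},p_i,\bbR)$ of i.i.d.\ standard normals (with ${\mathcal Z}_i={\mathbf 0}$ when $p_i=0$), independent across $i$ by construction. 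Then $D_X(2^j)D_X(2^j)^* = \sum_\ell \Gamma_{{\mathcal H}_\ell}{\mathbf Z}_\ell{\mathbf Z}_\ell^*\Gamma_{{\mathcal H}_\ell} \stackrel{d}= \sum_{i=1}^r \Gamma_{\breve H_i}{\mathcal Z}_i{\mathcal Z}_i^*\Gamma_{\breve H_i}$, and dividing by $n_{a,j}$ and applying the trace/transpose identity yields \eqref{e:decorrelation_sum_p_claim}. The main obstacle is bookkeeping: making the random regrouping of columns rigorous (conditioning on the vector $({\mathcal H}_1,\dots,{\mathcal H}_p)$, permuting columns, and checking that conditionally on the counts $(p_1,\dots,p_r)$ the noise blocks remain i.i.d.\ standard normal and mutually independent) requires care with the substitution principle, but no hard analysis.
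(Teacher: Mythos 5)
Your argument matches the paper's proof essentially step for step: part $(i)$ via conditioning on ${\mathcal H}=H$ and the tower property, part $(ii)$ by observing that $\Gamma_{{\mathcal H}}$ is conditionally deterministic so the event $\{\Gamma_{{\mathcal H}}\in B_1\}$ is conditionally trivial, part $(iii)$ by integrating the conditional factorization and pulling out the constant Gaussian probability, and part $(iv)$ via the representation $D_X(2^j)^* \stackrel{d}= (\Gamma_{{\mathcal H}_1}{\mathbf Z}_1,\hdots,\Gamma_{{\mathcal H}_p}{\mathbf Z}_p)$ followed by regrouping the columns according to the value of ${\mathcal H}_\ell$. The only blemish is a transposition slip in part $(iv)$: you write $D_X(2^j)D_X(2^j)^*$ (which is $p\times p$) where you mean the $n_{a,j}\times n_{a,j}$ Gram matrix $D_X(2^j)^*D_X(2^j) = \sum_{\ell}{\mathbf d}_{{\mathcal H}_\ell}(2^j){\mathbf d}_{{\mathcal H}_\ell}(2^j)^*$ appearing in \eqref{e:decorrelation_sum_p_claim}, so no ``trace/transpose identity'' is actually needed at the end.
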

\begin{proof}
To show $(i)$, first note that \eqref{e:Gamma^(-1)_H_is_well_defined} is a consequence of Lemma \ref{l:D(2^j,k)_is_stationary}, $(iii)$. Now let $B \in {\mathcal B}(\bbR^{n_{a,j}}) $. Recall that, by Lemma \ref{l:D(2^j,k)_is_stationary}, the zero-mean random vector ${\mathbf d}_{{\mathcal H}}(2^j) \in \bbR^{n_{a,j}}$ is Gaussian conditionally on ${\mathcal H} = H \in \textnormal{supp}\hspace{0.5mm}\pi(dH)$. Then,
\begin{equation}\label{e:P(Gamma^(-1)d-vec_in_B|H=H)=P(Z_in_B)}
\bbP\big(\Gamma^{-1}_{{\mathcal H}}{\mathbf d}_{{\mathcal H}}(2^j) \in B \big| {\mathcal H} = H\big)
= \bbP\big(\Gamma^{-1}_{H}{\mathbf d}_{H}(2^j) \in B \big| {\mathcal H} = H\big)
= \bbP\big({\mathbf Z} \in B \big| {\mathcal H} = H\big) = \bbP\big({\mathbf Z} \in B \big),
\end{equation}
where $\bbR^{n_{a,j}} \ni {\mathbf Z} \sim {\mathcal N}({\mathbf 0},I)$. Then,
$$
\bbP\big(\Gamma^{-1}_{{\mathcal H}}{\mathbf d}_{{\mathcal H}}(2^j) \in B\big) = \bbE \big[\bbP\big(\Gamma^{-1}_{{\mathcal H}}{\mathbf d}_{{\mathcal H}}(2^j) \in B \big| {\mathcal H}\big)\big] = \bbP\big({\mathbf Z} \in B\big).
$$
This establishes \eqref{e:Gamma^(-1)d-vec_H_N(0,I)}, and hence, $(i)$.

We now turn to $(ii)$. Fix $H \in \textnormal{supp}\hspace{0.5mm}\pi(dH)$ and consider $B_1$, $B_2$ as in \eqref{e:B1_in_Borel(S>=0)_B2_in_B(R^n)}. Then,
\begin{equation}\label{e:Gamma-H_Gamma^(-1)-H_cond_indep}
\bbP\big(  \Gamma_{{\mathcal H}} \in B_1 \hspace{0.5mm}\cap  \hspace{0.5mm}\Gamma^{-1}_{{\mathcal H}} {\mathbf d}_{{\mathcal H}}(2^j) \in B_2 \big| {\mathcal H} = H \big) =
\bbP\big(  \Gamma_{H} \in B_1 \hspace{0.5mm}\cap  \hspace{0.5mm}\Gamma^{-1}_{H} {\mathbf d}_{H}(2^j) \in B_2 \big| {\mathcal H} = H \big).
\end{equation}
We arrive at two possible cases. First, suppose $\Gamma_{H} \in B_1$. Then, the right-hand side of \eqref{e:Gamma-H_Gamma^(-1)-H_cond_indep} can be rewritten as
$$
\bbP\big( \Gamma^{-1}_{H} {\mathbf d}_{H}(2^j) \in B_2 \big| {\mathcal H} = H \big)= \bbP\big(  \Gamma_{H} \in B_1  \big| {\mathcal H} = H \big)\cdot \bbP\big( \Gamma^{-1}_{H} {\mathbf d}_{H}(2^j) \in B_2 \big| {\mathcal H} = H \big).
$$
Alternatively, suppose $\Gamma_{H} \notin B_1$. Then, the right-hand side of \eqref{e:Gamma-H_Gamma^(-1)-H_cond_indep} can be recast in the form
$$
\bbP\big(  \Gamma_{H} \in B_1 \hspace{0.5mm}\cap  \hspace{0.5mm}\Gamma^{-1}_{H} {\mathbf d}_{H}(2^j) \in B_2 \big| {\mathcal H} = H \big) = 0
$$
$$
= \bbP\big(  \Gamma_{H} \in B_1  \big| {\mathcal H} = H \big)\cdot \bbP\big( \Gamma^{-1}_{H} {\mathbf d}_{H}(2^j) \in B_2 \big| {\mathcal H} = H \big).
$$
Thus, in any case, the left-hand side of \eqref{e:Gamma-H_Gamma^(-1)-H_cond_indep} equals
$$
\bbP\big(  \Gamma_{{\mathcal H}} \in B_1  \big| {\mathcal H} = H \big)\cdot \bbP\big( \Gamma^{-1}_{{\mathcal H}} {\mathbf d}_{{\mathcal H}}(2^j) \in B_2 \big| {\mathcal H} = H \big).
$$
This establishes $(ii)$.

To show $(iii)$, consider any sets $B_1$, $B_2$ as in \eqref{e:B1_in_Borel(S>=0)_B2_in_B(R^n)}. Then, by relations \eqref{e:Gamma_Gamma^(-1)d_are_conditional_indep}, \eqref{e:P(Gamma^(-1)d-vec_in_B|H=H)=P(Z_in_B)} and \eqref{e:Gamma^(-1)d-vec_H_N(0,I)},
$$
\bbP\big(  \Gamma_{{\mathcal H}} \in B_1  \hspace{0.5mm}\cap \hspace{0.5mm} \Gamma^{-1}_{{\mathcal H}} {\mathbf d}_{{\mathcal H}}(2^j) \in B_2\big) =
\bbE \big[\bbP\big(  \Gamma_{{\mathcal H}} \in B_1  \hspace{0.5mm}\cap \hspace{0.5mm} \Gamma^{-1}_{{\mathcal H}} {\mathbf d}_{{\mathcal H}}(2^j) \in B_2 \big| {\mathcal H}\big)\big]
$$
$$
= \bbE \big[\bbP\big(  \Gamma_{{\mathcal H}} \in B_1  \big| {\mathcal H}\big)\cdot
\bbP\big(\Gamma^{-1}_{{\mathcal H}} {\mathbf d}_{{\mathcal H}}(2^j) \in B_2 \big| {\mathcal H}\big)\big]
= \bbE \big[\bbP\big(  \Gamma_{{\mathcal H}} \in B_1  \big| {\mathcal H}\big)\big] \cdot \bbP\big({\mathbf Z}\in B_2\big)
$$
$$
= \bbP\big(  \Gamma_{{\mathcal H}} \in B_1\big) \cdot \bbP\big({\mathbf Z}\in B_2\big)
= \bbP\big(  \Gamma_{{\mathcal H}} \in B_1\big)  \cdot \bbP\big(\Gamma^{-1}_{{\mathcal H}} {\mathbf d}_{{\mathcal H}}(2^j) \in B_2 \big).
$$
This establishes $(iii)$.

We now turn to $(iv)$. By the independence of the $p$ rH-fBms that make up $\{X(t)\}_{t \in {\mathcal T}}$ (see assumption (A2) and expression \eqref{e:Y=PX}) and by parts $(i)$ and $(iii)$ in this lemma, we can rewrite
\begin{equation}\label{e:mixed_gaussian_representation}
D_X(2^j)^* \stackrel{d}= \Big( \Gamma_{{\mathcal H}_1} {\mathbf Z}_1, \hdots,\Gamma_{{\mathcal H}_p} {\mathbf Z}_p \Big),
\end{equation}
where $\bbR^{n_{a,j}} \supseteq {\mathbf Z}_1,\hdots,{\mathbf Z}_p \stackrel{\textnormal{i.i.d.}}\sim {\mathcal N}({\mathbf 0},I)$ and $\Gamma_{{\mathcal H}_\ell}$, $\ell = 1,\hdots,p$, are as in \eqref{e:Gamma-mathcal_H}. Now, for each $i = 1,\hdots,r$, let $p_i = p_i(n)$  be as in \eqref{e:p-tilde_i(n)}. Note that $\sum^{r}_{i=1} p_i(n) = p(n)$ a.s. Then,
$$
{\mathcal S}_{\geq 0}(n_{a,j},\bbR) \ni \frac{1}{n_{a,j}}D_X(2^j)^*D_X(2^j) \stackrel{d}= \frac{1}{n_{a,j}}\sum^{p}_{\ell=1} \Gamma_{{\mathcal H}_\ell}{\mathbf Z}_{\ell}{\mathbf Z}^*_{\ell}\Gamma_{{\mathcal H}_\ell}
$$
\begin{equation}\label{e:decorrelation_sum_p}
  =  \frac{1}{n_{a,j}} \sum_{i=1}^r \sum_{\ell=1}^{p_i} \Gamma_{\breve{H}_i}\mathbf{Z}_{\ell,i}\mathbf{Z}_{\ell,i}^*\Gamma_{\breve{H}_i} = \frac{1}{n_{a,j}} \sum_{i=1}^r \Gamma_{\breve{H}_i}\mathcal{Z}_i\mathcal{Z}_i^*\Gamma_{\breve{H}_i},
\end{equation}
where $\mathcal{Z}_1,\hdots,\mathcal{Z}_r$ are as in \eqref{e:decorrelation_sum_p_claim}. In \eqref{e:decorrelation_sum_p}, for $i = 1,\hdots,r$ and $\ell = 1,\hdots,p_i$, $\mathbf{Z}_{\ell,i} \in \bbR^{n_{a,j}}$ are independent random vectors with independent standard normal entries if $p_i \geq 1$, and $\mathbf{Z}_{\ell,i} = {\mathbf 0}$, otherwise. Also, $\mathcal{Z}_i$ is as in \eqref{e:decorrelation_sum_p_claim}. This establishes \eqref{e:decorrelation_sum_p_claim}, and hence, $(iv)$. $\Box$\\
\end{proof}

The next lemma is used in the proof of Theorem \ref{t:main_theorem}. Accordingly, the lemma pertains to the continuous-time framework and establishes the scaling properties of $\{D_X(a(n)2^j,k)\}_{k\in\bbZ} $ and $\mathbf{W}_X(a(n)2^j)$. For the lemma, recall that ${\mathds H}_n$ is given by \eqref{e:mathds_hn}.
\begin{lemma}\label{l:W(a(n)2^j)_D(a(n)2^j,k)_selfsimilar}
Suppose assumptions ($A1-A5$) and ($W1-W3$) hold. Fix $j\in \Naturals \cup \{0\}$.
\begin{enumerate}
   \item[$(i)$] The wavelet transform  $D_X(a(n)2^j,k)$ as in \eqref{e:DX(a(n)2^j,k)} satisfies a random operator self-similarity relation. That is,
\begin{equation}\label{e:D(a(n)2^j,k)_self_similarity}
    \{D_X(a(n)2^j,k)\}_{k\in\bbZ} \stackrel{\textnormal{f.d.d.}}= \{a(n)^{\mathds{H}_n+1/2\cdot I}D_X(2^j,k) \}_{k\in\bbZ}.
\end{equation}
 \item[$(ii)$] The wavelet random matrix $\mathbf{W}_X(a(n)2^j)$ as in \eqref{e:WX(a(n)2^j)} satisfies a random operator self-similarity relation. That is,
\begin{equation}\label{e:W(a(n)2^j)_self_similarity}
    \mathbf{W}_X(a(n)2^j) \stackrel{d}= a(n)^{\mathds{H}_n+1/2\cdot I } \mathbf{W}_X(2^j) a(n)^{\mathds{H}_n+1/2\cdot I}.
\end{equation}


\end{enumerate}
\end{lemma}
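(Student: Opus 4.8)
The plan is to reduce both statements to the conditional self-similarity of the latent process. Conditionally on $\mathds{H}_n = \mathbb{H}_n$, the process $X$ is an ofBm with diagonal Hurst matrix $\mathbb{H}_n$: by assumption $(A2)$ its $p$ rows are independent rH-fBms, and by Lemma \ref{l:properties_of_Xh(t)}, $(ii)$, each row $X_\ell$ satisfies $\{X_\ell(ct)\}_t \mid \mathcal{H}_\ell = H_\ell \stackrel{\textnormal{f.d.d.}}= \{c^{H_\ell}X_\ell(t)\}_t \mid \mathcal{H}_\ell = H_\ell$; stacking the components and using their conditional independence gives, for every $c>0$,
\begin{equation*}
\{X(ct)\}_t \mid \mathds{H}_n = \mathbb{H}_n \stackrel{\textnormal{f.d.d.}}= \{c^{\mathbb{H}_n}X(t)\}_t \mid \mathds{H}_n = \mathbb{H}_n, \qquad c^{\mathbb{H}_n} = \diag(c^{H_1},\hdots,c^{H_p}).
\end{equation*}

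For part $(i)$, I would start from the definition of $D_X(a(n)2^j,k)$ and perform the change of variables $t = a(n)s$, which yields $D_X(a(n)2^j,k) = a(n)^{1/2}\,2^{-j/2}\int_\bbR \psi(2^{-j}s-k)\,X(a(n)s)\,ds$ for each $k$. Conditionally on $\mathds{H}_n = \mathbb{H}_n$, the Gaussian process $\{X(a(n)s)\}_s$ has the same finite-dimensional distributions as $\{a(n)^{\mathbb{H}_n}X(s)\}_s$; since the wavelet transforms are well defined in the mean-square sense (Lemma \ref{l:D(2^j,k)_is_well_defined}), each vector $(D_X(a(n)2^j,k))_{k}$ is a mean-square limit of finite linear combinations of the process values (Riemann sums), so equality of the finite-dimensional distributions of the underlying process propagates, jointly over $k$ and conditionally on $\mathds{H}_n = \mathbb{H}_n$, to
\begin{equation*}
\{D_X(a(n)2^j,k)\}_{k} \stackrel{\textnormal{f.d.d.}}= \big\{a(n)^{1/2}\,a(n)^{\mathbb{H}_n}D_X(2^j,k)\big\}_{k} = \big\{a(n)^{\mathbb{H}_n+(1/2)I}D_X(2^j,k)\big\}_{k},
\end{equation*}
where the last equality uses that the scalar $a(n)^{1/2}$ commutes with the diagonal matrix $a(n)^{\mathbb{H}_n}$. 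To de-condition, I would integrate this identity against the law of $\mathds{H}_n$, noting that $\mathds{H}_n$ is independent of the driving Gaussian noise and that on $\{\mathds{H}_n = \mathbb{H}_n\}$ the random matrix $a(n)^{\mathds{H}_n+(1/2)I}$ coincides with the constant $a(n)^{\mathbb{H}_n+(1/2)I}$; this yields \eqref{e:D(a(n)2^j,k)_self_similarity}.

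Part $(ii)$ then follows by applying the (measurable) map $(v_1,\hdots,v_{n_{a,j}}) \mapsto \tfrac{1}{n_{a,j}}\sum_{k=1}^{n_{a,j}} v_k v_k^*$ to the joint-in-$k$ equality in distribution from part $(i)$ restricted to $k = 1,\hdots,n_{a,j}$, and then factoring the common matrix $a(n)^{\mathds{H}_n+(1/2)I}$ out of the sum, which is legitimate because $\mathds{H}_n$ is diagonal, hence $a(n)^{\mathds{H}_n+(1/2)I}$ is symmetric and equals its own transpose; this gives the right-hand side of \eqref{e:W(a(n)2^j)_self_similarity}. The only points demanding care are the propagation of the finite-dimensional-distribution identity through the mean-square wavelet integral and the de-conditioning step; I expect both to be routine once Lemma \ref{l:D(2^j,k)_is_well_defined} is invoked, so no serious obstacle is anticipated.
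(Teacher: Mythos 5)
Your proposal is correct and follows essentially the same route as the paper's proof: a change of variables in the wavelet integral, the conditional operator self-similarity of $X$ given $\mathds{H}_n = \mathbb{H}_n$, de-conditioning via the law of total probability over $\mathds{H}_n$, and deducing $(ii)$ from $(i)$ by applying the quadratic-form map to the joint-in-$k$ distributional identity. The extra care you take in justifying why the f.d.d.\ identity propagates through the mean-square integral and in assembling the multivariate conditional self-similarity from the rows is detail the paper leaves implicit, not a different argument.
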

\begin{proof}
By \eqref{e:continuous_wavelet_detail} and Lemma \ref{l:D(2^j,k)_is_well_defined}, $(ii)$, the wavelet transform of the stochastic process $X$ is given by
$$
D_X(a(n)2^j,k)
=  (a(n)2^{j})^{1/2} \int_{\Reals} \psi(u)X\big(a(n)2^{j}(u+k)\big)du ,
$$
where we make the change of variables $u = (a(n)2^{j})^{-1}t-k$. However, conditionally on $\mathds{H}_n = \mathbb{H}_n$, $X$ is an ofBm with Hurst (matrix) exponent $\mathbb{H}_n$. Therefore, making use of the operator self-similarity of $X$ (see \eqref{e:def_ss}), we obtain
$$
\big\{D_X(a(n)2^j,k)\big|\mathds{H}_n =  \mathbb{H}_n \big\}_{k \in \bbZ}  \stackrel{\textnormal{f.d.d.}}= \Big\{a(n)^{ \mathbb{H}_n+1/2\cdot I} 2^{j/2} \int_{\Reals} \psi(u)X\big (2^{j}(u+k)\big)du \Big\}_{k \in \bbZ}.
$$
Thus, by a second change of variables $t = 2^j(u+k)$, we obtain
\begin{equation}\label{e:conditional_D(a(n)2^j,k)_self_similar}
   \big\{ D_X(a(n)2^j,k)\big|\mathds{H}_n = \mathbb{H}_n \big\}_{k \in \bbZ} \hspace{2mm}\stackrel{\textnormal{f.d.d.}}=\hspace{2mm} \big\{ a(n)^{\mathds{H}_n +1/2 \cdot I} D_X(2^j,k)\big|\mathds{H}_n = \mathbb{H}_n \big\}_{k \in \bbZ} .
\end{equation}
Now, for any $m \in \bbN$, consider a collection of sets $B_1,\hdots,B_m \in \mathcal{B}(\bbR^p)$ and integers $k_1,\hdots,k_m$. Then,
$$
\bbP\Big( \bigcap^{m}_{\ell=1} \{D_X(a(n)2^j,k_\ell)\in  B_\ell \} \Big) = \int_{{\mathcal S}_{\geq 0}(p(n),\bbR)} \bbP \Big(  \bigcap^{m}_{\ell=1} \{D_X(a(n) 2^j,k_\ell)\in  B_\ell \} \Big|\mathds{H}_n = \mathbb{H}_n\Big) \bbP_{\mathds{H}_n}(d \hspace{0.5mm}\mathbb{H}_n)
$$
$$
=\int_{{\mathcal S}_{\geq 0}(p(n),\bbR)} \bbP \Big( \bigcap^{m}_{\ell=1} \{a(n)^{\mathbb{H}_n + 1/2 \cdot I }D_X(2^j,k_\ell)\in  B_\ell \}   \Big|\mathds{H}_n= \mathbb{H}_n\Big) \bbP_{\mathds{H}_n}(d \hspace{0.5mm}\mathbb{H}_n)
$$
$$
=\bbP\Big( \bigcap^{m}_{\ell=1} \{a(n)^{\mathds{H}_n+1/2 \cdot I} D_X(2^j,k_\ell)\in  B_\ell \} \Big),
$$
where the second equality follows from \eqref{e:conditional_D(a(n)2^j,k)_self_similar}. This establishes $(i)$.
Moreover, since
$$
\mathbf{W}_X(a(n)2^j) = \frac{1}{n_{a,j}} \sum^{n_{a,j}}_{k=1} D_X(a(n)2^j,k)D_X(a(n)2^j,k)^*,
$$
then $(ii)$ is a direct consequence of $(i)$. $\Box$\\
\end{proof}

 The following result is a technical lemma used in the proof of Lemma \ref{l:D(2^j,k)_is_stationary}.
\begin{lemma}\label{l:orthonormality_everywhere}
Under the assumptions of Lemma \ref{l:D(2^j,k)_is_stationary},
\begin{equation}\label{e:orthonormality_everywhere}
\sum^{\infty}_{\ell = -\infty} |\widehat{\psi}(x + 2 \pi \ell)|^2 = 1 \quad \textnormal{for all } x \in [-\pi,\pi).
\end{equation}
\end{lemma}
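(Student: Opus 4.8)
The plan is to deduce \eqref{e:orthonormality_everywhere} from the orthonormality of the integer shifts $\{\psi(\cdot - k)\}_{k\in\bbZ}$ in $L^2(\bbR)$ — a structural feature of the wavelet MRA underlying assumptions ($W1$-$W3$) (see Section~\ref{s:framework} and Mallat \cite{mallat:1999}, Theorems 7.1 and 7.3) — and then to promote the resulting almost-everywhere identity to an everywhere identity using the continuity and decay of $\widehat{\psi}$ supplied by ($W2$) and ($W3$).

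First I would record that $\int_\bbR \psi(t)\psi(t-k)\,dt = \delta_{0,k}$ for every $k\in\bbZ$: the case $k=0$ is the unit-norm normalization in ($W1$), and the cases $k\neq 0$ are the orthogonality of the shifts within the MRA. With the convention $\widehat{\psi}(x)=\int_\bbR \psi(t)e^{-\imag xt}\,dt$ (under which ($W1$) reads $\widehat{\psi}^{(p)}(0)=0$ for $p<N_\psi$), Parseval's identity turns this into
$$
\frac{1}{2\pi}\int_\bbR |\widehat{\psi}(x)|^2 e^{\imag kx}\,dx = \delta_{0,k}, \qquad k\in\bbZ.
$$
Since $\psi\in L^2(\bbR)$, Plancherel gives $\widehat{\psi}\in L^2(\bbR)$, so $|\widehat{\psi}|^2\in L^1(\bbR)$ and Fubini legitimizes splitting $\bbR=\bigcup_{\ell\in\bbZ}[-\pi+2\pi\ell,\pi+2\pi\ell)$; the $2\pi$-periodic function $\Phi(x):=\sum_{\ell\in\bbZ}|\widehat{\psi}(x+2\pi\ell)|^2$ then satisfies $\frac{1}{2\pi}\int_{-\pi}^{\pi}\Phi(x)e^{\imag kx}\,dx=\delta_{0,k}$ for all $k\in\bbZ$. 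Hence every Fourier coefficient of $\Phi\in L^1([-\pi,\pi))$ vanishes except the constant one, which equals $1$, and by uniqueness of Fourier coefficients $\Phi(x)=1$ for almost every $x\in[-\pi,\pi)$.

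Second, I would upgrade this to a pointwise identity. By ($W2$), $\psi\in L^1(\bbR)$ is compactly supported, so $\widehat{\psi}$ is continuous on $\bbR$. By ($W3$), $|\widehat{\psi}(x)|^2\leq A^2(1+|x|)^{-2\alpha}$ with $2\alpha>2$, so for $x\in[-\pi,\pi)$ the series defining $\Phi(x)$ is dominated by the convergent, $x$-independent series $\sum_{\ell\in\bbZ}A^2(1+|x+2\pi\ell|)^{-2\alpha}$; the Weierstrass $M$-test yields uniform convergence on $[-\pi,\pi)$, hence continuity of $\Phi$ there and, by periodicity, on all of $\bbR$. A continuous function that equals $1$ almost everywhere equals $1$ everywhere, which is precisely \eqref{e:orthonormality_everywhere}.

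The only point needing care in the write-up — and the one I would flag explicitly — is the appeal to the orthonormality of $\{\psi(\cdot-k)\}_{k\in\bbZ}$: this is not literally among ($W1$-$W3$) but is part of the standing MRA framework of Section~\ref{s:framework} (the paper works "under the orthogonality of the underlying wavelet and scaling function basis"). Everything else is a routine Parseval-and-periodization computation together with a standard dominated-convergence/continuity argument, so I do not anticipate any substantive obstacle.
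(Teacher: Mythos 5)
Your proof is correct and follows essentially the same route as the paper: both arguments start from the fact that orthonormality of $\{\psi(\cdot-k)\}_{k\in\bbZ}$ gives $\sum_{\ell}|\widehat{\psi}(x+2\pi\ell)|^2=1$ almost everywhere (the paper simply cites Daubechies, p.~134, where you re-derive it via Parseval and periodization), and both then upgrade to an everywhere identity by showing the periodized sum is continuous using the decay bound from ($W3$) together with the continuity of $\widehat{\psi}$. The point you flag about orthonormality not being literally listed in ($W1$--$W3$) is handled the same way in the paper, which invokes it as part of the standing MRA framework.
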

\begin{proof}
Recall that the orthonormality of $\{\psi(t-k)\}_{k \in \bbZ}$ is equivalent to the condition that $ \sum^{\infty}_{\ell = -\infty} |\widehat{\psi}(x + 2 \pi \ell)|^2 = 1$ a.e. (see Daubechies \cite{daubechies:1992}, p.\ 134). From condition \eqref{e:psihat_is_slower_than_a_power_function} (see $(W3)$), there exist constants $0 < A < \infty$ and $\alpha > 1$ such that, for each $x \in \Reals$ and $\ell \in \bbZ$, $|\widehat{\psi}(x + 2 \pi \ell)|^2 \leq A^2 / (1 + |x + 2 \pi \ell|)^{2 \alpha}.$ Since $\widehat{\psi}$ is continuous (see conditions \eqref{e:N_psi} and \eqref{e:supp_psi=compact} in ($W1$) and ($W2$)), then, by the dominated convergence theorem, the function $x \mapsto \sum^{\infty}_{\ell=-\infty} |\widehat{\psi}(x + 2 \pi \ell)|^2$ is continuous. Hence, \eqref{e:orthonormality_everywhere} holds. $\Box$\\
\end{proof}

The following lemma is used in the proof of Lemma \ref{l:fixed_scale_log_limit_discrete}. In particular, for $X_{{\mathcal H}}$ as in \eqref{e:X_h(t)_def}, it establishes properties of the conditional covariance matrix of the discrete-time wavelet coefficients as given by \eqref{e:disc2}, namely,
\begin{equation}\label{e:wave_coefs_univar_discrete}
\bbR^{n_{a,j}} \ni \widetilde{\mathbf d}_{{\mathcal H}}(2^j) := \big\{\widetilde{d}_{\mathcal{H}}(2^j,k) \big\}_{k=1,\hdots,n_{a,j}}  := \Big\{\sum_{\ell\in\bbZ} X_{\mathcal{H}}(\ell)h_{j,2^jk-\ell} \Big\}_{k=1,\hdots,n_{a,j}}.
\end{equation}
\begin{lemma}\label{l:covariance_of_univariate_discrete_wavelet_coeff}
    Suppose assumptions $(A1-A5)$ and $(W1-W3)$ hold. For $j \in \bbN \cup \{0\}$, let $\widetilde{\mathbf d}_{{\mathcal H}}(2^j,k)$ be the univariate discrete-time wavelet coefficient as in \eqref{e:wave_coefs_univar_discrete}
    In addition, for any $H \in \textnormal{supp} \hspace{0.5mm}\pi(dH)$, let
\begin{equation}\label{e:Sigma-H-discrete}
\widetilde{\Sigma}_{H} \equiv \widetilde{\Sigma}_{H,n_{a,j}} = \Big\{\bbE \big[\widetilde{d}_{{\mathcal H}}(2^j,k_1) \widetilde{d}_{{\mathcal H}}(2^j,k_2)| {\mathcal H} = H \big] \Big\}_{k_1,k_2=1,\hdots,n_{a,j}}\in {\mathcal S}_{\geq 0}(n_{a,j},\bbR)
\end{equation}
be the conditional covariance matrix of the discrete-time wavelet coefficients in \eqref{e:wave_coefs_univar}. Then, $\widetilde{\Sigma}_{H}$ has full rank, and conditionally on $\mathcal{H} = H$,
$$
\widetilde{\mathbf d}_{{\mathcal H}}(2^j) \big| \mathcal{H} = H\  \stackrel{\textnormal{a.s.}}= \widetilde{\Gamma}_{H}\mathbf{Z},
$$
 where $\mathbf{Z} \in \bbR^{n_{a,j}}$ is a standard normal random vector and $\widetilde{\Gamma}_{H}^2 =\widetilde{\Sigma}_{H} $.
\end{lemma}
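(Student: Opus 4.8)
Fix $H\in\textnormal{supp}\,\pi(dH)$ and work throughout conditionally on $\mathcal{H}=H$, so that $X_{\mathcal{H}}$ is a standard fBm with Hurst exponent $H$. The plan is to split the argument into two essentially independent parts: first, that $\widetilde{\mathbf d}_{\mathcal{H}}(2^j)\,\big|\,\mathcal{H}=H$ is a centered Gaussian vector, and second, that its covariance matrix $\widetilde{\Sigma}_{H}$ is nonsingular; granting both, the stated representation is immediate. For the Gaussian part I would invoke assumption ($W2$): since $\psi$ and $\phi$ are compactly supported, \eqref{e:hj,l} shows that only finitely many filter coefficients $h_{j,\ell}$ are nonzero, so each $\widetilde d_{\mathcal{H}}(2^j,k)=\sum_{\ell}X_{\mathcal{H}}(\ell)h_{j,2^jk-\ell}$ is, conditionally on $\mathcal{H}=H$, a \emph{finite} linear combination of the jointly Gaussian family $\{X_H(\ell)\}_{\ell\in\bbZ}$. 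Hence $\widetilde{\mathbf d}_{\mathcal{H}}(2^j)\,|\,\mathcal{H}=H$ is a centered Gaussian vector with covariance $\widetilde{\Sigma}_{H}$ as in \eqref{e:Sigma-H-discrete}.

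The substantive part is the nonsingularity of $\widetilde{\Sigma}_{H}$. First I would observe that, conditionally on $\mathcal{H}=H$, the sequence $\{\widetilde d_{\mathcal{H}}(2^j,k)\}_{k\in\bbZ}$ is strictly stationary: by the partition of unity $\sum_{\ell}\phi(\cdot+\ell)\equiv\widehat\phi(0)=1$ together with $\widehat\psi(0)=0$ (which holds since $N_\psi\ge 1$, cf.\ ($W1$)--($W2$)) one obtains $\sum_{\ell}h_{j,\ell}=0$, so a summation by parts expresses $\widetilde d_{\mathcal{H}}(2^j,k)$ as a finite linear combination of the stationary increments $\{X_H(\ell+1)-X_H(\ell)\}_{\ell}$ whose coefficient pattern is merely translated by $2^jk$ as $k$ varies; strict stationarity of those increments (Lemma \ref{l:properties_of_Xh(t)}, $(ii)$) then gives the claim, in parallel with Lemma \ref{l:D(2^j,k)_is_stationary}, $(i)$, in continuous time. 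Consequently $\widetilde{\Sigma}_{H}=\widetilde{\Sigma}_{H,n_{a,j}}$ is a symmetric Toeplitz matrix generated by a nonnegative spectral density $\widetilde f_H$ on $[-\pi,\pi]$.

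Next, exactly as in the proof of Lemma \ref{l:D(2^j,k)_is_stationary}, $(i)$--$(ii)$ --- writing $\widetilde d_{\mathcal{H}}(2^j,k)$ through the spectral representation of the increments of fBm, folding the transfer function of the discrete (Mallat) analysis filter modulo $2\pi$, and using Lemma \ref{l:orthonormality_everywhere} in place of the orthonormality relation for $\widehat\psi$ --- I would show that $\widetilde f_H$ is continuous and strictly positive on $[-\pi,\pi]$. The Toeplitz eigenvalue bound \eqref{e:gray_toeplitz_bound} then gives $\lambda_{\ell}(\widetilde{\Sigma}_{H,n_{a,j}})\ge m_{\widetilde f_H}>0$ for all $\ell$ and all $n_{a,j}$, so $\widetilde{\Sigma}_{H}$ has full rank. (Even if one only establishes $\widetilde f_H>0$ almost everywhere, the conclusion follows, since a nonzero trigonometric polynomial has finitely many zeros.) Finally, set $\widetilde{\Gamma}_{H}:=\widetilde{\Sigma}_{H}^{1/2}$, which is invertible by the previous step, and put $\mathbf Z:=\widetilde{\Gamma}_{H}^{-1}\widetilde{\mathbf d}_{\mathcal{H}}(2^j)$ on $\{\mathcal{H}=H\}$; conditionally on $\mathcal{H}=H$ this $\mathbf Z$ is centered Gaussian with covariance $\widetilde{\Gamma}_{H}^{-1}\widetilde{\Sigma}_{H}\widetilde{\Gamma}_{H}^{-1}=I_{n_{a,j}}$, i.e.\ standard normal, and $\widetilde{\mathbf d}_{\mathcal{H}}(2^j)=\widetilde{\Gamma}_{H}\mathbf Z$ a.s., as claimed.

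The step I expect to be the main obstacle is the positivity of $\widetilde f_H$: the delicate points are $\lambda=0$, where the fractional Gaussian noise spectral density degenerates (it vanishes for $H<1/2$ and has a pole for $H>1/2$) and must be traded against the vanishing moments of the discrete filter, together with the boundary value $H=1/2$ --- precisely the difficulties encountered in the continuous-time argument of Lemma \ref{l:D(2^j,k)_is_stationary}, $(ii)$. An alternative route that bypasses spectral densities altogether is to prove directly the $L^2(\bbR)$-linear independence of the time-domain kernels representing $\widetilde d_{\mathcal{H}}(2^j,1),\dots,\widetilde d_{\mathcal{H}}(2^j,n_{a,j})$, adapting the estimates of Stoev et al.\ \cite{stoev:pipiras:taqqu:2002} already used in the proof of Proposition \ref{p:D-tildeD_is_bounded}.
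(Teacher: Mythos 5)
Your treatment of the conditional Gaussianity, the stationarity/Toeplitz structure, and the final factorization $\widetilde{\mathbf d}_{{\mathcal H}}(2^j)=\widetilde{\Gamma}_{H}\mathbf Z$ is fine and matches what the paper does (the paper simply asserts conditional Gaussianity from the fact that, given $\mathcal H=H$, $X_{\mathcal H}$ is an fBm). The problem is that you route the one substantive claim --- full rank of $\widetilde{\Sigma}_{H}$ --- through strict (or a.e.) positivity of a discrete-time spectral density $\widetilde f_H$, and then you do not actually establish that positivity: you flag it yourself as ``the main obstacle'' and assert it can be done ``exactly as in'' Lemma \ref{l:D(2^j,k)_is_stationary}, $(ii)$. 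That transfer is not automatic. The continuous-time argument hinges on the folded orthonormality relation of Lemma \ref{l:orthonormality_everywhere}, applied directly to $\widehat\psi$; in discrete time the relevant object is the transfer function of the Mallat filter $h_{j,\cdot}$ of \eqref{e:hj,l} (essentially a product of $\widehat\phi$ and a dilate of $\widehat\psi$, after the initialization \eqref{e:Btilde}), folded modulo $2\pi$ and downsampled by $2^j$, and bounding the resulting sum away from zero --- or even showing it is nonzero a.e.\ --- is a genuinely different computation that your proposal does not carry out. As written, the proof of the only nontrivial assertion of the lemma is missing.

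The paper closes exactly this gap by a softer argument that avoids spectral densities altogether: conditionally on $\mathcal H=H$ the sequence $\{\widetilde d_{\mathcal H}(2^j,k)\}_{k}$ is stationary with $\bbE[\widetilde d_{\mathcal H}^{\,2}(2^j,0)\,|\,\mathcal H=H]>0$ and with autocovariance tending to $0$ as the lag tends to infinity (both cited from Boniece et al.\ \cite{boniece:didier:sabzikar:2021}); Proposition 5.1.1 of Brockwell and Davis \cite{brockwell:davis:1991} then states that any stationary sequence with $\gamma(0)>0$ and $\gamma(\kappa)\to0$ has nonsingular covariance matrices of every finite order. If you want to keep your structure, replace the spectral-positivity step with this criterion: you then only need the decay of the autocovariance, which follows from the vanishing moments of the filter ($N_\psi\ge2$), and the rest of your argument goes through unchanged.
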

\begin{proof} By definition, conditionally on $\mathcal{H} = H$, $X_\mathcal{H}$ is a fBm with Hurst exponent $H$. Hence, also conditionally, the random vector $\widetilde{\mathbf d}_{{\mathcal H}}(2^j)$ is Gaussian. By expressions (A.10) and (A.11) in Boniece et al.\  \cite{boniece:didier:sabzikar:2021}, $\bbE[ \widetilde{d}^2_\mathcal{H}(2^j,0)\big |\mathcal{H}=H] > 0$. Equivalently, the main diagonal entries of $\widetilde{\Sigma}_H$ are (equal and) strictly positive. Moreover, $\bbE[ \widetilde{d}_\mathcal{H}(2^j,\kappa)\widetilde{d}_\mathcal{H}(2^j,0)\big |\mathcal{H}=H] \rightarrow 0$ as $\kappa \rightarrow \infty$ (see, for instance, Boniece and Didier \cite{boniece:didier:sabzikar:2021}, Proposition A.3, $(ii)$). Therefore, by Proposition 5.1.1 in Brockwell and Davis \cite{brockwell:davis:1991}, the matrix $\widehat{\Sigma}_H$ as in \eqref{e:Sigma-H-discrete} is nonsingular for every $n_{a,j}$. This completes the proof. $\Box$\\
\end{proof}

\bibliography{highdim}

\end{document}